\theoremstyle{plain}
\newtheorem{theorem}{Theorem}[section]
\newtheorem{lemma}[theorem]{Lemma}
\newtheorem{fact}[theorem]{Fact}
\newtheorem{corollary}[theorem]{Corollary}
\newtheorem{proposition}[theorem]{Proposition}
\newtheorem{conjecture}[theorem]{Conjecture}
\theoremstyle{definition}
\newtheorem{definition}[theorem]{Definition}
\newtheorem{example}[theorem]{Example}
\newtheorem{question}[theorem]{Question}
\newtheorem{remark}[theorem]{Remark}
\theoremstyle{remark}
\newtheorem{clm}{Claim}
\newtheorem*{claim}{Claim}
\numberwithin{figure}{section}
\DeclareMathOperator{\im}{Im}
\DeclareMathOperator{\aut}{Aut}
\DeclareMathOperator{\SL}{SL}
\DeclareMathOperator{\GL}{GL}
\DeclareMathOperator{\SO}{SO}
\DeclareMathOperator{\Sp}{Sp}
\DeclareMathOperator{\St}{{St}_{2n}^{\rm{sym}}}
\DeclareMathOperator{\dcl}{dcl}
\DeclareMathOperator{\Hom}{Hom}
\DeclareMathOperator{\Stab}{Stab}
\DeclareMathOperator{\K2}{K_{2}^{\rm{sym}}}
\DeclareMathOperator{\Th}{Th}
\DeclareMathOperator{\chr}{char}
\DeclareMathOperator{\cw}{cw}
\DeclareMathOperator{\Tr}{Tr}
\newcommand{\G}{{\mathcal G}}
\renewcommand{\H}{{\mathcal H}}
\newcommand{\U}{{\mathcal U}}
\newcommand{\N}{{\mathbb N}}
\newcommand{\Z}{{\mathbb Z}}
\newcommand{\R}{{\mathbb R}}
\newcommand{\Q}{{\mathbb Q}}
\newcommand{\id}{{\mathrm{id}}}
\newcommand{\Kcal}{{\mathcal K}}
\newcommand{\F}{{\mathbb F}}
\begin{document}

\title{On model-theoretic connected components in some group extensions}
\date{\today}

\author{Jakub Gismatullin \and Krzysztof Krupi\'nski}
\thanks{The first author is supported by the Marie Curie Intra-European Fellowship MODGROUP no. PIEF-GA-2009-254123 and European Social Fund under project POKL.04.01.01-00-054/10-00. Both authors are supported by Polish Government grant N N201 545938 and NCN grant DEC-2012/07/B/ST1/03513}


\address{Instytut Matematyczny Uniwersytetu Wroc{\l}awskiego, pl. Grunwaldzki 2/4, 50-384 Wroc{\l}aw, Poland and School of Mathematics, University of Leeds, Woodhouse Lane, Leeds, LS2 9JT, UK}
\email{gismat@math.uni.wroc.pl, www.math.uni.wroc.pl/\~{}gismat}
\address{Instytut Matematyczny Uniwersytetu Wroc{\l}awskiego, pl. Grunwaldzki 2/4, 50-384 Wroc{\l}aw, Poland}
\email{kkrup@math.uni.wroc.pl, www.math.uni.wroc.pl/\~{}kkrup}

\keywords{model-theoretic connected components, the smallest type-definable subgroup of bounded index, the smallest invariant subgroup of bounded index, $G$-compactness, universal central extensions, symplectic Steinberg symbols, quasi-characters, the braid group $B_3$}
\subjclass[2010]{Primary 03C60 ; Secondary 20A15.}

\begin{abstract} We analyze model-theoretic connected components in extensions of a given group by abelian groups which are defined by means of 2-cocycles with finite image. We characterize, in terms of these 2-cocycles,  when the smallest type-definable subgroup of the corresponding extension differs from the smallest invariant subgroup. In some situations, we also describe the quotient of these two connected components.

Using our general results about extensions of groups together with Matsumoto-Moore theory or various quasi-characters considered in bounded cohomology, we obtain new classes of examples of groups whose smallest type-definable subgroup of bounded index differs from the smallest invariant subgroup of bounded index. This includes the first known example of a group with this property found by Conversano and Pillay, namely the universal cover of $\SL_2(\R)$ (intepreted in a monster model), as well as various examples of different nature, e.g. some central extensions of free groups or of fundamental groups of closed orientable surfaces. We also obtain a variant of the example of Conversano and Pillay for $\SL_2(\Z)$ instead of $\SL_2(\R)$, which (as most of our examples) was not accessible by the previously known methods.
\end{abstract}

\maketitle

\tableofcontents



\section{Introduction}

Assume $G$ is a group $\emptyset$-definable in a monster model of some first order theory, and let $B$ be a (small) set of parameters from this model. The following connected components play a very important role in the study of groups from the model-theoretic perspective: 

\begin{itemize}
\item the intersection of all $B$-definable subgroups of $G$ of finite index, denoted by $G^0_B$,
\item the smallest $B$-type-definable subgroup of $G$ of bounded index, denoted by $G^{00}_B$,
\item the smallest $B$-invariant subgroup of $G$ of bounded index, denoted by $G^{000}_B$ or by $G^{\infty}_B$.
\end{itemize}

It is clear that $G^{000}_B \leq G^{00}_B \leq G^{0}_B \leq G$, and it is easy to show that all these groups are normal in $G$. Sometimes these connected components do not depend on the choice of $B$, e.g. in NIP theories. In such situations, we skip the parameter set $B$, and we say that the appropriate connected component exists, e.g. we write $G^{000}$ and say that $G^{000}$ exists (notice that then $G^{000}$ is the smallest invariant, over an arbitrarily chosen small set of parameters, subgroup of bounded index).

The significance of the above connected components was discussed in various papers (e.g. see \cite{gis, modcon}).
So, we will only briefly discuss the motivation to consider them. Originally, $G^0$ played a fundamental role in the study of generic types in stable groups, for example due to the fact that $G/G^0$ is always a profinite group. The importance of $G^{00}$ stems from the fact that it allows to associate with $G$ a compact topological group $G/G^{00}$ (with the so-called logic topology). This becomes particularly interesting in $o$-minimal structures due to Pillay's conjecture which describes $G/G^{00}$ as a compact Lie group of an appropriate dimension, and so associates with the group $G$ a classical mathematical object $G/G^{00}$ \cite{peter}. A common motivation to consider all three connected components also comes from their relationships with strong types in various senses (strong types, Kim-Pillay types and Lascar strong types), which in turn are essential notions in the study of stable, simple and NIP theories. For details on these relationships see \cite[Section 3]{gis}. Notice  also that all the quotients of the connected components, e.g. $G/G^0$, $G/G^{00}$, $G/G^{000}$ or $G^{00}/G^{000}$, are certain natural model-theoretic invariants of the group $G$ (in the sense that they do not depend on the choice of the monster model), so it is a natural model-theoretic task to understand them.

Our main goal is to find examples, or, more desirably, general methods of constructing them, of groups $G$ for which $G^{00}_B \ne G^{000}_B$. This problem arose during the work on the first author's Ph.D. thesis under the supervision of L. Newelski, 
and it appears 
in \cite[Page 496]{gis}. Recall that any such example leads to a new 
example of a non-$G$-compact theory (see \cite[Corollary 3.6]{gis}). 

In \cite{conv_pillay}, the authors found first (strongly related to each other) examples of groups $G$ for which $G^{00} \ne G^{000}$. Their example is a monster model of the topological universal cover $\widetilde{\SL_2(\R)}$ of $\SL_2(\R)$.
In particular, $\widetilde{\SL_2(\R)}$ is a central extension of $\SL_2(\R)$ by $\Z$ given by a definable (in the two sorted structure $((\Z,+),(\R,+,\cdot))$) 2-cocycle $h \colon \SL_2(\R) \times \SL_2(\R) \to \Z$ with finite image.


This led us to the following general question. 

\begin{question}
When does an extension $\widetilde{G}$ of a group $G$ by an abelian group $A$ satisfy $\widetilde{G}^{00}_B \ne \widetilde{G}^{000}_B$ for some parameter set $B$ (working in a monster model)? 
\end{question}


We would like to emphasis that we consider this problem in a general algebraic context, i.e., without assuming that $\widetilde{G}$ is a topological universal cover of a topological group or that $G$ is definable in an $o$-minimal structure. The only restriction that we make is the assumption that the 2-cocycle $h \colon G \times G \to A$ defining our extension is definable (in the model in which we are working)  and has finite image. 


When we talk about connected components, we always compute them in a monster model.
In Section \ref{sec:main}, we characterize when $\widetilde{G}^{00}_B \ne \widetilde{G}^{000}_B$ in terms of the underlying 2-cocycle $h \colon G \times G \to A$, and we analyze the quotient  $\widetilde{G}^{00}_B / \widetilde{G}^{000}_B$. We obtain two corollaries of our characterization, which are used in Section \ref{sec:central} to find new classes of examples where these two connected components are different. 
The first class of examples, generalizing the example of Conversano and Pillay to some central extensions of $\Sp_{2n}(k)$ (and of some other related groups) for $k$ being any ordered field, is obtained by application of symplectic Steinberg symbols and Matsumoto-Moore theory \cite{rehm, stein}. The second family of examples, including some central extensions of free groups or of fundamental groups of closed orientable surfaces, are obtained by application of various quasi-characters considered in bounded cohomology. 
As a corollary, we get that in a non-abelian free group expanded by predicates for all subsets the two connected components differ. Using our main characterization, we also obtain that in the central extension of $\SL_2(\Z)$ by $\Z$ defined by the 2-cocycle defining $\widetilde{\SL_2(\R)}$ restricted to $\SL_2(\Z) \times \SL_2(\Z)$, the two connected components differ. Section \ref{sec:finite} contains some auxiliary results which allow to produce yet more examples from the existing ones (concrete applications are described in Section \ref{sec:central}).

The relevant definitions and facts concerning extensions of groups and 2-cocycles are given in the initial parts of Sections \ref{sec:main} and \ref{sec:central}. Here we only recall a few definitions and facts from model theory.

Suppose a set $X$ is type-definable in a monster model. Let $E$ be a bounded (i.e., with boundedly many classes), type-definable equivalence relation on $X$. The \emph{logic topology} on $X/E$ is a topology whose closed sets are the sets with type-definable preimages by the quotient map. It turns out that this is always a compact (Hausdorff) topology. In particular, the equivalence relation on $G$ of lying in the same coset modulo $G^{00}_B$ is always bounded and type-definable, so we have the logic topology on $G/G^{00}_B$. With this topology, $G/G^{00}_B$ becomes a compact, topological group. For the basic properties of the logic topology see for example \cite[Section 2]{pill} and \cite[Proposition 3.5 1.]{gis}.

Now, we recall the notion of a thick subset of a group.

\begin{definition}
A subset $P$ of an arbitrary group $G$ (not necessarily
sufficiently saturated) is called \emph{$n$-thick}, where $n \in \N$, if it is symmetric (that is, $P = P^{-1}$)
and for every  $g_1, \dots , g_n$ from $G$ there are $1 \leq i < j \leq n$ such that $g_i^{-1}
g_j \in P$. We
say that $P$ is \emph{thick} if $P$ is $n$-thick for some $n \in \N$. 
\end{definition}

In  the above definition, instead of  $g_i^{-1}
g_j \in P$ we can write $g_ig_j^{-1}
\in  P$, as one can always consider elements $g_1^{-1}
, \dots , g_n^{-1}$ instead of $g_1, \dots , g_n$. Recall that the intersection of any two thick subsets of a group is always thick, and the image of a thick subset of a group by an epimorphism is also thick.

\begin{fact}[{\cite[Lemma 1.3, Lemma 1.5]{abscon}}]\label{fact:new 1}
Let $G$ be a group $\emptyset$-definable in a monster model, and $B$ a small set of parameters.
\begin{enumerate}
\item[(i)] The component  $G^{000}_B$ is generated by the intersection of all $B$-definable thick subsets
of G.
\item[(ii)] The component $G^{00}_B$
can be written as the intersection of some family ${\mathcal P}$ of $B$-definable thick subsets of $G$, where ${\mathcal P}$ is closed under finite intersections.
\end{enumerate}
\end{fact}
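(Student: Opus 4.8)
The plan is to treat (i) and (ii) separately, with a common set-up: I work in the monster model $\mathfrak C$, write $\mu=|L(B)|$ (so there are at most $\mu$ many $B$-definable subsets of $G$), and set $Y=\bigcap\{P\subseteq G:\ P\ \text{is}\ B\text{-definable and thick}\}$, which is $B$-type-definable (an intersection of $\le\mu$ many $B$-definable sets), symmetric, and contains $e$ (take $g_1=\dots=g_n=e$ in the definition of $n$-thickness). For (i) I will prove $\langle Y\rangle=G^{000}_B$; for (ii) I will prove that $G^{00}_B$ is an intersection of $B$-definable thick sets, after which closing that family under finite intersections yields the statement.

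For the inclusion $G^{000}_B\le\langle Y\rangle$ in (i) I would check that $\langle Y\rangle$ is a $B$-invariant subgroup of bounded index, whence $G^{000}_B\le\langle Y\rangle$ by minimality of $G^{000}_B$. Invariance is immediate since $Y$ is an intersection of $B$-definable sets. For bounded index I would argue by contradiction: if $[G:\langle Y\rangle]\ge(2^\mu)^+$, pick representatives $(g_\alpha)_{\alpha<(2^\mu)^+}$ of pairwise distinct cosets; for $\alpha\ne\beta$ one has $g_\alpha^{-1}g_\beta\notin Y$, so one may pick a $B$-definable thick $P_{\{\alpha,\beta\}}$ with $g_\alpha^{-1}g_\beta\notin P_{\{\alpha,\beta\}}$ (the choice is insensitive to the order of $\alpha,\beta$, as thick sets are symmetric). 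This colours the $2$-subsets of $(2^\mu)^+$ with $\le\mu$ colours, so by Erd\H{o}s--Rado there is a monochromatic set of size $\mu^+$, with colour some $n$-thick $B$-definable $P$; the representatives indexed by its first $n$ elements then have all pairwise quotients outside $P$, contradicting $n$-thickness. Hence $[G:\langle Y\rangle]\le 2^\mu$ is bounded.

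For the reverse inclusion $\langle Y\rangle\le G^{000}_B$ in (i) it is enough to show $Y\subseteq G^{000}_B$, and here I would invoke the standard description of thickness by indiscernible sequences: for $g\in G$, one has $g\in Y$ if and only if there is a $B$-indiscernible sequence $(c_i)_{i<\omega}$ in $G$ with $c_0^{-1}c_1=g$. The ``if'' direction is easy (for a $B$-definable $n$-thick $P$, among $c_0,\dots,c_{n-1}$ some $c_i^{-1}c_j$ with $i<j$ lies in $P$, and by indiscernibility $c_i^{-1}c_j\equiv_B c_0^{-1}c_1=g$, so $g\in P$); the ``only if'' direction is the usual Ramsey-plus-compactness construction of an indiscernible sequence and is the real content of the notion of thickness --- I expect this to be the main obstacle in (i). Given this, for $g\in Y$ I would fix such a sequence $(c_i)$ and use that $\pi\colon G\to G/G^{000}_B$ is $\aut(\mathfrak C/B)$-equivariant (since $G^{000}_B$ is $B$-invariant) with bounded image: extending $(c_i)$ to a long enough $B$-indiscernible sequence forces $\pi(c_i)=\pi(c_j)$ for some $i<j$, and since $(c_i,c_j)\equiv_B(c_0,c_1)$ and $\pi$ is equivariant, $\pi(c_0)=\pi(c_1)$, i.e.\ $g=c_0^{-1}c_1\in G^{000}_B$. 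Combined with the previous paragraph, $\langle Y\rangle=G^{000}_B$.

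For (ii), write $G^{00}_B=\bigcap_{i\in I}D_i$ with each $D_i$ $B$-definable. The heart of the argument is the claim that for every $B$-definable $D\supseteq G^{00}_B$ there is a $B$-definable thick $P$ with $G^{00}_B\subseteq P\subseteq D$; granting this, pick such a $P_i$ for each $D_i$, so that $G^{00}_B\subseteq\bigcap_iP_i\subseteq\bigcap_iD_i=G^{00}_B$, and take $\mathcal P$ to be the family of finite intersections of the $P_i$. To prove the claim I would set $C=G\setminus D$ and pass to $K:=G/G^{00}_B$ with the logic topology, a compact Hausdorff group; $\pi(C)$ is closed and misses the identity $\bar e$, so (by continuity of multiplication, compactness and regularity of $K$) I would choose open symmetric $W'\subseteq W\ni\bar e$ with $\overline{W'}\subseteq W$ and $\overline{W}\cdot\overline{W}\subseteq K\setminus\pi(C)$, and put $P_1:=\pi^{-1}(\overline{W'}\cdot\overline{W'})$. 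Then $P_1$ is $B$-type-definable (preimage of a closed set), symmetric, contains $G^{00}_B$, and is contained in $D$; moreover, covering $K$ by $m$ translates of $W'$ and lifting to $G$ shows any $m+1$ elements of $G$ have two with quotient in $\pi^{-1}(W')^{-1}\pi^{-1}(W')\subseteq\pi^{-1}(W'W')\subseteq P_1$, so $P_1$ is $(m+1)$-thick. Finally I would write $P_1=\bigcap_\ell G_\ell$ with the $G_\ell$ symmetric $B$-definable (replacing $G_\ell$ by $G_\ell\cap G_\ell^{-1}$ is harmless since $P_1$ is symmetric), observe that each $G_\ell\supseteq P_1$ is again $(m+1)$-thick because a symmetric superset of a thick set is thick, and then use compactness (as $\bigcap_\ell G_\ell=P_1\subseteq D$ with $D$ $B$-definable) to extract a finite subintersection $P:=G_{\ell_1}\cap\dots\cap G_{\ell_k}\subseteq D$. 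This $P$ is $B$-definable, symmetric, thick, and squeezed between $G^{00}_B$ and $D$. Here I expect the delicate point to be precisely this last passage from the type-definable $P_1$ to a genuinely $B$-definable $P$, which is exactly where the upward monotonicity of thickness and compactness of $G/G^{00}_B$ in the logic topology are used.
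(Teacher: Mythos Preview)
The paper does not prove this statement; it is quoted as a fact from \cite{abscon} (Lemmas~1.3 and~1.5 there), so there is no proof in the paper to compare against. Your argument is along the standard lines of that reference, and part~(i) is correct: the Erd\H{o}s--Rado bound on the index, the indiscernible-sequence description of $Y$, and the deduction $Y\subseteq G^{000}_B$ from it (via equivariance of $\pi$ and pigeonhole on a long extension) are all fine.

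There is, however, a genuine gap in part~(ii). You assert that $P_1=\pi^{-1}(\overline{W'}\cdot\overline{W'})$ is \emph{$B$-type-definable} ``(preimage of a closed set)'', but in the logic topology a closed subset of $K=G/G^{00}_B$ has preimage which is type-definable over \emph{some} small set of parameters, not necessarily over $B$. The action of $\aut(\mathfrak C/B)$ on $K$ (by continuous group automorphisms) need not be trivial: for instance, if $\G$ has two indistinguishable sorts each carrying $(\Z,+)$ and $G$ is their product, then $K\cong\widehat{\Z}\times\widehat{\Z}$ and swapping the sorts acts nontrivially. Your $W'$ is chosen without regard to this action, so $\overline{W'}\cdot\overline{W'}$ need not be $\aut(\mathfrak C/B)$-invariant, $P_1$ need not be $B$-type-definable, and consequently the $G_\ell$'s and the final $P$ need not be $B$-definable.

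The fix is easy and actually streamlines your argument. Given $B$-definable $D\supseteq G^{00}_B$, write $G^{00}_B=\bigcap_iD_i$ with the $D_i$ $B$-definable and closed under finite intersections; since $(G^{00}_B)^3=G^{00}_B\subseteq D$, compactness gives some $i$ with $D_i^3\subseteq D$. Put $E=D_i\cap D_i^{-1}$ and $P:=E^3$. Then $P$ is $B$-definable, symmetric, and $G^{00}_B\subseteq E\subseteq P\subseteq D$. For thickness, run your $K$-argument with the \emph{canonical} open neighbourhood $U:=K\setminus\pi(G\setminus E)\ni\bar e$ (which is $\aut(\mathfrak C/B)$-invariant because $E$ is $B$-definable): covering $K$ by $m$ translates of $U$, any $m+1$ elements of $G$ have two with quotient in $\pi^{-1}(U^{-1}U)\subseteq\pi^{-1}(\pi(E^2))=E^2\cdot G^{00}_B\subseteq E^3=P$. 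So $P$ is $(m+1)$-thick, and the claim follows.
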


We will sometimes  use the notion of an absolutely connected group from \cite{abscon}. 


\begin{definition}[{\cite[Definition 2.3, Propositions 2.1 and 2.2]{abscon}}] \label{def:abscon}
Let $G$ be an infinite group.
\begin{enumerate}
\item Suppose $G$ is given with some first order structure $\G = (G,\cdot,\ldots)$. We say that $G$ is \emph{definably absolutely connected} if ${G^*}^{000}$ exists and $G^* = {G^*}^{000}$, where  $G^*$ is the interpretation of $G$ in a monster model $\G^* \succ \G$. 
\item We say that $G$ is \emph{absolutely connected} if for an arbitrary first order expansion $\G = (G,\cdot,\ldots)$ of $G$ the component ${G^*}^{000}$ exists and $G^* = {G^*}^{000}$, where $G^*$ is an interpretation of $G$ in a monster model $\G^* \succ \G$.
\end{enumerate}
\end{definition}


The main examples of absolutely connected groups are simply connected Chevalley groups \cite[Theorems 6.5, 6.7]{abscon}. That is, if $k$ is an arbitrary infinite field and $G$ is a $k$-split, semisimple, simply connected linear algebraic group defined over $k$, then the group $G(k)$ of $k$-rational points is absolutely connected. In particular, all classical groups such as special linear groups $\SL_n(k)$ or symplectic groups $\Sp_{2n}(k)$ are absolutely connected.


We will use the fact that absolutely connected groups are perfect \cite[Theorem 3.5]{abscon}. In some arguments, we also consider absolutely connected groups of finite commutator width. 
Recall that all groups of the form $G(k)$ from the previous paragraph have finite commutator width. 

A few observations in this paper rely on a version of Beth's definability theorem for types rather than for formulas. For the reader's convenience, we give a precise formulation of this theorem. To prove it, one should modify the proof of Beth's theorem for formulas \cite[Theorem 2.2.22]{ki_cha}, which we leave as an exercise.


Let $L$ be a first order language. Recall that for a first order $L$-theory $T$ and collections of formulas $p_1(x)$ and $p_2(x)$ in the language $L$, the expression $T \models p_1(x) \equiv p_2(x)$ means that for any model $M \models T$ the types $p_1(x)$ and $p_2(x)$ have the same sets of realizations in $M$, i.e., $p_1(M) = p_2(M)$. Equivalently, for every $\varphi_1(x) \in p_1(x)$ there exists a conjunction $\varphi_2(x)$ of formulas from $p_2(x)$ such that $T \vdash \varphi_2(x) \rightarrow \varphi_1(x)$, and conversely, for every $\psi_2(x) \in p_2(x)$ there exists a conjunction $\psi_1(x)$ of formulas from $p_1(x)$ such that $T \vdash \psi_1(x) \rightarrow \psi_2(x)$.

\begin{fact}[Beth's theorem for types]\label{fact:Beth for types}
Assume $L$ is a first order language. Let $L' = L \cup \{ P_i',f_j',c_k': i \in I, j \in J, k \in K\}$ and $L''=L \cup \{ P_i'',f_j'',c_k'': i \in I, j \in J, k \in K\}$, where $P_i', P_i'',f_j',f_j'',c_k',c_k''$ are pairwise distinct relation, function and constant symbols, not appearing in the language $L$, and such that for each $i \in I$ the relation symbols $P_i'$ and $P_i''$ have the same arity and for each $j\in J$ the function symbols $f_j'$ and $f_j''$ have the same arity. Let $T'$ be a theory in $L'$ and $T''$ the corresponding theory in $L''$ (i.e., $T''$ is obtained from $T'$ by replacing each of the symbols $P_i', f_j',c_k'$ by $P_i'',f_j'',c_k''$, respectively). Let $\pi'(x)$ be a collection of formulas in $L'$ and $\pi''(x)$ the corresponding collection of formulas in $L''$. Assume $T' \cup T'' \models \pi'(x) \equiv \pi''(x)$. Then there exists a collection of formulas $\pi(x)$ in the language $L$ such that $T' \models \pi'(x) \equiv \pi(x)$.
\end{fact}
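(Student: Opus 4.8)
The plan is to adapt the standard derivation of Beth's definability theorem from Craig's interpolation theorem, the only new feature being that one produces \emph{one interpolant per formula} of $\pi'(x)$ and then collects these into $\pi(x)$. Fix a tuple $c$ of pairwise distinct new constant symbols of the same length as $x$ and work over $L\cup\{c\}$, $L'\cup\{c\}$ and $L''\cup\{c\}$; note that, since the primed and double-primed symbols are pairwise distinct and do not occur in $L$, the only non-logical symbols common to $L'\cup\{c\}$ and $L''\cup\{c\}$ are those of $L\cup\{c\}$. The first step is to unwind the hypothesis $T'\cup T''\models\pi'(x)\equiv\pi''(x)$: in any model $N$ of $T'\cup T''$ in which $c$ realizes $\pi'$, the element $c^N$ lies in $\pi'(N)=\pi''(N)$, so $N$ satisfies $\pi''(c)$; hence for each $\varphi''(x)\in\pi''(x)$ one gets $T'\cup T''\cup\pi'(c)\models\varphi''(c)$.

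Next I would apply compactness: there are a finite conjunction $\chi'_{\varphi'}(c)$ of formulas from $\pi'(c)$ and finite $T'_0\subseteq T'$, $T''_0\subseteq T''$ with $\bigl(\bigwedge T'_0\bigr)\wedge\chi'_{\varphi'}(c)\models\bigl(\bigwedge T''_0\bigr)\to\varphi''(c)$. The antecedent is an $(L'\cup\{c\})$-sentence and the consequent an $(L''\cup\{c\})$-sentence, so Craig interpolation yields an interpolant $\theta_{\varphi'}(c)$ whose non-logical symbols all lie in $L\cup\{c\}$; since $c$ is a tuple of fresh constants, this is $\theta_{\varphi'}(x)$ substituted at $c$ for an $L$-formula $\theta_{\varphi'}(x)$. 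Its two halves give $T'\models\chi'_{\varphi'}(x)\to\theta_{\varphi'}(x)$ and $T''\models\theta_{\varphi'}(x)\to\varphi''(x)$; renaming every double-primed symbol by the corresponding primed one (which fixes $\theta_{\varphi'}$, being an $L$-formula, and carries $T''$ to $T'$ and $\varphi''$ to $\varphi'$) turns the second into $T'\models\theta_{\varphi'}(x)\to\varphi'(x)$.

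It then remains to put $\pi(x):=\{\theta_{\varphi'}(x):\varphi'(x)\in\pi'(x)\}$, a collection of $L$-formulas. For every $\varphi'(x)\in\pi'(x)$ the single formula $\theta_{\varphi'}(x)\in\pi(x)$ already satisfies $T'\vdash\theta_{\varphi'}(x)\to\varphi'(x)$, and every $\theta_{\varphi'}(x)\in\pi(x)$ is implied over $T'$ by the finite conjunction $\chi'_{\varphi'}(x)$ of formulas of $\pi'(x)$; by the reformulation of the relation $\equiv$ recalled just before the statement, this says exactly $T'\models\pi'(x)\equiv\pi(x)$.

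I do not expect a genuine obstacle: the argument is routine once Craig interpolation is at hand, and the only points deserving care are the bookkeeping of which symbols live in which language (so that the interpolant really lands in $L\cup\{c\}$), the use of compactness to handle possibly infinite $T'$, $T''$ and infinite $\pi'$ formula by formula, and the harmless relabeling of double-primed symbols by primed ones. If one prefers to mimic the textbook proof of Beth for formulas without invoking Craig, one may instead take $\pi(x)$ to be the set of all $L$-formulas $\theta(x)$ with $T'\vdash\bigwedge\Delta'(x)\to\theta(x)$ for some finite $\Delta'\subseteq\pi'(x)$ (so one direction of $\equiv$ holds automatically), and prove $T'\cup\pi(x)\models\varphi'(x)$ for each $\varphi'\in\pi'$ by a gluing argument: if it failed, one would obtain a model of $T'$ realizing $\pi(x)$ but omitting $\varphi'$, pass to isomorphic ultrapowers to glue it with a model of $T'$ realizing $\pi'$ over a common $(L\cup\{c\})$-reduct, and get a model of $T'\cup T''$ with $c$ in $\pi'$ but not in $\pi''$, contradicting the hypothesis.
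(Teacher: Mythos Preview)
Your proof is correct and is exactly the kind of argument the paper intends: the paper does not actually write out a proof of this fact but merely remarks that one should modify the standard proof of Beth's definability theorem for formulas (as in Chang--Keisler) and leaves it as an exercise. Your derivation via Craig interpolation---producing one interpolant per formula of $\pi'(x)$ and collecting them into $\pi(x)$---is precisely that modification, and the alternative gluing argument you sketch at the end is the model-theoretic variant of the same idea.
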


\section{Connected components and 2-cocycles} \label{sec:main}

This section is constructed as follows. After a short introduction on group extensions and 2-cocycles, we prove the main theorem of this paper (Theorem \ref{thm:main}), describing a general situation in which an extension $\widetilde{G}$ of a group $G$ by an abelian group $A$, defined in terms of a definable 2-cocycle with finite image, satisfies $\widetilde{G^*}^{000}_B \ne \widetilde{G^*}^{00}_B$ for some parameter set $B$ ($*$ means that we consider the sets of realizations in a monster model). Then, we obtain two essentially different corollaries of the main theorem, which are used in Section \ref{sec:central} to produce new classes of examples of groups of the form $\widetilde{G^*}$ satisfying $\widetilde{G^*}^{000}_B \ne \widetilde{G^*}^{00}_B.$
Next, we make a closer analysis of the situation from Theorem \ref{thm:main}, proving that in a rather general context the main assumption of this theorem is also a necessary condition for $\widetilde{G^*}^{000}_B \ne \widetilde{G^*}^{00}_B$ (Corollary \ref{cor:new 1}), and describing the quotient $\widetilde{G^*}^{00}_B/\widetilde{G^*}^{000}_B$. 


Let $G$ be an arbitrary group and let $A$ be an abelian group.
Assume that $\widetilde{G}$ is an extension of $G$ by $A$ (not necessarily central), i.e., we assume that there exists an exact sequence
\begin{equation}
\xymatrix{
 1 \ar@{^{(}->}[r] & A \ar@{^{(}->}[r] & {}\widetilde{G} \ar@{>>}[r]^-{\pi} & G \ar@{>>}[r] & 1
}. \label{eq:1}
\end{equation}
Sometimes by a group extension of $G$ by $A$ we mean the above sequence (and not just the group $\widetilde{G}$), which should be clear from the context. Then, $\widetilde{G} \cong (A\times G,*)$, where \[(a_1,g_1)*(a_2,g_2) = (a_1+g_1\cdot a_2+h(g_1,g_2),g_1g_2)\] for the action $\cdot\colon G \times A \to A$ of $G$ on $A$ by automorphisms induced by the conjugation action of $\widetilde{G}$ on $A$, and for some \emph{2-cocycle} $h$ (also called a \emph{factor set}), that is a map $h\colon G\times G \to A$ satisfying \cite[10.13]{rotman}:
\begin{itemize}
\item $h(g_1,g_2) + h(g_1g_2,g_3) = h(g_1,g_2g_3) + g_1\cdot h(g_2,g_3)$ for all $g_1,g_2,g_3\in G$,
\item $h(g,e)=h(e,g)=0$ for all $g\in G$.
\end{itemize}
%
%
%
More precisely, the extension \eqref{eq:1} is equivalent to the natural extension 
\begin{equation}
\xymatrix{
 1 \ar@{^{(}->}[r] & A \ar@{^{(}->}[r] & (A \times G, *) \ar@{>>}[r]^-{\pi} & G \ar@{>>}[r] & 1
}, \label{eq:2}
\end{equation}
that is, there exists an isomorphism $\varphi\colon \widetilde{G} \overset{\cong}{\to} (A\times G,*)$ such that the following diagram commutes
\[
\xymatrix{
A \ar@{^{(}->}[r] \ar[d]^-{\id} & {}\widetilde{G} \ar@{>>}[r]^-{\pi} \ar[d]^-{\cong}_{\varphi} & G \ar[d]^-{\id}\\
A \ar@{^{(}->}[r] & (A \times G, *) \ar@{>>}[r]^-{\pi'} & G.}
\]
%
%
Recall also that the 2-cocycles $h\colon G \times G \to A$ yielding extensions equivalent to \eqref{eq:1} are exactly the functions given by the formula $h(g_1,g_2)=s(g_1)s(g_2)s(g_1g_2)^{-1}$ with $s$ ranging over all sections of $\pi$ in \eqref{eq:1} (i.e., $s\colon G \to \widetilde{G}$ satisfying $\pi \circ s= \id_G$) such that $s(e_G)=e_{\widetilde{G}}$.

Conversely, for any action $\cdot$ of $G$ on $A$ by automorphisms and for any 2-cocycle $h\colon G \times G \to A$, the structure $(A\times G,*)$ is a group being an extension of $G$ by $A$. 
The identity element is $(0, e)$ and the inverse of $(a, g)$ equals $(-g^{-1}\cdot a - h(g^{-1},g), g^{-1})$.

From now on, $\widetilde{G}$ will denote $(A\times G,*)$ for some 2-cocycle $h$. We will freely identify $A$ with the subgroup $A \times \{ e \}$ of $\widetilde{G}$.

We say that the 2-cocycle $h\colon G \times G \to A$ is \emph{split} if there exists a function $f\colon G \to A$ for which:
\begin{itemize}
\item $h(g_1,g_2) = f(g_1) + g_1\cdot f(g_2)-f(g_1g_2)$ for all $g_1,g_2 \in G$,
\item $f(e)=0$.
\end{itemize}
In this situation, we also say that $h$ is \emph{split via the function $f$}. 

Recall that the 2-cocycle $h$ is split if and only if the extension \eqref{eq:1} is equivalent to the semidirect product extension of $G$ by $A$ \cite[10.15]{rotman}. In particular, if the action of $G$ on $A$ is trivial, then $h$ is split if and only if the extension \eqref{eq:1} is equivalent to the product extension. 

We say that 2-cocycles $h$ and $h'$ are \emph{cohomologous} via a function $f$ if $h-h'$ is split via $f$.


If $H \leq G$, then $h$ induces the restricted 2-cocycle $h_{| H \times H} \colon H \times H \to A$. If $C \leq A$ is invariant under the action of $G$, then the action of $G$ on $A$ induces an action of $G$ on $A/C$, and the 2-cocycle $h$ induces a 2-cocycle $\overline{h} \colon G \times G \to A/C$ in the obvious way. 

We consider a situation when the groups $G$ and $A$ and the action of $G$ on $A$ are $\emptyset$-definable in a (many-sorted) structure $\G$ (e.g. $\G$ consists of the pure groups $G$ and $(A,+)$ together with the action of $G$ on $A$).
We assume that the image $\im(h)$ of the 2-cocycle $h\colon G \times G \to A$ is finite and that $h$ is definable in $\G$ (the definability of $h$ is equivalent to the fact that the preimage by $h$ of any element of $A$ is a definable (in $\G$) subset of $G \times G$). 

The group $\widetilde{G}$ is, of course, definable in $\G$. Let $\G^* \succ \G$ be a monster model. Denote by $G^*$ the interpretation of $G$ in $\G^*$, by $A^*$ the interpretation of $A$ in $\G^*$, and by $\widetilde{G^*}$ the interpretation of $\widetilde{G}$ in $\G^*$. We have the following exact sequence
%
%
\begin{equation}
\xymatrix{
 1 \ar@{^{(}->}[r] & A^* \ar@{^{(}->}[r] & {}\widetilde{G^*} \ar@{>>}[r]^-{\pi} & G^* \ar@{>>}[r] & 1}, \label{eq:3}
\end{equation}
where $\pi$ is the projection on the second coordinate. The interpretations in $\G^*$ of various definable objects will be usually denoted by adding $^*$ (as above).
%

By ${G^*}^{000}_B$ and $\widetilde{G^*}^{000}_B$ we denote the smallest subgroups of bounded index of $G^*$ and $\widetilde{G^*}$, respectively, which are invariant under $\aut\left(\G^*/B\right)$ for a fixed parameter set $B$. By ${G^*}^{00}_B$ and $\widetilde{G^*}^{00}_B$ we denote the smallest subgroups of bounded index of $G^*$ and $\widetilde{G^*}$, respectively, which are type-definable in $\G^*$ over $B$, and similarly with ${G^*}^0_B$ and $\widetilde{G^*}^{0}_B$. 

By $A_0$ we will denote the subgroup of $A$ generated by the image of $h$. Notice that $A_0$ is countable and invariant under the action of $G^*$ (which easily follows from the first formula in the definition of 2-cocycles).

Let us distinguish the following two situations which will be the context of various results in this paper.



\begin{itemize}
\item[$(0)$] Let $G$ be a group acting by automorphisms on an abelian group $A$, where $G$, $A$ and the action of $G$ on $A$ are $\emptyset$-definable in a structure $\G$, and let $h\colon G \times G \to A$ be a 2-cocycle which is $B$-definable in $\G$ and with finite image $\im(h)$ contained in $\dcl(B)$ (the definable closure of $B$) for some finite parameter set $B\subset \G$. By $A_0$ we denote the subgroup of $A$ generated $\im(h)$. Then $A_0 \subseteq \dcl(B)$.
\item[$(00)$] Assume we are in Situation $(0)$. Additionally,  let $A^*_1$ be a bounded index subgroup of $A^*$ which is type-definable over $B$ and which is invariant under the action of $G^*$. 
\end{itemize} 

Let us  list two special cases in which the assumption that $A^*_1$ is invariant under $G^*$ is satisfied.

\begin{itemize}
\item Consider the case when $A^*_1 ={A^*}^0$ is the connected component of the pure group $(A^*,+)$, i.e., it is the intersection of all definable (in the pure group $(A^*,+)$) finite index subgroups of $A^*$. Then $A^*_1$ has bounded index in $A^*$, it is type-definable in $\G^*$ over $\emptyset$, and it is invariant under the action of $G^*$. 
\item If the action of $G$ on $A$ is trivial (i.e., the extension of $G$ by $A$ is central), then the assumption that $A^*_1$ is invariant under $G^*$ is clearly satisfied for every $A^*_1$.
\end{itemize}

We start from the following very useful observation.
\begin{remark}\label{rem:trivial action}
Consider Situation $(0)$.
\begin{enumerate} 
\item[(i)] ${G^*}^{0}_B$ acts trivially on $A_0$.
\item[(ii)] ${G^*}^{000}_B$ acts trivially on $A^*/ \left(\widetilde{G^*}^{000}_B \cap A^*\right)$.
\item[(iii)] Consider Situation $(00)$. Then ${G^*}^{00}_B$ acts trivially on $A^*/A^*_1$.
\end{enumerate}
\end{remark}

\begin{proof}
(i) 
Take any $a \in A_0$. Since $A_0$ is closed under the action of $G^*$ and countable, the index $\left[G^*:\Stab_{G^*}(a)\right]$ is also countable, where $\Stab_{G^*}(a)$ is the stabilizer of $a$ in $G^*$. As $a \in \dcl(B)$, we have that $\Stab_{G^*}(a)$ is $B$-definable with $\left[G^*:\Stab_{G^*}(a)\right]$ finite, and so $\Stab_{G^*}(a) \geq {G^*}^{0}_B$. Thus, ${G^*}^{0}_B$ acts trivially on $A_0$.\\
(ii) 
Consider any element $\overline{a}$ in $A^* / \left(\widetilde{G^*}^{000}_B \cap A^*\right)$. 
Since $\widetilde{G^*}^{000}_B$ is a normal subgroup of $\widetilde{G^*}$, the action of $G^*$ on $A^*$ induces a $B$-invariant action of $G^*$ on the quotient $A^* / \left(\widetilde{G^*}^{000}_B \cap A^*\right)$. As this quotient has bounded size, the orbit of $\overline{a}$ under the action of $G^*$ is also of bounded size. Thus, the stabilizer $\Stab_{G^*}(\overline{a})$ of $\overline{a}$ in $G^*$ is a bounded index subgroup of $G^*$ (invariant over $B,\overline{a}$). The intersection of all such stabilizers $\Stab_{G^*}(\overline{a})$ for $\overline{a}$ ranging over $A^* / \left(\widetilde{G^*}^{000}_B \cap A^*\right)$ is a $B$-invariant, bounded index subgroup of $G^*$, and so it contains ${G^*}^{000}_B$. Thus, ${G^*}^{000}_B$ acts trivially on $A^* / \left(\widetilde{G^*}^{000}_B \cap A^*\right)$.\\
(iii) A similar proof works.
\end{proof}

Whenever we say that a 2-cocycle is non-split via $B$-invariant functions, we mean that it is split via no $B$-invariant function.

Now, we turn to the main theorem of the paper. 

\begin{theorem} \label{thm:main}
Consider Situation $(00)$. Assume that:
\begin{enumerate} 
\item[(i)] the induced 2-cocycle $\overline{h^*}_{|{G^*}^{00}_B \times {G^*}^{00}_B}\colon {G^*}^{00}_B \times {G^*}^{00}_B \to A_0/\left({A^*}_{1} \cap A_0\right)$ is non-split via $B$-invariant functions,
\item[(ii)] $A_0/\left( {A^*}_{1} \cap A_0 \right)$ is torsion free (and so isomorphic with $\Z^n$ for some natural $n$).
\end{enumerate}
Then $\widetilde{G^*}^{000}_B \ne \widetilde{G^*}^{00}_B$.
%

Suppose furthermore that ${G^*}^{000}_B = G^*$, and for every proper, type-definable over $B$ in $\G^*$ and invariant under the action of $G^*$ subgroup $H$ of $A^*$ of bounded index the induced 2-cocycle $\overline{h^*}\colon G^* \times G^* \to A_0/\left(H \cap A_0\right)$ is non-split via $B$-invariant functions. Then $\widetilde{G^*}^{00}_B = \widetilde{G^*}$.
\end{theorem}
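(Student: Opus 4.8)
\medskip

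The plan is to prove the two assertions separately. For the first assertion, I want to produce a $B$-type-definable subgroup $D$ of $\widetilde{G^*}$ of bounded index with $\widetilde{G^*}^{000}_B \subsetneq D$, which forces $\widetilde{G^*}^{00}_B$ to be strictly larger than $\widetilde{G^*}^{000}_B$. The natural candidate is built from ${G^*}^{00}_B$ together with the subgroup $A^*_1$ of $A^*$: roughly, $D$ should be the preimage $\pi^{-1}({G^*}^{00}_B)$ intersected with the set of $(a,g)$ for which $a$ lies in $A^*_1$ modulo the connected component of $A_0$-directions, i.e., I want $D$ to be type-definable, contain $A^*_1$ and project onto ${G^*}^{00}_B$, but \emph{not} contain $A_0$. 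The key point is a cohomological dichotomy: by the splitting criterion for 2-cocycles (the $\rotman$-style fact recalled in the excerpt), a complement-type type-definable section of $\widetilde{G^*}^{00}_B$ over $A^*_1 \cap A_0$ modulo $A^*_1$ would give a $B$-invariant splitting function for $\overline{h^*}_{|{G^*}^{00}_B \times {G^*}^{00}_B}$, contradicting hypothesis~(i). So $\widetilde{G^*}^{000}_B$, which sits inside such a $D$, cannot swallow all of $A_0$; on the other hand torsion-freeness of $A_0/(A^*_1\cap A_0) \cong \Z^n$ (hypothesis (ii)) is exactly what prevents $\widetilde{G^*}^{000}_B \cap A^*$ from being ``most of'' $A_0$ for index reasons — a proper subgroup of $\Z^n$ either has infinite index or is all of $\Z^n$, and by Remark~\ref{rem:trivial action}(ii) together with boundedness, $\widetilde{G^*}^{000}_B\cap A_0$ must have finite (hence, being a subgroup of $\Z^n$, either trivial-index or proper-infinite) index; combined with the cocycle obstruction this pins down $\widetilde{G^*}^{000}_B \cap A^* \subseteq A^*_1$ up to finite index, while $\widetilde{G^*}^{00}_B \cap A^*$ must contain a lattice projecting onto all of $A_0/(A^*_1\cap A_0)$. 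Hence they differ.

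\medskip

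More concretely, I would argue as follows. Set $C := \widetilde{G^*}^{000}_B \cap A^*$. By Remark~\ref{rem:trivial action}(ii), ${G^*}^{000}_B$ acts trivially on $A^*/C$, and since $\widetilde{G^*}^{000}_B$ is normal and of bounded index, $C$ is $B$-invariant of bounded index in $A^*$; intersecting with $A_0$ (which is countable and in $\dcl(B)$), $C \cap A_0$ has finite index in $A_0$, so $A_0/(C\cap A_0)$ is a finite quotient of $\Z^n$. If $C \cap A_0 \not\subseteq A^*_1$, push everything to $A_0/(A^*_1\cap A_0)$: the image of $C\cap A_0$ is a nontrivial subgroup $\Lambda$ of $\Z^n$, and one shows that modulo $\Lambda$ the induced map $\widetilde{G^*}^{000}_B \to ({G^*}^{00}_B, \cdot)$ with kernel inside $A^*_1$-translates yields a $B$-invariant section, hence (by the splitting fact and the fact that invariant sections can be averaged/transferred — here one uses Beth's theorem for types, Fact~\ref{fact:Beth for types}, to descend the section to $L$) a $B$-invariant splitting of $\overline{h^*}_{|{G^*}^{00}_B\times{G^*}^{00}_B}$ over $A^*_1 \cap A_0$, contradicting (i). Therefore $C \cap A_0 \subseteq A^*_1 \cap A_0$. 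On the other hand, $\widetilde{G^*}^{00}_B \cap A^*$ together with $\pi(\widetilde{G^*}^{00}_B) = {G^*}^{00}_B$ and the fact that $\widetilde{G^*}^{00}_B$ has bounded index forces $\widetilde{G^*}^{00}_B\cap A_0$ to have finite index in $A_0$ but \emph{not} to be contained in $A^*_1\cap A_0$ — otherwise, again by the splitting fact, $\overline{h^*}_{|{G^*}^{00}_B\times{G^*}^{00}_B}$ would be split by a $B$-definable (hence, via Beth, $B$-invariant) function. So $\widetilde{G^*}^{000}_B \cap A_0 \subsetneq \widetilde{G^*}^{00}_B \cap A_0$ and we are done.

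\medskip

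For the second assertion, assume ${G^*}^{000}_B = G^*$ and the stated non-splitting over \emph{every} proper type-definable invariant bounded-index $H \le A^*$. I want to show $\widetilde{G^*}^{00}_B = \widetilde{G^*}$, equivalently $\widetilde{G^*}^{00}_B \cap A^* = A^*$ and $\pi(\widetilde{G^*}^{00}_B) = G^*$; the latter holds because $\pi(\widetilde{G^*}^{00}_B) = {G^*}^{00}_B \supseteq {G^*}^{000}_B = G^*$. Put $H := \widetilde{G^*}^{00}_B \cap A^*$; this is type-definable over $B$, of bounded index in $A^*$ (since $\widetilde{G^*}^{00}_B$ has bounded index), and $G^*$-invariant (as $\widetilde{G^*}^{00}_B$ is normal in $\widetilde{G^*}$ and $A^*$ is normal). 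Suppose for contradiction $H \ne A^*$, i.e. $H$ is a proper such subgroup. Then the hypothesis says $\overline{h^*}\colon G^*\times G^* \to A_0/(H\cap A_0)$ is non-split via $B$-invariant functions. But now $\widetilde{G^*}^{00}_B$, being type-definable of bounded index and projecting onto all of $G^*$, provides a $B$-type-definable complement-modulo-$H$ to $A^*$ inside $\widetilde{G^*}$; concretely, a $B$-type-definable set of coset representatives for $H$ in $\pi^{-1}(G^*)\cap\widetilde{G^*}^{00}_B$ gives a section $s\colon G^* \to \widetilde{G^*}/H$, whose associated cocycle is exactly $\overline{h^*}$ (valued in $A^*/H$, and its $A_0$-part lands in $A_0/(H\cap A_0)$). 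Using Beth's theorem for types as above, this section can be taken $B$-invariant, so $\overline{h^*}$ is split via a $B$-invariant function — contradiction. Hence $H = A^*$ and $\widetilde{G^*}^{00}_B = \widetilde{G^*}$.

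\medskip

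\textbf{Main obstacle.} The delicate point in both parts is the passage ``type-definable bounded-index subgroup of $\widetilde{G^*}$ projecting onto the relevant quotient $\Rightarrow$ $B$-invariant splitting function for the induced cocycle.'' One must (a) extract from the type-definable set an honest set-theoretic section, (b) check its coboundary is the induced cocycle modulo the relevant subgroup, and most importantly (c) replace this section by one that is \emph{invariant} over $B$ — this is where Fact~\ref{fact:Beth for types} (Beth for types) is essential, since a priori the section is only type-definable, and one needs to realize the resulting cohomological triviality data in the reduct language $L$. Getting the quantifiers and the finiteness/boundedness bookkeeping right in (c), and correctly using the torsion-freeness hypothesis (ii) to rule out ``small-index-but-nontrivial'' subgroups of $\Z^n$ in the first part, are the steps I expect to require the most care.
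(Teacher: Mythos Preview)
Your argument for the second assertion is essentially the paper's: set $H=\widetilde{G^*}^{00}_B\cap A^*$, observe it is type-definable over $B$, of bounded index, $G^*$-invariant by normality, and then contradict the ``furthermore'' hypothesis if $H\neq A^*$. The invocation of Beth is unnecessary --- the $B$-invariant splitting comes for free from $A_0\subseteq\dcl(B)$: a section $s(g)=(a_g,g)$ with $a_g=b_g+c_g$, $b_g\in H$, $c_g\in A_0$, can be chosen so that $g\mapsto -c_g$ is $B$-invariant simply because $A_0$ is a fixed countable set in $\dcl(B)$.

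Your first-part argument, however, has a genuine error of direction and is missing the key idea. You claim that if $C\cap A_0\not\subseteq A^*_1$ (where $C=\widetilde{G^*}^{000}_B\cap A^*$) then one manufactures a $B$-invariant splitting of $\overline{h^*}_{|{G^*}^{00}_B\times{G^*}^{00}_B}$, contradicting (i). This is backwards: the splitting criterion (the paper's Claim~2) says that if $A_0\cap\widetilde{G^*}^{000}_B\subseteq H$ then the induced cocycle over $H$ splits $B$-invariantly; so non-splitting forces $A_0\cap\widetilde{G^*}^{000}_B$ to be \emph{large}, not small. Indeed in the paper's concrete examples one has $C\cap A_0=A_0$ while the cocycle is non-split. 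Relatedly, your claim that $C\cap A_0$ has finite index in $A_0$ is unjustified: $C$ having bounded index in $A^*$ says nothing about $C\cap A_0$ in the countable subgroup $A_0$. Finally, you never assume $\widetilde{G^*}^{000}_B=\widetilde{G^*}^{00}_B$ for contradiction, so you have no way to identify ${G^*}^{000}_B$ with ${G^*}^{00}_B$ (needed to even apply hypothesis (i), which is stated on ${G^*}^{00}_B$).

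The missing idea is topological. Assume for contradiction $\widetilde{G^*}^{000}_B=\widetilde{G^*}^{00}_B$ and set $H=\widetilde{G^*}^{000}_B\cap A^*_1$. From the containment $\widetilde{G^*}^{000}_B\subseteq (H+A_0)\times{G^*}^{000}_B$ one gets $\widetilde{G^*}^{000}_B\cap A^*\subseteq H+A_0$; from non-splitting (i) one gets $A_0\cap\widetilde{G^*}^{000}_B\not\subseteq H$; together with torsion-freeness (ii) this makes $(\widetilde{G^*}^{000}_B\cap A^*)/H$ countably \emph{infinite}. But under the contradiction hypothesis $\widetilde{G^*}^{000}_B\cap A^*$ is type-definable, so this quotient carries the logic topology and is a compact Hausdorff group --- which cannot have cardinality $\aleph_0$. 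That is the punchline your sketch lacks; torsion-freeness is used to promote ``nontrivial'' to ``infinite'', not to rule out small-index subgroups of $\Z^n$.
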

%
%
%
Before the proof, we would like to note that Remark \ref{rem:trivial action}(i) implies that the action of ${G^*}^{00}_B$ on $A_0$ is trivial, so the notion of  induced 2-cocycle in Assumption (i) of the above theorem makes sense, and similarly in the last part of the theorem. 
Notice also that if the assumptions of the theorem are satisfied in the monster model $\G^*$, then they are satisfied in any bigger monster model.

\begin{proof}
First, we prove the following claim.

\begin{clm} Suppose $H \leq A^*$ is a subgroup of bounded index which is invariant both under $\aut(\G^*/B)$ and under the action of $G^*$ on $A^*$. Then $(H + A_0) \times {G^*}^{000}_B$ is a subgroup of $\widetilde{G^*}$ containing $\widetilde{G^*}^{000}_B$.
\end{clm}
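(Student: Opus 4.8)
The plan is to verify the Claim by a direct computation that the set $N := (H + A_0) \times {G^*}^{000}_B$ is a subgroup of $\widetilde{G^*}$, and then to show it contains $\widetilde{G^*}^{000}_B$ by exhibiting $N$ as an intersection of boundedly many $\aut(\G^*/B)$-invariant subgroups of bounded index — or, more efficiently, by showing $N$ is itself $\aut(\G^*/B)$-invariant of bounded index and applying the minimality of $\widetilde{G^*}^{000}_B$. First I would check closure under the group operation $*$: for $(a_1,g_1),(a_2,g_2) \in N$, the product is $(a_1 + g_1\cdot a_2 + h(g_1,g_2), g_1 g_2)$. Here $g_1 g_2 \in {G^*}^{000}_B$ since ${G^*}^{000}_B$ is a subgroup; $h(g_1,g_2) \in \im(h) \subseteq A_0$; $a_1 \in H + A_0$; and $g_1 \cdot a_2 \in H + A_0$ because both $H$ and $A_0$ are invariant under the action of $G^*$ (for $H$ this is the hypothesis of the Claim, for $A_0$ it was observed right after the definition of $A_0$). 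Hence the first coordinate lies in $H + A_0$, so $N$ is closed under $*$. The inverse computation is analogous using the formula $(a,g)^{-1} = (-g^{-1}\cdot a - h(g^{-1},g), g^{-1})$ and the same invariance facts, and $(0,e) \in N$. So $N \leq \widetilde{G^*}$.

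Next I would argue that $N$ is $\aut(\G^*/B)$-invariant and of bounded index in $\widetilde{G^*}$. Invariance: $H$ is $\aut(\G^*/B)$-invariant by hypothesis; $A_0 \subseteq \dcl(B)$ by Situation $(0)$, hence is pointwise fixed by $\aut(\G^*/B)$, so $H + A_0$ is $\aut(\G^*/B)$-invariant; and ${G^*}^{000}_B$ is $\aut(\G^*/B)$-invariant by definition. A product of two invariant sets is invariant, so $N$ is $\aut(\G^*/B)$-invariant. Bounded index: $[\widetilde{G^*} : N] \leq [A^* : H + A_0] \cdot [G^* : {G^*}^{000}_B]$ (concretely, one bounds it by the product of these two indices via the projection $\pi$ and the kernel $A^*$); both factors are bounded — the first because $[A^*:H]$ is bounded and $H \leq H + A_0$, the second by definition of ${G^*}^{000}_B$ — so $[\widetilde{G^*}:N]$ is bounded.

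Finally, since $\widetilde{G^*}^{000}_B$ is by definition the smallest $\aut(\G^*/B)$-invariant subgroup of $\widetilde{G^*}$ of bounded index, and $N$ is such a subgroup, we conclude $\widetilde{G^*}^{000}_B \leq N = (H + A_0) \times {G^*}^{000}_B$, as desired. The only point requiring genuine care — the main (mild) obstacle — is the bookkeeping that the first coordinate of a product or inverse of two elements of $N$ stays inside $H + A_0$; this is where one must use that \emph{both} $H$ and $A_0$ are $G^*$-invariant and that $\im(h) \subseteq A_0$. Everything else is routine once those invariance facts are in hand.
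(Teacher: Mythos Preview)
Your proof is correct and follows essentially the same approach as the paper's: verify that $(H+A_0)\times {G^*}^{000}_B$ is a subgroup using the $G^*$-invariance of $H$ and $A_0$ together with $\im(h^*)\subseteq A_0$, then observe it is $B$-invariant of bounded index and apply minimality of $\widetilde{G^*}^{000}_B$. You have simply spelled out the details that the paper leaves implicit.
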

\begin{proof}[Proof of Claim 1]
The fact that $(H + A_0) \times {G^*}^{000}_B$ is a subgroup of $\widetilde{G^*}$ follows from the invariance of $H$ under the action of $G^*$ and the observations that the image $\im(h^*)$ is contained in $A_0$ and $A_0$ is closed under the action of $G^*$ (which follows from the first formula in the definition of 2-cocycles). Moreover, it is clear that $(H + A_0) \times {G^*}^{000}_B$ is $B$-invariant and has bounded index in $\widetilde{G^*}$. This shows the desired inclusion.
\end{proof}

It is easy to see that ${G^*}^{000}_B=\pi \left[\widetilde{G^*}^{000}_B\right]$ (in \eqref{eq:3}). Let $H \leq A^*$ be as in Claim 1. As a consequence of Claim 1, 
we have that the following sequence is exact:
\begin{equation}
\xymatrix{
 1 \ar@{^{(}->}[r] & \left(H+A_0\right)\cap\widetilde{G^*}^{000}_B \ar@{^{(}->}[r] & {}\widetilde{G^*}^{000}_B \ar@{>>}[r]^-{\pi} & {G^*}^{000}_B \ar@{>>}[r] & 1.} \label{eq:4}
\end{equation}
%
%
We can say even more about this situation. Notice that if $H$ satisfies the assumptions of Claim 1, then so does $H \cap \widetilde{G^*}^{000}_B$, because $\widetilde{G^*}^{000}_B$ is a normal subgroup of $\widetilde{G^*}$. Thus, Claim 1 yields the following exact sequence
\begin{equation} 
\xymatrix{1 \ar@{^{(}->}[r] & \left(H\cap \widetilde{G^*}^{000}_B +A_0\right)\cap\widetilde{G^*}^{000}_B \ar@{^{(}->}[r] & {}\widetilde{G^*}^{000}_B \ar@{>>}[r]^-{\pi} & {G^*}^{000}_B \ar@{>>}[r] & 1,}
\end{equation} 
and so $\ker\left(\pi_{|\widetilde{G^*}^{000}_B}\right)=H \cap \widetilde{G^*}^{000}_B + A_0 \cap \widetilde{G^*}^{000}_B$.
\begin{clm} Let $H \leq A^*$ be as in Claim 1. If $A_0\cap\widetilde{G^*}^{000}_B\subseteq H$, then the induced 2-cocycle \[\overline{h^*}_{|{G^*}^{000}_B \times {G^*}^{000}_B}\colon {G^*}^{000}_B \times {G^*}^{000}_B \to A_0/\left(H \cap A_0\right)\] is split via a $B$-invariant function.
\end{clm}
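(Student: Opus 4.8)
The plan is to construct the splitting function explicitly from the section of $\pi$ restricted to $\widetilde{G^*}^{000}_B$. Recall from \eqref{eq:4} that $\pi$ maps $\widetilde{G^*}^{000}_B$ onto ${G^*}^{000}_B$, and the preceding discussion identifies its kernel as $H\cap\widetilde{G^*}^{000}_B + A_0\cap\widetilde{G^*}^{000}_B$. Under the hypothesis $A_0\cap\widetilde{G^*}^{000}_B\subseteq H$, this kernel is just $H\cap\widetilde{G^*}^{000}_B$, which is contained in $H$. So modding out by $H$, the quotient map $A^*\to A^*/H$ restricted to $\ker(\pi_{|\widetilde{G^*}^{000}_B})$ is trivial; equivalently, if we let $q\colon A^*\to A_0/(H\cap A_0)$ be the natural map (note $A_0/(H\cap A_0)\hookrightarrow A^*/H$), then $q$ kills $\ker(\pi_{|\widetilde{G^*}^{000}_B})$.

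The key step is then the following. Fix a section $s\colon {G^*}^{000}_B\to\widetilde{G^*}^{000}_B$ of $\pi_{|\widetilde{G^*}^{000}_B}$ with $s(e)=e$; we may and do choose $s$ to be the restriction of the coordinate section $g\mapsto(0,g)$ coming from the presentation $\widetilde{G^*}=(A^*\times G^*,*)$, which indeed lands in $\widetilde{G^*}^{000}_B$ precisely on ${G^*}^{000}_B$ because of the exact sequence just established. For $g_1,g_2\in{G^*}^{000}_B$ the element $s(g_1)*s(g_2)*s(g_1g_2)^{-1}$ lies in $\ker(\pi_{|\widetilde{G^*}^{000}_B})$, and by the standard computation equals $(h^*(g_1,g_2),e)$; hence $\overline{h^*}(g_1,g_2)=q(h^*(g_1,g_2))$ is the image under $q$ of this kernel element, which is $0$. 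In other words $\overline{h^*}_{|{G^*}^{000}_B\times{G^*}^{000}_B}$ is \emph{identically zero}, hence trivially split via the constant function $f\equiv 0$, which is $B$-invariant.

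Wait — one should double-check that this is what is wanted, since the claim asserts splitting via \emph{some} $B$-invariant function rather than literal vanishing; but vanishing is the strongest form, so it suffices. If in fact $\ker(\pi_{|\widetilde{G^*}^{000}_B})$ is not contained in $H$ but only its $A_0$-part is (which is all the hypothesis gives, together with the kernel description $H\cap\widetilde{G^*}^{000}_B+A_0\cap\widetilde{G^*}^{000}_B$), then one still has that $q$ composed with the inclusion $\ker(\pi_{|\widetilde{G^*}^{000}_B})\hookrightarrow A^*$ is trivial: indeed $q$ vanishes on $H\cap A_0\supseteq A_0\cap\widetilde{G^*}^{000}_B$ by hypothesis and on $H\cap\widetilde{G^*}^{000}_B$ since $q$ factors through $A^*/H$, and the kernel is the sum of (a subgroup of) these two. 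So the same conclusion holds.

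The main obstacle I anticipate is purely bookkeeping: making sure the kernel of $\pi_{|\widetilde{G^*}^{000}_B}$ is correctly identified as $H\cap\widetilde{G^*}^{000}_B + A_0\cap\widetilde{G^*}^{000}_B$ (this was derived above from Claim 1 applied to $H\cap\widetilde{G^*}^{000}_B$), and that the hypothesis $A_0\cap\widetilde{G^*}^{000}_B\subseteq H$ together with $q$ factoring through $A^*/H$ forces $q$ to annihilate this whole kernel — after which the cocycle identity $s(g_1)*s(g_2) = (h^*(g_1,g_2),e)*s(g_1g_2)$ does the rest. No nontrivial model-theoretic input is needed beyond what Claim 1 and the exactness of \eqref{eq:4} already supply; the content is entirely the algebra of the extension restricted to the $000$-component.
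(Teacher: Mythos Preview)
There is a genuine gap: your choice of section is unjustified. You assert that the coordinate section $g\mapsto(0,g)$ lands in $\widetilde{G^*}^{000}_B$ for $g\in{G^*}^{000}_B$ ``because of the exact sequence just established,'' but the exact sequence \eqref{eq:4} only gives \emph{surjectivity} of $\pi_{|\widetilde{G^*}^{000}_B}$ onto ${G^*}^{000}_B$. Surjectivity means that for each $g\in{G^*}^{000}_B$ there exists \emph{some} $a_g\in A^*$ with $(a_g,g)\in\widetilde{G^*}^{000}_B$; it does not say $a_g=0$ works. Claim~1 tells you $a_g$ can be taken in $H+A_0$, nothing more. If your argument were valid it would prove the strictly stronger conclusion that $\overline{h^*}_{|{G^*}^{000}_B\times{G^*}^{000}_B}$ is \emph{identically zero}, not merely split; this is not what the claim asserts and is not true in general.

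The paper's proof proceeds exactly by accepting that the section has the form $s(g)=(a_g,g)$ with $a_g=b_g+c_g$, $b_g\in H$, $c_g\in A_0$, and then showing that the $A_0$-part $c_g$ furnishes the splitting. One computes that the $2$-cocycle $h'(g_1,g_2)=s(g_1)s(g_2)s(g_1g_2)^{-1}$ takes values in $\ker\bigl(\pi_{|\widetilde{G^*}^{000}_B}\bigr)=H\cap\widetilde{G^*}^{000}_B$ and satisfies $h'(g_1,g_2)=h^*(g_1,g_2)+g_1\cdot a_{g_2}-a_{g_1g_2}+a_{g_1}$; subtracting off the $b$-parts shows that $h^*$ is cohomologous, via $f(g)=-c_g$, to a cocycle with values in $H\cap A_0$. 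The $B$-invariance of $f$ is arranged using $A_0\subseteq\dcl(B)$: since $A_0$ is countable and contained in $\dcl(B)$, one can choose the decomposition $a_g=b_g+c_g$ so that $g\mapsto c_g$ is $B$-invariant. Your argument omits precisely this construction of the splitting function $f$ from the $A_0$-component of the section; that is the substantive content of the claim.
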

\begin{proof}[Proof of Claim 2]
If $A_0\cap\widetilde{G^*}^{000}_B\subseteq H$, then the above conclusion gives us \[\ker\left(\pi_{|\widetilde{G^*}^{000}_B}\right)= H\cap\widetilde{G^*}^{000}_B.\] 
By Claim 1, take a section $s\colon{G^*}^{000}_B \to \widetilde{G^*}^{000}_B$ of $\pi$ of the form $s(g)=(a_g,g)$, where each $a_g\in H+A_0$ and $a_e=0$. Then, \[a_g = b_g + c_g,\] for some $b_g\in H$ and $c_g\in A_0$. Since $A_0 \subseteq \dcl(B)$, it is clear that section $s$ can be chosen so that the function $f \colon {G^*}^{000}_B \to A_0$ defined by $f(g)=-c_g$ is $B$-invariant. 

Consider the 2-cocycle $h'\colon {G^*}^{000}_B \times {G^*}^{000}_B \to A^*$ defined by \[h'(g_1,g_2)=s(g_1)s(g_2)s(g_1g_2)^{-1}.\] It takes values in $\ker\left(\pi_{|\widetilde{G^*}^{000}_B}\right)=H\cap\widetilde{G^*}^{000}_B$. Moreover, it is cohomologous to $h^*_{|{G^*}^{000}_B \times {G^*}^{000}_B}$, because, using the fact that $g\cdot h^*(g^{-1},g)=h^*(g,g^{-1})$ (which follows from the definition of 2-cocycles), we have
\begin{eqnarray*}
 h'(g_1,g_2) &=& (a_{g_1},g_1)(a_{g_2},g_2)(a_{g_1g_2},g_1g_2)^{-1} \\
   &=& h^*(g_1,g_2)+g_1\cdot a_{g_2}-a_{g_1g_2}+a_{g_1}\\
 &+&h^*\left(g_1g_2,(g_1g_2)^{-1}\right)-g_1g_2\cdot h^*\left((g_1g_2)^{-1},g_1g_2\right) \\
   &=& h^*(g_1,g_2)+g_1\cdot a_{g_2}-a_{g_1g_2}+a_{g_1}.
\end{eqnarray*}
The above equality implies that \[h'(g_1,g_2) - g_1\cdot b_{g_2}+b_{g_1g_2}-b_{g_1} = h^*(g_1,g_2)+g_1\cdot c_{g_2}-c_{g_1g_2}+c_{g_1}.\] Both sides of this equality define the same 2-cocycle $h''(g_1,g_2)$, which takes values in $H\cap A_0$ (because the left hand side takes values in $H$, and the right hand side takes values in $A_0$). Thus, $h^*_{|{G^*}^{000}_B \times {G^*}^{000}_B}$ is cohomologous to a 
2-cocycle with values in $H\cap A_0$ via the $B$-invariant function $f\colon {G^*}^{000}_B \to A_0$ (defined by $f(g)=-c_g$).
\end{proof}

To prove the first part of the theorem, suppose for a contradiction that $\widetilde{G^*}^{000}_B = \widetilde{G^*}^{00}_B$. Put $H=\widetilde{G^*}^{000}_B \cap A^*_1$. Then, $H$ is a subgroup of bounded index of $A^*$, which is type-definable in $\G^*$ over $B$ and invariant under the action of $G^*$ (the last fact follows from the observation that $\widetilde{G^*}^{000}_B$ is a normal subgroup of $\widetilde{G^*}$). By Claim 1, we get that $(H + A_0) \times {G^*}^{000}_B$ is a subgroup of $\widetilde{G^*}$ containing $\widetilde{G^*}^{000}_B$. Thus, $\widetilde{G^*}^{000}_B \cap A^* \leq H + A_0$.
On the other hand, by Claim 2 together with the assumption (i) and the fact that ${G^*}^{00}_B=\pi \left[ \widetilde{G^*}^{00}_B\right]=\pi \left[ \widetilde{G^*}^{000}_B\right]={G^*}^{000}_B$, we get $A_0\cap\widetilde{G^*}^{000}_B\not\subseteq H$.
%
%
Using the assumption that $A_0/\left( A^*_1 \cap A_0 \right)$ is torsion free and the fact that $A_0$ is countable, we conclude that $\left(\widetilde{G^*}^{000}_B \cap A^*\right)/H$ is countably infinite. But, since $\widetilde{G^*}^{000}_B = \widetilde{G^*}^{00}_B$, the group $\widetilde{G^*}^{000}_B \cap A^*$ is type-definable in $\G^*$, and so $\left(\widetilde{G^*}^{000}_B \cap A^*\right)/H$ is a compact topological group, and as such, it cannot be of cardinality $\aleph_0$, a contradiction.

Now, we prove the second part of the theorem. Let $H = \widetilde{G^*}^{00}_B\cap A^*$. By Claim 2 and our assumption, $H = A^*$. Therefore, $A^*\leq \widetilde{G^*}^{00}_B$. On the other hand $\pi\left[\widetilde{G^*}^{00}_B \right] \supseteq \pi\left[\widetilde{G^*}^{000}_B \right] ={G^*}^{000}_B=G^*$. Hence, $\widetilde{G^*}^{00}_B= \widetilde{G^*}$.
\end{proof}

A natural goal is to simplify Assumption (i) of Theorem \ref{thm:main} in order to make it easier to verify in concrete applications. We do this in the next corollary.

\begin{corollary} \label{cor:main}
Consider Situation $(00)$. Assume that:
\begin{enumerate}
\item the 2-cocycle $h\colon G\times G \to A_0$ is non-split (via a function taking values in $A_0$),
\item ${A^*}_{1} \cap A_0$ is trivial and $A_0$ is torsion free (and so $A_0\cong \Z^n$ for some $n$),
\item ${G^*}^{00}_B = G^*$.
\end{enumerate}
Then $\widetilde{G^*}^{000}_B \ne \widetilde{G^*}^{00}_B$.

Suppose furthermore that ${G^*}^{000}_B = G^*$ and:
\begin{enumerate}
\item[(4)] (strong non-splitness of $h$) for every proper subgroup $Z \lneqq A_0=\Z^n$ the induced 2-cocycle $\overline{h} \colon G \times G \to \Z^n/Z$ is non-split,
\item[(5)] (denseness of $A_0$ in $A^*$) if $H \leq A^*$ is a type-definable over $B$ (in $\G^*$) subgroup of $A^*$ of bounded index and containing $A_0$, then $H=A^*$.
\end{enumerate}
Then $\widetilde{G^*}^{00}_B = \widetilde{G^*}$.
\end{corollary}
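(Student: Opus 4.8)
The plan is to deduce Corollary~\ref{cor:main} directly from Theorem~\ref{thm:main}, by checking that hypotheses (1)--(3) imply assumptions (i) and (ii) of the first part of the theorem, and that (1)--(5) together with ${G^*}^{000}_B = G^*$ imply the hypotheses of its second part. The only model-theoretic input needed is the elementary extension $\G \prec \G^*$: since $G$, $A$, the action of $G$ on $A$ and $h$ are definable in $\G$, we have $h^*_{|G \times G} = h$, the action of $G^*$ on $A^*$ restricts to that of $G$ on $A$, and --- because $\im(h)$ is finite and $A_0 \subseteq \dcl(B)$ --- the countable group $A_0$ and all its subgroups are unchanged on passing to $\G^*$. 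The whole argument then reduces to a ``restriction to $G$'' trick for splitting functions.

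For the first conclusion: by (2) we have $A^*_1 \cap A_0 = \{0\}$, so $A_0/(A^*_1 \cap A_0) = A_0 \cong \Z^n$ is torsion free, which is assumption (ii). For assumption (i), hypothesis (3) identifies ${G^*}^{00}_B$ with $G^*$ and (2) identifies the target with $A_0$, so the cocycle appearing there is just $h^* \colon G^* \times G^* \to A_0$; I claim it is split via \emph{no} function $G^* \to A_0$ at all (in particular via no $B$-invariant one). Indeed, if $h^*(g_1,g_2) = f^*(g_1) + g_1 \cdot f^*(g_2) - f^*(g_1g_2)$ for all $g_1,g_2 \in G^*$ and some $f^*\colon G^* \to A_0$, then restricting $f^*$ to $G$ and using $h^*_{|G \times G} = h$ together with the compatibility of the actions shows that $h$ is split via $f^*_{|G}\colon G \to A_0$ (the normalization $f^*(e) = 0$ being forced by the cocycle identity), contradicting (1). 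Theorem~\ref{thm:main} then gives $\widetilde{G^*}^{000}_B \ne \widetilde{G^*}^{00}_B$.

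For the second conclusion, note first that ${G^*}^{000}_B = G^*$ forces ${G^*}^{00}_B = G^*$, so (3) is automatic and the first part applies. It remains to verify the extra hypothesis of the second part of Theorem~\ref{thm:main}: let $H \leq A^*$ be a proper, bounded-index subgroup that is type-definable over $B$ and invariant under $G^*$, and put $Z := H \cap A_0 \leq A_0 \cong \Z^n$. If $A_0 \subseteq H$, then hypothesis (5) forces $H = A^*$, contradicting properness of $H$; hence $Z \lneqq A_0$. By (4), the induced cocycle $\overline{h}\colon G \times G \to \Z^n/Z$ is non-split, and since $\overline{h^*}_{|G \times G} = \overline{h}$ the same restriction trick shows that $\overline{h^*}\colon G^* \times G^* \to A_0/(H \cap A_0)$ is split via no function, in particular via no $B$-invariant one. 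As ${G^*}^{000}_B = G^*$ is assumed, Theorem~\ref{thm:main} yields $\widetilde{G^*}^{00}_B = \widetilde{G^*}$.

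The argument is essentially bookkeeping once this translation is set up; the only point that needs care --- and the main (modest) obstacle --- is justifying the restriction-to-$G$ step, i.e.\ that for $g_1,g_2 \in G$ the value $h^*(g_1,g_2)$, the product $g_1g_2$, and the element $g_1 \cdot f^*(g_2)$ (which lies in $A_0 \subseteq A$) are all computed identically in $\G$ and in $\G^*$. This is exactly where one uses that the data $(G, A, \text{action}, h)$ is definable and that $A_0 \subseteq \dcl(B)$ is countable, so that $A_0$ and each of its subgroups $Z$ is unaffected by $\G \prec \G^*$. No appeal to Beth's theorem for types (Fact~\ref{fact:Beth for types}) is needed here; that tool is used only for the converse-type results.
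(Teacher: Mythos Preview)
Your proof is correct and follows essentially the same approach as the paper's own proof: verify assumption (ii) of Theorem~\ref{thm:main} from (2), verify assumption (i) by observing that a splitting of $h^*$ on $G^*$ restricts to a splitting of $h$ on $G$ (contradicting (1)), and for the second part combine (4) and (5) to see that $H\cap A_0$ is proper whenever $H$ is, then apply the same restriction trick. Your write-up is somewhat more detailed than the paper's (in particular your explicit discussion of why the restriction-to-$G$ step is legitimate), but the argument is the same.
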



Before the proof, we give some comments on the assumptions of this corollary. Notice that since $A_0$ is finitely generated, $(2)$ implies that $A_0\cong \Z^n$ for some natural number $n$. By Remark \ref{rem:trivial action}(i), every subgroup $Z$ considered in (4) is invariant under the action of $G^*$, and so it makes sense to talk about the induced 2-cocycle $\overline{h}\colon G \times G \to \Z^n/Z$. 
Another remark is that every definably absolutely connected group (see Definition \ref{def:abscon}) satisfies ${G^*}^{000}_B = G^*$, so also $(3)$.
Finally, we explain why (5) was called `denseness of $A_0$ in $A^*$'. Let ${A^*}^{00}_B$ be the smallest subgroup of bounded index of $A^*$ which is type-definable over $B$ in $\G^*$. It is easy to check that $(5)$ is equivalent to the fact that $A_0/{A^*}^{00}_B$ is a dense subset of the topological group $A^*/{A^*}^{00}_B$.

\begin{proof}
We have to prove that the assumptions of Theorem \ref{thm:main} are satisfied. 
First note that by (2), the group $A_0/\left( {A^*}_{1} \cap A_0\right) = A_0$ is torsion free.
Suppose for a contradiction that Assumption (i) of Theorem \ref{thm:main} does not hold. This implies, by (2) and (3), that the 2-cocycle $h^*\colon G^* \times G^* \to A_0$ is split.
Then, after restriction to $G$, we get that $h\colon G \times G \to A_0$ is split (via a function taking values in $A_0$), a contradiction to (1).

The second part of the corollary holds, because $(4)$ and $(5)$ imply that for every proper, type-definable over $B$ (in $\G^*$) subgroup $H$ of $A^*$ of bounded index the induced 2-cocycle $\overline{h}\colon G \times G \to A_0/\left(H \cap A_0\right)$ is non-split which implies that  $\overline{h^*}\colon G^* \times G^* \to A_0/\left(H \cap A_0\right)$ is non-split.
\end{proof}

Next, we notice that the examples from \cite[Section 3]{conv_pillay} follow from the above corollary.




\begin{example}\label{ex:PC1}
Let $\G = ((\Z,+), (\R,+,\cdot,<,0,1))$, $G=\SL_2(\R)$ and $A=(\Z,+)$ ($G$ and $A$ are $\emptyset$-definable in $\G$, $G$ acts trivially on $A$). Let $\widetilde{G}=\widetilde{\SL_2(\R)}$ be the topological universal cover of $\SL_2(\R)$. $\widetilde{\SL_2(\R)}$ is defined by means of the 2-cocycle $h\colon G\times G \to \Z$ considered in \cite[Theorem 2]{asai}. We recall the definition of $h$ from \cite{asai}, from which it is clear that $h$ is $B$-definable in $\G$ for $B:=\{1\}$ 
(where 1 is from the sort $(\Z,+)$), and which will be crucial in the last part of Section \ref{sec:central}. 
For $c,d\in\R$ define the following symbol $c(d) = 
\left\{
  \begin{array}{l l}
    c &: c\ne 0 \\
    d &: c=0 \\ 
  \end{array}
\right..$
Consider any 
$\left( 
\begin{array}{cc}
a_1 & b_1 \\
c_1 & d_1 \\ 
\end{array}\right), \left( 
\begin{array}{cc}
a_2 & b_2 \\
c_2 & d_2 \\ 
\end{array}\right) \in \SL_2(\R)$. Let $\left( 
\begin{array}{cc}
a_3 & b_3 \\
c_3 & d_3 \\ 
\end{array}\right)=\left( 
\begin{array}{cc}
a_1 & b_1 \\
c_1 & d_1 \\ 
\end{array}\right) \left( 
\begin{array}{cc}
a_2 & b_2 \\
c_2 & d_2 \\ 
\end{array}\right)$. Then

\[h\left(\left( 
\begin{array}{cc}
a_1 & b_1 \\
c_1 & d_1 \\ 
\end{array}\right), \left( 
\begin{array}{cc}
a_2 & b_2 \\
c_2 & d_2 \\ 
\end{array}\right) \right) = \left\{
  \begin{array}{l l}
    1 &: c_1(d_1)>0 \wedge c_2(d_2)>0 \wedge c_3(d_3)<0, \\
    -1 &: c_1(d_1)<0 \wedge c_2(d_2)<0 \wedge c_3(d_3)>0, \\ 
    0 &: \text{otherwise.} \\
  \end{array}
\right..\]
Put as $A^*_1$ the connected component ${\Z^*}^0$ of the pure group $(\Z^*,+)$. 
Then the assumptions of the above corollary are satisfied, so we get $\widetilde{G^*} = \widetilde{G^*}^{00} \ne \widetilde{G^*}^{000}$, where $\widetilde{G^*}$ is the interpretation of $\widetilde{G}$ in a monster model $\G^* = ((\Z^*,+),(\R^*,+,\cdot,<,0,1))$.


\end{example}
\begin{proof} The assumptions (1) and (4) of Corollary \ref{cor:main} are true by \cite[Theorem 1]{asai}, (3) follows from the absolute connectedness of $\SL_2(\R)$, and $A_0=\Z$ implies (2).
The condition (5) follows from the observation that if $D\subseteq \Z^*$ is a $B$-definable subset containing $\Z$ and $B\subseteq \Z$, then $D=\Z^*$. Notice that since $\G$ has NIP, we can skip the parameter set $B$ in all connected components.
%
\end{proof}


\begin{example}\label{ex:PC2} Let $\G = (\R,+,\cdot,<,0,1)$, $G=\SL_2(\R)$ and $A=\SO_2(\R)$. Assume that the action of $G$ on $A$ is trivial. Fix a non-torsion element $g\in A$. Let $\widetilde{G}$ be defined by means of the 2-cocycle $h'\colon G \times G \to \SO_2(\R)$ defined as $h'(x,y)=h(x,y)g$ for $h$ from Example \ref{ex:PC1}. The group $\widetilde{G}$ is definable in $\G$. Put $A^*_1= \SO_2(\R^*)^{00}$ and $B=\{g\}$. Then, as in the previous example, the assumptions of Corollary \ref{cor:main} are satisfied, so we get $\widetilde{G^*} = \widetilde{G^*}^{00} \ne \widetilde{G^*}^{000}$.
\end{example}

Using Corollary \ref{cor:main} and Matsumoto-Moore theory, we find in Subsection \ref{subsection:4.1} much more general classes of examples.

Now, we will deduce another corollary of Theorem \ref{thm:main}, whose proof is a bit surprising, as we start from the 2-cocycle $h$ which is split on the original group $G$ in order to get the non-splitness of the 2-cocycle considered in Assumption (i) of the theorem. This corollary leads in Subsection \ref{subsection:4.2} to various examples which were not accessible by the methods used in \cite{conv_pillay} or by Corollary \ref{cor:main}. In order to get these examples, we use various quasi-characters considered in bounded cohomology.

\begin{corollary}\label{cor:main2}
Consider Situation $(00)$. Assume that: 
\begin{enumerate}
\item[(1)] there is a $B$-definable subgroup $H$ of $G$ of finite index such that $h_{|H \times H} \colon H \times H \to A$ is split via a $B$-definable function $f$,
\item[(2)] Assumption $(ii)$ of Theorem \ref{thm:main} holds,
\item[(3)] $\overline{f^*}\left[{G^*}^{00}_B\right] \nsubseteq A_0/A_1^*$, where  $\overline{f^*}\colon G^* \to A^*/A_1^*$ is induced by the interpretation $f^*$ of $f$ in $\G^*$. 
\end{enumerate}
Then $\widetilde{G^*}^{000}_B \ne \widetilde{G^*}^{00}_B$.

Moreover, under (1) and (2), Assumption (3) is satisfied when one (or both) of the following conditions holds:
\begin{enumerate}
\item[(4)] the image $\overline{f^*}\left[{G^*}^{00}_B\right]$ contains a non-trivial subgroup of $A^*/A_1^*$,
\item[(5)] there is $a \in A \setminus A^*_1$ such that for any [$B$-definable] thick subset $P$ of $G$ there exists $g \in P\cap H$ such that for every $n \in \Z$, $f(g^n)=na$.
\end{enumerate}
\end{corollary}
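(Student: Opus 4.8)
The plan is to reduce everything to Theorem \ref{thm:main} by verifying its Assumption (i), i.e. that the induced $2$-cocycle $\overline{h^*}_{|{G^*}^{00}_B \times {G^*}^{00}_B}$ with values in $A_0/({A^*}_1 \cap A_0)$ is non-split via $B$-invariant functions; Assumption (ii) is exactly hypothesis (2), so it is free. Since $H$ has finite index in $G$, we have ${G^*}^{00}_B \leq H^*$, so $h^*_{|{G^*}^{00}_B \times {G^*}^{00}_B}$ is split via $f^*_{|{G^*}^{00}_B}$ (a $B$-definable, hence $B$-invariant, function). The key point — and this is the "surprising" twist advertised before the statement — is that splitness of $h^*$ on ${G^*}^{00}_B$ via $f^*$ does \emph{not} contradict what we want: we must instead show that \emph{no} $B$-invariant function splits the \emph{induced} cocycle $\overline{h^*}$ with values in $A_0/({A^*}_1 \cap A_0)$. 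I would argue by contradiction: suppose $\overline{h^*}_{|{G^*}^{00}_B \times {G^*}^{00}_B}$ is split via some $B$-invariant $e \colon {G^*}^{00}_B \to A_0/({A^*}_1 \cap A_0)$. Since $A_0 \subseteq \dcl(B)$, the quotient $A_0/({A^*}_1 \cap A_0)$ is a fixed (bounded, indeed countable) $B$-invariantly-discrete object, so $e$ takes boundedly many values and is "essentially algebraic" over $B$. Both $e$ and the reduction mod ${A^*}_1$ of $f^*_{|{G^*}^{00}_B}$ split the same induced cocycle into $A_0/({A^*}_1\cap A_0)$ — here I use that the composite $A_0 \hookrightarrow A^* \twoheadrightarrow A^*/{A^*}_1$ has kernel ${A^*}_1 \cap A_0$, so the image of $\overline{f^*}$ landing in $A_0/{A^*}_1 \cong A_0/({A^*}_1\cap A_0)$ exactly corresponds to the induced splitting. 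Two splittings of the same $2$-cocycle differ by a homomorphism; so $\overline{f^*}_{|{G^*}^{00}_B} - e$ would be a $B$-invariant homomorphism ${G^*}^{00}_B \to A_0/({A^*}_1 \cap A_0)$ with bounded image.

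The crucial observation is that a $B$-invariant homomorphism from ${G^*}^{00}_B$ with bounded (hence countable, discrete, abelian) image must be trivial: its kernel is a $B$-invariant subgroup of bounded index of ${G^*}^{00}_B$, and ${G^*}^{00}_B$ is type-definable, so one can run the standard argument (as in Remark \ref{rem:trivial action}, or \cite[Section 3]{gis}) that the image is both a countable group and a compact group, forcing it to be finite; a bit more care — or directly the fact that ${G^*}^{00}_B = ({G^*}^{00}_B)^{00}$ has no proper invariant subgroup of bounded index with torsion-free-ish discrete quotient after one more reduction mod a finite-index piece — gives triviality. Hence $\overline{f^*}_{|{G^*}^{00}_B} = e$, so $\overline{f^*}\left[{G^*}^{00}_B\right] = e\left[{G^*}^{00}_B\right] \subseteq A_0/({A^*}_1 \cap A_0)$, which after identifying $A_0/({A^*}_1\cap A_0)$ with a subgroup of $A^*/A_1^*$ says exactly $\overline{f^*}\left[{G^*}^{00}_B\right] \subseteq A_0/A_1^*$ — contradicting (3). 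This establishes Assumption (i), and Theorem \ref{thm:main} yields $\widetilde{G^*}^{000}_B \ne \widetilde{G^*}^{00}_B$.

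For the "moreover" part, assume (1) and (2); I must show (4) or (5) implies (3). If (4) holds, then $\overline{f^*}\left[{G^*}^{00}_B\right]$ contains a nontrivial subgroup of $A^*/A_1^*$; since $A_0/A_1^*$ is a subgroup of $A^*/A_1^*$ isomorphic to $A_0/({A^*}_1\cap A_0)$, which by (2) is torsion-free finitely generated and in particular a \emph{proper} subgroup of $A^*/A_1^*$ whenever $A^*/A_1^*$ is uncountable (which it is, being a nontrivial compact group or having bounded-but-large index) — more simply, if $\overline{f^*}\left[{G^*}^{00}_B\right] \subseteq A_0/A_1^*$ then, being contained in the countable discrete group $A_0/A_1^*$, it would be a countable group, but a bounded-index subgroup-containing image; the cleanest route is: a nontrivial subgroup of $A^*/A_1^*$ contained in $A_0/A_1^*$, combined with the fact that $\overline{f^*}\left[{G^*}^{00}_B\right]$ is invariant, would make $A_0/A_1^*$ contain arbitrarily many conjugates, contradicting countability — I would phrase this via: $A_0/({A^*}_1\cap A_0)$ has bounded index $>1$ only if it is the whole group, impossible by (2) when $A^*/A_1^*$ is infinite; hence (3). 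For (5): pick the given $a \in A \setminus A^*_1$. By Fact \ref{fact:new 1}(ii), ${G^*}^{00}_B = \bigcap {\mathcal P}$ for a family ${\mathcal P}$ of $B$-definable thick sets closed under finite intersections; for each $P \in {\mathcal P}$, (5) gives $g_P \in P \cap H$ with $f(g_P^n) = na$ for all $n$. By compactness/saturation, as $P$ shrinks the element $g_P$ accumulates to some $g \in {G^*}^{00}_B$ with $f^*(g^n) = na$ for all $n \in \Z$; since $a \notin A^*_1$, the element $\overline{f^*}(g) \in A^*/A_1^*$ is non-torsion, so $\overline{f^*}\left[{G^*}^{00}_B\right]$ contains the infinite cyclic group generated by $\overline{f^*}(g)$, which cannot lie in the finitely generated group $A_0/A_1^*$ unless $\overline{f^*}(g)$ generates a finite-index subgroup — and one more application of invariance rules that out, giving (3).

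The main obstacle I anticipate is the homomorphism-rigidity step: proving cleanly that a $B$-invariant homomorphism from ${G^*}^{00}_B$ into a countable abelian group must be trivial, and handling the possibility that $A_0/({A^*}_1\cap A_0)$, while torsion-free, could still receive a nonzero map whose image is "spread out". The resolution is that any such image is simultaneously a bounded (hence, after quotienting by a type-definable subgroup, compact) and countable group, so finite, and then a finitely generated torsion-free group with finite quotient-or-subgroup relations forces the map to be zero — but making this airtight, possibly by first intersecting with a finite-index ${A^*}^0$-type reduction as in Remark \ref{rem:trivial action}, is the delicate bookkeeping. The compactness argument producing $g \in {G^*}^{00}_B$ in case (5) is routine given Fact \ref{fact:new 1}(ii); the only subtlety there is ensuring $f^*(g^n) = na$ \emph{for all} $n$ simultaneously, which is a countable conjunction of $B$-definable conditions and so survives the limit.
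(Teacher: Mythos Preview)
Your overall architecture for the main part is right: verify Assumption~(i) of Theorem~\ref{thm:main} by contradiction, noting that the hypothetical $B$-invariant splitting $e$ and the given splitting $\overline{f^*}$ differ by a $B$-invariant homomorphism $\psi\colon {G^*}^{00}_B \to A^*/A^*_1$, and argue $\psi=0$. However, you omit the paper's first move --- reducing to the case ${G^*}^{000}_B={G^*}^{00}_B$ (otherwise $\widetilde{G^*}^{000}_B\ne\widetilde{G^*}^{00}_B$ is immediate) --- and without it your ``crucial observation'' that any $B$-invariant homomorphism from ${G^*}^{00}_B$ with bounded image is trivial is simply \emph{false}: the quotient map ${G^*}^{00}_B\to {G^*}^{00}_B/{G^*}^{000}_B$ is a counterexample whenever these differ. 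With the reduction in place, triviality is a one-liner: $\ker\psi$ is a $B$-invariant subgroup of bounded index in ${G^*}^{00}_B={G^*}^{000}_B$, so $\ker\psi={G^*}^{00}_B$. Your compact-versus-countable workaround cannot substitute for this, because $\psi$ takes values in $A^*/A^*_1$ (not in the countable $A_0/A^*_1$; indeed (3) says $\overline{f^*}$ does not land there), and since $e$ is only $B$-invariant, neither $\im\psi$ nor $\ker\psi$ is a priori type-definable. Once $\psi=0$ you finish exactly as the paper does: $\overline{f^*}_{|{G^*}^{00}_B}=e$ forces $\overline{f^*}[{G^*}^{00}_B]\subseteq A_0/A^*_1$, contradicting (3).

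For $(4)\Rightarrow(3)$ your argument is incorrect: an infinite cyclic group can certainly sit inside the finitely generated torsion-free group $A_0/A^*_1$, and the ``invariance/conjugates'' reasoning does not apply. The key point you are missing is that $\overline{f^*}[{G^*}^{00}_B]$ is \emph{closed} in the logic topology on $A^*/A^*_1$, because $f^*$ is $B$-definable and ${G^*}^{00}_B$ is $B$-type-definable. Given a nontrivial subgroup $L\subseteq\overline{f^*}[{G^*}^{00}_B]$, either $L\not\subseteq A_0/A^*_1$ (done), or $L\subseteq A_0/A^*_1$ so $L$ is infinite by torsion-freeness; then its closure $\overline{L}$ is an infinite compact group, hence uncountable, yet contained in the closed set $\overline{f^*}[{G^*}^{00}_B]$ --- which therefore cannot lie in the countable $A_0/A^*_1$. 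Your treatment of $(5)\Rightarrow(4)$ via Fact~\ref{fact:new 1}(ii) and compactness is essentially the paper's argument and is fine.
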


\begin{proof}
If ${G^*}^{000}_B \ne {G^*}^{00}_B$, then $\widetilde{G^*}^{000}_B \ne \widetilde{G^*}^{00}_B$, so  we can assume that ${G^*}^{000}_B = {G^*}^{00}_B$. To prove the first part of the corollary, it is enough to show that Assumption (i) of Theorem \ref{thm:main} is satisfied. Suppose for a contradiction that  $\overline{h^*}_{|{G^*}^{00}_B \times {G^*}^{00}_B} \colon {G^*}^{00}_B \times {G^*}^{00}_B \to A_0/A^*_1$ is split via a $B$-invariant function $g \colon {G^*}^{00}_B \to A_0/A^*_1$ (here, we identify $A_0/A_0 \cap A^*_1$ with $A_0/A^*_1$ by the obvious $B$-invariant isomorphism). By Remark \ref{rem:trivial action}(i) or (iii), this means that $\overline{h^*}(x,y)=g(x) +g(y)-g(xy)$ for all $x,y \in {G^*}^{00}_B$. Since by (1) and Remark \ref{rem:trivial action}(iii) also $\overline{h^*}(x,y)=\overline{f^*}(x) + \overline{f^*}(y) - \overline{f^*}(xy)$ for all $x,y \in {G^*}^{00}_B$, we get that \[\overline{f^*}-g \colon {G^*}^{00}_B  \longrightarrow A^*/A^*_1\] is a $B$-invariant homomorphism. By (3) and the fact that $\im(g) \subseteq A_0/A^*_1$, we get that $\overline{f^*}_{|{G^*}^{00}_B} \ne g$. Thus, $\ker\left(\overline{f^*}-g\right)$ is a proper $B$-invariant subgroup of ${G^*}^{00}_B={G^*}^{000}_B$, but the index $\left[{G^*}^{000}_B:\ker\left(\overline{f^*}-g\right)\right]\leq |A^*/A^*_1|$ is bounded, which contradicts the definition of ${G^*}^{000}_B$.

To see the `moreover' part of the corollary, first assume (4). Denote by $L$ a non-trivial subgroup of $A^*/A^*_1$ contained in $\overline{f^*}\left[{G^*}^{00}_B\right]$. We can assume that $L \leq A_0/A^*_1$ (otherwise (3) holds). Then $L$ is infinite, as $A_0/A^*_1$ is torsion-free. Since $\overline{f^*}\left[{G^*}^{00}_B\right]$ is closed in the logic topology on $A^*/A^*_1$, the closure $\overline{L}$ is contained in $\overline{f^*}\left[{G^*}^{00}_B\right]$. But being an infinite, compact topological group,  $\overline{L}$ is uncountable, and so $\overline{f^*}\left[{G^*}^{00}_B\right] \nsubseteq A_0/A_1^*$ (as $A_0/A^*_1$ is countable), i.e., $(3)$ holds. Now, assume (5). By Fact \ref{fact:new 1}(ii) and compactness, we easily get that (4) is satisfied, so we are done.
\end{proof}


Although we are not going to use the language of cohomology groups in the current paper, it is worth to mention that Theorem \ref{thm:main} and its corollaries are related to bounded cohomology (see \cite{Gr}). For example, if $A=A_0=\Z$, then the 2-cocycle $h$ has finite image iff it is bounded. 
So, the class of such a 2-cocycle $h$ is an element of the second bounded cohomology group of $G$:
in Corollary \ref{cor:main}, we consider the situation when the 2-cocycle $h$ yields a non-trivial element in the non-singular part of the second bounded cohomology group, whereas in Corollary \ref{cor:main2}, it yields a trivial element in the non-singular part. It could be interesting to investigate relationships between properties of classes of 2-cocycles as elements of  the second bounded cohomology group and the properties of connected components in the corresponding group extensions.


Now, we will undertake a closer analysis of the situation from Theorem \ref{thm:main}. First of all, we show that, in a rather general context, Assumption (i) of Theorem \ref{thm:main} is not only sufficient but also necessary in order to have $\widetilde{G^*}^{000}_B \ne \widetilde{G^*}^{00}_B$. Then, we give a description of the quotient $\widetilde{G^*}^{00}_B/\widetilde{G^*}^{000}_B$, assuming that ${G^*}^{000}_B = {G^*}^{00}_B$. We also formulate some questions related to these issues.

Before we go to the details, notice that if one wants to find a necessary condition on the 2-cocycle $h$ from Theorem \ref{thm:main} for $\widetilde{G^*}$ to satisfy $\widetilde{G^*}^{000}_B \ne \widetilde{G^*}^{00}_B$, one should assume that ${G^*}^{000}_B = {G^*}^{00}_B$, as otherwise $\widetilde{G^*}^{000}_B$ is automatically different from $\widetilde{G^*}^{00}_B$.

\begin{proposition}\label{prop:x}
Consider Situation $(00)$.
\begin{enumerate}
\item Suppose  ${G^*}^{000}_B ={G^*}^{00}_B$. Assume additionally that $A^*_1 \subseteq \widetilde{G^*}^{000}_B \cap A^*$. Then, the condition $\widetilde{G^*}^{000}_B \ne \widetilde{G^*}^{00}_B$ implies $\widetilde{G^*}^{00}_B \cap A^* \not\subseteq A^*_1$. Under Assumption (ii) of Theorem \ref{thm:main}, the converse is also true.
\item The induced 2-cocycle 
$\overline{h^*}_{|{G^*}^{000}_B \times {G^*}^{000}_B} \colon {G^*}^{000}_B \times {G^*}^{000}_B \to A_0/(A^*_1 \cap A_0)$ is non-split via $B$-invariant functions if and only if $\widetilde{G^*}^{000}_B \cap A^* \not\subseteq A^*_1$.
\end{enumerate}
\end{proposition}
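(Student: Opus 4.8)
\textbf{Proof proposal for Proposition \ref{prop:x}.}

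The plan is to extract the necessary machinery from the proof of Theorem \ref{thm:main} — essentially Claims 1 and 2 there — and run the two implications through it. Throughout write $N = \widetilde{G^*}^{000}_B$ and $K = \ker(\pi_{|N}) = N \cap A^*$; recall from the discussion following Claim 1 in the proof of Theorem \ref{thm:main} that $\pi[N] = {G^*}^{000}_B$, and that for any bounded-index subgroup $H \leq A^*$ invariant under both $\aut(\G^*/B)$ and the $G^*$-action one has $K = (H \cap N) + (A_0 \cap N)$ (take $H$ itself, using that $H\cap N$ satisfies the hypotheses of Claim 1).

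For part (1), first assume ${G^*}^{000}_B = {G^*}^{00}_B$ and $A^*_1 \subseteq K$. Suppose $\widetilde{G^*}^{00}_B \cap A^* \subseteq A^*_1$. Then $\widetilde{G^*}^{00}_B \cap A^* \subseteq A^*_1 \subseteq K \subseteq \widetilde{G^*}^{00}_B \cap A^*$, so $N \cap A^* = \widetilde{G^*}^{00}_B \cap A^*$; combined with $\pi[N] = {G^*}^{000}_B = {G^*}^{00}_B = \pi[\widetilde{G^*}^{00}_B]$ this forces $N = \widetilde{G^*}^{00}_B$, i.e.\ the contrapositive of the first implication. For the converse, assume Assumption (ii) of Theorem \ref{thm:main} and $\widetilde{G^*}^{00}_B \cap A^* \not\subseteq A^*_1$; I want $N \ne \widetilde{G^*}^{00}_B$. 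Suppose not, so $N = \widetilde{G^*}^{00}_B$. Then $K = N \cap A^*$ is type-definable over $B$, and it contains $A^*_1$ by hypothesis, so $K/A^*_1$ is a type-definable bounded subgroup, hence a compact group in the logic topology. On the other hand, by the displayed formula $K = (A^*_1 \cap N) + (A_0 \cap N) = A^*_1 + (A_0 \cap N)$, so $K/A^*_1$ is the image of the countable group $A_0 \cap N$, hence countable. Moreover $K/A^*_1$ surjects onto $(A_0 \cap N)/(A^*_1 \cap A_0)$, which by Assumption (ii) embeds into the torsion-free group $A_0/(A^*_1 \cap A_0) \cong \Z^n$; and this quotient is nontrivial precisely because $\widetilde{G^*}^{00}_B \cap A^* = K \not\subseteq A^*_1$ means $A_0 \cap N \not\subseteq A^*_1$. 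A nontrivial finitely generated torsion-free group, being infinite, cannot be a quotient of a countable compact group (such a group is finite or uncountable) — contradiction, exactly as in the first part of the proof of Theorem \ref{thm:main}.

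For part (2), this is the "necessary and sufficient" refinement of Claim 2 with $G$ replaced throughout by ${G^*}^{000}_B$ and $H := A^*_1 \cap N$ (which is a bounded-index, $B$-invariant, $G^*$-invariant subgroup of $A^*$, so Claim 1 applies). If $A_0 \cap N \subseteq A^*_1$, equivalently $A_0 \cap N \subseteq H$, then Claim 2 gives that $\overline{h^*}_{|{G^*}^{000}_B \times {G^*}^{000}_B} \colon {G^*}^{000}_B \times {G^*}^{000}_B \to A_0/(A^*_1 \cap A_0)$ is split via a $B$-invariant function (note $H \cap A_0 = A^*_1 \cap N \cap A_0 = A^*_1 \cap A_0$ once $A_0\cap N \subseteq A^*_1$, since then $A_0 \cap N \subseteq A^*_1 \cap A_0$ and conversely $A^*_1\cap A_0 \subseteq N$ as $A_0 \subseteq \dcl(B)$ forces... — here I must check $A^*_1 \cap A_0 \subseteq N$; since $A_0 \subseteq \dcl(B)$ is countable, $A^*_1 \cap A_0$ is a countable $B$-invariant subgroup hence bounded and its stabilizer-type argument, or more simply the proof of Claim 1 applied to $H = A^*_1\cap N + $ this group, should yield it). Conversely, if $\overline{h^*}_{|{G^*}^{000}_B}$ is split via a $B$-invariant $f$, I run the construction from the proof of Claim 2 \emph{backwards}: a $B$-invariant splitting over $A_0/(A^*_1 \cap A_0)$ lets me build a $B$-invariant section $s(g) = (a_g, g)$ of $\pi$ over ${G^*}^{000}_B$ with each $a_g \in A^*_1 + A_0$ modulo the appropriate correction, whose image generates a $B$-invariant bounded-index subgroup of $\widetilde{G^*}$; by minimality of $N$ this subgroup contains $N$, forcing $K = N \cap A^* \subseteq A^*_1 + (A_0 \cap \text{something})$ and pushing the $A_0$-part inside $A^*_1$, i.e.\ $A_0 \cap N \subseteq A^*_1$.

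The main obstacle I anticipate is the bookkeeping in part (2), specifically the bidirectional refinement of Claim 2: Claim 2 as stated in the proof of Theorem \ref{thm:main} is only the "if" direction ($A_0 \cap N \subseteq H \Rightarrow$ split), and I need to verify that the argument there is genuinely reversible — that the $B$-invariant section it produces can, conversely, be reconstructed from an arbitrary $B$-invariant splitting and that the resulting subgroup is honestly bounded-index and $B$-invariant so that minimality of $\widetilde{G^*}^{000}_B$ bites. The small lemma that $A^*_1 \cap A_0 \subseteq \widetilde{G^*}^{000}_B$ (needed to identify $H \cap A_0$ cleanly) should follow from the stabilizer argument of Remark \ref{rem:trivial action}(i)/(ii) applied to the countable group $A^*_1 \cap A_0 \subseteq \dcl(B)$, but it is worth stating explicitly. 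Part (1) is more routine, reusing verbatim the compact-versus-countable dichotomy already deployed in the proof of Theorem \ref{thm:main}.
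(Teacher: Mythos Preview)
Your part (1) is correct and matches the paper's argument essentially verbatim: the $(\Rightarrow)$ direction is the short exact-sequence count, and the $(\Leftarrow)$ direction is the same compact-versus-countable contradiction used in Theorem~\ref{thm:main}.

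For part (2), the $(\Rightarrow)$ direction is right but you have overcomplicated it. There is no need to set $H = A^*_1 \cap N$ and then worry about whether $A^*_1 \cap A_0 \subseteq N$; just apply Claim~2 directly with $H := A^*_1$. The hypothesis $\widetilde{G^*}^{000}_B \cap A^* \subseteq A^*_1$ gives $A_0 \cap N \subseteq A^*_1$ immediately, and then Claim~2 yields splitting over $A_0/(A^*_1 \cap A_0)$ with no further bookkeeping.

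The $(\Leftarrow)$ direction of part (2) has a genuine gap, and your own diagnosis (``I need to verify that the argument there is genuinely reversible'') is accurate: as written, your sketch does not work. The subgroup generated by the image of a $B$-invariant section $s(g)=(-\hat f(g),g)$ meets $A^*$ only in a subgroup of $A^*_1\cap A_0$, which is countable, so that subgroup is \emph{not} of bounded index in $\widetilde{G^*}$ and minimality of $N$ gives nothing. You also end by concluding only $A_0\cap N\subseteq A^*_1$, which is strictly weaker than the required $A^*\cap N\subseteq A^*_1$. The paper's fix is clean and different in spirit from ``reversing Claim~2'': pass to the quotient extension $H$ of ${G^*}^{000}_B$ by $A^*/A^*_1$ defined by $\overline{h^*}$, and observe that the $B$-invariant splitting $f$ yields a $B$-invariant group isomorphism $\Phi\colon H\to K$, $\Phi(a,g)=(a+f(g),g)$, where $K$ is the direct product $A^*/A^*_1 \times {G^*}^{000}_B$. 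In $K$ one has $K^{000}_B = \{0+A^*_1\}\times {G^*}^{000}_B$, hence $K^{000}_B\cap(A^*/A^*_1)$ is trivial; transporting back via $\Phi^{-1}$ and identifying $H^{000}_B$ with the image of $\widetilde{G^*}^{000}_B$ gives $(\widetilde{G^*}^{000}_B\cap A^*)+A^*_1 = A^*_1$, i.e.\ $\widetilde{G^*}^{000}_B\cap A^*\subseteq A^*_1$. Equivalently, if you want to stay inside $\widetilde{G^*}$: the correct bounded-index $B$-invariant subgroup to use is $S=\{(a,g): g\in{G^*}^{000}_B,\ a+\hat f(g)\in A^*_1\}$, which is exactly $\Phi^{-1}$ of the product factor pulled back; one checks $S$ is a subgroup using Remark~\ref{rem:trivial action}(i) and $G^*$-invariance of $A^*_1$, and then $S\cap A^*=A^*_1$ gives the conclusion. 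Your section idea becomes this once you throw in all of $A^*_1$, not just $A^*_1\cap A_0$.
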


Before the proof, notice that the assumption $A^*_1 \subseteq \widetilde{G^*}^{000}_B \cap A^*$ cannot be dropped. 
Indeed, consider $A^*_1:=A^*$ to see that without this assumption, $\widetilde{G^*}^{00}_B \cap A^* \not\subseteq A^*_1$ does not need to hold. One can also take any example in which $\widetilde{G^*}^{000}_B \ne \widetilde{G^*}^{00}_B$ and define a new $A^*_1$ as $\widetilde{G^*}^{00}_B \cap A^*$. 

Notice, however, that the assumption $A^*_1 \subseteq \widetilde{G^*}^{000}_B \cap A^*$ holds in various interesting cases (e.g. if $ A^*_1={A^*}^{00}_B={A^*}^{000}_B$ and the action of $G$ on $A$ is trivial; in particular, if $\G=((A,+), M)$, $G$ is definable in the structure $M$ and it acts trivially on $A$, and $A^*_1={A^*}^{0}$).

\begin{proof}
(1) Under Assumption (ii) of Theorem \ref{thm:main}, $(\Leftarrow)$ follows by an argument similar to the one contained in the penultimate paragraph of the proof of Theorem \ref{thm:main} (and does not require the extra assumptions made in Proposition \ref{prop:x}(i)).
Namely, suppose for a contradiction that $\widetilde{G^*}^{000}_B = \widetilde{G^*}^{00}_B$. By Claim 1 in the proof of Theorem \ref{thm:main}, we know that $\widetilde{G^*}^{000}_B \cap A^* \subseteq A^*_1+A_0$, so $\left( \widetilde{G^*}^{000}_B\cap A^*\right)/A^*_1$ is countable, and, if it is non-trivial, it must be infinite (as $\left( A_0+A^*_1\right)/A^*_1$ is torsion free). But $\left( \widetilde{G^*}^{000}_B\cap A^*\right)/A^*_1$ is non-trivial, because $\widetilde{G^*}^{000}_B \cap A^* = \widetilde{G^*}^{00}_B \cap A^* \nsubseteq A^*_1$. Hence, $\left( \widetilde{G^*}^{00}_B \cap A^* \right) / A^*_1$ is a countable, infinite, compact group, which is impossible.

$(\Rightarrow)$. 
Suppose for a contradiction that $\widetilde{G^*}^{00}_B \cap A^* \subseteq A^*_1$.
Then $A^*_1=\widetilde{G^*}^{00}_B \cap A^* = \widetilde{G^*}^{000}_B \cap A^*$. 
Since ${G^*}^{000}_B={G^*}^{00}_B$, by the exactness of the sequences
\begin{equation}
\xymatrix{
 1 \ar@{^{(}->}[r] & {}\widetilde{G^*}^{00}_B \cap A^* \ar@{^{(}->}[r] & {}\widetilde{G^*}^{00}_B \ar@{>>}[r]^-{\pi} & {G^*}^{00}_B \ar@{>>}[r] & 1}, \label{eq:5}
\end{equation}
\begin{equation}
\xymatrix{
 1 \ar@{^{(}->}[r] & {}\widetilde{G^*}^{000}_B \cap A^* \ar@{^{(}->}[r] & {}\widetilde{G^*}^{000}_B \ar@{>>}[r]^-{\pi} & {G^*}^{000}_B \ar@{>>}[r] & 1}, \label{eq:6}
\end{equation}
we get $\widetilde{G^*}^{000}_B = \widetilde{G^*}^{00}_B$, which is a contradiction.\\[3mm]
(2) The implication $(\Rightarrow)$ follows from Claim 2 in the proof of Theorem \ref{thm:main}.

$(\Leftarrow)$. Since $A_0/(A^*_1 \cap A_0)$ can be identified with $A_0/A^*_1$ by a $B$-invariant isomorphism, $\overline{h^*}$ can be treated as a 2-cocycle with values in $A_0/A^*_1$. Suppose for a contradiction that $\overline{h^*}_{|{G^*}^{000}_B \times {G^*}^{000}_B}$ is split via a $B$-invariant function. By Remark \ref{rem:trivial action}(i), this means that $\overline{h^*}(x,y)=f(x)+f(y)-f(xy)$ for some $B$-invariant function $f \colon {G^*}^{000}_B \to A_0/A^*_1$ satisfying $f(e)=0+A^*_1$. 


Let $H$ be the extension of ${G^*}^{000}_B$ by $A^*/A^*_1$ defined by means of the 2-cocycle $\overline{h^*}_{|{G^*}^{000}_B \times {G^*}^{000}_B}$, with the action of ${G^*}^{000}_B$ on $A^*/A^*_1$ induced by the action of $G^*$ on $A^*$, which is trivial by Remark \ref{rem:trivial action}(iii). Let $K$ be the corresponding product $A^*/A^*_1 \times {G^*}^{000}_B$. Both groups $H$ and $K$ live in $\G^*$ as $B$-invariant objects.

A classical fact tells us that since $\overline{h^*}_{|{G^*}^{000}_B \times {G^*}^{000}_B}$ is split via $f$, the function $\Phi \colon H \to K$ given by $\Phi(a,g)=(a+f(g),g)$ is an isomorphism of groups. Since $f$ is $B$-invariant, so is $\Phi$.

Although $K$ is not definable but only $B$-invariant, we can still define $K^{000}_B$ as the smallest $B$-invariant subgroup of $K$ of bounded index. The trivial subgroup $A^*_1/A^*_1$ of $A^*/A^*_1$ will be denoted by $\{ 0+A^*_1 \}$. Similar remarks apply to $H$.

As $\{ 0 +A^*_1 \} \times {G^*}^{000}_B$ is a $B$-invariant subgroup of $K$ of bounded index, $K^{000}_B = \{ 0 +A^*_1\} \times {G^*}^{000}_B$. Hence, $K^{000}_B \cap \left( A^*/A^*_1 \right)= \{ 0 +A^*_1\}$. 

Using this together with the fact that $\Phi$ is a $B$-invariant isomorphism, we conclude that
%
\[
\begin{array}{lll}
\left\{ 0 +A^*_1\right\} &= \Phi^{-1}\left[\left\{ 0+A^*_1 \right\}\right] = \Phi^{-1}\left[ K^{000}_B \cap \left( A^*/A^*_1\right) \right]
= H^{000}_B \cap \left( A^*/A^*_1\right) \\
& = \left(\widetilde{G^*}^{000}_B \cap A^*\right)/ A^*_1.
\end{array}
\] 
Therefore, $\widetilde{G^*}^{000}_B \cap A^*\subseteq A^*_1$, a contradiction.
\end{proof}

Now, we will deduce from the last proposition that, in some general context, the conclusion of Theorem \ref{thm:main} implies its Assumption (i) (see Corollary \ref{cor:new 1}). In order to do that, we will need some results from \cite{Kr_Pi_So} and \cite{Ka_Mi_Si} on Borel cardinalities of Lascar strong types. Since Corollary \ref{cor:new 1} will follow formally from these results, we will not recall the relevant notions from \cite{Kr_Pi_So} in the current paper. Note that \cite{Kr_Pi_So} refers to the first version of the current paper (available on arXiv) which was written before the results from \cite{Ka_Mi_Si} were known.

Consider Situation $(00)$. Suppose ${G^*}^{000}_B ={G^*}^{00}_B$ and $A^*_1 \subseteq \widetilde{G^*}^{000}_B \cap A^*$. By Proposition \ref{prop:x}, in order to show that the conclusion of Theorem \ref{thm:main} (i.e., $\widetilde{G^*}^{000}_B \ne \widetilde{G^*}^{00}_B$) implies its Assumption (i) (i.e., $\overline{h^*}_{|{G^*}^{00}_B \times {G^*}^{00}_B} \colon {G^*}^{00}_B \times {G^*}^{00}_B \to A_0/(A^*_1\cap A_0)$ is non-split via $B$-invariant functions), it is enough to prove that
\begin{equation}
\widetilde{G^*}^{000}_B \cap A^*\subseteq A^*_1 \Longrightarrow \widetilde{G^*}^{00}_B \cap A^*\subseteq A^*_1.\label{eq:8}
\end{equation}

Suppose that this fails. Then $\widetilde{G^*}^{000}_B \cap A^* =A^*_1$ is type-definable but different from $\widetilde{G^*}^{00}_B \cap A^*$. Assuming that the language is countable,  \cite[Proposition 4.7]{Kr_Pi_So} tells us that in such a situation the relation on $\widetilde{G^*}^{00}_B$ of lying in the same coset modulo $\widetilde{G^*}^{000}_B$ (interpreted in the space of types over a small model) is non-trivial and smooth in the sense of Borel reducability. By \cite[Proposition 2.11]{Kr_Pi_So}, this implies that \cite[Conjecture 1]{Kr_Pi_So} is false, i.e., there exists an example of a Kim-Pillay type on which the relation of lying in the same Lascar strong type is non-trivial and smooth. However, very recently, Kaplan, Miller and Simon  proved \cite[Conjecture 1]{Kr_Pi_So} in \cite{Ka_Mi_Si}. This is a contradiction. Thus, we obtain the following corollaries.


\begin{corollary}\label{cor:new 1}
Consider Situation $(00)$ and assume that the language is countable. Suppose ${G^*}^{000}_B ={G^*}^{00}_B$ and $A^*_1 \subseteq \widetilde{G^*}^{000}_B \cap A^*$. Then, the conclusion of Theorem \ref{thm:main} implies its Assumption (i).
\end{corollary}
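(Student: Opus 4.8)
\textbf{Proof proposal for Corollary \ref{cor:new 1}.}

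The plan is to verify that the hypotheses of Corollary \ref{cor:new 1} are exactly what is needed to run the chain of implications sketched in the paragraph preceding its statement, so that the ``hard work'' is entirely imported from Proposition \ref{prop:x} and from the external results of \cite{Kr_Pi_So} and \cite{Ka_Mi_Si}. Concretely, assume the conclusion of Theorem \ref{thm:main}, namely $\widetilde{G^*}^{000}_B \ne \widetilde{G^*}^{00}_B$. By Proposition \ref{prop:x}(2), Assumption (i) of Theorem \ref{thm:main} (that $\overline{h^*}_{|{G^*}^{00}_B \times {G^*}^{00}_B}$ is non-split via $B$-invariant functions) is equivalent to $\widetilde{G^*}^{000}_B \cap A^* \not\subseteq A^*_1$ --- here I use that ${G^*}^{000}_B = {G^*}^{00}_B$, so the 2-cocycle in Proposition \ref{prop:x}(2) is the same as the one in Assumption (i). So it suffices to prove $\widetilde{G^*}^{000}_B \cap A^* \not\subseteq A^*_1$.

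Suppose toward contradiction that $\widetilde{G^*}^{000}_B \cap A^* \subseteq A^*_1$. Combined with the standing hypothesis $A^*_1 \subseteq \widetilde{G^*}^{000}_B \cap A^*$, this forces $\widetilde{G^*}^{000}_B \cap A^* = A^*_1$, which is type-definable. On the other hand, by Proposition \ref{prop:x}(1) --- whose hypotheses ${G^*}^{000}_B = {G^*}^{00}_B$ and $A^*_1 \subseteq \widetilde{G^*}^{000}_B \cap A^*$ are precisely our standing assumptions --- the assumption $\widetilde{G^*}^{000}_B \ne \widetilde{G^*}^{00}_B$ gives $\widetilde{G^*}^{00}_B \cap A^* \not\subseteq A^*_1$. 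Hence $\widetilde{G^*}^{000}_B \cap A^* = A^*_1$ is type-definable but strictly smaller than $\widetilde{G^*}^{00}_B \cap A^*$. This is exactly the configuration (a type-definable invariant subgroup of bounded index that is properly contained in the corresponding ${}^{00}$) to which the Borel-cardinality machinery applies: since the language is countable, \cite[Proposition 4.7]{Kr_Pi_So} yields that the relation on $\widetilde{G^*}^{00}_B$ of lying in the same coset modulo $\widetilde{G^*}^{000}_B$ is non-trivial and smooth, \cite[Proposition 2.11]{Kr_Pi_So} converts this into a counterexample to \cite[Conjecture 1]{Kr_Pi_So}, and \cite{Ka_Mi_Si} proves that conjecture --- contradiction. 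Therefore $\widetilde{G^*}^{000}_B \cap A^* \not\subseteq A^*_1$, and by the equivalence above, Assumption (i) holds.

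The main obstacle --- or rather the only real content beyond bookkeeping --- is checking that the quotient-group situation here genuinely matches the formal hypotheses of \cite[Proposition 4.7]{Kr_Pi_So}: one needs $\widetilde{G^*}^{000}_B$ to be a bounded-index invariant (over the countable parameter set, which we may absorb into a model) subgroup of the type-definable group $\widetilde{G^*}^{00}_B$ with type-definable intersection with $A^*$ but itself not type-definable, so that the induced equivalence relation on types is Borel and the smoothness dichotomy is available. Since $\widetilde{G^*}^{00}_B$ is type-definable and $\widetilde{G^*}^{000}_B$ is $B$-invariant of bounded index, this is immediate, and I do not expect any further difficulty; the implication \eqref{eq:8} is then exactly what was just established, and the corollary follows.
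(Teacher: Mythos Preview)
Your proposal is correct and follows essentially the same route as the paper: reduce via Proposition~\ref{prop:x}(2) to showing $\widetilde{G^*}^{000}_B \cap A^* \not\subseteq A^*_1$, use Proposition~\ref{prop:x}(1) together with the standing hypothesis $A^*_1 \subseteq \widetilde{G^*}^{000}_B \cap A^*$ to reach the configuration where $\widetilde{G^*}^{000}_B \cap A^*$ is type-definable yet properly contained in $\widetilde{G^*}^{00}_B \cap A^*$, and then invoke \cite[Propositions 4.7 and 2.11]{Kr_Pi_So} and \cite{Ka_Mi_Si} for the contradiction. The paper phrases the reduction slightly differently (as proving the implication~\eqref{eq:8} rather than arguing directly from the conclusion of Theorem~\ref{thm:main}), but the content is identical.
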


\begin{corollary}\label{cor:new 2}
Consider Situation $(0)$ and assume that the language is countable. Suppose ${G^*}^{000}_B ={G^*}^{00}_B$. Then, if $\widetilde{G^*}^{000}_B \cap A^*$ is type-definable, it must coincide with $\widetilde{G^*}^{00}_B \cap A^*$ (and so  $\widetilde{G^*}^{000}_B = \widetilde{G^*}^{00}_B$). 
\end{corollary}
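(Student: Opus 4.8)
The plan is to run almost verbatim the argument given in the paragraph preceding Corollary \ref{cor:new 1}, observing that in the present formulation the auxiliary subgroup $A^*_1$ and the extra hypotheses of that corollary play no role: type-definability of $\widetilde{G^*}^{000}_B \cap A^*$, which there was \emph{derived} from the equality $\widetilde{G^*}^{000}_B \cap A^* = A^*_1$, is here simply part of the hypothesis.

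So suppose for a contradiction that $\widetilde{G^*}^{000}_B \cap A^*$ is type-definable but $\widetilde{G^*}^{000}_B \cap A^* \ne \widetilde{G^*}^{00}_B \cap A^*$ (the inclusion $\widetilde{G^*}^{000}_B \cap A^* \subseteq \widetilde{G^*}^{00}_B \cap A^*$ is automatic, since $\widetilde{G^*}^{000}_B \subseteq \widetilde{G^*}^{00}_B$). Since the language is countable, \cite[Proposition 4.7]{Kr_Pi_So} applies to the pair $\widetilde{G^*}^{000}_B \leq \widetilde{G^*}^{00}_B$ and tells us that the equivalence relation on $\widetilde{G^*}^{00}_B$ of lying in the same coset modulo $\widetilde{G^*}^{000}_B$, read off in the space of types over a small model, is non-trivial and smooth in the sense of Borel reducibility. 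By \cite[Proposition 2.11]{Kr_Pi_So} this would refute \cite[Conjecture 1]{Kr_Pi_So}, which, however, was proved by Kaplan, Miller and Simon in \cite{Ka_Mi_Si}; contradiction. Hence $\widetilde{G^*}^{000}_B \cap A^* = \widetilde{G^*}^{00}_B \cap A^*$.

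It remains to upgrade this equality of the $A^*$-parts to the equality of the full components, and this is where the hypothesis ${G^*}^{000}_B = {G^*}^{00}_B$ is used. Recalling (as in the proof of Theorem \ref{thm:main}) that $\pi\left[\widetilde{G^*}^{00}_B\right] = {G^*}^{00}_B$ and $\pi\left[\widetilde{G^*}^{000}_B\right] = {G^*}^{000}_B$, a short diagram chase using the exact sequence $1 \to \widetilde{G^*}^{00}_B \cap A^* \to \widetilde{G^*}^{00}_B \xrightarrow{\pi} {G^*}^{00}_B \to 1$ and its $000$-analogue finishes the proof: given $x \in \widetilde{G^*}^{00}_B$, since $\pi(x) \in {G^*}^{00}_B = {G^*}^{000}_B$ we may pick $y \in \widetilde{G^*}^{000}_B$ with $\pi(y) = \pi(x)$; then $xy^{-1} \in \widetilde{G^*}^{00}_B \cap A^* = \widetilde{G^*}^{000}_B \cap A^* \subseteq \widetilde{G^*}^{000}_B$, whence $x = (xy^{-1})y \in \widetilde{G^*}^{000}_B$.

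The only genuine content is the appeal to the Borel-cardinality results of \cite{Kr_Pi_So} and \cite{Ka_Mi_Si}; I do not expect any real obstacle beyond checking that their hypotheses are in force, namely a countable language together with a type-definable $\widetilde{G^*}^{000}_B \cap A^*$ strictly smaller than $\widetilde{G^*}^{00}_B \cap A^*$ — exactly the contradiction hypothesis — so that \cite[Proposition 4.7]{Kr_Pi_So} applies. Everything else is formal.
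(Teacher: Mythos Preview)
Your proof is correct and follows essentially the same route as the paper: both invoke \cite[Proposition 4.7]{Kr_Pi_So} together with the Kaplan--Miller--Simon result to derive a contradiction from the assumption that $\widetilde{G^*}^{000}_B \cap A^*$ is type-definable yet strictly smaller than $\widetilde{G^*}^{00}_B \cap A^*$, and then use the exact sequences \eqref{eq:5}, \eqref{eq:6} with ${G^*}^{000}_B = {G^*}^{00}_B$ to pass to the full equality of the components. Your write-up makes the second step (the diagram chase) more explicit than the paper does, but the content is the same.
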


We do not know whether (\ref{eq:8}) is true without the assumption  $A^*_1 \subseteq \widetilde{G^*}^{000}_B \cap A^*$ or the assumption that the language is countable.

\begin{conjecture}\label{con:new 1}
Consider Situation $(00)$. Suppose ${G^*}^{000}_B ={G^*}^{00}_B$. Then $\widetilde{G^*}^{00}_B \cap A^*\subseteq A^*_1 \iff \widetilde{G^*}^{000}_B \cap A^*\subseteq A^*_1$.
\end{conjecture}

This conjecture is equivalent to the following conjecture, generalizing Corollary \ref{cor:new 2}.

\begin{conjecture}\label{con:new 2}
Consider Situation $(0)$. Suppose ${G^*}^{000}_B ={G^*}^{00}_B$. Let $H$ be a $B$-invariant subgroup of $\widetilde{G^*}$ of bounded index such that $H \cap A^*$ is type-definable. Then $\widetilde{G^*}^{00}_B \cap A^*\subseteq H \cap A^*$.
\end{conjecture}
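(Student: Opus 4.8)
The plan is to show that Conjecture~\ref{con:new 2} is equivalent to Conjecture~\ref{con:new 1} and hence, in particular, a proof of either yields the other; but since the task is to prove Conjecture~\ref{con:new 2} as worded, I will argue directly, reducing it to the statement of Conjecture~\ref{con:new 1} and then indicating the one genuinely new ingredient needed. Throughout we work in Situation~$(0)$ and assume ${G^*}^{000}_B = {G^*}^{00}_B$. Fix a $B$-invariant subgroup $H$ of $\widetilde{G^*}$ of bounded index with $H \cap A^*$ type-definable; set $A^*_1 := H \cap A^*$. The first step is to check that $A^*_1$ is legitimate data for Situation~$(00)$: it has bounded index in $A^*$ (since $H$ has bounded index in $\widetilde{G^*}$ and $A^* \leq \widetilde{G^*}$), it is type-definable over $B$ by hypothesis, and it is invariant under the action of $G^*$ because $H$, being $B$-invariant of bounded index, contains $\widetilde{G^*}^{000}_B$, which is normal in $\widetilde{G^*}$, and more directly because $H \cap A^*$ is normalized by all of $\widetilde{G^*}$ (conjugation by $\widetilde{G^*}$ preserves both $H$ and $A^*$). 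So we are in Situation~$(00)$ with this choice of $A^*_1$.

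The second step is to unwind what Conjecture~\ref{con:new 2} asks for in this notation: it asks precisely that $\widetilde{G^*}^{00}_B \cap A^* \subseteq A^*_1$. The third step is to split into two cases according to whether $A^*_1 \subseteq \widetilde{G^*}^{000}_B \cap A^*$. In the case where this inclusion holds, we may apply Proposition~\ref{prop:x}(2): it tells us that $\widetilde{G^*}^{000}_B \cap A^* \not\subseteq A^*_1$ is equivalent to non-splitness of the induced $2$-cocycle $\overline{h^*}_{|{G^*}^{000}_B \times {G^*}^{000}_B}$ via $B$-invariant functions. But here $\widetilde{G^*}^{000}_B \cap A^* \supseteq A^*_1$, so we would need the converse inclusion $\widetilde{G^*}^{000}_B \cap A^* \subseteq A^*_1$ to even get started; that need not hold a priori. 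Instead, the clean route is: assuming Conjecture~\ref{con:new 1} holds for the $A^*_1$ we constructed, observe that $\widetilde{G^*}^{000}_B \cap A^* \subseteq A^*_1$ fails in general, so Conjecture~\ref{con:new 1} is not directly the right reduction either. The honest reduction is the contrapositive implication~\eqref{eq:8}: if $\widetilde{G^*}^{00}_B \cap A^* \not\subseteq A^*_1$ we must derive a contradiction. Since $\widetilde{G^*}^{00}_B \cap A^*$ is type-definable and $A^*_1$ is type-definable of bounded index, the quotient $(\widetilde{G^*}^{00}_B \cap A^*)/((\widetilde{G^*}^{00}_B \cap A^*) \cap A^*_1)$ carries the logic topology and is a (Hausdorff) compact group; if it is nontrivial, the argument of Corollary~\ref{cor:new 1} — invoking \cite[Proposition 4.7]{Kr_Pi_So} and \cite[Conjecture 1]{Kr_Pi_So}, now a theorem of Kaplan--Miller--Simon \cite{Ka_Mi_Si} — produces the contradiction, provided the language is countable. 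So in the countable-language case the statement follows; in general it is exactly Conjecture~\ref{con:new 1} applied with this $A^*_1$.

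The fourth and final step is the reverse direction establishing genuine equivalence of the two conjectures (which is the content of the sentence "This conjecture is equivalent to the following conjecture"). For "Conjecture~\ref{con:new 2} $\Rightarrow$ Conjecture~\ref{con:new 1}": given $A^*_1$ as in Situation~$(00)$ with $\widetilde{G^*}^{000}_B \cap A^* \subseteq A^*_1$, let $H := A^*_1 \cdot \widetilde{G^*}^{000}_B$, a $B$-invariant subgroup of bounded index with $H \cap A^* = A^*_1$ (here one uses that $A^*_1$ is $G^*$-invariant so the product is a subgroup, and a short computation with the exact sequence~\eqref{eq:3} together with $\widetilde{G^*}^{000}_B \cap A^* \subseteq A^*_1$ to identify $H \cap A^*$). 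Applying Conjecture~\ref{con:new 2} to this $H$ yields $\widetilde{G^*}^{00}_B \cap A^* \subseteq A^*_1$, which is the nontrivial implication of Conjecture~\ref{con:new 1}; the other implication of Conjecture~\ref{con:new 1} is trivial since $\widetilde{G^*}^{000}_B \leq \widetilde{G^*}^{00}_B$. Conversely, for "Conjecture~\ref{con:new 1} $\Rightarrow$ Conjecture~\ref{con:new 2}": given $H$ as in Conjecture~\ref{con:new 2}, set $A^*_1 := (H \cap A^*) \cdot (\widetilde{G^*}^{000}_B \cap A^*)$, which is type-definable (as $\widetilde{G^*}^{000}_B \cap A^*$ need not be type-definable, one should instead take $A^*_1 := H \cap A^*$ and note $\widetilde{G^*}^{00}_B \cap A^* \subseteq H \cap A^*$ is exactly what we want, handling the case $\widetilde{G^*}^{000}_B \cap A^* \not\subseteq H\cap A^*$ separately, where it is automatic).

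\textbf{Main obstacle.} The delicate point — and the reason the statement is only a conjecture in full generality — is the passage~\eqref{eq:8}: without the countability of the language one cannot invoke the Borel-cardinality machinery of \cite{Kr_Pi_So} and \cite{Ka_Mi_Si}, and without the extra hypothesis $A^*_1 \subseteq \widetilde{G^*}^{000}_B \cap A^*$ the reduction via Proposition~\ref{prop:x} stalls because one loses control of which of the two type-definable groups is the larger one. The routine parts — verifying that the various constructed subgroups land in Situation~$(00)$, identifying $H \cap A^*$ via the exact sequences, and the compact-group cardinality contradiction — are straightforward; the conjectural core is precisely the model-theoretic equivalence "$\widetilde{G^*}^{00}_B \cap A^*$ type-definable $\Rightarrow$ it equals $\widetilde{G^*}^{000}_B \cap A^*$" in the absence of those hypotheses.
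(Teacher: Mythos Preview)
First, a framing issue: this statement is a \emph{conjecture}, and the paper does not prove it. What the paper does prove is that Conjecture~\ref{con:new 2} is equivalent to Conjecture~\ref{con:new 1}, in two lines: for \ref{con:new 1}~$\Rightarrow$~\ref{con:new 2}, replace $H$ by a normal subgroup via the standard trick and apply \ref{con:new 1} to $A^*_1:=H\cap A^*$; for \ref{con:new 2}~$\Rightarrow$~\ref{con:new 1}, apply \ref{con:new 2} to $H:=A^*_1+\widetilde{G^*}^{000}_B$. Your proposal correctly recognises that the content is this equivalence and that the conjecture itself is open in general, but the reduction you give has a genuine gap and an unnecessary detour.

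The gap is in your first step: you assert that $A^*_1:=H\cap A^*$ is invariant under the action of $G^*$ because ``conjugation by $\widetilde{G^*}$ preserves both $H$ and $A^*$.'' But $H$ is only assumed $B$-invariant (i.e., invariant under $\aut(\G^*/B)$), not normal in $\widetilde{G^*}$; conjugation need not preserve $H$. This is exactly why the paper first passes to a normal subgroup: replace $H$ by its normal core $H_0=\bigcap_{g\in\widetilde{G^*}} gHg^{-1}$, which is still $B$-invariant of bounded index (it contains $\widetilde{G^*}^{000}_B$), and for which $H_0\cap A^*$ is $G^*$-invariant. Proving the conclusion for $H_0$ suffices since $H_0\cap A^*\subseteq H\cap A^*$.

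The detour is your third step. Once $H$ (or $H_0$) is in hand, there is no case split to make: since $H$ is $B$-invariant of bounded index, automatically $\widetilde{G^*}^{000}_B\subseteq H$, hence $\widetilde{G^*}^{000}_B\cap A^*\subseteq H\cap A^*=A^*_1$. Now Conjecture~\ref{con:new 1} (the direction $\widetilde{G^*}^{000}_B\cap A^*\subseteq A^*_1\Rightarrow\widetilde{G^*}^{00}_B\cap A^*\subseteq A^*_1$) gives the conclusion directly. Your attempt to route through Proposition~\ref{prop:x}(2), the hypothesis $A^*_1\subseteq\widetilde{G^*}^{000}_B\cap A^*$, and a compact-group cardinality argument is beside the point here. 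Your fourth step (the direction \ref{con:new 2}~$\Rightarrow$~\ref{con:new 1} via $H:=A^*_1\cdot\widetilde{G^*}^{000}_B$) is correct and matches the paper; the parenthetical attempt at the converse within that step is muddled and should be deleted in favour of the clean argument above.
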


To show that Conjecture \ref{con:new 1} implies \ref{con:new 2}, first, by a standard trick, replace $H$ by a normal subgroup of $\widetilde{G^*}$, and then apply \ref{con:new 1} to $A^*_1:=H \cap A^*$ . To see the converse, apply \ref{con:new 2} to $H:=A^*_1+ \widetilde{G^*}^{000}_B$.

Conjecture \ref{con:new 1} is important, because it implies that Corollary \ref{cor:new 1} is true without the assumption that the language is countable and because it would allow us to understand better the quotient $\widetilde{G^*}^{00}_B/\widetilde{G^*}^{000}_B$ in the final part of this section (see Remark \ref{rem:zastosowanie 2.10}).
It is possible that one could use methods from \cite{Kr_Pi_So} and \cite{Ka_Mi_Si} to prove it (at least for a countable language). In the current paper, we will prove it in some special (but still rather general) situations.

Recall that the \emph{commutator length} of an element $g \in [G,G]$ is the minimal number of commutators sufficient to express $g$ as their product. The \emph{commutator width} $\cw(G)$ of $G$ is the maximum of the commutator lengths of elements of its derived subgroup $[G,G]$.  Notice that, by a compactness argument, the clause `$G^*$ is perfect' is equivalent to `$G$ is perfect and $G$ has finite commutator width $\cw(G)$'. By \cite[Theorem 3.5]{abscon}, every absolutely connected group is perfect.

Modifying the argument used in the proof of Proposition \ref{prop:x}(2), we obtain the following variant of Proposition \ref{prop:x}(2).




\begin{proposition}\label{prop:xx}
Consider Situation $(00)$.
\begin{enumerate} 
\item[(i)] Assume that $\Hom\left({G^*}^{000}_B,A_0/\left(A^*_1 \cap A_0\right)\right)$ is trivial. 
Then the induced 2-cocycle 
\[\overline{h^*}_{|{G^*}^{000}_B \times {G^*}^{000}_B} \colon {G^*}^{000}_B \times {G^*}^{000}_B \longrightarrow A_0/(A^*_1 \cap A_0)\] is non-split if and only if $\widetilde{G^*}^{000}_B \cap A^* \not\subseteq A^*_1$.

\item[(ii)] Assume that $G$ is absolutely connected of finite commutator width (and so ${G^*}^{000}_B={G^*}^{00}_B=G^*$ is perfect). Then the induced 2-cocycle $\overline{h^*}_{|{G^*}^{00}_B \times {G^*}^{00}_B} $ 
is non-split if and only if $\widetilde{G^*}^{00}_B \cap A^* \not\subseteq A^*_1$. These two equivalent conditions are also equivalent to the fact that the induced 2-cocycle $\overline{h^*} \colon G^* \times G^* \to A^*/A^*_1$ is non-split.
\end{enumerate}
\end{proposition}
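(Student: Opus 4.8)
The plan is to imitate the proof of Proposition \ref{prop:x}(2), isolating exactly where the assumption ${G^*}^{000}_B = {G^*}^{00}_B$ was used there and replacing it with the new hypotheses. For part (i), the direction $(\Rightarrow)$ is unchanged: it is literally Claim 2 from the proof of Theorem \ref{thm:main}, applied with $H := A^*_1 \cap \widetilde{G^*}^{000}_B$, which shows that if $\widetilde{G^*}^{000}_B \cap A^* \subseteq A^*_1$ then $A_0 \cap \widetilde{G^*}^{000}_B \subseteq H$ and hence the induced 2-cocycle on ${G^*}^{000}_B$ is split via a $B$-invariant function. For $(\Leftarrow)$, I would run the same argument as in Proposition \ref{prop:x}(2): assume $\overline{h^*}_{|{G^*}^{000}_B \times {G^*}^{000}_B}$ is split, say via a $B$-invariant $f\colon {G^*}^{000}_B \to A_0/(A^*_1 \cap A_0)$ with $f(e) = 0$; identify $A_0/(A^*_1 \cap A_0)$ with $A_0/A^*_1$ by the obvious $B$-invariant isomorphism; form the extension $H$ of ${G^*}^{000}_B$ by $A^*/A^*_1$ via $\overline{h^*}_{|{G^*}^{000}_B \times {G^*}^{000}_B}$ and the product $K = A^*/A^*_1 \times {G^*}^{000}_B$; and use that $\Phi(a,g) = (a + f(g), g)$ is a $B$-invariant isomorphism $H \to K$. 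In Proposition \ref{prop:x}(2) the splitness hypothesis was only assumed for a $B$-invariant function, and that is also what I have here, so nothing changes — the assumption ${G^*}^{000}_B = {G^*}^{00}_B$ was \emph{not} actually needed in part (2) of Proposition \ref{prop:x}. Wait: re-reading, indeed Proposition \ref{prop:x}(2) carries no such hypothesis. So where does $\Hom({G^*}^{000}_B, A_0/(A^*_1 \cap A_0))$ triviality enter? It is needed to pass from "split via a $B$-invariant function" (which is what Claim 2 delivers, and what Proposition \ref{prop:x}(2) discusses) to "split" tout court: a priori the 2-cocycle could be split via some non-$B$-invariant function while being non-split via every $B$-invariant one. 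Two functions splitting the same 2-cocycle differ by a homomorphism ${G^*}^{000}_B \to A_0/(A^*_1 \cap A_0)$; if all such homomorphisms are trivial, the two notions of splitness coincide. So the proof of (i) is: Proposition \ref{prop:x}(2) gives the equivalence with "$B$-invariantly split", and the $\Hom$-triviality upgrades this to plain "split".

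For part (ii), I would first record that absolute connectedness of $G$ of finite commutator width forces ${G^*}^{000}_B = {G^*}^{00}_B = G^*$ (by Definition \ref{def:abscon} and \cite[Theorem 3.5]{abscon}) and that $G^*$ is perfect, so $\Hom(G^*, A_0/(A^*_1 \cap A_0)) = \Hom(G^*, \text{abelian})$ is trivial. Hence part (i) applies with ${G^*}^{000}_B = G^*$, giving the first claimed equivalence. It remains to connect $\overline{h^*}_{|G^* \times G^*}$ (valued in $A_0/(A^*_1 \cap A_0) \cong A_0/A^*_1$, after the identification) with $\overline{h^*}\colon G^* \times G^* \to A^*/A^*_1$: the point is that these are the same 2-cocycle viewed with different codomains, $A_0/A^*_1 \hookrightarrow A^*/A^*_1$ being an inclusion (here one uses $\im(h) \subseteq A_0$). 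One direction, non-split as $A^*/A^*_1$-valued $\Rightarrow$ non-split as $A_0/A^*_1$-valued, is trivial. For the converse I need: if $\overline{h^*}$ is split via some $f\colon G^* \to A^*/A^*_1$, then it is split via a function valued in $A_0/A^*_1$. Since $G^*$ is perfect and the codomain is abelian, splitness via $f$ means $\overline{h^*}$ is a coboundary; the subgroup of $A^*/A^*_1$ generated by $\im(\overline{h^*})$ is the image of $A_0$, i.e. lies in $A_0/A^*_1$, and one can correct $f$ modulo a homomorphism — but $\Hom(G^*, A^*/A^*_1)$ need not be trivial. The cleaner route: if $\overline{h^*}$ is a coboundary in $A^*/A^*_1$, push to $(A^*/A^*_1)/(A_0/A^*_1) \cong A^*/(A^*_1 + A_0)$, where $\overline{h^*}$ becomes $0$; so $f$ composed with this quotient is a homomorphism $G^* \to A^*/(A^*_1 + A_0)$, which is trivial by perfectness, i.e. $\im(f) \subseteq A_0/A^*_1$ already. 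Hence $\overline{h^*}$ is split via an $A_0/A^*_1$-valued function, which is the first condition.

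The main obstacle I anticipate is bookkeeping around the three near-identical codomains $A_0/(A^*_1 \cap A_0)$, $A_0/A^*_1$, and $A^*/A^*_1$ and the $B$-invariant identifications between the first two — one must be careful that the function realizing splitness in one description genuinely transfers, and that "$B$-invariant" is preserved under these identifications (it is, since the isomorphism $A_0/(A^*_1 \cap A_0) \to A_0/A^*_1$ is $\emptyset$-definable, in fact it comes from $A_0 \subseteq \dcl(B)$). The other delicate point is the perfectness input in (ii): I should state explicitly that finite commutator width is what makes "$G^*$ perfect" a first-order–compact consequence of "$G$ perfect", since $\Hom$-triviality in part (i) is really a statement about $G^*$, not $G$. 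Everything else is a transcription of the arguments already given for Proposition \ref{prop:x}.
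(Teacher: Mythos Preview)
Your treatment of part (i) is correct and matches the paper's approach: Proposition~\ref{prop:x}(2) already gives the equivalence with ``non-split via $B$-invariant functions'', and the $\Hom$-triviality hypothesis forces any two splittings to coincide, hence any splitting is automatically $B$-invariant.

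Part (ii), however, has a real gap. You write that ``part (i) applies with ${G^*}^{000}_B = G^*$, giving the first claimed equivalence''. But part (i) yields
\[
\overline{h^*}_{|G^*\times G^*}\ \text{non-split} \iff \widetilde{G^*}^{000}_B \cap A^* \not\subseteq A^*_1,
\]
whereas the first equivalence in (ii) is
\[
\overline{h^*}_{|G^*\times G^*}\ \text{non-split} \iff \widetilde{G^*}^{00}_B \cap A^* \not\subseteq A^*_1.
\]
The equality ${G^*}^{000}_B = {G^*}^{00}_B$ does \emph{not} imply $\widetilde{G^*}^{000}_B = \widetilde{G^*}^{00}_B$; indeed, producing examples where these differ is the whole point of the paper. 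So from (i) you only get the implication ``non-split $\Rightarrow \widetilde{G^*}^{00}_B \cap A^* \not\subseteq A^*_1$'' (via $\widetilde{G^*}^{000}_B \subseteq \widetilde{G^*}^{00}_B$). The converse, i.e.\ ``$\widetilde{G^*}^{00}_B \cap A^* \not\subseteq A^*_1 \Rightarrow$ non-split'', is missing from your argument. Your correct proof that the $A_0/A^*_1$-valued and $A^*/A^*_1$-valued versions of the cocycle are equi-split (via perfectness of $G^*$) does not help here.

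The paper closes this gap with Beth's theorem for types (Fact~\ref{fact:Beth for types}). The point is that uniqueness of the splitting function --- which you correctly obtained from perfectness --- is an \emph{implicit} definition in the sense of Beth, so the graph of $f$ (modulo $A^*_1$) is in fact $B$-type-definable, not merely $B$-invariant. With $f$ type-definable, the isomorphism $\Phi$ from the proof of Proposition~\ref{prop:x}(2) becomes type-definable, and one can re-run that argument with $K^{00}_B$ in place of $K^{000}_B$ to conclude $\widetilde{G^*}^{00}_B \cap A^* \subseteq A^*_1$.
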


The assumption that $\Hom\left({G^*}^{000}_B,A_0/\left(A^*_1 \cap A_0\right)\right)$ is trivial is satisfied in many cases. For instance, this assumption holds when ${G^*}^{000}_B$ is a perfect group or a divisible group, because $A_0/\left(A^*_1 \cap A_0\right)$ is a finitely generated abelian group. For example, we know that ${G^*}^{000}_B=G^*$ is perfect when $G$ is absolutely connected of finite commutator width, and ${G^*}^{000}_B$ is divisible when $G$ is a finitely generated abelian group \cite[Proposition 3.7]{bcg}.

\begin{proof} (i) $(\Rightarrow)$ follows immediately from $(\Rightarrow)$ in Proposition \ref{prop:x}(2). 

$(\Leftarrow)$. Suppose for a contradiction that $(\Leftarrow)$ does not hold. We get  $\overline{h^*}_{|{G^*}^{000}_B \times {G^*}^{000}_B}(x,y)=f(x)+f(y)-f(xy)$ for some function $f \colon {G^*}^{000}_B \to A_0/A^*_1$ satisfying $f(e)=0+A^*_1$. To apply $(\Leftarrow)$ from Proposition \ref{prop:x}(2), it is enough to check that $f$ is $B$-invariant. For this it is enough to show that $f$ is unique. So, suppose that $\overline{h}(x,y)=f_1(x)+f_1(y)-f_1(xy)$ for some function $f_1$.  Then $(f-f_1)(xy)=(f-f_1)(x)+(f-f_1)(y)$, so $f-f_1$ belongs to $\Hom\left({G^*}^{000}_B,A_0/A^*_1\right)$ which is trivial (this is the only place where this assumption is used). Thus, we get that $f=f_1$, so we are done.\\[3mm]
(ii) We have that ${G^*}^{000}_B={G^*}^{00}_B=G^*$ is perfect. The implication $(\Rightarrow)$ follows from the implication $(\Rightarrow)$ in point (i).

$(\Leftarrow)$. 
%
The idea is to apply the proof of $(\Leftarrow)$ in Proposition \ref{prop:x}(2), noticing that by Beth's definability theorem, the function $f$ considered in this proof is a $B$-type-definable subset of $G^* \times \left( A^*/A^*_1 \right)$ (i.e. its preimage in $G^* \times A^*$ under the map $(g,a) \mapsto (g,a+A^*_1)$ is type-definable over $B$).

Denote by $L$ the language of $\G$, and by $L_B$ its expansion by the constants from $B$.
Let $L_1 =L_B \cup \{ f_1 \}$ and $L_2 =L_B \cup\{ f_2 \}$, where $f_1$ and $f_2$ are two new distinct function symbols. Let $A(x)$ and $G(y)$ be formulas in $L$ defining $A$ and $G$ in $\G$, respectively. Let $A_1(x)$ be a type (in $L_B$) defining $A^*_1$ in $\G^*$. We will use the fact that $h$ is a function definable in $\G$ in the language $L_B$.

For $i\in \{1,2\}$ we define a theory $T_i$ in the language $L_i$ as the theory of $\G$ in $L_B$ together with the sentence \[(\forall x)\left( G(x) \rightarrow A(f_i(x)\right)\] and the following collection of formulas in $L_i$
\[(\forall x,y)\left[ (G(x) \wedge G(y)) \rightarrow \varphi(h(x,y)-f_i(xy)+f_i(x)+f_i(y))\right]\]
with $\varphi(z)$ ranging over all formulas from $A_1(z)$. 
This extra collection of formulas says that in any model $M$ of $T_i$, the induced 2-cocycle $\overline{h}^M \colon G(M) \times G(M) \to A(M)/A_1(M)$ coincides with the function $\overline{f_i}^M(xy)-\overline{f_i}^M(x)-\overline{f_i}^M(y)$, where $\overline{f_i}^M \colon G(M) \to A(M)/A_1(M)$ is given by $\overline{f_i}^M(x)=f_i^M(x) +A_1(M)$ (where $f_i^M$ is the interpretation of $f_i$ in $M$).

It follows easily that whenever $M$ is a model of $T_1 \cup T_2$, then the difference $\overline{f_1}^M-\overline{f_2}^M$ belongs to $\Hom\left( G(M), A(M)/A_1(M)\right)$, which is trivial because $G(M)$ is perfect and $A(M)/A_1(M)$ is abelian. So $\overline{f_1}^M=\overline{f_2}^M$.
Therefore, $T_1 \cup T_2 \models p_1(x,y) \equiv p_2(x,y)$, where $p_i(x,y)$ is the type 
\[G(x) \wedge A_1(y-f_i(x))\]
in the language $L_i$.

Using Beth's theorem (i.e. Fact \ref{fact:Beth for types}), we get a type $p(x,y)$ in $L_B$ such that 
\[T_1 \models p_1(x,y) \equiv p(x,y).\]

Suppose for a contradiction that $\overline{h^*} \colon G^*\times G^* \to A^*/A^*_1$ is split. By Remark \ref{rem:trivial action}(iii), this means that $\overline{h^*}(x,y)=f(x)+f(y)-f(xy)$ for some function $f \colon G^* \to A^*/A^*_1$. We can write $f(x)=f'(x)+A^*_1$ for some function $f' \colon G^* \to A^*$.

Expanding $\G^*$ by $f'$ (prolongated arbitrarily outside $G^*$) and treating $f'$ as the interpretation of the function symbol $f_1$, $\G^*$ becomes a model of $T_1$. We conclude that $f(x)=y+A^*_1$ if and only if
$\G^* \models p(x,y)$.

Having that $f$ is type-definable over $B$, we get that $\Phi$ (defined in the proof of \ref{prop:x}(2)) is also type-definable over $B$. Hence, modifying slightly the rest of the proof of \ref{prop:x}(2), one easily gets $\widetilde{G^*}^{00}_B \cap A^* \subseteq A^*_1$, which is a contradiction.
\end{proof}


Now, we prove Conjecture \ref{con:new 1} in two special cases.

\begin{proposition}\label{prop:xxx}
\begin{enumerate}
\item[(i)] Consider Situation $(00)$. Assume that $G$ is absolutely connected of finite commutator width. Then, $\widetilde{G^*}^{00}_B \cap A^*\subseteq A^*_1$ iff $\widetilde{G^*}^{000}_B \cap A^*\subseteq A^*_1$.
\item[(ii)] Consider Situation $(0)$. Let $A^*_1$ be a bounded index subgroup of $A^*$ which is invariant over $B$, invariant under the action of $G^*$, and which is an intersection of definable subgroups of finite index. Assume ${G^*}^{000}_B = {G^*}^{00}_B$. Then
$\widetilde{G^*}^{00}_B \cap A^*\subseteq A^*_1$ iff $\widetilde{G^*}^{000}_B \cap A^*\subseteq A^*_1$.
\end{enumerate}
\end{proposition}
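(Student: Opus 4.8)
In both parts the implication $(\Rightarrow)$ is immediate from $\widetilde{G^*}^{000}_B\subseteq \widetilde{G^*}^{00}_B$, so the content is the converse $(\Leftarrow)$. The plan is to reduce each statement to the splitness analysis already carried out in Propositions \ref{prop:x}(2) and \ref{prop:xx}. Concretely, I would argue by contrapositive: assume $\widetilde{G^*}^{00}_B\cap A^*\not\subseteq A^*_1$ and show $\widetilde{G^*}^{000}_B\cap A^*\not\subseteq A^*_1$.

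For part (i): since $G$ is absolutely connected of finite commutator width we have ${G^*}^{000}_B={G^*}^{00}_B=G^*$ perfect, so $\Hom({G^*}^{000}_B, A_0/(A^*_1\cap A_0))$ is trivial (the target is finitely generated abelian). By Proposition \ref{prop:xx}(ii), the hypothesis $\widetilde{G^*}^{00}_B\cap A^*\not\subseteq A^*_1$ is equivalent to non-splitness of the induced 2-cocycle $\overline{h^*}\colon G^*\times G^*\to A^*/A^*_1$, which (again by \ref{prop:xx}(ii), and since $G^*={G^*}^{000}_B$) is equivalent to non-splitness of $\overline{h^*}_{|{G^*}^{000}_B\times{G^*}^{000}_B}$. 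Now apply Proposition \ref{prop:xx}(i): since the relevant $\Hom$ group is trivial, this non-splitness is equivalent to $\widetilde{G^*}^{000}_B\cap A^*\not\subseteq A^*_1$, which is exactly what we want. So part (i) is essentially a concatenation of the two equivalences in \ref{prop:xx}, with the perfectness taking care of the uniqueness/invariance issues for the splitting function, as in the Beth-theorem argument used there.

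For part (ii): here we do not have perfectness, so we cannot rely on triviality of $\Hom$. Instead I would exploit the extra hypothesis that $A^*_1$ is an intersection of definable finite-index subgroups of $A^*$, i.e.\ $A^*_1\supseteq {A^*}^0$ computed in the pure group $(A^*,+)$, and that ${G^*}^{000}_B={G^*}^{00}_B$. The key point is that passing to $A^*/A^*_1$, the quotient becomes profinite (being a quotient of $A^*/{A^*}^0$), and the induced action of $G^*$ is trivial on $A_0$'s image. I would set $H:=\widetilde{G^*}^{00}_B\cap A^*$ and run the argument from the penultimate paragraph of the proof of Theorem \ref{thm:main} in reverse: assuming $\widetilde{G^*}^{000}_B\cap A^*\subseteq A^*_1$, I want to force $H\subseteq A^*_1$. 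Using Claim 1 and Claim 2 from the proof of Theorem \ref{thm:main} (with this $H$, which is type-definable over $B$ and $G^*$-invariant since $\widetilde{G^*}^{00}_B\trianglelefteq\widetilde{G^*}$), together with ${G^*}^{00}_B={G^*}^{000}_B$, one gets that $\overline{h^*}_{|{G^*}^{00}_B\times{G^*}^{00}_B}\colon {G^*}^{00}_B\times{G^*}^{00}_B\to A_0/(H\cap A_0)$ is split via a $B$-invariant function whenever $A_0\cap \widetilde{G^*}^{000}_B\subseteq H$; the point of the profiniteness of $A^*/A^*_1$ is that $(H\cap A^*)/(\widetilde{G^*}^{000}_B\cap A^*)$, being type-definable inside a profinite group and of index equal to an index inside a finitely-generated-by-$A_0$ torsion situation, must actually be finite, hence closed, and one then pushes down through the finite-index definable subgroups defining $A^*_1$ to conclude $H\subseteq A^*_1$.

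The main obstacle I anticipate is part (ii): making precise the ``descent'' from $A^*/A^*_1$ (profinite) to the statement $H\subseteq A^*_1$ without assuming countability of the language or invoking the Borel-cardinality machinery of \cite{Kr_Pi_So, Ka_Mi_Si}. The delicate step is controlling the quotient $(\widetilde{G^*}^{00}_B\cap A^*)/A^*_1$: one needs to show it is trivial rather than merely a ``small'' subgroup of the profinite group $A^*/A^*_1$, and this requires combining the type-definability of $\widetilde{G^*}^{00}_B\cap A^*$ with the fact that, modulo $A_0$, the component $\widetilde{G^*}^{00}_B\cap A^*$ is trapped between $A^*_1$-related definable subgroups — here is where the hypothesis that $A^*_1$ is an intersection of definable \emph{finite-index} subgroups does the real work, allowing a finite-index (hence compactness) argument at each stage in place of the torsion-free/countable-compact contradiction used in Theorem \ref{thm:main}.
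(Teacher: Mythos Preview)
Your treatment of part (i) is correct and is exactly the paper's argument, merely phrased as a contrapositive: the paper assumes $\widetilde{G^*}^{000}_B\cap A^*\subseteq A^*_1$, applies Proposition~\ref{prop:xx}(i) to get splitness of $\overline{h^*}_{|{G^*}^{000}_B\times{G^*}^{000}_B}$, and since ${G^*}^{000}_B={G^*}^{00}_B$, Proposition~\ref{prop:xx}(ii) then yields $\widetilde{G^*}^{00}_B\cap A^*\subseteq A^*_1$.

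For part (ii), however, your sketch does not close. The claim that $(\widetilde{G^*}^{00}_B\cap A^*)/(\widetilde{G^*}^{000}_B\cap A^*)$ ``must actually be finite'' because it is ``type-definable inside a profinite group'' is not justified: type-definability alone gives closedness, not finiteness, and you have no a priori control on $\widetilde{G^*}^{00}_B\cap A^*$ modulo $A^*_1+A_0$ (Claim~1 of Theorem~\ref{thm:main} only bounds the $000$-component, since $A_0$ is not type-definable). The route through Claims~1 and~2 of Theorem~\ref{thm:main} with $H:=\widetilde{G^*}^{00}_B\cap A^*$ yields splitness over $A_0/(H\cap A_0)$, but this does not by itself force $H\subseteq A^*_1$.

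The paper's argument is more direct and uses a different key ingredient you are missing. After first arranging (via a short claim) that $A^*_1=\bigcap_i A_i$ with each $A_i$ a $B$-definable, $G^*$-invariant subgroup of $A^*$ of finite index, one passes to the quotient $\widetilde{G^*}/A_i$. Since ${G^*}^{000}_B={G^*}^{00}_B$ and $A^*/A_i$ is \emph{finite}, one gets $\left[(\widetilde{G^*}/A_i)^{00}_B:(\widetilde{G^*}/A_i)^{000}_B\right]<\aleph_0$. The crucial step is then \cite[Lemma~3.9]{gis}: a finite index between $G^{00}_B$ and $G^{000}_B$ forces equality. Hence $\widetilde{G^*}^{00}_B/A_i=\widetilde{G^*}^{000}_B/A_i$, so $\widetilde{G^*}^{00}_B\cap A^*\subseteq(\widetilde{G^*}^{000}_B\cap A^*)+A_i\subseteq A_i$; intersecting over $i$ gives $\widetilde{G^*}^{00}_B\cap A^*\subseteq A^*_1$. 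Your final paragraph gestures at a ``finite-index argument at each stage'', which is the right instinct, but the decisive lemma from \cite{gis} is what actually makes it go through.
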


\begin{proof}
(i) Only the implication $(\Leftarrow)$ requires an explanation. Assume $\widetilde{G^*}^{000}_B \cap A^*\subseteq A^*_1$. 
%
%
By Proposition \ref{prop:xx}(i), this implies that $\overline{h} \colon {G^*}^{000}_B \times {G^*}^{000}_B \to A_0/(A^*_1 \cap A_0)$ is split. Thus, since ${G^*}^{000}_B={G^*}^{00}_B$, Proposition \ref{prop:xx}(ii) gives us that $\widetilde{G^*}^{00}_B \cap A^* \subseteq A^*_1$.

(ii) Once again only the implication $(\Leftarrow)$ requires a proof. We start from the following claim.

\begin{claim}
The group $A^*_1$ can be written as $\bigcap_{i \in I} A_i$, where all $A_i$'s are $B$-definable (in $\G^*$) subgroups of $A^*$ of finite index, invariant under the action of $G^*$ and so normal in $\widetilde{G^*}$.
\end{claim}


\begin{proof}[Proof of the claim]
We can write $A^*_1= \bigcap_{i \in I} C_i$, where all $C_i$'s are definable subgroups of $A^*$ of finite index. Since $A^*_1\leq C_i$, $A^*_1$ is invariant under $G^*$, and $A^*/A^*_1$ is of bounded size, we get that the orbit of the set $C_i$ under $G^*$ is of bounded size, and so the setwise stabilizer $\Stab_{G^*}(C_i)$ of $C_i$ in $G^*$ is a definable subgroup of bounded index. Thus, $\left[G^*:\Stab_{G^*}(C_i)\right]<\aleph_0$. Define $B_i$ as the intersection of all $g\cdot C_i$ for $g$ ranging over $G^*$. We conclude that $A^*_1=\bigcap_{i \in I} B_i$, and all $B_i$'s are definable subgroups of $A^*$ of finite index, invariant under $G^*$.

Since $A^*_1\leq B_i$, $A^*_1$ is invariant over $B$, and $A^*/A^*_1$ is of bounded size, we get that the orbit of the definable group $B_i$ under $\aut(\G^*/B)$ is of bounded size, and so it is finite. Let $A_i$ be the intersection of all $f\left[B_i\right]$ for $f$ ranging over $\aut(\G^*/B)$. We conclude that $A^*_1=\bigcap_{i \in I} A_i$, and all $A_i$'s are $B$-definable subgroups of $A^*$ of finite index, invariant under $G^*$.
\end{proof}

The quotient $\widetilde{G^*}/A_i$ can and will be freely identified with the extension of $G^*$ by $A^*/A_i$ defined by means of the 2-cocycle induced by $h^*$ and the action of $G^*$ on $A^*/A_i$ induced by the action of $G^*$ on $A^*$. Then, $\pi\colon \widetilde{G^*}/A_i \to G^*$ denotes the projection on the second coordinate.

We see that $\left(\widetilde{G^*}/A_i\right)^{00}_B=\widetilde{G^*}^{00}_B/A_i$ and $\left(\widetilde{G^*}/A_i \right)^{000}_B=\widetilde{G^*}^{000}_B/A_i$. Since ${G^*}^{000}_B = {G^*}^{00}_B$, we conclude that
\[\pi \left[ \left(\widetilde{G^*}/A_i\right)^{00}_B \right]= {G^*}^{00}_B={G^*}^{000}_B=\pi \left[\left(\widetilde{G^*}/A_i\right)^{000}_B\right].\]
Therefore, $(A^*/A_i)  \left(\widetilde{G^*}/A_i\right)^{000}_B \geq \left(\widetilde{G^*}/A_i\right)^{00}_B$. Using the assumption that $A^*/A_i$ is finite, we conclude that 
\[\left[ \left(\widetilde{G^*}/A_i\right)^{00}_B : \left(\widetilde{G^*}/A_i\right)^{000}_B \right]<\aleph_0.\]
By virtue of \cite[Lemma 3.9]{gis}, this implies that $\left(\widetilde{G^*}/A_i\right)^{00}_B = \left(\widetilde{G^*}/A_i\right)^{000}_B$. Therefore, $\widetilde{G^*}^{00}_B \cap A^* \subseteq \left(\widetilde{G^*}^{000}_B \cap A^*\right) + A_i$. Since we have assumed that 
$\widetilde{G^*}^{000}_B \cap A^* \subseteq A^*_1 \subseteq A_i$, we conclude that $\widetilde{G^*}^{00}_B \cap A^* \subseteq A_i$. As this holds for any $i \in I$, we get $\widetilde{G^*}^{00}_B \cap A^* \subseteq A^*_1$.
\end{proof}


In the remaining part of  this section, we analyze the quotient $\widetilde{G^*}^{00}_B/\widetilde{G^*}^{000}_B$.

\begin{proposition}\label{prop:quotient}
Consider Situation $(0)$. Suppose that ${G^*}^{000}_B={G^*}^{00}_B$. Then $\widetilde{G^*}^{00}_B/\widetilde{G^*}^{000}_B$ is isomorphic to $\left(\widetilde{G^*}^{00}_B \cap A^*\right)/\left(\widetilde{G^*}^{000}_B \cap A^*\right)$, and so it is abelian. More precisely, there exists a $B$-invariant isomorphisms between these groups.
\end{proposition}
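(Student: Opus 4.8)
The plan is to analyze the projection $\pi\colon\widetilde{G^*}\to G^*$ and restrict it to $\widetilde{G^*}^{00}_B$. Since $\pi\left[\widetilde{G^*}^{00}_B\right]={G^*}^{00}_B={G^*}^{000}_B=\pi\left[\widetilde{G^*}^{000}_B\right]$ and both $\widetilde{G^*}^{00}_B$ and $\widetilde{G^*}^{000}_B$ are normal in $\widetilde{G^*}$ with $\widetilde{G^*}^{000}_B\leq\widetilde{G^*}^{00}_B$, the induced map on the quotient $\widetilde{G^*}^{00}_B/\widetilde{G^*}^{000}_B\to{G^*}^{00}_B/{G^*}^{000}_B=\{e\}$ is trivial. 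Concretely, this says $\widetilde{G^*}^{00}_B\subseteq\widetilde{G^*}^{000}_B\cdot\ker(\pi)=\widetilde{G^*}^{000}_B\cdot A^*$, so every element of $\widetilde{G^*}^{00}_B$ is congruent modulo $\widetilde{G^*}^{000}_B$ to an element of $\widetilde{G^*}^{00}_B\cap A^*$. This gives a surjection
\[
\left(\widetilde{G^*}^{00}_B\cap A^*\right)\longrightarrow \widetilde{G^*}^{00}_B/\widetilde{G^*}^{000}_B,
\]
whose kernel is exactly $\left(\widetilde{G^*}^{00}_B\cap A^*\right)\cap\widetilde{G^*}^{000}_B=\widetilde{G^*}^{000}_B\cap A^*$ (using $\widetilde{G^*}^{000}_B\leq\widetilde{G^*}^{00}_B$). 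By the first isomorphism theorem this yields the stated isomorphism
\[
\widetilde{G^*}^{00}_B/\widetilde{G^*}^{000}_B\;\cong\;\left(\widetilde{G^*}^{00}_B\cap A^*\right)/\left(\widetilde{G^*}^{000}_B\cap A^*\right),
\]
and since $A^*$ is abelian, so is the quotient.

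The remaining points are routine but worth spelling out. First, I must check that the surjection above is well-defined as a group homomorphism: the inclusion $\widetilde{G^*}^{00}_B\cap A^*\hookrightarrow\widetilde{G^*}^{00}_B$ composed with the quotient map $\widetilde{G^*}^{00}_B\to\widetilde{G^*}^{00}_B/\widetilde{G^*}^{000}_B$ is clearly a homomorphism; surjectivity is precisely the containment $\widetilde{G^*}^{00}_B\subseteq(\widetilde{G^*}^{00}_B\cap A^*)\cdot\widetilde{G^*}^{000}_B$ established from the equality of $\pi$-images, together with the fact that $\widetilde{G^*}^{00}_B\cap A^*$ and $\widetilde{G^*}^{000}_B$ both lie in $\widetilde{G^*}^{00}_B$. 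Second, for the $B$-invariance claim: every map in sight (the inclusion of $A^*$, the projection $\pi$, and the quotient maps) is $B$-invariant because all the subgroups $\widetilde{G^*}^{00}_B$, $\widetilde{G^*}^{000}_B$, $A^*$ are invariant under $\aut(\G^*/B)$ (the first two by definition of these components, $A^*$ because it is $\emptyset$-definable). Hence the induced isomorphism commutes with the action of $\aut(\G^*/B)$, so it is $B$-invariant in the required sense.

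I do not anticipate a serious obstacle here; the only mild subtlety is making sure that the equality $\pi\left[\widetilde{G^*}^{00}_B\right]=\pi\left[\widetilde{G^*}^{000}_B\right]$ is correctly invoked — this uses that $\pi$ maps $\widetilde{G^*}^{00}_B$ onto ${G^*}^{00}_B$ (since $\pi$ is a definable surjection and the image of a bounded-index type-definable subgroup is bounded-index type-definable, so it contains ${G^*}^{00}_B$, while the reverse inclusion is automatic) and likewise onto ${G^*}^{000}_B$ for the $^{000}$ component, together with the hypothesis ${G^*}^{000}_B={G^*}^{00}_B$. This is exactly the observation "${G^*}^{000}_B=\pi\left[\widetilde{G^*}^{000}_B\right]$" already used in the proof of Theorem \ref{thm:main}, applied there verbatim, and the analogous statement for $^{00}$ follows the same way. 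Everything else is a direct application of the first isomorphism theorem together with bookkeeping of invariance under $\aut(\G^*/B)$.
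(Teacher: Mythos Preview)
Your proof is correct and in fact takes a cleaner route than the paper's. The paper constructs the isomorphism in the opposite direction: it chooses for each $g\in{G^*}^{000}_B$ an element $a_g$ with $(a_g,g)\in\widetilde{G^*}^{000}_B$, defines $\Phi\bigl((a,g)\widetilde{G^*}^{000}_B\bigr)=a-a_g+\bigl(\widetilde{G^*}^{000}_B\cap A^*\bigr)$, and then must verify by explicit computation that $\Phi$ is well-defined, a homomorphism, and injective. The homomorphism and injectivity checks rely on Remark~\ref{rem:trivial action}(ii) (that ${G^*}^{000}_B$ acts trivially on $A^*/(\widetilde{G^*}^{000}_B\cap A^*)$) to control the twisted terms coming from the action and the cocycle. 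Your argument bypasses all of this: by mapping in the reverse direction via the inclusion $\widetilde{G^*}^{00}_B\cap A^*\hookrightarrow\widetilde{G^*}^{00}_B$ followed by the quotient, the homomorphism property is automatic, and the result is just an instance of the second isomorphism theorem once you have $(\widetilde{G^*}^{00}_B\cap A^*)\cdot\widetilde{G^*}^{000}_B=\widetilde{G^*}^{00}_B$. The paper's explicit $\Phi$ has the mild advantage of displaying the inverse map concretely, but your approach is shorter and avoids invoking the action lemma. One small presentational point: your phrase ``the reverse inclusion is automatic'' for $\pi[\widetilde{G^*}^{00}_B]={G^*}^{00}_B$ is slightly garbled---what you actually need (and have) is $\pi[\widetilde{G^*}^{00}_B]\subseteq{G^*}^{00}_B={G^*}^{000}_B=\pi[\widetilde{G^*}^{000}_B]$, the first inclusion coming from $\widetilde{G^*}^{00}_B\leq\pi^{-1}[{G^*}^{00}_B]$ and the last equality from the paper's earlier observation; that alone suffices for $\widetilde{G^*}^{00}_B\subseteq\widetilde{G^*}^{000}_B\cdot A^*$.
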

\begin{proof}
For each $g \in {G^*}^{000}_B = {G^*}^{00}_B$ choose $a_g \in A^*$ so that $(a_g,g) \in \widetilde{G^*}^{000}_B$. Using the exact sequence (\ref{eq:3}), one easily gets:
\[\begin{array}{ll}
\widetilde{G^*}^{000}_B \cap \left(A^* \times \{g\}\right) = \left(a_g + \left(\widetilde{G^*}^{000}_B \cap A^*\right)\right) \times \{ g \}, \\ \widetilde{G^*}^{00}_B \cap (A^* \times \{g\}) = \left(a_g + \left(\widetilde{G^*}^{00}_B \cap A^*\right)\right) \times \{ g \}.
\end{array}\]

We will show that the formula
\[\Phi\left((a,g)\widetilde{G^*}^{000}_B\right)= a-a_g +\left(\widetilde{G^*}^{000}_B \cap A^*\right)\]
is a well definition of a $B$-invariant isomorphism \[\Phi \colon \widetilde{G^*}^{00}_B/\widetilde{G^*}^{000}_B \to \left(\widetilde{G^*}^{00}_B \cap A^*\right)/\left(\widetilde{G^*}^{000}_B \cap A^*\right).\]


We check that $\Phi$ is well-defined. Consider any $(a_1,g_1), (a_2,g_2) \in \widetilde{G^*}^{00}_B$ such that \[ (a_1,g_1) (a_2,g_2)^{-1} \in \widetilde{G^*}^{000}_B.\] Our goal is to show that $(a_1-a_{g_1})-(a_2 -a_{g_2}) \in \widetilde{G^*}^{000}_B \cap A^*$. Since $(a_{g_1},g_1),(a_{g_2},g_2) \in \widetilde{G^*}^{000}_B$, we have: 
\[\begin{array}{ll}
\left(a_1-g_1g_2^{-1}\cdot a_2-g_1\cdot h\left(g_2^{-1},g_2\right) +h \left(g_1,g_2^{-1}\right),g_1g_2^{-1}\right)= (a_1,g_1) (a_2,g_2)^{-1} \in \widetilde{G^*}^{000}_B,\\
\left(a_{g_1}-g_1g_2^{-1}\cdot a_{g_2} -g_1\cdot h\left(g_2^{-1},g_2\right) +h \left(g_1,g_2^{-1}\right),g_1g_2^{-1}\right)=(a_{g_1},g_1)(a_{g_2},g_2)^{-1} \in \widetilde{G^*}^{000}_B.
\end{array}\]
This implies
\[(a_1-a_{g_1})-g_1g_2^{-1}\cdot (a_2-a_{g_2}) \in \widetilde{G^*}^{000}_B \cap A^*.\]
On the other hand, by Remark \ref{rem:trivial action}(ii), 
\[g_1g_2^{-1}\cdot (a_2-a_{g_2}) -(a_2-a_{g_2}) \in \widetilde{G^*}^{000}_B \cap A^*.\]
So, we conclude that $(a_1-a_{g_1})-(a_2 -a_{g_2}) \in \widetilde{G^*}^{000}_B \cap A^*$.

It is easy to see that $\Phi$ is invariant over $B$ and surjective. Hence, it remains to check that it is an injective homomorphism.

First, we check that $\Phi$ is a homomorphism. Take any $(a_1,g_1),(a_2,g_2) \in \widetilde{G^*}^{00}_B$. Then,
\[\Phi\left((a_1,g_1)(a_2,g_2)\widetilde{G^*}^{000}_B\right)= a_1+g_1\cdot a_2 +h(g_1,g_2)- a_{g_1g_2} +\left(\widetilde{G^*}^{000}_B \cap A^*\right)\] and \[\Phi\left((a_1,g_1)\widetilde{G^*}^{000}_B\right)+\Phi\left((a_2,g_2) \widetilde{G^*}^{000}_B\right) = a_1-a_{g_1}+ a_2- a_{g_2} + \left(\widetilde{G^*}^{000}_B \cap A^*\right).\] So, our goal is to show that
\[\left(a_{g_1}+g_1\cdot a_2 +h(g_1,g_2)-a_{g_1g_2}\right) - (a_2-a_{g_2}) \in \left(\widetilde{G^*}^{000}_B \cap A^*\right).\]

Since $(a_{g_1}+g_1\cdot a_{g_2} +h (g_1,g_2),g_1g_2)=(a_{g_1},g_1)(a_{g_2},g_2) \in \widetilde{G^*}^{000}_B$, we have that $a_{g_1}+g_1\cdot a_{g_2} +h(g_1,g_2) -a_{g_1g_2} \in \widetilde{G^*}^{000}_B \cap A^*$. Therefore, 
\[\begin{array}{lll}
&\left(a_{g_1}+g_1\cdot a_2 +h(g_1,g_2)-a_{g_1g_2}\right) -(a_2-a_{g_2})
= \left(a_{g_1}+g_1\cdot a_{g_2} +h(g_1,g_2) -a_{g_1g_2}\right) \\+& g_1\cdot (a_2-a_{g_2}) -(a_2-a_{g_2}) 
\in g_1\cdot (a_2-a_{g_2}) - (a_2-a_{g_2})+ \left(\widetilde{G^*}^{000}_B \cap A^*\right).
\end{array}\]
We are done, because Remark \ref{rem:trivial action}(ii) implies that $ g_1\cdot (a_2-a_{g_2}) - (a_2-a_{g_2}) \in \widetilde{G^*}^{000}_B \cap A^*$.

It remains to show that $\Phi$ is injective. Consider any $(a_1,g_1),(a_2,g_2) \in \widetilde{G^*}^{00}_B$ such that 
\[(a_1,g_1)(a_2,g_2)^{-1} \notin \widetilde{G^*}^{000}_B.\] Then,
\begin{equation}\
a_1- g_1g_2^{-1}\cdot a_2 -g_1\cdot h\left(g_2^{-1},g_2\right) + h\left(g_1,g_2^{-1}\right) -a_{g_1g_2^{-1}} \notin \widetilde{G^*}^{000}_B \cap A^*.\label{equation:(7)}
\end{equation}

Our goal is to show that $(a_1-a_{g_1})-(a_2-a_{g_2}) \notin \widetilde{G^*}^{000}_B \cap A^*$. Suppose for a contradiction that this is not the case.

As $(a_{g_1},g_1),(a_{g_2},g_2) \in \widetilde{G^*}^{000}_B$, we have that
\[\left(a_{g_1}-g_1g_2^{-1}\cdot a_{g_2} - g_1\cdot h\left(g_2^{-1},g_2\right) +h\left(g_1,g_2^{-1}\right),g_1g_2^{-1}\right) = (a_{g_1},g_1)(a_{g_2},g_2)^{-1} \in \widetilde{G^*}^{000}_B,\]
and so 
\[a_{g_1}-g_1g_2^{-1}\cdot a_{g_2} -g_1\cdot h\left(g_2^{-1},g_2\right) +h\left(g_1,g_2^{-1}\right)-a_{g_1g_2^{-1}} \in\widetilde{G^*}^{000}_B \cap A^*.\]

Using this together with the assumption that $(a_1-a_{g_1})-(a_2-a_{g_2}) \in \widetilde{G^*}^{000}_B \cap A^*$ and the fact $g_1g_2^{-1}\cdot (a_{g_2}-a_2)-(a_{g_2}-a_2) \in \widetilde{G^*}^{000}_B \cap A^*$ (which follows from Remark \ref{rem:trivial action}(ii)), we get the following computation, which contradicts (\ref{equation:(7)}):
\[\begin{array}{lll}
& a_1- g_1g_2^{-1}\cdot a_2 -g_1\cdot h\left(g_2^{-1},g_2\right) + h\left(g_1,g_2^{-1}\right) -a_{g_1g_2^{-1}} \\
&= \left(a_{g_1}-g_1g_2^{-1}\cdot a_{g_2} - g_1 \cdot h\left(g_2^{-1},g_2\right) + h\left(g_1,g_2^{-1}\right)-a_{g_1g_2^{-1}}\right) \\
&+ (a_1-a_{g_1})+g_1g_2^{-1}\cdot (a_{g_2}-a_2)
 \in \widetilde{G^*}^{000}_B \cap A^*.
\end{array}\]
\end{proof}

In the examples from \cite{conv_pillay}, and, more generally, for each group $G$ definable in a monster model of an $o$-minimal expansion of a real closed field, the quotient $G^{00}/G^{000}$ turns out to be abstractly isomorphic to an abelian, compact Lie group divided by a dense, finitely generated subgroup.

In Situation $(00)$, assuming that ${G^*}^{000}_B = {G^*}^{00}_B$ and $A^*_1 \subseteq \widetilde{G^*}^{000}_B \cap A^*$, and using Proposition \ref{prop:quotient}, we get 
\[\widetilde{G^*}^{00}_B/\widetilde{G^*}^{000}_B \cong \left( \left(\widetilde{G^*}^{00}_B \cap A^*\right)/A^*_1\right)/\left( \left( \widetilde{G^*}^{000}_B \cap A^*\right)/A^*_1\right).\]
%
Thus, since by Claim 1 in the proof of Theorem \ref{thm:main} we know that $\widetilde{G^*}^{000}_B \cap A^* \leq A^*_1 +A_0$, we easily conclude that $\widetilde{G^*}^{00}_B/\widetilde{G^*}^{000}_B$ is isomorphic to the quotient of an abelian, compact group by a finitely generated subgroup. A question arises if this finitely generated subgroup is dense. Proposition \ref{prop:xxx} yields the following observation.

\begin{proposition}\label{prop:density}
\begin{enumerate}
\item[(i)]  Consider Situation $(00)$. Assume $G$ is absolutely connected of finite commutator width.  Then $\left( \widetilde{G^*}^{000}_B \cap A^*\right)/A^*_1$ is dense in $\left(\widetilde{G^*}^{00}_B \cap A^*\right)/A^*_1$.
\item[(ii)] Consider Situation $(0)$. Let $A^*_1$ be a bounded index subgroup of $A^*$ which is invariant over $B$, invariant under the action of $G^*$, and which is an intersection of definable subgroups of finite index.
Assume ${G^*}^{000}_B = {G^*}^{00}_B$. Then $\left( \widetilde{G^*}^{000}_B \cap A^*\right)/A^*_1$ is dense in $\left(\widetilde{G^*}^{00}_B \cap A^*\right)/A^*_1$.
\end{enumerate}
\end{proposition}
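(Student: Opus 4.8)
The plan is to reduce both statements to the corresponding cases of Proposition \ref{prop:xxx}, because density of $\left(\widetilde{G^*}^{000}_B \cap A^*\right)/A^*_1$ in the compact group $\left(\widetilde{G^*}^{00}_B \cap A^*\right)/A^*_1$ (equipped with the logic topology coming from the type-definability of $\widetilde{G^*}^{00}_B \cap A^*$ over $B$) is exactly the assertion that the only type-definable over $B$ subgroup of $\widetilde{G^*}^{00}_B \cap A^*$ of bounded index that contains $\widetilde{G^*}^{000}_B \cap A^*$ is the whole group $\widetilde{G^*}^{00}_B \cap A^*$. So first I would unwind this reformulation: by the basic properties of the logic topology on $\left(\widetilde{G^*}^{00}_B\cap A^*\right)/A^*_1$, a subgroup of bounded index is dense iff it is not contained in any proper type-definable over $B$ subgroup of bounded index, and such subgroups correspond to closed finite-index subgroups; more precisely one uses that in a compact topological group a subgroup is dense iff it meets every nonempty open set, and the open sets here come from complements of type-definable sets.

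Next, given a proper type-definable over $B$ subgroup $H_0$ of $\widetilde{G^*}^{00}_B \cap A^*$ of bounded index with $\widetilde{G^*}^{000}_B \cap A^* \subseteq H_0$, I would enlarge it to a subgroup $H$ of $A^*$: take $H := H_0 + A^*_1$ in case (i), or in case (ii) first intersect appropriately so as to land inside the hypotheses of \ref{prop:xxx}(ii). In case (i), since $G$ is absolutely connected of finite commutator width, ${G^*}^{000}_B = {G^*}^{00}_B = G^*$, and one checks $H$ is type-definable over $B$, of bounded index in $A^*$, and — because $\widetilde{G^*}^{00}_B$ normalizes $A^*$ and the action of ${G^*}^{00}_B$ on the relevant quotient is trivial by Remark \ref{rem:trivial action}(iii) — invariant under the action of $G^*$. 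Thus we are in Situation $(00)$ with this new $A^*_1$ replaced by $H$; we have $\widetilde{G^*}^{000}_B \cap A^* \subseteq H$, so Proposition \ref{prop:xxx}(i) forces $\widetilde{G^*}^{00}_B \cap A^* \subseteq H$, whence $H_0 + A^*_1 \supseteq \widetilde{G^*}^{00}_B \cap A^*$, and modding out by $A^*_1$ gives $H_0 = \widetilde{G^*}^{00}_B \cap A^*$, as required. Case (ii) is handled identically, invoking Proposition \ref{prop:xxx}(ii) in place of (i) and using that the new subgroup $H$ can again be arranged to be an intersection of definable finite-index subgroups invariant under $G^*$ (by the argument in the Claim inside the proof of \ref{prop:xxx}(ii), one replaces $H_0$ by $H_0 + A^*_1$ and then by a suitable finite-index invariant refinement, or simply applies \ref{prop:xxx}(ii) directly to $A^*_1{}' := H_0 + A^*_1$ after verifying it meets the hypotheses).

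The main obstacle I anticipate is the bookkeeping needed to verify that the enlarged subgroup $H$ (or $H_0 + A^*_1$) genuinely satisfies the hypotheses required to invoke \ref{prop:xxx} — in particular, in case (ii), that it can be taken to be an intersection of definable finite-index subgroups of $A^*$ that are invariant under the action of $G^*$; this is precisely the content of the Claim in the proof of \ref{prop:xxx}(ii) and one either reuses that argument or notes that the sum of $A^*_1$ with a type-definable bounded-index subgroup need not be an intersection of finite-index subgroups, so one must instead phrase the density statement directly in terms of the $A_i$'s appearing there. A clean way around this is to observe that density of $\left(\widetilde{G^*}^{000}_B \cap A^*\right)/A^*_1$ is equivalent to: for every $i\in I$ (with $A_i$ as in that Claim), $\left(\widetilde{G^*}^{000}_B \cap A^*\right) + A_i = \widetilde{G^*}^{00}_B \cap A^* \pmod{\text{nothing}}$, i.e. $\widetilde{G^*}^{00}_B \cap A^* \subseteq \left(\widetilde{G^*}^{000}_B \cap A^*\right) + A_i$; but this inclusion was already established inside the proof of \ref{prop:xxx}(ii) (it is exactly the line deduced from \cite[Lemma 3.9]{gis}), so in fact the density in case (ii) is immediate from that proof, and only case (i) needs the brief separate argument via \ref{prop:xxx}(i) and (ii) sketched above.
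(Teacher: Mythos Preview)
Your plan is correct and follows the same route as the paper: reduce density to an application of Proposition~\ref{prop:xxx} by passing to a type-definable, $G^*$-invariant, bounded-index subgroup of $A^*$ containing $\widetilde{G^*}^{000}_B\cap A^*$, and conclude it must contain $\widetilde{G^*}^{00}_B\cap A^*$. The paper streamlines your ``arbitrary $H_0$'' argument by taking at once $A^*_2:=\rho^{-1}\bigl(\overline{(\widetilde{G^*}^{000}_B\cap A^*)/A^*_1}\bigr)$ and, for (ii), writing $A^*_2=\bigcap_i(A_i+A^*_2)$ so that Proposition~\ref{prop:xxx}(ii) applies directly; your closing observation that the needed inclusions $\widetilde{G^*}^{00}_B\cap A^*\subseteq(\widetilde{G^*}^{000}_B\cap A^*)+A_i$ already appear inside the proof of \ref{prop:xxx}(ii) (before the extra hypothesis is used) is a valid and slightly more direct shortcut for (ii).
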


\begin{proof}
(i) Let $\rho \colon A^* \to A^*/A^*_1$ be the quotient map, and let $A^*_2$ be the preimage under $\rho$ of the closure of $\left( \widetilde{G^*}^{000}_B \cap A^*\right)/A^*_1$. We see that $A^*_2$ is a bounded index, $B$-type-definable subgroup of $A^*$ invariant under the action of $G^*$. Moreover,  $\widetilde{G^*}^{000}_B \cap A^* \subseteq A^*_2$. By Proposition \ref{prop:xxx}(i) applied to $A^*_2$ (instead of $A^*_1$), we get that $\widetilde{G^*}^{00}_B \cap A^* \subseteq A^*_2$, which implies that $\left( \widetilde{G^*}^{000}_B \cap A^*\right)/A^*_1$ is dense in $\left(\widetilde{G^*}^{00}_B \cap A^*\right)/A^*_1$.

(ii) By the claim from the proof of Proposition \ref{prop:xxx}, we know that $A^*_1=\bigcap_{i \in I} A_i$, where all $A_i$'s are $B$-definable subgroups of $A^*$ of finite index, invariant under $G^*$. We can assume that the family $\{ A_i: i \in I\}$ is closed under finite intersections. Take the notation from the proof of (i). Since $A^*_2$ is $B$-type-definable and contains $A^*_1$, we get that $A^*_2=\bigcap_{i \in I}(A_i + A^*_2)$ and each $A_i +A^*_2$ is a $B$-definable subgroup of $A^*$ of finite index, invariant under $G^*$. Moreover, $\widetilde{G^*}^{000}_B \cap A^* \subseteq A^*_2$. Thus, applying Proposition \ref{prop:xxx}(ii), we get that $\widetilde{G^*}^{00}_B \cap A^* \subseteq A^*_2$, which is enough. 
\end{proof}

W finish this section with a remark which follows from the proof of Proposition \ref{prop:density}. 
\begin{remark}\label{rem:zastosowanie 2.10}
Consider Situation $(00)$, and assume that ${G^*}^{000}_B = {G^*}^{00}_B$ and $A^*_1 \subseteq \widetilde{G^*}^{000}_B \cap A^*$. If Conjecture \ref{con:new 1} is true, then $\left( \widetilde{G^*}^{000}_B \cap A^*\right)/A^*_1$ is dense in $\left(\widetilde{G^*}^{00}_B \cap A^*\right)/A^*_1$.
\end{remark}


\section{Extensions} \label{sec:finite}

In this section, we make a few observations telling us that in some situations in which Theorem \ref{thm:main}, Corollary \ref{cor:main} or Corollary \ref{cor:main2} can be applied to a group $G$, it can also be applied to certain extensions of $G$ (providing new examples in Section \ref{sec:central}). 

By the \emph{strong version of Assumption (i)} of Theorem \ref{thm:main}, we mean the version in which we assume non-splitness with respect to all functions instead of $B$-invariant functions.

Suppose $G$ acts on an abelian group $A$, $h\colon G\times G\to A$ is a 2-cocycle and $f\colon H \twoheadrightarrow G$ is an epimorphism. Then $f$ induces an action of $H$ on $A$ so that the composition $h':= h\circ (f,f)\colon H \times H \to A$ becomes a 2-cocycle on $H$. Denote by $z\colon G \to H$ a section of $f$ with $z(e_G)=e_H$. By using the classical idea of the `Inflation-Restriction Sequence' \cite[Theorem 4.1.20]{rosen}, we obtain the following proposition.

\begin{proposition} \label{rem:finite}
Under the notation of the previous paragraph, the 2-cocycle $h':=h\circ (f,f)\colon H\times H\to A$ is split if and only if there exists a homomorphism $g\colon \ker(f) \to A$ which is $G$-invariant, that is for $x'\in H$ and $y'\in\ker(f)$ we have $g\left(x'y'x'^{-1}\right)=f(x')\cdot g(y')$, and such that $h$ is cohomologous to the 2-cocycle $h''$ defined by \[h''(x,y) = g\left(z(xy)z(y)^{-1}z(x)^{-1}\right)\] for $x,y\in G$.

In particular, if $\Hom(\ker(f),A)$ is trivial, then $h$ is non-split if and only if $h'$ is non-split.
\end{proposition}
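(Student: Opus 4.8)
\textbf{Proof plan for Proposition \ref{rem:finite}.}

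The plan is to apply the Inflation–Restriction Sequence \cite[Theorem 4.1.20]{rosen} in low degrees to the normal subgroup $N := \ker(f) \trianglelefteq H$, with $H/N \cong G$. Concretely, for the abelian coefficient group $A$ (with its induced $H$-action, which factors through $G$), the inflation map $H^2(G,A) \to H^2(H,A)$ sends the class of $h$ to the class of $h' = h\circ(f,f)$, and its kernel fits into the exact sequence
\[
\xymatrix{
0 \ar[r] & H^1(G,A) \ar[r]^-{\mathrm{inf}} & H^1(H,A) \ar[r]^-{\mathrm{res}} & H^1(N,A)^{G} \ar[r]^-{\mathrm{transgression}} & H^2(G,A) \ar[r]^-{\mathrm{inf}} & H^2(H,A).
}
\]
Thus $h'$ is split, i.e.\ $[h'] = 0$ in $H^2(H,A)$, if and only if $[h]$ lies in the image of the transgression map from $H^1(N,A)^G$. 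Since $N$ acts trivially on $A$ (the $H$-action factors through $G = H/N$), we have $H^1(N,A)^G = \Hom(N,A)^G$, the group of $H$-equivariant homomorphisms $g\colon N \to A$ in the sense stated (i.e.\ $g(x'y'x'^{-1}) = f(x')\cdot g(y')$). So the first step is simply to unwind this equivalence: $h'$ split $\iff$ $[h]$ is transgressed from some $g \in \Hom(N,A)^G$.

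The second step is to compute the transgression explicitly at the level of cocycles, using the chosen set-theoretic section $z\colon G \to H$ with $z(e_G) = e_H$. Given $g \in \Hom(N,A)^G$, a standard diagram chase (lifting the class of $g$ through the bar resolutions, or equivalently using that $z(x)z(y)z(xy)^{-1} \in N$ for all $x,y \in G$) shows that the transgression of $g$ is represented by the $2$-cocycle
\[
h''(x,y) = g\!\left(z(xy)z(y)^{-1}z(x)^{-1}\right).
\]
Here one verifies directly that $h''$ is indeed an $A$-valued $2$-cocycle on $G$ (using that $g$ is an $H$-equivariant homomorphism and that the "defect" elements $z(x)z(y)z(xy)^{-1}$ satisfy the obvious cocycle-type identity in $N$); I would present this as a short lemma-style computation rather than grinding through every term. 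Combining the two steps: $h'$ is split $\iff$ there exists $g \in \Hom(N,A)^G$ with $[h] = [h'']$, i.e.\ $h$ cohomologous to $h''$ — which is exactly the asserted statement. (One should also note the section $z$ may be replaced by any other such section without changing the conclusion, since different choices of $z$ alter $h''$ only by a coboundary; this is harmless and can be remarked in passing.)

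Finally, the "in particular" clause is immediate: if $\Hom(N,A) = \Hom(\ker f, A)$ is trivial, then $\Hom(N,A)^G$ is trivial, so the only class transgressed is $0$, hence $[h']=0 \iff [h]=0$, i.e.\ $h'$ split $\iff$ $h$ split; equivalently $h$ non-split $\iff$ $h'$ non-split. The main obstacle I anticipate is purely bookkeeping: pinning down the cocycle-level formula for the transgression with the correct conventions (which side the $G$-action acts on, the normalization $z(e_G)=e_H$, and the precise form of the equivariance condition on $g$) so that it matches the formula $h''(x,y) = g(z(xy)z(y)^{-1}z(x)^{-1})$ stated in the proposition. Once the conventions are fixed, everything else is a routine verification that the displayed $h''$ is a cocycle and that it represents the transgression of $g$.
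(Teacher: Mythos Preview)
Your proposal is correct and essentially the same as the paper's approach: the paper explicitly cites the Inflation--Restriction Sequence as the underlying idea, and its proof is precisely the explicit cocycle-level unwinding you describe --- it writes down the splitting function $g\colon H\to A$, checks that its restriction to $\ker(f)$ is a $G$-invariant homomorphism, and computes by hand that $h$ is cohomologous to $h''$ via $\widetilde z = g\circ z$ (and conversely constructs an explicit splitting $F(t)=g(tz(f(t))^{-1})$ of $h'''=h''\circ(f,f)$). In other words, the ``bookkeeping'' you anticipate as the main obstacle is exactly what the paper carries out directly, without ever naming the cohomology groups or the transgression map.
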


\begin{proof}
$(\Rightarrow)$. There exists a function $g\colon H \to A$ such that for $x',y'\in H$, \[h(f(x'),f(y')) = g(x')+f(x')\cdot g(y')-g(x'y'). 
\] 
If $x'\in \ker(f)$ or $y'\in \ker(f)$, then $g(x'y')=g(x') + f(x')\cdot g(y')$ (because $h(e_G,-)=h(-,e_G)=0$), so $g_{| \ker(f)} \in\Hom(\ker(f),A)$. 

Moreover, taking $x'\in H$ and $y'\in\ker(f)$, we have $x'y'x'^{-1}\in\ker(f)$, so $g(x'y')=g(x'y'x'^{-1}x')=g(x'y'x'^{-1})+g(x')$, and therefore $g(x'y'x'^{-1})=g(x'y') - g(x')=f(x')\cdot g(y')$. This means that $g_{| \ker(f)}$ is $G$-invariant.

Denote $\widetilde{z} = g\circ z\colon G \to A$. For $x,y\in G$ let $x'=z(x)$ and $y'=z(y)$. We have
\begin{eqnarray*}
h(x,y) = h(f(x'),f(y')) &=& g(z(x)) + x\cdot g(z(y)) - g(z(x)z(y)) \\
&=& \left[ g(z(x)) + x\cdot g(z(y)) - g(z(xy))\right] + g(z(xy)) - g(z(x)z(y)) \\
&=& \left[ \widetilde{z}(x) + x\cdot \widetilde{z}(y) - \widetilde{z}(xy)\right] + g\left(z(xy)z(y)^{-1}z(x)^{-1}\right),
\end{eqnarray*}
because $z(xy)z(y)^{-1}z(x)^{-1}\in\ker(f)$.

$(\Leftarrow)$. If $h$ is cohomologous to $h''$, then $h'=h\circ (f,f)$ is cohomologous to $h''':=h''\circ (f,f)$. For $x',y'\in H$ we have 
\begin{eqnarray*}
h'''(x',y') &=& g\left(z(f(x'y'))z(f(y'))^{-1}z(f(x'))^{-1}\right) \\
 &=& g\left((x'y'z(f(x'y'))^{-1})^{-1}x'(y'z(f(y'))^{-1})x'^{-1} (x'z(f(x'))^{-1})\right) \\
  &=& F(x')+f(x')\cdot F(y')-F(x'y'),
\end{eqnarray*}
where $F\colon H\to A$ is defined as $F(t)=g(tz(f(t))^{-1})$. Hence, both $h'''$ and $h'$ are split.
\end{proof}


\begin{corollary} \label{cor:ext}
Consider Situation $(00)$. Suppose that Assumption (ii) and the strong version of Assumption (i) of Theorem \ref{thm:main} hold. Assume that 
\begin{enumerate}
\item $G$ is absolutely connected,
\item $f\colon H \twoheadrightarrow G$ is an epimorphism.
\end{enumerate}
Regard $H$ and $f$ as objects $\emptyset$-definable in some first order expansion $\H$ of $\G$. Let $\widetilde{H}$ be the extension of $H$ by $A$ corresponding to the 2-cocycle $h':=h\circ (f,f)$.
%
%
Let $\H^*\succ \H$ be a big enough monster model. Assume additionally that \[\Hom\left(\ker\left({f^*}_{|{H^*}^{00}_B}\right),A_0/(A^*_1 \cap A_0)\right)\] is trivial, where $f^*$ in the interpretation of $f$ in $\H^*$. Then the 2-cocycle  \[\overline{h'^*}_{|{H^*}^{00}_B \times {H^*}^{00}_B}=\overline{h^*}\circ(f^*,f^*)_{|{H^*}^{00}_B \times {H^*}^{00}_B}\colon {H^*}^{00}_B \times {H^*}^{00}_B \longrightarrow A_0/(A^*_1 \cap A_0),\] where $\overline{h^*}\colon G^* \times G^* \to A_0/(A^*_1\cap A_0)$ is the induced 2-cocycle, is non-split. Therefore, by Theorem \ref{thm:main}, $\widetilde{H^*}^{000}_B \ne \widetilde{H^*}^{00}_B$.
\end{corollary}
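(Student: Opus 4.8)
The plan is to reduce the non-splitness of $\overline{h'^*}_{|{H^*}^{00}_B \times {H^*}^{00}_B}$ to the strong version of Assumption (i) for $G$ via Proposition \ref{rem:finite}, applied not to $f\colon H \twoheadrightarrow G$ directly but to its interpretation restricted to the connected component. First I would record the basic compatibility facts: since $G$ is absolutely connected, $G^* = {G^*}^{000}_B = {G^*}^{00}_B$, and since $f^*\colon H^* \to G^*$ is a definable epimorphism we have $f^*\left[{H^*}^{00}_B\right] = {G^*}^{00}_B = G^*$ (the image of a bounded-index type-definable subgroup under a definable surjection is again type-definable of bounded index, and it surjects onto $G^*={G^*}^{00}_B$). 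Thus $\bar f := {f^*}_{|{H^*}^{00}_B}\colon {H^*}^{00}_B \twoheadrightarrow G^*$ is an epimorphism with kernel $\ker\left({f^*}_{|{H^*}^{00}_B}\right)$, and $\overline{h'^*}_{|{H^*}^{00}_B \times {H^*}^{00}_B} = \overline{h^*}\circ(\bar f,\bar f)$, where $\overline{h^*}\colon G^* \times G^* \to A_0/(A^*_1\cap A_0)$ is the induced 2-cocycle. Note also that by Remark \ref{rem:trivial action}(i) the action of ${G^*}^{00}_B = G^*$ on $A_0$, hence on $A_0/(A^*_1\cap A_0)$, is trivial, so $\overline{h^*}$ is a genuine (abstract, central) 2-cocycle and all the hypotheses of Proposition \ref{rem:finite} make sense in this setting with $A$ replaced by $A_0/(A^*_1 \cap A_0)$.

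Next I would apply the ``in particular'' clause of Proposition \ref{rem:finite} with $G$ there equal to $G^*$, with $H$ there equal to ${H^*}^{00}_B$, with $f$ there equal to $\bar f$, and with the coefficient group $A_0/(A^*_1\cap A_0)$ in place of $A$. The hypothesis $\Hom\left(\ker(\bar f),A_0/(A^*_1 \cap A_0)\right)$ trivial is exactly the extra assumption we are granted. Hence Proposition \ref{rem:finite} gives: $\overline{h^*}$ (as a 2-cocycle on $G^* \times G^*$ with values in $A_0/(A^*_1\cap A_0)$) is non-split if and only if $\overline{h^*}\circ(\bar f,\bar f) = \overline{h'^*}_{|{H^*}^{00}_B \times {H^*}^{00}_B}$ is non-split. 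So it remains to see that $\overline{h^*}\colon G^* \times G^* \to A_0/(A^*_1\cap A_0)$ is non-split (via an arbitrary function). But $G^* = {G^*}^{00}_B$, so this is precisely the strong version of Assumption (i) of Theorem \ref{thm:main} for $G$, which we have assumed. This establishes that $\overline{h'^*}_{|{H^*}^{00}_B \times {H^*}^{00}_B}$ is non-split, hence in particular non-split via $B$-invariant functions, so Assumption (i) of Theorem \ref{thm:main} holds for $\widetilde{H^*}$.

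Finally I would invoke Theorem \ref{thm:main} applied to $H$ in place of $G$ and $h'=h\circ(f,f)$ in place of $h$: Assumption (ii) for $H$ coincides with Assumption (ii) for $G$ (the coefficient group $A_0/(A^*_1\cap A_0)$ is literally the same object, since $A_0$ and $A^*_1$ are unchanged), and we have just verified Assumption (i); note also that $h'$ is definable in $\H$ with the same finite image $\im(h)\subseteq \dcl(B)$ and that $A^*_1$ is still invariant under the action of $H^*$ (the $H^*$-action on $A^*$ factors through $f^*$ and the $G^*$-action), so Situation $(00)$ holds for the extension $\widetilde{H}$. Theorem \ref{thm:main} then yields $\widetilde{H^*}^{000}_B \ne \widetilde{H^*}^{00}_B$.

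The main point requiring care — and the step I expect to be the only real obstacle — is the passage to the restricted map $\bar f = {f^*}_{|{H^*}^{00}_B}$: one must check that $\bar f$ is genuinely surjective onto $G^*$ (using absolute connectedness of $G$ to know ${G^*}^{00}_B = G^*$) and that the restricted induced 2-cocycle really is $\overline{h^*}\circ(\bar f,\bar f)$, so that Proposition \ref{rem:finite} applies with this restricted epimorphism rather than with $f^*$ itself; everything else is bookkeeping, transporting the hypotheses of Situation $(00)$ and of Theorem \ref{thm:main} from $G$ to $H$.
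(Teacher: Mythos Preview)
Your proposal is correct and follows essentially the same approach as the paper's own proof: restrict $f^*$ to ${H^*}^{00}_B$, use absolute connectedness of $G$ to get surjectivity onto $G^*$, then apply the ``in particular'' clause of Proposition \ref{rem:finite} together with the strong version of Assumption (i) to conclude non-splitness, and finish with Theorem \ref{thm:main}. One small imprecision: your parenthetical claim that the image of a type-definable subgroup under a definable surjection is again \emph{type-definable} is not generally true and is not needed here --- what you actually use is that $f^*[{H^*}^{00}_B]$ is $B$-\emph{invariant} of bounded index, hence contains ${G^*}^{000}_B = G^*$ by absolute connectedness, which suffices.
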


\begin{proof}
Choose a monster model $\H^* \succ \H$ so that the interpretation of $\G$ in it is a monster model of $\Th(\G)$ being an elementary extension of $\G^*$; we may assume that this interpretation coincides with $\G^*$.

Since $G$ is absolutely connected, we have that ${G^*}^{00}_B$ (computed in the sense of $\H^*$) equals $G^*$, and so $f^*\left[{H^*}^{00}_B\right] =G^*$.
If $\overline{h'^*}_{|{H^*}^{00}_B \times {H^*}^{00}_B}$ is split, then by Proposition \ref{rem:finite} and the triviality of $\Hom\left(\ker\left({f^*}_{|{H^*}^{00}_B}\right),A_0/(A^*_1 \cap A_0)\right)$, the 2-cocycle $\overline{h^*}\colon G^* \times G^* \to A_0/(A^*_1 \cap A_0)$ is split, which contradicts the strong version of Assumption (i) of Theorem \ref{thm:main}.
\end{proof}

The extra assumption that $\Hom\left(\ker\left({f^*}_{|{H^*}^{00}_B}\right),A_0/(A^*_1 \cap A_0)\right)$ is trivial means that \[\Hom\left(\ker\left({f^*}_{|{H^*}^{00}_B}\right), \Z^n\right)\] is trivial, which is always satisfied for example when $\ker(f^*)$ is divisible or finite (because then $\ker\left({f^*}_{|{H^*}^{00}_B}\right)$ is also divisible or finite).

Corollary \ref{cor:ext} is a general recipe for obtaining new examples of extensions to which Theorem \ref{thm:main} can be applied. The next remark is a variant of this, where we assume that $H$ is a product extension of $G$, but the assumptions that `$G$ is absolutely connected' and `$\Hom\left(\ker\left({f^*}_{|{H^*}^{00}_B}\right),A_0/(A^*_1 \cap A_0)\right)$ is trivial' are dropped.

\begin{remark}\label{rem:produkty}
Consider Situation $(00)$. Suppose that Assumption (ii) and the strong version of Assumption (i) of Theorem \ref{thm:main} hold. Let $H=K \times G$, where $K$ is an arbitrary group. Let $\H$ be the expansion of $\G$ obtained by adding a new sort, consisting of the pure group structure $H$ and the projection $f \colon H \to G$ on the second coordinate.  Let $\widetilde{H}$ be the extension of $H$ by $A$ corresponding to the 2-cocycle $h':=h\circ (f,f)$. As usual, $\H^* \succ \H$ denotes a big enough monster model. Then the 2-cocycle \[\overline{h'^*}_{|{H^*}^{00}_B \times {H^*}^{00}_B}=\overline{h}\circ(f^*,f^*)_{|{H^*}^{00}_B \times {H^*}^{00}_B}\colon {H^*}^{00}_B \times {H^*}^{00}_B \longrightarrow A_0/(A^*_1 \cap A_0),\] 
is non-split. Therefore, by Theorem \ref{thm:main}, $\widetilde{H^*}^{000}_B \ne \widetilde{H^*}^{00}_B$.
\end{remark}

\begin{proof}
Let $\H_1$ be the expansion of $\G$ by the additional sort for the pure group structure $K$; then the group $H$ and the projection $f$ are definable in $\H_1$. Take a monster model $\H_1^* \succ \H_1$ so big that the interpretation of $\H$ in $\H_1^*$ (which we denote by $\H^*$) is a monster model of $\Th (\H)$ and the interpretation of $\G$ in $\H_1^*$ is a monster model of $\Th (\G)$ being an elementary extension of $\G^*$; we may assume that the interpretation of $\G$ in $\H_1^*$ coincides with $\G^*$. Let $K^*$ and $H^*$ be the interpretations of $K$ and $H$ in $\H_1^*$.

Then $H^*=K^* \times G^*$. Since $h'((k_1,g_1),(k_2,g_2))=h(g_1,g_2)$ and $\overline{h^*}_{|{G^*}^{00}_B \times {G^*}^{00}_B} \colon {G^*}^{00}_B \times {G^*}^{00}_B \to A_0/(A^*_1 \cap A_0)$ is non-split, in order to show that $\overline{h'^*}_{|{H^*}^{00}_B \times {H^*}^{00}_B} \colon {H^*}^{00}_B \times {H^*}^{00}_B \to A_0/(A^*_1 \cap A_0)$ is non-split, it is enough to note that ${G^*}^{00}_B$ computed in the sense of $\H_1^*$ coincides with ${G^*}^{00}_B$ computed in the sense of $\G^*$. But this follows immediately from an easy observation that the structure induced on $\G^*$ from $\H_1^*$ coincides with the original structure on $\G^*$.
\end{proof}


Part $(2)$ of the next proposition says that any strongly non-split extension of an absolutely connected group by a finite abelian group is also absolutely connected. 

\begin{proposition} \label{prop:ext}
Let $A' \to H \to G$
be an extension of a group $G$ by a finite abelian group $A'$, defined by means of a 2-cocycle $h_f\colon G\times G\to A'$. Assume $h_f$ is strongly non-split in the following way: for every proper subgroup $A''\lneqq A'$ normal in $H$ (equivalently, invariant under the action of $G$) the induced 2-cocycle $\overline{h_f}\colon G\times G\to A'/A''$ is non-split. Then:
\begin{enumerate}
\item $G$ has no proper subgroups of finite index if and only if $H$ has no proper subgroups of finite index.
\item $G$ is absolutely connected if and only if $H$ is absolutely connected. 
%
%
\end{enumerate}
\end{proposition}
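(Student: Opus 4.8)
The plan is to translate the group-theoretic statements about $H$ into splitting properties of the induced $2$-cocycles $\overline{h_f}\colon G\times G\to A'/A''$, and then invoke strong non-splitness. Write $\pi\colon H\to G$ for the quotient map, so $\ker\pi=A'$. Since $A'$ is finite, $\pi$ has finite fibres and $(A')^{*}=A'$ in every monster model; this finiteness is exactly what allows the hypothesis on $h_f$ (stated for the original $G$) to be used inside a monster model.

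For part (1), the implication ``$H$ has no proper finite-index subgroup $\Rightarrow$ $G$ has no proper finite-index subgroup'' is immediate, since a proper finite-index $G_0\lneqq G$ pulls back to the proper finite-index subgroup $\pi^{-1}(G_0)\lneqq H$. For the converse, suppose $G$ has no proper finite-index subgroup and that $H_0\lneqq H$ has finite index; replacing $H_0$ by its normal core (a finite intersection of conjugates), we may assume $H_0\trianglelefteq H$. Then $\pi(H_0)$ has finite index in $G$, hence equals $G$, so $H=H_0A'$ and $H/H_0\cong A'/(A'\cap H_0)$; in particular $A'':=A'\cap H_0$ is a \emph{proper} $H$-normal subgroup of $A'$. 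Passing to $H/A''$, the image of $H_0$ is a complement to $A'/A''$, so the extension $A'/A''\to H/A''\to G$ splits, i.e.\ $\overline{h_f}\colon G\times G\to A'/A''$ is split --- contradicting strong non-splitness.

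For part (2), I would first record the auxiliary fact that for the quotient map $q\colon H^{*}\to H^{*}/A'\cong G^{*}$ one has $q\bigl[{H^{*}}^{000}_{B}\bigr]={G^{*}}^{000}_{B}$, in every monster model and for every $B$: the inclusion $\subseteq$ holds because $q^{-1}\bigl[{G^{*}}^{000}_{B}\bigr]$ is a $B$-invariant subgroup of bounded index in $H^{*}$, and $\supseteq$ because $q\bigl[{H^{*}}^{000}_{B}\bigr]$ is a $B$-invariant subgroup of bounded index in $G^{*}$. Granting this: for ``$H$ absolutely connected $\Rightarrow$ $G$ absolutely connected'', given an expansion $\G$ of $G$, take $\H$ to be the expansion of $H$ obtained by pulling the structure of $\G$ back along $\pi$; in a monster model $\H^{*}\succ\H$ the structure induced on $H^{*}/A'$ is a monster model of $\Th(\G)$, and ${H^{*}}^{000}_{B}=H^{*}$ for all $B$ forces ${G^{*}}^{000}_{B}=q[H^{*}]=G^{*}$ for all $B$, so $G$ is absolutely connected. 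For the converse, assume $G$ absolutely connected and let $\H$ be an arbitrary expansion of $H$; regarding $G=H/A'$ with the induced structure as an expansion of $G$ gives ${G^{*}}^{000}_{B}=G^{*}$ for all $B$, hence ${H^{*}}^{000}_{B}A'=H^{*}$. Set $N:={H^{*}}^{000}_{B}\cap A'$; it is $H^{*}$-normal, so --- being a subgroup of the finite group $A'=(A')^{*}$ --- it is $H$-normal. If $N\lneqq A'$, then intersecting a complement of $A'/N$ in $H^{*}/N$ with $H/N$ yields a complement of $A'/N$ in $H/N$, so the extension $A'/N\to H/N\to G$ splits and $\overline{h_f}\colon G\times G\to A'/N$ is split --- contradicting strong non-splitness. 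Hence $N=A'$, so $A'\le{H^{*}}^{000}_{B}$ and ${H^{*}}^{000}_{B}=H^{*}$ for all $B$; thus ${H^{*}}^{000}$ exists and equals $H^{*}$, and $H$ is absolutely connected.

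The conceptual heart of the argument --- and the only place the hypothesis is used --- is the dictionary ``\emph{a (bounded-index) subgroup that surjects onto $G$ and meets $A'$ in a proper subgroup $A''$ corresponds to a splitting of the induced $2$-cocycle into $A'/A''$}''; once this is in place, strong non-splitness forces the relevant subgroup of $A'$ to be all of $A'$, which is precisely what both parts need. The routine but mildly delicate points are the model-theoretic bookkeeping in part (2): arranging $\H^{*}$ so that the structure it induces on the interpretable quotient $H^{*}/A'$ is genuinely a monster model of $\Th(\G)$ (this uses finiteness of $A'$ together with the standard transfer of saturation to interpretable structures), and checking the two inclusions defining $q\bigl[{H^{*}}^{000}_{B}\bigr]={G^{*}}^{000}_{B}$.
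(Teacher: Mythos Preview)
Your proof of part (1) is correct and essentially identical to the paper's: take the normal core, note that surjectivity onto $G$ forces $H_0 A'=H$, and observe that $H_0/A''$ is a complement of $A'/A''$ in $H/A''$, contradicting strong non-splitness.

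For part (2) the two arguments diverge. The paper does not argue directly: it invokes \cite[Proposition 2.8(4)]{abscon}, which reduces absolute connectedness of $H$ to the purely group-theoretic statement that $H$ has no proper finite-index subgroups (given that $G$ is absolutely connected and $\ker\pi$ is finite), and then appeals to part (1). Your argument is self-contained: you work inside an arbitrary monster model $\H^{*}$, use $q\bigl[{H^{*}}^{000}_{B}\bigr]={G^{*}}^{000}_{B}=G^{*}$ to get ${H^{*}}^{000}_{B}A'=H^{*}$, and then rerun the splitting argument from part (1) with ${H^{*}}^{000}_{B}$ in place of $H_0$, descending from $H^{*}/N$ to $H/N$ by intersecting the complement with $H/N$. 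This descent step is correct (and uses only that $A'\subseteq H$), so the contradiction with strong non-splitness goes through exactly as in (1). The trade-off is that your route avoids the external citation but costs you the model-theoretic bookkeeping you flag at the end (arranging that the interpretable quotient $H^{*}/A'$ carries a monster-model structure extending $\G$, and the two inclusions for $q\bigl[{H^{*}}^{000}_{B}\bigr]$); these are routine, and the second is used explicitly elsewhere in the paper. The paper's route is shorter precisely because that bookkeeping is absorbed into the cited proposition.
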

\begin{proof}
$(1)$ The implication $(\Leftarrow)$ is obvious. We prove $(\Rightarrow)$. Suppose $H_1 \leq H$ is a subgroup of finite index. We may assume that $H_1$ is a normal subgroup of $H$. Then $f[H_1]=G$, so $H=H_1\cdot A'$. We will show that $H=H_1$. Suppose for a contradiction that $A''=H_1\cap A'$ is a proper subgroup of $A'$. Consider the induced epimorphism $\overline{f}\colon H/A''\to G$. Since $A'=\ker(f)$, $\overline{f}_{|H_1/A''}$ is an isomorphism onto $G$. Hence, the inverse isomorphism $\left(\overline{f}_{|H_1/A''}\right)^{-1} \colon G \to H_1/A''$ is a section of $\overline{f}$. Therefore, the exact sequence
\[\xymatrix{
 1 \ar@{^{(}->}[r] & A'/A'' \ar@{^{(}->}[r] & H/A'' \ar@{>>}[r]^-{\overline{f}} & G \ar@{>>}[r] & 1}\] is split, and so the 2-cocycle $\overline{h_f}\colon G\times G\to A'/A''$ is also split, a contradiction.

$(2)$ By \cite[Proposition 2.8(4)]{abscon}, it is enough to prove that $H$ has no proper subgroups of finite index, which follows by $(1)$.
%
\end{proof}

The next corollary follows from Proposition \ref{rem:finite} and Proposition \ref{prop:ext}. 

\begin{corollary} \label{cor:ext_fin}
Consider the situation described in the first part of Corollary \ref{cor:main}. Suppose that 
\begin{enumerate}
\item $G$ is absolutely connected, 
\item $f\colon H \twoheadrightarrow G$ is an epimorphism with finite and abelian $\ker(f)$, and a 2-cocycle $h_f\colon G\times G\to \ker(f)$ corresponding to the extension $f$ is strongly non-split in the sense of Proposition \ref{prop:ext}.
\end{enumerate}
Regard $H$ and $f$ as objects $\emptyset$-definable in some first order expansion $\H$ of $\G$. Let $\widetilde{H}$ be the extension of $H$ corresponding to the 2-cocycle $h'=h\circ (f,f) \colon H \times H \to A_0$. Let $\H^* \succ \H$ be a  monster model. Then $H$ is absolutely connected, and the 2-cocycle $h'$ is non-split. Therefore, the assumptions of the first part of Corollary \ref{cor:main} are satisfied, so $\widetilde{H^*}^{000}_B \ne \widetilde{H^*}^{00}_B$. 
%
%
\end{corollary}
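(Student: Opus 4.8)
The plan is to combine Proposition \ref{rem:finite} (to transfer non-splitness of the 2-cocycle from $G$ to $H$) with Proposition \ref{prop:ext}(2) (to transfer absolute connectedness from $G$ to $H$), and then simply invoke the first part of Corollary \ref{cor:main}. So the proof has essentially two independent pieces plus a final bookkeeping step, and almost all the real content is already in the two propositions being cited.

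First I would establish that $H$ is absolutely connected. This is immediate from Proposition \ref{prop:ext}(2): we are given that $\ker(f)$ is finite abelian, that $h_f\colon G\times G\to\ker(f)$ is strongly non-split in exactly the sense required by Proposition \ref{prop:ext}, and that $G$ is absolutely connected; Proposition \ref{prop:ext}(2) then yields that $H$ is absolutely connected. In particular ${H^*}^{000}_B={H^*}^{00}_B=H^*$ (for every expansion, but in particular for $\H^*$), so Assumption (3) of Corollary \ref{cor:main} holds for $H$ in place of $G$.

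Next I would establish that $h'=h\circ(f,f)\colon H\times H\to A_0$ is non-split. Here I apply the ``in particular'' clause of Proposition \ref{rem:finite}: since $\ker(f)$ is finite and $A_0\cong\Z^n$ is torsion-free, $\Hom(\ker(f),A_0)$ is trivial, so $h$ is non-split if and only if $h'$ is non-split. By hypothesis (we are in the situation of the first part of Corollary \ref{cor:main}, whose Assumption (1) is precisely that $h\colon G\times G\to A_0$ is non-split via a function into $A_0$), $h$ is non-split, hence so is $h'$. This gives Assumption (1) of Corollary \ref{cor:main} for $\widetilde{H}$. Assumptions (2) of Corollary \ref{cor:main} — that ${A^*_1}\cap A_0$ is trivial and $A_0$ is torsion-free — are unchanged, since $A$, $A^*_1$, $A_0$ and the action are literally the same objects as in the original situation (the $H$-action on $A$ being the one induced through $f$, which agrees with the $G$-action on $A_0$ up to the identification via $f$). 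With all three assumptions verified, the first part of Corollary \ref{cor:main} applied to the extension $\widetilde{H^*}$ of $H^*$ by $A^*$ gives $\widetilde{H^*}^{000}_B\ne\widetilde{H^*}^{00}_B$, as claimed.

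The only point requiring a little care — and hence the main (mild) obstacle — is the matching of frameworks: one must check that replacing the $\G$-definable datum $(G,A,h)$ by the $\H$-definable datum $(H,A,h')$ genuinely lands us back inside ``Situation $(00)$'' as used in Corollary \ref{cor:main}, i.e. that $H$, $A$ and the induced action are $\emptyset$-definable (resp. the action $B$-definable) in $\H$, that $h'$ is $B$-definable in $\H$ with finite image contained in $\dcl(B)$, and that the same $A^*_1$ still works (it is type-definable over $B$ and invariant under the $H^*$-action because $H^*$ acts on $A^*$ through $f^*$ and $A^*_1$ was already $G^*$-invariant). All of these are routine: $H$ and $f$ are $\emptyset$-definable in $\H$ by hypothesis, so $h'=h\circ(f,f)$ is $B$-definable with $\im(h')=\im(h)\subseteq\dcl(B)$ finite, and $A_0$ is generated by $\im(h')=\im(h)$ so it is unchanged. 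Once this is noted, nothing further is needed beyond citing Proposition \ref{rem:finite}, Proposition \ref{prop:ext} and Corollary \ref{cor:main}.
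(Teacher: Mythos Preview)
Your proposal is correct and follows exactly the approach the paper indicates: the paper's entire proof is the sentence ``The next corollary follows from Proposition \ref{rem:finite} and Proposition \ref{prop:ext},'' and you have spelled out precisely those two applications together with the routine verification that the setup of Corollary \ref{cor:main} is preserved.
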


Example \ref{ex:counter-example} from Section \ref{sec:central} shows that even if in Corollary \ref{cor:ext_fin} we start from a situation in which all the assumptions of both parts of Corollary \ref{cor:main} are satisfied, it may happen that $\widetilde{H^*}^{00}_B \ne \widetilde{H^*}$.

Notice that letters $H$ and $f$ in Corollary \ref{cor:main2} denote different objects than usually in this section. 
In the next proposition, we keep the notation from Corollary \ref{cor:main2}, but we change the notation that we have been using in this section: the group which has been denoted in this section by $H$ will be denoted by $K$ and the function which has been denoted by $f$ will be denoted by $F$. In the next proposition, we consider the \emph{strong version of Assumption (5)} of Corollary \ref{cor:main2}, i.e., the version in which we use all (not necessarily $B$-definable) thick subsets of the group in question. 

\begin{proposition}\label{prop:more examples in 4.2}
Consider Situation $(00)$. Suppose that Assumptions (1), (2) and the strong version of Assumption (5) of Corollary \ref{cor:main2} hold. Let $F\colon K \twoheadrightarrow G$ be any epimorphism. Regard $K$ and $F$ as objects $\emptyset$-definable in some first order expansion $\Kcal$ of $\G$. Put $h'=h\circ (F,F) \colon K \times K \to A_0$ and $f'=f \circ F_{|F^{-1}[H]} \colon F^{-1}[H] \to A$. Let $\widetilde{K}$ be the extension of $K$ corresponding to the 2-cocycle $h'$. Let $\Kcal^* \succ \Kcal$ be a monster model. Then $f'$ witnesses that Assumptions (1), (2) and (5) of Corollary \ref{cor:main2} are satisfied for $A$, $B$, $A_0$, $A^*_1$, $K$ playing the role of $G$, $h'$ playing the role of $h$, and $F^{-1}[H]$ playing the role of $H$. Therefore,  $\widetilde{K^*}^{000}_B \ne \widetilde{K^*}^{00}_B$.
\end{proposition}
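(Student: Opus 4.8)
The plan is to verify that $K$ (in the role of $G$), the $2$-cocycle $h'=h\circ(F,F)$ (in the role of $h$), and the finite index subgroup $F^{-1}[H]$ (in the role of $H$) satisfy Assumptions (1), (2) and (5) of Corollary \ref{cor:main2}, with $f'=f\circ F_{|F^{-1}[H]}$ as the witnessing splitting function, and then simply to quote Corollary \ref{cor:main2}.

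First I would fix the ambient monster model exactly as in the proofs of Corollary \ref{cor:ext} and Remark \ref{rem:produkty}: choose $\Kcal^*\succ\Kcal$ big enough so that the interpretation of $\G$ inside $\Kcal^*$ is a monster model of $\Th(\G)$ elementarily extending $\G^*$, which we may assume to coincide with $\G^*$; thus $A^*$, $A^*_1$, $A_0$ and $G^*$ are unchanged. I would then observe that we are again in Situation $(00)$, now for $K$: the group $K$, the epimorphism $F$, and the induced action of $K$ on $A$ (namely $k\cdot a=F(k)\cdot a$) are $\emptyset$-definable in $\Kcal$; the map $h'=h\circ(F,F)$ is a $B$-definable $2$-cocycle on $K$ with $\im(h')=\im(h)$ since $F$ is onto, hence with finite image contained in $\dcl(B)$, and the subgroup of $A$ generated by $\im(h')$ equals $A_0$; the group $\widetilde{K}$ is definable in $\Kcal$; and $A^*_1$ is a bounded index, $B$-type-definable subgroup of $A^*$ invariant under $K^*$, because the $K^*$-action on $A^*$ factors through $F^*$ and $A^*_1$ is $G^*$-invariant.

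Next I would check the three assumptions. For (1): $F^{-1}[H]$ is a $B$-definable subgroup of $K$ of index $[G:H]<\aleph_0$, and a short direct computation — using $F(k_1)F(k_2)=F(k_1k_2)$, the definition of the $K$-action on $A$, and the hypothesis that $h_{|H\times H}$ splits via $f$ — shows that $h'_{|F^{-1}[H]\times F^{-1}[H]}$ splits via $f'=f\circ F_{|F^{-1}[H]}$, which is $B$-definable and satisfies $f'(e_K)=f(e_G)=0$. Assumption (2), i.e.\ Assumption (ii) of Theorem \ref{thm:main}, is literally unchanged, since neither $A_0$ nor $A^*_1$ has changed. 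For (5) I would take the same element $a\in A\setminus A^*_1$ provided by the strong version of Assumption (5) for $G$: given a [$B$-definable] thick subset $P$ of $K$, its image $F[P]$ is a thick subset of $G$, because the image of a thick set under an epimorphism is thick; applying the strong version of (5) to $F[P]$ yields some $g\in F[P]\cap H$ with $f(g^n)=na$ for all $n\in\Z$, and since $g\in F[P]$ we may pick $k\in P$ with $F(k)=g$, so that $k\in P\cap F^{-1}[H]$ (as $F(k)=g\in H$) and $f'(k^n)=f(F(k)^n)=f(g^n)=na$ for all $n$. With (1), (2) and (5) in hand, the first part of Corollary \ref{cor:main2} (where (5) implies its Assumption (3)) delivers $\widetilde{K^*}^{000}_B\ne\widetilde{K^*}^{00}_B$.

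The whole argument is a transfer along the epimorphism $F$, and the single delicate point — which is precisely why the hypothesis is phrased with the \emph{strong} version of Assumption (5) rather than just its $B$-definable form — is that the forward image $F[P]$ of a $B$-definable thick subset of $K$ is in general only definable in the expansion $\Kcal$, not in $\G$ itself, so the $B$-definable form of Assumption (5) for $G$ would not be applicable to it; once this is noted, everything else is routine bookkeeping.
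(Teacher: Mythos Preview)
Your proof is correct. The paper states this proposition without proof, leaving the routine verification to the reader; your argument supplies exactly that verification, and you correctly isolate the one genuinely non-trivial point---that $F[P]$ need not be $B$-definable in $\G$, which is why the \emph{strong} version of Assumption~(5) is required for $G$.
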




\section{Examples}\label{sec:central}

The aim of this section is to find new (concrete) examples of groups $G$ together with a 2-cocycle $h\colon G \times G \to \Z$ for which Theorem \ref{thm:main} or Corollaries \ref{cor:main} or \ref{cor:main2} can be applied, yielding the groups $\widetilde{G^*}$ satisfying $\widetilde{G^*}^{000}_B \ne \widetilde{G^*}^{00}_B$. 


We have divided this section into two subsections. In the first one, we use Matsumoto-Moore theory in order to achieve situations form Corollary \ref{cor:main}, and, in consequence, we obtain examples generalizing Example \ref{ex:PC1}; then we use results from Section \ref{sec:finite} to obtain yet more examples. In the second subsection, we use various quasi-characters to obtain split 2-cocycles
to which Corollary \ref{cor:main2} can be applied, yielding various examples; we also prove that the extension of $\SL_2(\Z)$ by $\Z$ defined by means of the 2-cocycle considered in Example \ref{ex:PC1} is an example where the two connected components are different. 
This group is a well-known braid group on 3 strands (see \cite[Section 7, Thm 18]{rawn} or \cite[Chapter 10, Thm 10.5]{milnor}). 



\subsection{Applications of Corollary \ref{cor:main}: central extensions of symplectic groups}\label{subsection:4.1}



We mainly concentrate on the case when $G=\Sp_{2n}(k)$ is the symplectic group for various infinite fields $k$. Recall that $\Sp_{2n}(k)$ is defined as $\left\{M\in\GL_{2n}(k) : M^{t}JM = J\right\}$, where $J = \left( \begin{array}{cc}
0 & \id_n \\
-\id_n & 0 \\ \end{array}\right)$, so, in particular, $\Sp_2(k)=\SL_2(k)$.

The main problem is to find 2-cocycles with finite image. We will use Matsumoto-Moore theory of the abstract universal central extension of $\Sp_{2n}(k)$. Every central extension of $\Sp_{2n}(k)$ by an abelian group $A$ corresponds to a 2-cocycle $\Sp_{2n}(k)\times \SL_{2n}(k) \to A$. However, in this case 
(more generally, in the case of \emph{Chevalley groups}), one can use a more convenient approach via \emph{Steinberg symbols} \cite[Section 7]{stein}, \cite{rehm}, which are certain mappings $c\colon k^{\times}\times k^{\times} \to A$.


For example, the topological universal cover $\widetilde{\SL_2(\R)}$ of $\SL_2(\R)$ can be described by the symbol defined by \cite[p. 51]{matsumoto}: for $x,y\in\R^{\times}$ \[c(x,y) = \left\{
  \begin{array}{r l}
    1 & \quad \text{if } x < 0 \text{ and } y<0 \\
    0 & \quad \text{otherwise}\\
  \end{array} \right.. \tag{\textasteriskcentered}\]

\begin{definition}{\cite[p. 74]{stein}} \label{def:uce}
A central extension $\pi\colon \widetilde{G} \to G$ is called \emph{universal} if for any central extension $\pi'\colon G' \to G$ there exists a unique homomorphism $f\colon \widetilde{G} \to G'$ such that $\pi'\circ f=\pi$, that is the following diagram commutes
\[\xymatrix{
 {}\widetilde{G} \ar@{>>}[r]^-{\pi} \ar[d]_-{f} & G \\
 G' \ar@{>>}[ru]_-{\pi'} &
}.\]
\end{definition}

It is known that any perfect group possesses a universal central extension \cite[p. 75]{stein}, which is unique up to isomorphism over $G$ \cite[p. 74]{stein}.

Suppose $k$ is an arbitrary infinite field, and let $k^{\times}=k\setminus\{0\}$ be the multiplicative group of $k$. 
%
%
Following \cite{rehm}, we describe the universal central extension of the symplectic group $\Sp_{2n}(k)$ (see also \cite[Sections 6, 7]{stein}). 
Since $\Sp_{2n}(k)$ is perfect, it has the universal central extension 
\[\xymatrix{
 1 \ar@{^{(}->}[r] & \ker(\pi)= \K2(k) \ar@{^{(}->}[rr] & & \St(k) \ar@{>>}[rr]^-{\pi} & &  \Sp_{2n}(k) \ar@{>>}[r] & 1.
} \tag{\textasteriskcentered\textasteriskcentered}\]

We recall the presentations of $\Sp_{2n}(k)$, $\St(k)$ and $\K2(k)$. Let $\Sigma = C_n$ be the root system for $\Sp_{2n}(k)$. Consider symbols $u_{\alpha}(x)$ for $\alpha\in\Sigma$ and $x\in k$. 
For $x\in k^{\times}$ define: $w_{\alpha}(x) = u_{\alpha}(x)u_{-\alpha}\left(-x^{-1}\right)u_{\alpha}(x)$ and $h_{\alpha}(x) = w_{\alpha}(x) w_{\alpha}(-1)$.

For $x,y\in k$, $t\in k^{\times}$, $\alpha, \beta\in\Sigma$ define:

\begin{itemize}
\item[(A)] $u_{\alpha}(x+y) = u_{\alpha}(x) u_{\alpha}(y)$,

\item[(B)] $\left[u_{\alpha}(x),u_{\beta}(y)\right] = \prod_{i,j>0,i\alpha+j\beta\in\Sigma}u_{i\alpha+j\beta}\left(c_{\alpha\beta,i,j} x^i y^j\right)$, for some constants $c_{\alpha\beta,i,j}\in\Z$, dependent only on $\Sigma$,

\item[(B$^\prime$)] $w_{\alpha}(t)u_{\alpha}(x)w_{\alpha}(t)^{-1} = u_{-\alpha}\left(-t^{-2}x\right)$,

\item[(C)] $h_{\alpha}(xy) = h_{\alpha}(x)h_{\alpha}(y)$.
\end{itemize}

A presentation of $\Sp_{2n}(k)$ is given by

\[\left\{
  \begin{array}{l l}
  \left\langle\  u_{\alpha}(x)\  |\  (A), (B'), (C)\ \right\rangle_{\alpha\in\Sigma, x\in k} &: \text{if }n = 1, \\
    \left\langle\ u_{\alpha}(x)\  |\  (A), (B), (C)\ \right\rangle_{\alpha\in\Sigma, x\in k}  &: \text{if }n\geq 2. \\
\end{array}
\right.
\]


We identify each $u_\alpha(x)$ with a concrete matrix from $\Sp_{2n}(k)$. Recall \cite[Page 205]{bour} that the roots for $C_n$ are $\Sigma = \{\pm 2e_i,\ \ \pm e_j \pm e_k : 1\leq i \leq n,\ \ 1\leq j < k \leq n\}\subset \R^n$, where $\{e_i : 1\leq i\leq n\}$ is the standard basis of $\R^n$. Each $2e_i$ is a \emph{long root}. Other roots from $\Sigma$ are \emph{short}.

\begin{remark} \label{rem:opis_mac}
Let $\{b_{-n},\ldots,b_{-1},b_1,\ldots,b_n\}$ be the standard basis of the vector space $k^{2n}$. Define $E_{i,j}=(a_{pq})$ as a $2n\times 2n$ matrix such that $a_{ij}=1$ and $a_{pq}=0$ for $(p,q)\neq(i,j)$. Put  \cite[Page 205]{bour}: $X_{2e_i} = E_{i,-i}$, $X_{-2e_i} = -E_{-i,i}$, $X_{e_i-e_j}=E_{i,j}-E_{-j,-i}$, $X_{-e_i+e_j}=-E_{j,i}+E_{-i,-j}$, $X_{e_i+e_j}=E_{i,-j}+E_{j,-i}$, $X_{-e_i-e_j}=-E_{-i,j}-E_{-j,i}$. Then for $\alpha\in \Sigma$ and $x\in k$ we have (see \cite[Section 3, Page 21]{stein}, where the symbol $x_{\alpha}(t)$ for $u_{\alpha}(t)$ is used) \[u_\alpha(x) = \exp(c\cdot X_\alpha) = \id_{2n} + x\cdot X_{\alpha},\] as $X_\alpha^2=0$.
\end{remark}

The Steinberg presentation of $\St(k)$ is 

\[\left\{
  \begin{array}{l l}
  \left\langle\  u_{\alpha}(x)\  |\  (A), (B')\ \right\rangle_{\alpha\in\Sigma, x\in k} &: \text{if }n = 1, \\
    \left\langle\ u_{\alpha}(x)\  |\  (A), (B)\ \right\rangle_{\alpha\in\Sigma, x\in k}  &: \text{if }n\geq 2. \\
\end{array}
\right.
\]


For a long root $\alpha\in\Sigma$ and  $x,y\in k^{\times}$ define $c(x,y) = h_{\alpha}(x)h_{\alpha}(y)h_{\alpha}(xy)^{-1}$. Matsumoto-Moore theorem (see \cite[Prop. 5.5, Thm. 5.10, Cor. 5.11]{matsumoto}, \cite[Theorem 12, p. 86]{stein}) tells us that $\K2(k)$ is central in $\St(k)$, it is generated by  $\left\{ c(x,y):x,y\in k^{\times}\right\} $ and can be presented abstractly as $\left\langle\ c(x,y) \ |\ (\text{S1}), (\text{S2}), (\text{S3})\ \right\rangle_{x,y\in k^{\times}}$,  where:
\begin{itemize}
\item[(S1)] $c(x,y)\; c(xy,z) = c(x,yz)\; c(y,z)$,
\item[(S2)] $c(1,1) = 1,\ c(x,y) = c(x^{-1},y^{-1})$,
\item[(S3)] $c(x,y) = c(x,(1-x)y)$ for $x\ne 1$.
\end{itemize}
We call each $c(x,y)$ a \emph{standard generator} of $\K2(k)$. Since $\K2(k)$ is abelian, from now on, we will use the additive notation in $\K2(k)$. The above presentation of $\K2(k)$ is independent of $n$ and a long root $\alpha$.

Note that $c$ may be regarded as a mapping $c\colon k^{\times}\times k^{\times} \to \K2(k)$. More generally, for an arbitrary abelian group $A$, every mapping $c\colon k^{\times}\times k^{\times} \to A$ satisfying (S1), (S2) and (S3) is called a \emph{symplectic Steinberg symbol}. For example, the mapping (\textasteriskcentered) is such.

An important consequence of the above theory is the existence of a one-to-one correspondence between symplectic Steinberg symbols on $k$ with values in $A$ and central extensions of $\Sp_{2n}(k)$ by $A$ up to equivalence (see for example \cite[Section 11]{milnor}). 

More precisely, suppose $A \hookrightarrow G' \overset{\pi'}{\twoheadrightarrow} \Sp_{2n}(k)$ is a central extension of $\Sp_{2n}(k)$. Then, since $\St(k)$ is the universal central extension of $\Sp_{2n}(k)$, there is a unique mapping $f$ such that the following diagram commutes
\[\xymatrix{
 {}\K2(k) \ar@{^{(}->}[rr] \ar[d]^-{f_{|\K2(k)}} & & \St(k) \ar@{>>}[rr]^-{\pi} \ar[d]^-{f} & & \Sp_{2n}(k) \ar[d]^-{\id} \\
 A        \ar@{^{(}->}[rr] & & G'       \ar@{>>}[rr]^-{\pi'} & & \Sp_{2n}(k).
}\]
Hence, the composition $c' = f\circ c$ is a symplectic Steinberg symbol. Conversely, by the Matsumoto-Moore theorem, every symplectic Steinberg symbol $c'\colon k^{\times}\times k^{\times} \to A$ is induced by a unique homomorphism $f_{c'}\colon\K2(k)\to A$ mapping $c(x,y)$ to $c'(x,y)$ for each $x,y \in k^{\times}$. For $N<\K2(k)\times A$ defined as $N = \left\{(x,-f_{c'}(x)) : x\in \K2(k)\right\}$ we obtain the following central extension of $\Sp_{2n}(k)$
\[\xymatrix{
A \ar@{^{(}->}[rr] & & \frac{\St(k)\times A}{N} \ar@{>>}[rr]^-{\pi'} & & \Sp_{2n}(k).
} \tag{\textasteriskcentered\textasteriskcentered\textasteriskcentered}\]


Notice also that if $H\colon \Sp_{2n}(k) \times \Sp_{2n}(k) \to \K2(k)$ is any 2-cocycle defining the extension $(**)$ (up to equivalence) and $c'\colon k^{\times}\times k^{\times} \to A$ is a symplectic Steinberg symbol, then the homomorphism $f_{c'}\colon\K2(k)\to A$ defined above induces a 2-cocycle $H_{c'}\colon\Sp_{2n}(k) \times \Sp_{2n}(k) \to A$ by putting $H_{c'}(a,a')=f_{c'}(H(a,a'))$. Then, the central extension of $\Sp_{2n}(k)$ by $A$ defined by means of $H_{c'}$ turns out to be equivalent to the extension (\textasteriskcentered\textasteriskcentered\textasteriskcentered). 

Let us collect some of the facts discussed above as a corollary so that we could easily refer to them later.

\begin{corollary}\label{cor:podsumowanie}
Let $c'\colon k^{\times}\times k^{\times} \to A$ be a symplectic Steinberg symbol. Then there exists a unique homomorphism $f_{c'}\colon\K2(k)\to A$ satisfying $f_{c'}(c(x,y))=c'(x,y)$ for all $x,y \in k^\times$. If $H\colon \Sp_{2n}(k) \times \Sp_{2n}(k) \to \K2(k)$ is a 2-cocycle defining the extension $(**)$, then the formula \[H_{c'}(a,a')=f_{c'}(H(a,a'))\] defines a 2-cocycle $H_{c'}\colon \Sp_{2n}(k) \times \Sp_{2n}(k) \to A$.
\end{corollary}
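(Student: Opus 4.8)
The plan is to assemble the two assertions from ingredients already recorded in the discussion preceding the statement; this is a bookkeeping corollary packaging the Matsumoto--Moore machinery together with functoriality of group cohomology in the coefficients. For the first sentence, I would invoke the Matsumoto--Moore presentation of $\K2(k)$, namely $\K2(k)=\langle\, c(x,y)\mid (\mathrm{S1}),(\mathrm{S2}),(\mathrm{S3})\,\rangle_{x,y\in k^\times}$. By the very definition of a symplectic Steinberg symbol, $c'\colon k^\times\times k^\times\to A$ satisfies exactly the relations $(\mathrm{S1})$, $(\mathrm{S2})$, $(\mathrm{S3})$ (written additively in $A$), so the assignment $c(x,y)\mapsto c'(x,y)$ respects all defining relations of $\K2(k)$. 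The universal property of a group presentation then yields a homomorphism $f_{c'}\colon\K2(k)\to A$ with $f_{c'}(c(x,y))=c'(x,y)$ for all $x,y\in k^\times$, and it is unique because the elements $c(x,y)$ generate $\K2(k)$.

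For the second assertion, recall that the extension $(**)$ is central, so $\Sp_{2n}(k)$ acts trivially on $\K2(k)$; regarding $A$ also with the trivial $\Sp_{2n}(k)$-action, any 2-cocycle $H\colon\Sp_{2n}(k)\times\Sp_{2n}(k)\to\K2(k)$ defining $(**)$ satisfies $H(g_1,g_2)+H(g_1g_2,g_3)=H(g_1,g_2g_3)+H(g_2,g_3)$ together with $H(g,e)=H(e,g)=0$. Applying the homomorphism $f_{c'}$ to both sides and using its additivity shows immediately that $H_{c'}:=f_{c'}\circ H$ satisfies the same identities, hence $H_{c'}$ is a 2-cocycle $\Sp_{2n}(k)\times\Sp_{2n}(k)\to A$ for the trivial action. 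This is just the general principle that post-composing a 2-cocycle with a homomorphism of the (trivial) coefficient module again produces a 2-cocycle.

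There is essentially no obstacle here; the only point deserving a word of care is to check that the relations in the Matsumoto--Moore presentation of $\K2(k)$ are literally $(\mathrm{S1})$--$(\mathrm{S3})$ and that these coincide with the defining conditions of a symplectic Steinberg symbol, so that the factorization through $f_{c'}$ exists with no further hypotheses on $c'$; this has already been arranged in the statements of $(\mathrm{S1})$--$(\mathrm{S3})$ and in the definition recalled above. If one wishes, one may additionally note that the central extension of $\Sp_{2n}(k)$ by $A$ built from $H_{c'}$ is equivalent to $(\textasteriskcentered\textasteriskcentered\textasteriskcentered)$, but this was explained in the paragraph preceding the corollary and is not part of the assertion to be proved.
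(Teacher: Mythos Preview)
Your proposal is correct and matches the paper's approach. The paper does not give a separate proof for this corollary at all: it is explicitly introduced with the sentence ``Let us collect some of the facts discussed above as a corollary so that we could easily refer to them later,'' and the ingredients you invoke --- the Matsumoto--Moore presentation of $\K2(k)$ by generators $c(x,y)$ and relations $(\mathrm{S1})$--$(\mathrm{S3})$ for the existence and uniqueness of $f_{c'}$, and the trivial observation that post-composing a $2$-cocycle with a coefficient homomorphism yields a $2$-cocycle --- are exactly what the preceding paragraphs record.
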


In order to apply Corollary \ref{cor:main}, we need to find non-split 2-cocycles on $\Sp_{2n}(k)$ with values in $\Z^n$ and with finite image. We will define such 2-cocycles using symplectic Steinberg symbols. 
We  use sections and 2-cocycles of \emph{finite width}.

\begin{definition} Let $s\colon \Sp_{2n}(k)\to \St(k)$ be a section of $\pi$ from (\textasteriskcentered\textasteriskcentered) such that $s(\id_{2n})=e$ and let $H\colon \Sp_{2n}(k) \times \Sp_{2n}(k) \to \K2(k)$ be the 2-cocycle induced by $s$. We say that:
\begin{enumerate}
\item \emph{$s$ has finite width} if there exists $N\in\N$ such that $s(M)$ can be written as a product of at most $N$ generators $u_\alpha(x)$ of $\St(k)$, for every $M\in\Sp_{2n}(k)$ (notice that by (A) $u_\alpha(x)^{-1} = u_\alpha(-x)$).
\item \emph{$H$ has finite width} if for some $N\in\N$, for every $M,M'\in\Sp_{2n}(k)$, $H(M,M')$ can be written as a sum of at most $N$ $\pm$ standard generators of $\K2(k)$.
\end{enumerate}
\end{definition}

\begin{remark}\label{rem:existence of s}
Sections of finite width of $\pi$ from (\textasteriskcentered\textasteriskcentered) satisfying $s(\id_{2n})=e$ always exist. 
\end{remark}
\begin{proof}
It is a well-known fact that there is $N\in\N$ such that every element of $\Sp_{2n}(k)$ is a product of at most $N$ generators $u_\alpha(x)$ (see the Bruhat decomposition in \cite[Section 8]{stein}). Using this fact, one can easily define a section of finite width. Namely, for any given element $M \in \Sp_{2n}(k)$,  write $M$ (not uniquely) as a product of at most $N$ elements of the form $u_\alpha(x)$, and define $s(M)$ as the same product but computed in $\St(k)$. 
\end{proof}

\begin{lemma} \label{lem:bound}
Let $s\colon \Sp_{2n}(k)\to \St(k)$ be a section of $\pi$ from (\textasteriskcentered\textasteriskcentered) of finite width and such that $s(\id_{2n})=e$. Then the associated 2-cocycle $H$ has also finite width.
\end{lemma}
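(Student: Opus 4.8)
The statement to prove is Lemma \ref{lem:bound}: if $s\colon \Sp_{2n}(k)\to \St(k)$ is a section of $\pi$ in $(\textasteriskcentered\textasteriskcentered)$ of finite width with $s(\id_{2n})=e$, then the associated $2$-cocycle $H(M,M')=s(M)s(M')s(MM')^{-1}$ also has finite width, i.e., there is $N'\in\N$ such that for every $M,M'$, $H(M,M')$ is a sum of at most $N'$ terms $\pm c(x,y)$ in $\K2(k)$. The plan is to use the finite-width hypothesis on $s$ to write $s(M)s(M')s(MM')^{-1}$ as a product of a bounded number of generators $u_\alpha(x)^{\pm 1}$ of $\St(k)$ that projects to $\id_{2n}$ under $\pi$, and then to show that any such bounded-length word lying in $\ker\pi=\K2(k)$ is a sum of boundedly many standard generators.

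**Step 1: bounding the word length of $H(M,M')$.** By hypothesis there is $N\in\N$ such that every $s(M)$ is a product of at most $N$ symbols $u_\alpha(x)$; using (A), $u_\alpha(x)^{-1}=u_\alpha(-x)$, so $s(MM')^{-1}$ is likewise a product of at most $N$ such symbols. Hence $H(M,M')=s(M)s(M')s(MM')^{-1}$ is a product of at most $3N$ symbols $u_\alpha(x)$ in $\St(k)$. Moreover $\pi(H(M,M'))=\id_{2n}$, so $H(M,M')\in\K2(k)$, and it is central in $\St(k)$.

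**Step 2: from bounded-length words in $\ker\pi$ to boundedly many standard generators.** This is the heart of the argument. I would argue that there is a function $\nu\colon\N\to\N$ such that any element of $\K2(k)$ which can be expressed as a product of at most $m$ generators $u_\alpha(x)^{\pm1}$ of $\St(k)$ is a sum of at most $\nu(m)$ standard generators $\pm c(x,y)$; applying this with $m=3N$ gives $N'=\nu(3N)$ and finishes the lemma. To see such a $\nu$ exists, recall that in $\St(k)$ the Bruhat-type machinery (see \cite[Sections 7, 8]{stein} and \cite{matsumoto}) lets one rewrite any product $u_{\alpha_1}(x_1)\cdots u_{\alpha_m}(x_m)$ into a canonical form $u^{+}\,w\,h\,u'$, where $u^{+},u'$ are products of positive-root symbols, $w$ is a product of $w_\alpha(1)$'s, and $h$ is a product of $h_\alpha(t)$'s; crucially, the number of factors of each type in this normal form, and the number of times the symbol relations (S1)--(S3) / the commutator relations (B), (B$'$) are invoked, can be bounded purely in terms of $m$ and the fixed root system $C_n$ — independently of $k$ and of the particular elements $x_i$. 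When the resulting element lies in $\K2(k)$, the $u^{+}$, $w$, $u'$ parts must collapse to the identity and the $h$-part, pushed through the defining relation $c(x,y)=h_\alpha(x)h_\alpha(y)h_\alpha(xy)^{-1}$, yields an expression as a sum of boundedly many $\pm c(x,y)$. Concretely I would invoke the explicit cocycle/commutator formulas of Matsumoto (\cite[Prop. 5.5, Cor. 5.11]{matsumoto}, \cite[Lemma 37 ff., p.\ 79ff.]{stein}) which show that the "price" in $\K2(k)$ of each elementary rewriting step is a bounded number of standard generators.

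**Main obstacle.** The routine part is Step 1; the real content is Step 2, namely making precise that the rewriting of a length-$m$ word in the $u_\alpha(x)$ into Bruhat normal form uses only a number of symbol moves bounded by a function of $m$ (and $n$) alone. The subtlety is that a naive induction on word length could a priori blow up because rewriting can lengthen the word; one must use the layered structure of the Bruhat decomposition (positive part, Weyl part, torus part) so that moving a symbol past the normal form increases the relevant counts by a bounded amount, and once the Weyl-length is pinned down the process terminates in boundedly many steps. I would organize this as an auxiliary lemma: "there is $\nu\colon\N\to\N$, depending only on the root system $C_n$, such that every word of length $\le m$ in the generators $u_\alpha(x)^{\pm1}$ that represents an element of $\K2(k)$ equals a sum of at most $\nu(m)$ standard generators," proved by tracking the cost of the standard Bruhat reduction steps, and then Lemma \ref{lem:bound} follows immediately from Step 1 with $N'=\nu(3N)$.
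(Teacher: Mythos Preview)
Your approach is \emph{genuinely different} from the paper's, and the comparison is instructive.

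\textbf{What the paper does.} The paper proves the lemma by contradiction via an ultraproduct argument. Assuming no uniform bound exists, one picks $M_n,M'_n$ with $H(M_n,M'_n)$ not a sum of $\le n$ standard generators, and passes to an ultraproduct over a non-principal ultrafilter $\U$ on $\N$. The universal property of the central extension yields a canonical homomorphism
\[
F\colon \St\!\left(k^{\N}/\U\right)\longrightarrow \St(k)^{\N}/\U
\]
sending $u_\alpha((x_n)/\U)$ to $(u_\alpha(x_n))/\U$ and $c((x_n)/\U,(y_n)/\U)$ to $(c(x_n,y_n))/\U$. Because $s$ has finite width and $\Sigma$ is finite, the image of $s^{\N}/\U$ lies in $\im F$, so $s^{\N}/\U$ lifts to a section $s_1$ upstairs. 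The associated cocycle value $H_1(M,M')$ lies in $\K2(k^{\N}/\U)$, which by Matsumoto--Moore is generated by the $c(x,y)$'s, hence is a \emph{finite} sum of standard generators; pushing down through $F$ gives the same for $H^{\N}/\U(M,M')$, contradicting the choice of $M_n,M'_n$.

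\textbf{What you do.} You try to prove directly the existence of $\nu\colon\N\to\N$ such that every length-$m$ word in the $u_\alpha(x)$ representing an element of $\K2(k)$ is a sum of at most $\nu(m)$ standard generators, by tracking the cost of Bruhat-normal-form rewriting. This is exactly the computational route the paper alludes to in its Remark following the lemma, and the paper explicitly says that obtaining the lemma this way ``would require very long computations, whereas our argument involving ultraproducts is short.''

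\textbf{Comparison.} Your Step~1 is fine and shared with the paper in spirit. Your Step~2 is plausible and, as the paper itself acknowledges, can be made to work; but as written it is only a sketch: you assert that the number of rewriting steps and the $\K2$-cost per step are bounded uniformly in $m$ and $C_n$, citing Matsumoto and Steinberg, without actually carrying out the bookkeeping. The paper's ultraproduct proof is precisely a compactness substitute for this bookkeeping: it shows that \emph{some} $\nu$ exists without constructing it, by exploiting that in the ultrapower field the Matsumoto--Moore presentation of $\K2$ guarantees finiteness automatically. What your approach buys is effectivity (an explicit $\nu$, and in principle explicit formulas for $H$ in terms of Steinberg symbols); what the paper's approach buys is a short, complete proof that sidesteps all the normal-form analysis. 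If you want to turn your sketch into a full proof, you would need to nail down the termination and cost bounds for the Bruhat reduction; otherwise, the ultraproduct argument is the cleaner way to close the lemma.
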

\begin{proof}
Suppose the conclusion fails. Then for each $n\in\N$ we take $M_n,M'_n\in\Sp_{2n}(k)$ such that $H\left(M_n,M'_n\right)$ cannot be written as a sum of at most $N$ $\pm$ standard generators of $\K2(k)$. We take an ultraproduct of the sequence (\textasteriskcentered\textasteriskcentered), together with the section $s$ and the induced 2-cocycle $H$, over some non-principal ultrafilter $\U$ on $\N$. 
From now on, we will identify  $\Sp_{2n}(k^{\N}/\U)$ with $\Sp_{2n}(k)^{\N}/\U$ by the natural isomorphism.
Using 
the definition of a universal central extension (Definition \ref{def:uce}), we can write the following diagram:
\[\xymatrix{
 1 \ar@{^{(}->}[r] & \K2\left(k^{\N}/\U\right) \ar@{^{(}->}[rr] \ar[d]^{F_1=F_{|\K2}} & & \St\left(k^{\N}/\U\right) \ar@{>>}[rr]^{\pi_1} \ar[d]^F & & \Sp_{2n}\left(k^{\N}/\U\right) \ar@{>>}[r] \ar[d]^{\cong} \ar@/^1.7pc/[ll]_{s_1} & 1 \\
  1 \ar@{^{(}->}[r] & \K2(k)^{\N}/\U \ar@{^{(}->}[rr] & & \St(k)^{\N}/\U \ar@{>>}[rr]^{\pi^{\N}/\U} & & \Sp_{2n}\left(k\right)^{\N}/\U \ar@{>>}[r] \ar@/^1.7pc/[ll]_{s^{\N}/\U} & 1,
}\]
where 
\begin{itemize}
\item $F$ is given by Definition \ref{def:uce}, 
\item $F_1$ is the restriction of $F$ to $\K2\left(k^{\N}/\U\right)$,
\item $s_1$ is a section of $\pi_1$ satisfying $s_1(\id_{2n})=e$ and $F\circ s_1 = s^{\N}/\U$; the existence of $s_1$ is proved below.
\end{itemize}

By the uniqueness of $F$, we have that $F\bigg(u_{\alpha}\Big((x_n)_{n\in\N}/\U\Big)\bigg) = \Big(u_{\alpha}(x_n)\Big)_{n\in\N}/\U$ (because $F$ defined in a such a way makes the diagram commutative), and \[F_1\bigg(c\Big((x_n)_{n\in\N}/\U,(y_n)_{n\in\N}/\U\Big)\bigg) = \Big(c(x_n,y_n)\Big)_{n\in\N}/\U.\]


The existence of a section $s_1$ of $\pi_1$ satisfying $s_1(\id_{2n})=e$ and $F\circ s_1 = s^{\N}/\U$ follows from the fact that the image of $F$ contains the image of $s^{\N}/\U$ which, in turn, can be seen using  the above formula for $F$ and the fact $s$ has finite width and the root system $\Sigma=C_n$ is finite.

Let $H_1$ be the 2-cocycle induced by $s_1$. Clearly $F_1\circ H_1 = H^{\N}/\U$. Take $M, M'\in\Sp_{2n}(k^{\N}/\U)$ defined as $M=(M_n)_{n\in\N}/\U$ and $M'=(M'_n)_{n\in\N}/\U$. Then $H_1(M,M')$ is a finite sum of $\pm$ standard generators of $\K2\left(k^{\N}/\U\right)$.  Hence, also $H^{\N}/\U(M,M') = F_1(H_1(M,M'))$ can be expressed as a finite sum of images of $\pm$ standard generators. However, \[H^{\N}/\U(M,M')=\Big(H(M_n,M'_n)\Big)_{n\in\N}/\U,\] thus we get a contradiction with the choice of $M_n$ and $M'_n$.
\end{proof}


\begin{remark}\label{rem:computational}
Using similar methods to those in \cite{block}, it is possible to calculate explicit formulas in terms of Steinberg symbols for the 2-cocycle $H$ corresponding to the universal central extension (\textasteriskcentered\textasteriskcentered) for some particular section $s$ of finite width. In \cite{block}, this task has been done under the stronger assumption than (S1), that is under the assumption that $c$ is bilinear \cite[Page 146]{block}. For $\Sp_{2n}(k)=\SL_2(k)$ the following explicit formula for a 2-cocycle has been given by Matsumoto in \cite[5.12(a)]{matsumoto}: $H(M,M') = c\left(\chi(MM'),-\chi(M)^{-1}\chi(M')\right) - c(\chi(M),\chi(M'))$, where $\chi\colon \Sp_{2n}(k)\to k$ is defined as $\chi\left( \begin{array}{cc}
s & t \\
u & v \\ \end{array}\right) = v$ or $u$, depending on whether $u=0$ or not.
\end{remark}

To get Lemma \ref{lem:bound} by methods mentioned in the last remark would require very long computations, whereas our argument involving ultraproducts is short.
On the other hand, however, using this computational method, one can deduce that the formulas for the 2-cocycle in terms of finitely many standard generators of $\K2(k)$ depend in a `definable (in the field $k$) way' on the entries of the matrices from $\Sp_2(k)$, which allows to deduce definability of  2-cocycles $H_{c'}$ defined in Corollary \ref{cor:podsumowanie} in some concrete examples below.

Corollary \ref{cor:podsumowanie} together with Lemma \ref{lem:bound} give us the following conclusion.

\begin{corollary}\label{cor:finite image}
Assume that $c'\colon k^{\times}\times k^{\times} \to A$ is a symplectic Steinberg symbol and let $H\colon \Sp_{2n}(k) \times \Sp_{2n}(k) \to \K2(k)$ be a 2-cocycle as in Lemma \ref{lem:bound}. Let $H_{c'}\colon \Sp_{2n}(k) \times \Sp_{2n}(k) \to A$ be obtained from $H$ and $c'$ as it was described in Corollary \ref{cor:podsumowanie}. Then $\im(H_{c'})\subseteq (\im(c')-\im(c'))^N$ for some natural $N$. In particular, if $c'$ has finite image, then $H_{c'}$ has finite image. 
\end{corollary}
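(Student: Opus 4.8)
The plan is to unwind the two relevant facts: by Lemma \ref{lem:bound}, the 2-cocycle $H$ has finite width, i.e., there is a single natural number $N$ such that for every pair $M, M' \in \Sp_{2n}(k)$ the value $H(M,M') \in \K2(k)$ can be written as a sum of at most $N$ terms, each being $\pm$ a standard generator $c(x,y)$ of $\K2(k)$. Applying the homomorphism $f_{c'}\colon \K2(k) \to A$ from Corollary \ref{cor:podsumowanie} to such an expression, and using that $f_{c'}(c(x,y)) = c'(x,y)$, we get that $H_{c'}(M,M') = f_{c'}(H(M,M'))$ is a sum of at most $N$ terms, each of the form $\pm c'(x,y)$ for some $x,y \in k^\times$. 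In other words, $H_{c'}(M,M') \in (\im(c') \cup (-\im(c')))^{N}$, where the exponent denotes the set of all sums of at most $N$ elements of the indicated set. Since $\im(c') = -\im(c')$ need not hold, but $0 = c'(1,1) \in \im(c')$ by (S2) and one checks $-c'(x,y) = c'(x^{-1}, y^{-1})$ is again in $\im(c')$, we actually have $-\im(c') = \im(c')$, so the bound simplifies to $\im(H_{c'}) \subseteq (\im(c'))^{N}$; writing it as $(\im(c') - \im(c'))^{N}$ as in the statement is harmless since the right-hand side only grows.

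First I would record the finite-width constant: fix $N \in \N$ with the property given by Lemma \ref{lem:bound}. Then, for arbitrary $M, M'$, write $H(M,M') = \sum_{i=1}^{m} \varepsilon_i \, c(x_i, y_i)$ with $m \le N$, $\varepsilon_i \in \{+1,-1\}$ and $x_i, y_i \in k^\times$. Second, apply $f_{c'}$, which is a group homomorphism, to obtain $H_{c'}(M,M') = f_{c'}(H(M,M')) = \sum_{i=1}^{m} \varepsilon_i \, c'(x_i,y_i)$, using $f_{c'} \circ c = c'$. Third, observe that $-c'(x_i,y_i) = c'(x_i^{-1}, y_i^{-1}) \in \im(c')$ by (S2), so each summand lies in $\im(c')$; hence $H_{c'}(M,M')$ is a sum of at most $N$ elements of $\im(c')$, which gives $\im(H_{c'}) \subseteq (\im(c'))^N \subseteq (\im(c') - \im(c'))^N$ (the latter inclusion because $0 \in \im(c')$, so $\im(c') \subseteq \im(c') - \im(c')$). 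Finally, for the `in particular' clause: if $\im(c')$ is finite, then $(\im(c') - \im(c'))^N$ is a finite subset of $A$ (a bounded number of sums of finitely many elements from a finite set), and therefore $\im(H_{c'})$ is finite.

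There is essentially no obstacle here — the statement is a bookkeeping consequence of Lemma \ref{lem:bound} and the functoriality in Corollary \ref{cor:podsumowanie}. The only point needing a word of care is that the homomorphism $f_{c'}$ sends $\pm$ standard generators to $\pm$ values of $c'$, and that $\im(c')$ is symmetric; both follow immediately from $f_{c'}$ being a homomorphism with $f_{c'}(c(x,y)) = c'(x,y)$ and from relation (S2). The main content — that $H$ itself has finite width — has already been established in Lemma \ref{lem:bound} via the ultraproduct argument, so the present corollary is just the transport of that bound along $f_{c'}$.
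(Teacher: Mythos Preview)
Your overall structure is exactly the (tacit) argument the paper intends: Lemma~\ref{lem:bound} gives a uniform bound $N$ on the number of $\pm$ standard generators needed to express $H(M,M')$, and pushing this through the homomorphism $f_{c'}$ from Corollary~\ref{cor:podsumowanie} writes $H_{c'}(M,M')$ as a sum of at most $N$ terms of the form $\pm c'(x,y)$. Since $0=c'(1,1)\in\im(c')$, each such term lies in $\im(c')-\im(c')$, and padding by zeros gives $H_{c'}(M,M')\in(\im(c')-\im(c'))^N$. That is all the paper claims, and your derivation of it is fine.

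There is, however, one genuine slip: you invoke (S2) to conclude $-c'(x,y)=c'(x^{-1},y^{-1})$, hence $-\im(c')=\im(c')$. Relation (S2) says $c'(x,y)=c'(x^{-1},y^{-1})$, not its negative, and in general $\im(c')$ is \emph{not} symmetric: for the key example $(\ast\ast\ast\ast)$ one has $\im(c')=\{0,1\}\neq\{0,-1\}=-\im(c')$. So your intermediate stronger claim $\im(H_{c'})\subseteq(\im(c'))^N$ is not justified. Simply drop that step; the inclusion into $(\im(c')-\im(c'))^N$, which is the actual statement, already follows from your sum-of-$\pm$-values description without any symmetry of $\im(c')$.
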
 


Now, we make a few easy general observations, relating splitness with perfectness. We will use some of them to get non-splitness of 2-cocycles $H_{c'}$ considered in the last corollary. 

\begin{remark}\label{rem:nice and easy}
Suppose $\widetilde{G}$ is the central extension of a group $G$ by a non-trivial abelian group $A$ defined by means of a 2-cocycle $h \colon G \times G \to A$. Then we have:
\begin{enumerate}
\item If $\left[\widetilde{G},\widetilde{G}\right] \cap A$ is non-trivial, then $h$ is non-split.
\item If $\widetilde{G}$ is perfect, then $h$ is non-split.
\item If $G$ is perfect and $h$ non-split, then $\left[\widetilde{G},\widetilde{G}\right] \cap A$ is non-trivial.
\item If $G$ is perfect and $h$ is strongly non-split (i.e., for any proper $A'<A$ the induced 2-cocycle $\overline{h}: G \to A/A'$ is non-split), then $\widetilde{G}$ is perfect.
\item Assume $G$ is perfect and it acts trivially on an abelian group $C\geq A$. Then $h\colon G \times G \to A$ is non-split iff $h \colon G \times G \to C$ (i.e., $h$ treated as a 2-cocycle with values in $C$) is non-split. 
\end{enumerate}
\end{remark}

\begin{proof}
(1) If $h$ was split, we would get an isomorphism $\Phi \colon  \widetilde{G} \to A \times G$ commuting with the projections on $G$. Then $\{ 0 \} \ne \Phi\left[\left[\widetilde{G}, \widetilde{G}\right] \cap A\right]=\left[A \times G,A\times G\right] \cap A=\{ 0\}$, a contradiction.\\
(2) follows from (1).\\
(3) Suppose for a contradiction that $\left[\widetilde{G},\widetilde{G}\right] \cap A$ is trivial. Let 
\[\xymatrix{
 1 \ar@{^{(}->}[r] & C \ar@{^{(}->}[r] & U \ar@{>>}[r]^-{\pi'} & G \ar@{>>}[r] & 1.
} \]
be the universal central extension of $G$ (it exists since $G$ is perfect). Then there exists a homomorphism $\Phi \colon U \to \widetilde{G}$ which commutes with the projections on $G$. Since the universal central extension is always perfect, $\Phi[C]=\Phi\left[\left[U,U\right] \cap C\right] \subseteq \left[\widetilde{G},\widetilde{G}\right] \cap A = \{ 0 \}$. Thus, $\Phi$ is trivial on $C$, and so the section $s \colon G \to \widetilde{G}$ of the projection of $\widetilde{G}$ onto $G$ defined by $s(g)= \Phi\left[\pi'^{-1}(g)\right]$ is a homomorphism. This implies that $h$ is split, a contradiction.\\
(4) follows from (3). Namely, suppose for a contradiction that $A':=\left[\widetilde{G}, \widetilde{G}\right] \cap A$ is a proper subgroup of $A$. Then, applying (3) to the central extension of $G$ by $A/A'$ defined by the 2-cocycle $\overline{h}\colon G \times G \to A/A'$ induced by $h$, we get that $\overline{h}$ is split, a contradiction.\\
(5) follows from (1) and (3).
\end{proof}

\begin{corollary}\label{cor:better version}
Suppose that $c'\colon k^{\times}\times k^{\times} \to A$ is a symplectic Steinberg symbol and $\chr(k)=0$. By $H_{c'}$ we denote the 2-cocycle considered in Corollary \ref{cor:finite image}. In (2) and (3) below, we assume that $H$ is induced by a section $s$ of finite width satisfying $s(\id_{2n})=e$ and additionally such that the image $s[\Sp_{2n}(\Q)]$ is contained in the subgroup of $\St(k)$ generated by $\{ u_\alpha(x): \alpha  \in \Sigma, x \in \Q\}$ (note that the existence of at least one such $s$ follows as in Remark \ref{rem:existence of s}).
\begin{enumerate}
\item If $c'$ is non-trivial, then $H_{c'}$ is non-split.
\item Consider any $G$ with $\Sp_{2n}(\Q)\leq G \leq \Sp_{2n}(k)$. If $c'_{|\Q^\times \times \Q^{\times}}$ is non-trivial, then ${H_{c'}}_{| G \times G}$ is non-split.
\item Consider any $G$ with $\Sp_{2n}(\Q)\leq G \leq \Sp_{2n}(k)$. Assume that $\Sp_{2n}(\Q)$, $G$, $A$ and the 2-cocycle ${H_{c'}}_{|G\times G}$ are $B$-definable in some first order structure $\G$. Let $\G^*\succ \G$ be a monster model. Then, if $c'_{|\Q^\times \times \Q^{\times}}$ is non-trivial, the 2-cocycle ${{H_{c'}}^*}_{| {G^*}^{00}_B\times{G^*}^{00}_B} \colon {G^*}^{00}_B\times{G^*}^{00}_B \to A^*$ is non-split.
\end{enumerate}
\end{corollary}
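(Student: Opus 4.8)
The plan is to deduce all three parts of Corollary \ref{cor:better version} from Remark \ref{rem:nice and easy} and the earlier structural results, proceeding from the most concrete to the most technical. For part (1), the idea is that since $\Sp_{2n}(k)$ is perfect (indeed absolutely connected, being a simply connected Chevalley group over an infinite field), by Remark \ref{rem:nice and easy}(3) it suffices to exhibit a nontrivial element of $\left[\widetilde{G},\widetilde{G}\right]\cap A$ in the central extension $\widetilde{G}$ of $\Sp_{2n}(k)$ by $A$ defined via $H_{c'}$. But this extension is equivalent to the extension $(\textasteriskcentered\textasteriskcentered\textasteriskcentered)$, whose kernel $A$ is generated by the image of $c'$: if $c'$ is nontrivial, then the image of $c'$ contains a nonzero element, and since $\K_2^{\rm sym}(k)$ sits inside the derived subgroup of $\St(k)$ (the universal central extension is perfect), the image of $c(x,y)$ lands in $\left[\St(k)\times A/N,\St(k)\times A/N\right]$; pushing forward, $c'(x,y)\in\left[\widetilde{G},\widetilde{G}\right]\cap A$, which is nontrivial. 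Hence $H_{c'}$ is non-split by Remark \ref{rem:nice and easy}(3) (or directly by (1)).

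For part (2), I would use the hypothesis on the section $s$: since $s[\Sp_{2n}(\Q)]$ lies in the subgroup of $\St(k)$ generated by $\{u_\alpha(x):\alpha\in\Sigma,\, x\in\Q\}$, the restriction $H_{|\Sp_{2n}(\Q)\times\Sp_{2n}(\Q)}$ takes values in the subgroup of $\K_2^{\rm sym}(k)$ generated by standard generators $c(x,y)$ with $x,y\in\Q^\times$, and correspondingly ${H_{c'}}_{|\Sp_{2n}(\Q)\times\Sp_{2n}(\Q)}$ takes values in the subgroup of $A$ generated by $c'_{|\Q^\times\times\Q^\times}$. Restricting $(\textasteriskcentered\textasteriskcentered\textasteriskcentered)$ over $\Sp_{2n}(\Q)$ and using that $\Sp_{2n}(\Q)$ is also perfect (again a simply connected Chevalley group over an infinite field, here $\Q$), the same argument as in part (1), applied to the subsymbol $c'_{|\Q^\times\times\Q^\times}$ and the group $\Sp_{2n}(\Q)$, shows that $\left[\widetilde{\Sp_{2n}(\Q)},\widetilde{\Sp_{2n}(\Q)}\right]\cap A$ is nontrivial whenever $c'_{|\Q^\times\times\Q^\times}$ is nontrivial. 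Now for any $G$ with $\Sp_{2n}(\Q)\leq G\leq\Sp_{2n}(k)$, the extension of $G$ by $A$ via ${H_{c'}}_{|G\times G}$ contains the extension of $\Sp_{2n}(\Q)$ by $A$ as a subgroup, so its derived subgroup also meets $A$ nontrivially; since $G$ is perfect ($\Sp_{2n}(\Q)\leq G\leq\Sp_{2n}(k)$ and both ends are perfect — here I would invoke that $\Sp_{2n}(\Q)$ perfect forces $G$ perfect, as $G=[\Sp_{2n}(\Q),\Sp_{2n}(\Q)]\cdot(\text{rest})\subseteq[G,G]$ — wait, more carefully: $\Sp_{2n}(\Q)=[\Sp_{2n}(\Q),\Sp_{2n}(\Q)]\subseteq[G,G]$, and since $\Sp_{2n}(\Q)$ surjects onto $G/[G,G]$? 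No — rather $\Sp_{2n}(k)$ is perfect and $G\leq\Sp_{2n}(k)$ need not be; the correct point is that we only need $\left[\widetilde{G},\widetilde{G}\right]\cap A\neq 0$, which follows directly from containment of the $\Sp_{2n}(\Q)$-extension, and then apply Remark \ref{rem:nice and easy}(1), which does not require $G$ perfect). So ${H_{c'}}_{|G\times G}$ is non-split by Remark \ref{rem:nice and easy}(1).

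For part (3), the goal is to upgrade non-splitness of ${H_{c'}}_{|G\times G}$ to non-splitness of its restriction to ${G^*}^{00}_B\times{G^*}^{00}_B$ in the monster model. The key point is that ${G^*}^{00}_B$ contains $\Sp_{2n}(\Q)$ — indeed, all the finitely many generators $u_\alpha(x)$, $x\in\Q$, of $\Sp_{2n}(\Q)$ lie in $G^*$, and $\Sp_{2n}(\Q)$, being (absolutely) connected, is contained in ${G^*}^{00}_B$ since its image in the compact group $G^*/{G^*}^{00}_B$ is a subgroup generated by finitely many elements with no proper finite-index subgroups, hence trivial; more directly, $\Sp_{2n}(\Q)$ has no proper finite-index subgroups, so its image in any profinite (or compact) quotient is trivial. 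Then the extension $\widetilde{{G^*}^{00}_B}$ of ${G^*}^{00}_B$ by $A^*$ via $\overline{{H_{c'}}^*}$ contains the extension of $\Sp_{2n}(\Q)$ by $A^*$, and since $c'_{|\Q^\times\times\Q^\times}$ nontrivial gives, exactly as in part (2), a nonzero element of $\left[\widetilde{\Sp_{2n}(\Q)},\widetilde{\Sp_{2n}(\Q)}\right]\cap A^*$, we conclude $\left[\widetilde{{G^*}^{00}_B},\widetilde{{G^*}^{00}_B}\right]\cap A^*\neq 0$, so by Remark \ref{rem:nice and easy}(1) the 2-cocycle ${{H_{c'}}^*}_{|{G^*}^{00}_B\times{G^*}^{00}_B}$ is non-split. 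The main obstacle I anticipate is being careful about \emph{which} subgroup of $A$ (resp.\ $A^*$) the relevant element of the derived subgroup lands in, and in part (3), verifying cleanly that $\Sp_{2n}(\Q)\subseteq{G^*}^{00}_B$ — this uses that $\Sp_{2n}(\Q)$ is generated by unipotents $u_\alpha(x)$ ($x\in\Q$) each lying in a divisible, hence connected, subgroup, or alternatively the perfectness-plus-divisibility argument showing $\Sp_{2n}(\Q)$ has no proper bounded-index subgroups.
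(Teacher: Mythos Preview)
Your approach to parts (1) and (2) is essentially the paper's: use the perfectness of $\St(k)$ (resp.\ of the image of $\St(\Q)$ in $\St(k)$) to produce a nontrivial element of $[\widetilde{G},\widetilde{G}]\cap A$, then invoke Remark~\ref{rem:nice and easy}(1). Your mid-argument detour in (2) about whether $G$ itself is perfect is correctly resolved---only Remark~\ref{rem:nice and easy}(1) is needed, and for that it suffices that the extension over the subgroup $\Sp_{2n}(\Q)$ already contributes a nontrivial commutator in $A$. The one point to make precise in (2) is that the hypothesis on $s$ guarantees that $H_{|\Sp_{2n}(\Q)\times\Sp_{2n}(\Q)}$ is given by the \emph{same} formal expression in standard generators as a cocycle for $\St(\Q)$; the paper states this explicitly and uses it to conclude $[P\times\Sp_{2n}(\Q),P\times\Sp_{2n}(\Q)]\cap P=P$ (with $P=\langle c(x,y):x,y\in\Q^\times\rangle$) from the perfectness of $\St(\Q)$.

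For part (3), however, your justifications of the key inclusion $\Sp_{2n}(\Q)\subseteq{G^*}^{00}_B$ all fail. The claim ``$\Sp_{2n}(\Q)$ has no proper finite-index subgroups, hence its image in any compact quotient is trivial'' is false: having no finite-index subgroups does not force trivial image in a compact (as opposed to profinite) group---already $(\Q,+)$ has no proper finite-index subgroups but maps densely into the circle. Likewise, ``generated by divisible one-parameter subgroups'' does not by itself give containment in ${G^*}^{00}_B$, and ``no proper bounded-index subgroups'' is vacuous for a countable group. The paper's argument uses the hypothesis (stated in (3)) that $\Sp_{2n}(\Q)$ is $B$-\emph{definable} in $\G$, together with its absolute connectedness: then $\Sp_{2n}(\Q)\leq\Sp_{2n}(\Q)^*={\Sp_{2n}(\Q)^*}^{00}_B\leq{G^*}^{00}_B$, the last inclusion because $G^{00}_B\cap\Sp_{2n}(\Q)^*$ is a $B$-type-definable bounded-index subgroup of $\Sp_{2n}(\Q)^*$. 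Once this is in hand, (3) follows immediately from (2), applied with ${G^*}^{00}_B$ in place of $G$ (the argument of (2) only needs $\Sp_{2n}(\Q)$ as a subgroup).
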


\begin{proof}
(1) As a set, $\St(k)$ can be presented as the product $\K2(k) \times \Sp_{2n}(k)$. Let $\widetilde{\Sp_{2n}(k)_{c'}}$ be the central extension of $\Sp_{2n}(k)$ by $A$ defined by means of the 2-cocycle $H_{c'}$. Then the function $\Phi \colon \St(k) \to \widetilde{\Sp_{2n}(k)_{c'}}$ defined as $f_{c'} \times \id$ is a homomorphism commuting with projections
 onto $\Sp_{2n}(k)$ (where $f_{c'}$ is provided by Corollary \ref{cor:podsumowanie}). Since $\left[\St(k),\St(k)\right] \cap \K2(k)=\K2(k)$ and $f_{c'} \colon \K2(k) \to A$ is not identically $0$, we get 
\begin{eqnarray*}
\left[\widetilde{\Sp_{2n}(k)_{c'}},\widetilde{\Sp_{2n}(k)_{c'}}\right] \cap A \supseteq \Phi\left[\left[\St(k),\St(k)\right] \cap \K2(k)\right] &=& \Phi\left[\K2(k)\right] \\
 &=& f_{c'}\left[\K2(k)\right] \ne \{0\}.
\end{eqnarray*}
 By Remark \ref{rem:nice and easy}(1), the conclusion is clear.\\
%
%
%
%
(2) Let $P$ be the subgroup of $\K2(k)$ generated by $\left\{ c(x,y) : x,y \in \Q^\times\right\}$. By our assumption on $H$, there is a 2-cocycle defining $\St(\Q)$ induced by a section of finite width and defined by the same formulas in terms of $c(x,y)$, $x,y \in \Q^\times$, as the 2-cocycle $H_{| \Sp_{2n}(\Q) \times \Sp_{2n}(\Q)}$. Therefore, by the perfectness of $\St(\Q)$, we get that, working inside $\St(k)$, one has 
\[\left[P \times \Sp_{2n}(\Q), P \times \Sp_{2n}(\Q)\right] \cap P=P.\]
Thus, arguing as in (1), we get the conclusion of (2).\\
(3) Since $\Sp_{2n}(\Q)$ is absolutely connected, we have \[\Sp_{2n}(\Q) \leq \Sp_{2n}(\Q)^* ={\Sp_{2n}(\Q)^*}^{00} \leq {G^*}^{00}_B.\] So, the conclusion follows from (2). 
\end{proof}

We give a criterion for $H_{c'}$ to be strongly non-split in the sense of Corollary \ref{cor:main}$(4)$ and Proposition \ref{prop:ext}.



\begin{corollary}\label{cor:strong non-splitting}
Suppose that $c'\colon k^{\times}\times k^{\times} \to A$ is a symplectic Steinberg symbol and $\chr(k)=0$. 
Assume that $A=\Z$ and $c'(-1,-1)=1$. 

Take $H_{c'}$ as in Corollary \ref{cor:better version}. If $G$ is any subgroup of $\Sp_{2n}(k)$ containing $\Sp_{2n}(\Q)$, then $H_{c'}$ restricted to $G$ is strongly non-split, that is for every proper subgroup $A'\lneqq A$ the induced 2-cocycle $\overline{H_{c'}}\colon G\times G\to A/A'$ is non-split.
\end{corollary}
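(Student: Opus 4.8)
The plan is to handle, for each proper subgroup $A'\lneqq A=\Z$, the induced $2$-cocycle $\overline{H_{c'}}\colon G\times G\to A/A'$ by reducing to the already established Corollary \ref{cor:better version}(2) applied to a suitably modified Steinberg symbol. Since the proper subgroups of $\Z$ are exactly $\{0\}$ and $n\Z$ for $n\geq 2$, there are two cases. When $A'=\{0\}$ one has $A/A'\cong\Z$ and $\overline{H_{c'}}=H_{c'}$; as $c'(-1,-1)=1\neq 0$ forces $c'_{|\Q^\times\times\Q^\times}$ to be non-trivial, Corollary \ref{cor:better version}(2) immediately gives that $H_{c'}$ restricted to $G$ is non-split. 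So from now on I fix $n\geq 2$, put $A'=n\Z$, and let $q\colon\Z\to\Z/n\Z$ be the quotient homomorphism.

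First I would observe that $\overline{c'}:=q\circ c'\colon k^\times\times k^\times\to\Z/n\Z$ is again a symplectic Steinberg symbol: the defining relations (S1), (S2), (S3) are equalities in the coefficient group, and each is preserved by applying the homomorphism $q$. By Corollary \ref{cor:podsumowanie} there is a unique homomorphism $f_{\overline{c'}}\colon\K2(k)\to\Z/n\Z$ with $f_{\overline{c'}}(c(x,y))=\overline{c'}(x,y)$ for all $x,y\in k^\times$. Now $q\circ f_{c'}\colon\K2(k)\to\Z/n\Z$ is a homomorphism carrying each standard generator $c(x,y)$ to $q(c'(x,y))=\overline{c'}(x,y)$, so the uniqueness clause of Corollary \ref{cor:podsumowanie} gives $f_{\overline{c'}}=q\circ f_{c'}$. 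Hence, with $H\colon\Sp_{2n}(k)\times\Sp_{2n}(k)\to\K2(k)$ the same $2$-cocycle used to define $H_{c'}$ — it is fixed, independent of the symbol, and by hypothesis induced by a finite-width section $s$ with $s(\id_{2n})=e$ and $s[\Sp_{2n}(\Q)]$ inside the subgroup of $\St(k)$ generated by the $u_\alpha(x)$ with $x\in\Q$ — we get
\[H_{\overline{c'}}=f_{\overline{c'}}\circ H=q\circ f_{c'}\circ H=q\circ H_{c'}=\overline{H_{c'}},\]
i.e.\ the $2$-cocycle attached to $\overline{c'}$ coincides with the induced $2$-cocycle $\overline{H_{c'}}$.

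Finally, since $n\geq 2$ we have $\overline{c'}(-1,-1)=q(1)=1+n\Z\neq 0$ in $\Z/n\Z$, so $\overline{c'}_{|\Q^\times\times\Q^\times}$ is non-trivial (already on $(-1,-1)$, and $-1\in\Q^\times$). Applying Corollary \ref{cor:better version}(2) to the symbol $\overline{c'}$, the coefficient group $\Z/n\Z$, and the same reference section $s$ and $2$-cocycle $H$ (note $\chr(k)=0$ is in force), we conclude that $(H_{\overline{c'}})_{|G\times G}$ is non-split; by the identification above this is exactly the assertion that $\overline{H_{c'}}\colon G\times G\to A/A'$ is non-split, completing the proof. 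I expect the only points that genuinely need checking to be the verification that $\overline{c'}$ is a symplectic Steinberg symbol and the identification $H_{\overline{c'}}=\overline{H_{c'}}$ via uniqueness in Corollary \ref{cor:podsumowanie}; neither is a real obstacle, so the whole argument is essentially a bookkeeping reduction to Corollary \ref{cor:better version}(2). (Alternatively, one could avoid invoking Corollary \ref{cor:better version}(2) and argue directly in the spirit of Remark \ref{rem:nice and easy}: it suffices to show that the central extension of $G$ by $\Z/n\Z$ defined by $\overline{H_{c'}}$ is perfect, which follows from the perfectness of $\St(\Q)$ just as in the proof of Corollary \ref{cor:better version}(2).)
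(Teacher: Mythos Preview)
Your proof is correct and follows essentially the same approach as the paper: identify $\overline{H_{c'}}$ with $H_{\overline{c'}}$ for the induced Steinberg symbol $\overline{c'}$, observe $\overline{c'}(-1,-1)\neq 0+A'$, and invoke Corollary~\ref{cor:better version}(2). You have simply spelled out carefully the verification that $\overline{c'}$ is again a symplectic Steinberg symbol and the identification $H_{\overline{c'}}=\overline{H_{c'}}$ via the uniqueness clause in Corollary~\ref{cor:podsumowanie}, which the paper leaves implicit.
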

\begin{proof}
Notice that $\overline{H_{c'}}=H_{\overline{c'}}$, where $\overline{c'}\colon k^{\times}\times k^{\times} \to A/A'$ is the induced Steinberg symbol. Since $\overline{c'}(-1,-1) \ne 0 +A'$, the conclusion follows from Corollary \ref{cor:better version}(2). 
\end{proof}

Now, we are ready to give new classes of examples of groups in which the smallest invariant subgroup of bounded index is a proper subgroup of the smallest type-definable subgroup of bounded index.

\begin{example}\label{ex:main}
Suppose $k$ is an ordered field with the order denoted by $<$ (e.g. $k=\Q$ or $k$ is an arbitrary subfield of $\R$ with the natural order). 
One can check that the following mapping $c' \colon k^{\times} \times k^{\times} \to {\Z}$ is a symplectic Steinberg symbol \[c'(x,y) = \left\{
  \begin{array}{r l}
    1 & \quad \text{if } x < 0 \text{ and } y < 0 \\
    0 & \quad \text{otherwise}\\
  \end{array} \right.. \tag{\textasteriskcentered\textasteriskcentered\textasteriskcentered\textasteriskcentered}\]

%
%
Let $H \colon \Sp_{2n}(k) \times \Sp_{2n}(k) \to \K2(k)$ be a 2-cocycle defining the universal central extension of $\Sp_{2n}(k)$ as in Lemma \ref{lem:bound}. By \[H_{c'}\colon \Sp_{2n}(k) \times \Sp_{2n}(k) \to \Z\] we denote the 2-cocycle obtained from $H$ and $c'$ as it was described in Corollary \ref{cor:podsumowanie}.

Let $\G = ((\Z,+), (k,+,\cdot,<), H_{c'})$, $G=\Sp_{2n}(k)$ and $A=(\Z,+)$. The groups $G$ and $A$ are $\emptyset$-definable in $\G$. Assume the action of $G$ on $A$ is trivial, and let $\widetilde{G}$ (a central extension of $G$ by $\Z$) be defined by means of the 2-cocycle $H_{c'}$. Let $\G^*=((\Z^*,+), (k^*,+,\cdot,<))\succ \G$ be a monster model. Put as $A^*_1$ the connected component ${\Z^*}^0$ of the pure group $(\Z^*,+)$ (i.e., the intersection of all groups $n\Z^*$, $n \in \N \setminus \{ 0 \}$)
%
and $B=\{1\}$ (where $1$ is from the sort $(\Z,+)$).

Then the assumptions of Corollary \ref{cor:main} are satisfied. So, we get $\widetilde{G^*}^{000}_B \ne \widetilde{G^*}^{00}_B=\widetilde{G^*}$, where $\widetilde{G^*}$ is the interpretation of $\widetilde{G}$ in the monster model $\G^*$. Moreover, the quotient $\widetilde{G^*}^{00}_B / \widetilde{G^*}^{000}_B$ is abelian. In fact, $\widetilde{G^*}^{000}_B=\left({\Z^*}^0 +\Z\right) \times G^*$, and $\widetilde{G^*}^{00}_B/\widetilde{G^*}^{000}_B$ is isomorphic to $\widehat{\Z}/\Z$, where $\widehat{\Z}$ is the profinite completion of $\Z$.


\end{example}

\begin{proof}
The fact that the image of $H_{c'}$ is finite follows from Corollary \ref{cor:finite image}.
The assumptions (2) and (5) of Corollary \ref{cor:main} are clearly satisfied (see the proof of Example \ref{ex:PC1}). The assumption (3) (even ${G^*}^{000}_B=G^*$) follows from the absolute connectedness of $\Sp_{2n}(k)$. Finally, (1) and (4) are true by Corollary \ref{cor:strong non-splitting}.

The fact that $\widetilde{G^*}^{00}_B / \widetilde{G^*}^{000}_B$ is abelian follows from the absolute connectedness of $\Sp_{2n}(k)$ and Proposition \ref{prop:quotient}. 


To get the desired description of this quotient, one should apply Propositions \ref{prop:quotient} and \ref{prop:density} together with Claim 1 in the proof of Theorem \ref{thm:main}. Namely, by this claim applied to $H:={\Z^*}^0$, we have that $\widetilde{G^*}^{000}_B \cap \Z^* = {\Z^*}^0 +n\Z$ for some $n \in \N$. Using this together with the fact that $\widetilde{G^*}^{00}_B=\widetilde{G^*}$ and Proposition \ref{prop:quotient}, we get \[\widetilde{G^*}^{00}_B/\widetilde{G^*}^{000}_B \cong \left( \Z^*/{\Z^*}^0\right) /\left( ({\Z^*}^0+n\Z)/{\Z^*}^0\right).\]
But Proposition \ref{prop:density} tells us that $({\Z^*}^0+n\Z)/{\Z^*}^0$ is a dense subgroup of $\Z^*/{\Z^*}^0$, which implies that $n=1$. Thus, once again using Claim 1 in the proof of Theorem \ref{thm:main}, we conclude that
\[\widetilde{G^*}^{000}_B = ({\Z^*}^0 + \Z) \times G^*\; \; \mbox{and}\;\; \widetilde{G^*}^{00}_B/\widetilde{G^*}^{000}_B \cong \widehat{\Z}/\Z.\]




\end{proof}


\begin{remark}
Starting from the 2-cocycle $H$ induced by some particular section existence  of which is mentioned in Remark \ref{rem:computational} (see also the paragraph following \ref{rem:computational}), one could prove that $H_{c'}$ from Example \ref{ex:main} is $B$-definable in  $((\Z,+), (k,+,\cdot,<))$.
\end{remark}


In the same way as Example \ref{ex:PC2} was obtained from Example \ref{ex:PC1}, for an arbitrary ordered field $(k,+,\cdot,<)$ the above example yields an extension $\widetilde{G}$ of $\Sp_{2n}(k)$ by $\SO_2(k)$ which is definable in $\G:=(k,+,\cdot,<)$ and such that $\widetilde{G^*}^{00}_B \ne \widetilde{G^*}^{000}_B$, where 
$B:=\{g\}$ for some $g \in \SO_2(k)$ of infinite order. 



Next, we generalize the situation from Example \ref{ex:main} in the following way.



\begin{example}\label{ex:main_gen}
Suppose $k$ is  an ordered field, and $c' \colon k^\times \times k^\times \to \Z$ the Steinberg symbol defined in Example \ref{ex:main}. Let $H_{c'}$ be a 2-cocycle as in Corollary \ref{cor:better version} (i.e., with the additional assumption on $H$).
Let $G$ be an arbitrary group with $\Sp_{2n}(\Q)\leq G \leq \Sp_{2n}(k)$, and let $\widetilde{G}$ be the central extension of $G$ by $\Z$ corresponding to $H_{c'}$. Let $A=\Z$ and $B=\{1\}$. 
Assume that $\Sp_{2n}(\Q)$, $G$ and $A$ are $\emptyset$-definable and the 2-cocycle ${H_{c'}}_{|G\times G}$ is $B$-definable in a first order structure $\G$. For example, $\G$ might be the two-sorted structure with the disjoint sorts $(\Z,+)$ and $(k,+,\cdot,<)$ together with predicates for $G$, $\Sp_{2n}(\Q)$ and ${H_{c'}}_{|G\times G}$. Let $\G^*\succ \G$ be a monster model. Put $A^*_1={\Z^*}^0$, the intersection of all groups $n\Z^*$, $n \in \N \setminus \{ 0 \}$. Then the assumptions of the first part of Theorem \ref{thm:main} (in fact, even the strong version of Assumption (i)) are satisfied, so $\widetilde{G^*}^{000}_B \ne \widetilde{G^*}^{00}_B$.


If, moreover, ${G^*}^{000}_B =G^*$, then the assumptions of Corollary \ref{cor:main} are satisfied too, so $\widetilde{G^*}^{00}_B=\widetilde{G^*}$, and $\widetilde{G^*}^{00}_B / \widetilde{G^*}^{000}_B$ is abelian.
\end{example}

\begin{proof}
Assumption (ii) of Theorem \ref{thm:main} is clearly satisfied, whereas the strong version of Assumption (i) follows from Corollary \ref{cor:better version}(3).
%
%
For the `moreover' part, it is enough to use Corollary \ref{cor:strong non-splitting} and Proposition \ref{prop:quotient}.
\end{proof}

Starting from Examples \ref{ex:main} or \ref{ex:main_gen}, the results of Section \ref{sec:finite} yield more general classes of examples, some of which are briefly discussed below. In order to avoid a clash of notation (in Section \ref{sec:finite}, $H$ was a group, whereas in this section, $H$ is a 2-cocycle), the 2-cocycles $H$ and $H_{c'}$ considered in Example \ref{ex:main} will be denoted by $h$ and $h_{c'}$, respectively.

\begin{example}\label{ex:extension of main}
Consider the situation from Example \ref{ex:main} (as it is mentioned above, instead of $H$ and $H_{c'}$ we write $h$ and $h_{c'}$). Let 
\[ \xymatrix{
 1 \ar@{^{(}->}[r] & \ker (f) \ar@{^{(}->}[r] & H \ar@{>>}[r]^-{f} & G \ar@{>>}[r] & 1}\]
be an extension of $G$ by $\ker(f)$. Let $\H$ be any expansion of $\G$ in which $H$ and $f$ are $\emptyset$-definable (e.g. $\H$ is the expansion of $\G$ by the new sort $H$ together with the function $f$), and let $\H^*\succ \H$ be a monster model. Assume additionally that $\Hom (\ker ({f^*}_{|{H^*}^{00}_B}), \Z)$ is trivial (where $f^*$ in the interpretation of $f$ in $\H^*$). Put $h': =h_{c'} \circ (f,f) \colon H \times H \to \Z$ a 2-cocycle definable in $\H$ over $B$. 
Let $\widetilde{H}$ be the extension of $H$ by $\Z$ corresponding to $h'$. Then ${{h'}^*}_{| {H^*}^{00}_B \times {H^*}^{00}_B} \colon {H^*}^{00}_B \times {H^*}^{00}_B \to \Z$ is non-split, and $\widetilde{H^*}^{000}_B \ne \widetilde{H^*}^{00}_B$.

To see a concrete example arising in this way, take as $H$ the extension of $\Sp_{2n}(k)$ (where $k$ is an ordered field) by the divisible hull ${\Q} \otimes_\Z \K2(k)$ of $\K2(k)$ corresponding to the 2-cocycle $h$, and as $f\colon H \to \Sp_{2n}(k)$ the projection on the second coordinate. More generally, start from any extension of $\Sp_{2n}(k)$ by an abelian group $C$ and take as $H$ the group obtained from this extension by replacing $C$ by its divisible hull. 

Another family of examples arising in this way is formed by the groups of the form $\widetilde{H}$ for $H$ ranging over all finite extension of $\Sp_{2n}(k)$.
\end{example}

\begin{proof}
Using Corollary \ref{cor:ext}, the conclusion follows from the observations that $\Sp_{2n}(k)$ is absolutely connected and that the assumptions of Theorem \ref{thm:main} (including the strong version of Assumption (i)) are satisfied in Example \ref{ex:main}.
\end{proof}

\begin{example}\label{ex:produkty_przyklad}
Consider the situation from Example \ref{ex:main_gen}. Let $H$ be the product of groups $K \times G$ for an arbitrary group $K$. We define $\H$ as the expansion of $\G$ obtained by adding a new sort, consisting of the pure group structure on H and the projection $f \colon H \to \Sp_{2n}(k)$ on the second coordinate. Let $\H^* \succ \H$ be a monster model and $f^*$ the interpretation of $f$ in it. Put $h': =h_{c'} \circ (f,f) \colon H \times H \to \Z$ a 2-cocycle definable in $\H$ over $B$. 
%
%
Let $\widetilde{H}$ be the extension of $H$ by $\Z$ corresponding to $h'$. Then ${{h'}^*}_{| {H^*}^{00}_B \times {H^*}^{00}_B} \colon {H^*}^{00}_B \times {H^*}^{00}_B \to \Z$ is non-split, and $\widetilde{H^*}^{000}_B \ne \widetilde{H^*}^{00}_B$. 
\end{example}

\begin{proof}
Using Remark \ref{rem:produkty}, the conclusion follows from the fact that the assumptions of the first part of Theorem \ref{thm:main} are satisfied in Example \ref{ex:main_gen}.
\end{proof}

Although in Example \ref{ex:main} the assumptions of both parts of Theorem \ref{thm:main} (even of Corollary \ref{cor:main}) are satisfied (and, in consequence, $\widetilde{G^*}^{000}_B \ne \widetilde{G^*}^{00}_B =\widetilde{G^*}$), in Example \ref{ex:extension of main}, we only concluded that the assumptions of the first part of Theorem \ref{thm:main} hold, and, in consequence, $\widetilde{H^*}^{000}_B \ne \widetilde{H^*}^{00}_B$. 
It is rather clear that in general, we can not expect that $\widetilde{H^*}^{00}_B=\widetilde{H^*}$. For example, take $H:=\Z/n\Z \times \Sp_{2n}(k)$, $f \colon H \to \Sp_{2n}(k)$ the projection on the second coordinate, and 
$\H=\G=((\Z,+),(k,+,\cdot,<),h_{c'})$ where $H$ is definable in the obvious way. Then $\{ 0 +n\Z \} \times \Sp_{2n}(k)$ is a finite index, $B$-definable (in $\H$) subgroup of $H^*$, so ${H^*}^{00}_B \ne H^*$. Therefore, $\widetilde{H^*}^{00}_B \ne \widetilde{H^*}$. 

However, from
%
%
Proposition \ref{prop:ext}, we know that if $\ker (f)$ is a finite abelian group and some 2-cocycle $h_f$ corresponding to the extension
\[ \xymatrix{
 1 \ar@{^{(}->}[r] & \ker (f) \ar@{^{(}->}[r] & H \ar@{>>}[r]^-{f} & \Sp_{2n}(k) \ar@{>>}[r] & 1}\]
is strongly non-split, then $H$ is absolutely connected (since $\Sp_{2n}(k)$ is such). The next example shows that even in such a situation, it may happen that $\widetilde{H^*}^{00}_B \ne \widetilde{H^*}$.

\begin{example}\label{ex:counter-example}
Consider the situation from Example \ref{ex:main}. In particular, $G=\Sp_{2n}(k)$, $A=\Z$ and the extension $\widetilde{G}$ is given by the 2-cocycle $h_{c'} \colon G \times G \to A$. Let $h_f \colon G \times G \to \Z/n\Z$ (for some $n \in {\N} \setminus \{ 0 \}$) be the 2-cocycle induced by $h_{c'}$, $H$ be the corresponding extension of $G$ by $\Z/n\Z$, and $f\colon H \to G$ be the projection on the second coordinate. Recall that the 2-cocycle $h' \colon H \times H \to \Z$ is defined as $h_{c'} \circ (f,f)$, and $\widetilde{H}$ is the corresponding extension of $H$ by $\Z$. 
Define $\H = \G = ((\Z,+), (k,+,\cdot,<),h_{c'})$. Then everything is $B$-interpretable in $\G$. As usual, $\H^*=\G^* \succ \G$ is a monster model.

Then $H$ is absolutely connected (so ${H^*}^{00}_B=H^*$), but $\widetilde{H^*}^{000}_B \ne \widetilde{H^*}^{00}_B \ne \widetilde{H^*}$.


\end{example}

\begin{proof}
Since, by Example \ref{ex:main}, $h_{c'}$ is strongly non-split, so is $h_f$. Thus, the absolute connectedness of $H$ follows from Proposition \ref{prop:ext} and the fact that $\Sp_{2n}(k)$ is absolutely connected.

The fact that $\widetilde{H^*}^{000}_B \ne \widetilde{H^*}^{00}_B$ was proved in Example \ref{ex:extension of main}. It remains to show that $\widetilde{H^*}^{00}_B \ne \widetilde{H^*}$.

Of course, $\widetilde{H}/n\Z$ is B-definably isomorphic with the extension of $H$ by $\Z / n\Z$ corresponding to the 2-cocycle $\overline{h'} \colon H \times H \to \Z/n\Z$ induced by $h'$; denote this extension by $K$. 


Let $i_n \colon \widetilde{G} \to H$ be the $B$-definable homomorphism defined by $i_n(a,x)=(a +n\Z, x)$. Then $\overline{h'}=i_n \circ h_{c'}\circ (f,f)$ (after the appropriate identifications). 

Define a section $z \colon G \to \widetilde{G}$ by $z(x)=(0,x)$. Then $z_n := i_n \circ z \colon G \to H$ is a section of $f$.

We easily get 
\[\overline{h'}(x,y)=F(x)+F(y)-F(xy),\]
%
%
where $F \colon H \to \Z/n\Z$ is defined by $F(x)=z_n(f(x))x^{-1}$. This means that $\overline{h'}$ is split via the $B$-definable function $F$. Therefore, $K$ is $B$-definably isomorphic with the product $\Z / n\Z \times H$. Hence, $K^*$ is $B$-definably isomorphic with $\Z^* / n\Z^* \times H^*$. Since clearly $\left(\Z^* / n\Z^* \times H^*\right)^{00}_B \leq \{ 0 +n\Z^*\} \times H^* \ne \Z^* / n\Z^* \times H^*$, we get that ${K^*}^{00}_B \ne K^*$, which implies $\left(\widetilde{H^*}/n\Z^*\right)^{00}_B \ne \widetilde{H^*}/n\Z^*$, and so $\widetilde{H^*}^{00}_B \ne \widetilde{H^*}$.
\end{proof}


We finish with a discussion on Steinberg symbols.
Note that a given field may have many different orders. Each of them gives rise to some symplectic Steinberg symbol, yielding various classes of examples of groups described above.

Recall that a field $k$ is an ordered field (with respect to some order) if and only if it is formally real (i.e. $-1$ is not a sum of squares in $k$). Corollaries \ref{cor:finite image} and \ref{cor:better version} and Example \ref{ex:main} lead to the following question:
\begin{quote}
Can a non-formally real field $k$ have a non-trivial symplectic Steinberg symbol $c\colon k^{\times}\times k^{\times} \to \Z^n$ with finite image? 
\end{quote}
We answer this question in the negative for fields of characteristic different from $2$.

For a field $k$, by $S(k)$ we denote the set of sums of squares $\left\{\sum_{i=1}^{n} a_i^2 : a_i\in k^{\times}\right\}$ of $k$.

\begin{proposition}
Suppose $k$ is a field, $\chr(k)\ne 2$ and $c\colon k^{\times}\times k^{\times} \to A$ is a symplectic Steinberg symbol with finite image, where $A$ is a torsion free abelian group. Then for every $0\ne s\in S(k)$ and $t\in k^{\times}$ \[c(s,t)=c(t,s)=0.\]
\end{proposition}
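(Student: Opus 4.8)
The plan is to prove the formally stronger statement that the set $W:=\{x\in k^{\times}: c(x,y)=0\text{ for all }y\in k^{\times}\}$ contains every nonzero sum of squares; once I also know $c$ is symmetric this yields both vanishings at once. I would first record some preliminaries. Replacing $A$ by the subgroup generated by the (finite) image of $c$, I may assume $A$ is finitely generated and torsion free, i.e.\ $A\cong\Z^{m}$. From (S2), (S3) one derives the identity $c(x,-xy)=c(x,y)$ for all $x,y$ (apply (S3) to $x$ and to $x^{-1}$ and compare via (S2)), hence $c(x,-x)=0$, $c(x,x)=c(x,-1)$, and $c(x,x^{2}y)=c(x,y)$; also $c(x,1-x)=0$ for $x\neq1$ (put $y=1$ in (S3)); and $S(k)$, the set of nonzero sums of squares, is a subgroup of $k^{\times}$ containing $(k^{\times})^{2}$.

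Next I would establish three facts. (i) \emph{$c$ is symmetric}: by (S1) $c$ is a $2$-cocycle on the abelian group $(k^{\times},\cdot)$, so $\langle x,y\rangle:=c(x,y)-c(y,x)$ is an alternating bi-additive map $k^{\times}\times k^{\times}\to A$, whose image is a finite subgroup of the torsion free group $A$, hence trivial. (ii) \emph{$c$ kills squares}: combining $c(x,x)=c(x,-1)$ and $c(x,xy)=c(x,-y)$ with (S1) applied to $(x,x,y)$ gives $c(x^{2},y)=c(x,y)+c(x,-y)-c(x,-1)$, which is invariant under $y\mapsto-y$, so $c(x^{2},-1)=c(x^{2},1)=0$; applying the formula to the square $x^{2}$ gives $c(x^{4},y)=2c(x^{2},y)$, and inductively $c(x^{2^{r}},y)=2^{r-1}c(x^{2},y)$, which is bounded because $c$ has finite image, forcing $c(x^{2},y)=0$. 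From (S1) it then follows that $c$ is unchanged when either argument is multiplied by a square, and that $W$ is a subgroup of $k^{\times}$ containing $(k^{\times})^{2}$. (iii) \emph{$2c=\delta\beta$, where $\beta:=c(\,\cdot\,,-1)$ and $\delta\beta(x,y):=\beta(x)+\beta(y)-\beta(xy)$}: since $c$ is a $2$-cocycle with bounded image into torsion free $A$, the abelian extension $0\to A\to E\to k^{\times}\to0$ it determines splits after tensoring with $\Q$ (as $A\otimes\Q$ is injective), so $c=\delta\psi$ for some $\psi\colon k^{\times}\to A\otimes\Q$; homogenising the quasimorphism $\psi$ over the abelian group $k^{\times}$ produces a homomorphism $\bar\psi$ with $\phi:=\psi-\bar\psi$ bounded and still $c=\delta\phi$; by (S2) the map $x\mapsto\phi(x)-\phi(x^{-1})$ is a bounded homomorphism, hence zero, and telescoping $c(x,x^{m})\in\{0,\beta(x)\}$ forces $\phi(x)=\tfrac12\beta(x)$, so $2c=\delta\beta$.

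The final and central step is the lemma: \emph{if $y\in W$ and $1+y\neq0$, then $1+y\in W$}. I would prove it by a cocycle chase: $c(-y,1+y)=0$ is the Steinberg relation at $-y$ (using symmetry), and then (S3) at $x=1+y$ together with (S1) applied to $(1+y,-y,t)$ and to $(-y,1+y,t)$, using $c(y,\,\cdot\,)\equiv0$, yields $c(-y,(1+y)t)=c(-y,t)$ for all $t$; on the other hand (S1) applied to $(-1,y,t)$ with $c(y,t)=0$ gives $c(-y,t)=\beta(yt)-\beta(y)$, so $\beta((1+y)t)=\beta(t)$ for all $t$; since $\beta(1+y)=\beta(1)=0$, fact (iii) gives $2c(1+y,t)=\beta(1+y)+\beta(t)-\beta((1+y)t)=0$, so $c(1+y,t)=0$. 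With the lemma in hand, an induction on the number of squares shows $S(k)\subseteq W$: a nonzero square lies in $W$ by (ii); and if $s=\sigma+a^{2}\neq0$ with $\sigma$ a nonzero sum of fewer squares, then $\sigma\in W$ by induction, $a^{2}\sigma^{-1}\in W$ as $W$ is a group, $1+a^{2}\sigma^{-1}=s/\sigma\neq0$, so $1+a^{2}\sigma^{-1}\in W$ by the lemma and hence $s=\sigma\cdot(1+a^{2}\sigma^{-1})\in W$. Thus $c(s,t)=0$, and by symmetry $c(t,s)=0$, for all $0\neq s\in S(k)$ and $t\in k^{\times}$.

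I expect the main obstacles to be fact (iii) — the homogenisation of the quasimorphism attached to $c$, which is where finiteness of the image and torsion freeness of $A$ are really used — and the precise cocycle bookkeeping in the key lemma; everything else is formal manipulation of (S1)--(S3), and the hypothesis $\chr(k)\neq2$ enters only through the structure of sums of squares.
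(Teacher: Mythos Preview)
Your proof is correct and, at its core, follows the same mechanism as the paper's: both arguments reduce to showing $\beta(s)=0$ for any nonzero sum of squares $s$ (where $\beta(x)=c(x,-1)$) and then conclude $2c(s,t)=0$ via the relation $\beta(x)=c(x,y)+c(x,-y)$. The paper does this by a direct $(S3)$/$(S1)$ manipulation showing $c(s,t)=c(s,-t)$ for $s$ a sum of $n+1$ squares; you repackage the same induction as the clean lemma ``$y\in W\Rightarrow 1+y\in W$'' applied to $y=a^{2}\sigma^{-1}$, which is arguably tidier.

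The one genuine difference is your route to fact (iii), $2c=\delta\beta$. The quasimorphism/homogenisation argument is valid, but it is heavier than necessary: once your step (ii) gives $c(x^{2},y)=0$, your own displayed formula $c(x^{2},y)=c(x,y)+c(x,-y)-c(x,-1)$ immediately yields the paper's relation (6), namely $\beta(x)=c(x,y)+c(x,-y)$; combining this with $(S1)$ at $(x,y,-1)$ gives $\delta\beta(x,y)=c(x,-1)-c(x,-y)+c(x,y)=2c(x,y)$ in two lines. So the ``main obstacle'' you anticipated dissolves. The paper never states $2c=\delta\beta$ explicitly but uses exactly the equivalent relation (6) at the end of its inductive step. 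One small point: in your induction, you should note that if the truncated sum $\sigma$ happens to vanish then $s=a^{2}$ is already a square, so the case split is harmless.
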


\begin{proof}
We use the relations (S1), (S2) and (S3) as well as the following formulas, which are consequences of (S1) -- (S3) (see \cite[Proposition 5.7, p. 28]{matsumoto}): for $x,y\in k^{\times}$
\begin{itemize}
\item[(1)] $c(x,y) = c(y^{-1},x)$,
\item[(2)] $c(x,y) = c(x,-xy)$,
\item[(3)] the mapping $t \mapsto c(x,t^2)$ is a homomorphisms from $k^{\times}$ to $A$.
\end{itemize}

 We will prove that $c(s,t)=c(t,s)=0$ whenever $s=\sum_{i=1}^{n} a_i^2$ for $a_i\in k^{\times}$ and  $t\in k^{\times}$. We will do it by induction on $n$.

Case $n=1$. Since the image of $c$ is finite and $A$ is torsion free, (3) implies that the mapping $t \mapsto c(x,t^2)$ is trivial for an arbitrary $x\in k^{\times}$. Thus, by (1), $c(s,t)=c(t,s)=0$.

Before proving the inductive step, we prove the following relations: for $x,y,z\in k^{\times}$
\begin{itemize}
\item[(4)] $c(xy^2,z)=c(x,zy^2)=c(x,z)$,
\item[(5)] $c(x,y)=c(y,x)$,
\item[(6)] $c(x,-1)=c(x,y) + c(x,-y)$.
\end{itemize}

(4) Using (S1) and the case $n=1$, we have $c(x,zy^2)=c(x,z)+c(xz,y^2)-c(z,y^2)= c(x,z)$. The proof of $c(xy^2,z)=c(x,z)$ is similar.

(5) By (1) and (4), $c(x,y)=c(y^{-1},x)=c(y^{-1}y^2,x)=c(y,x)$.

(6) By (S1), we have $c(x,-1)=c(x,y(-y^{-1})) = c(x,y) + c(xy,-y^{-1}) - c(y,-y^{-1})$. Moreover, by (2), $c(y,-y^{-1})=c(y,1)=0$ and by (5), (2) and (4), $c(xy,-y^{-1}) = c(-y^{-1},xy) = c(-y^{-1},x)=c(-y,x) = c(x,-y)$. Hence we get (6).

Inductive step $n\rightarrow n+1$. Let $s=\sum_{i=1}^{n+1} a_i^2$. First, we prove that $c(s,-t) = c(s,t)$. 

We may assume that $s \ne a_1^2$, because otherwise we are done by the case $n=1$. Then $1 + \sum_{i=2}^{n+1} \left(\frac{a_i}{a_1}\right)^2 \ne 1$, hence we have
\begin{eqnarray*}
c(s,t) &\overset{(4)}{=}& c\left( 1 + \sum_{i=2}^{n+1} \left(\frac{a_i}{a_1}\right)^2,t \right) \overset{(S3)}{=} c\left(1 + \sum_{i=2}^{n+1} \left(\frac{a_i}{a_1}\right)^2, -\left(\sum_{i=2}^{n+1} \left(\frac{a_i}{a_1}\right)^2\right)t \right) \\
       &\overset{(4)}{=}& c\left(s, \left(\sum_{i=2}^{n+1} \left(\frac{a_i}{a_1}\right)^2\right)(-t) \right).
\end{eqnarray*}
By (S1) and the induction assumption,
\begin{eqnarray*}
c\left(s, (-t)\left(\sum_{i=2}^{n+1} \left(\frac{a_i}{a_1}\right)^2\right) \right) &=& c(s,-t) + c\left(-st,\sum_{i=2}^{n+1} \left(\frac{a_i}{a_1}\right)^2 \right) - c\left(-t,\sum_{i=2}^{n+1} \left(\frac{a_i}{a_1}\right)^2\right) \\
&=& c(s,-t). 
\end{eqnarray*}
If $t=-1$, then $c(s,-1)=c(s,1)=0$ by the case $n=1$. Hence, by (6), $0=c(s,-1) = c(s,t)+c(s,-t) = 2c(s,t)$, so $c(s,t)=0$ and $c(t,s)=0$ by (5).
\end{proof}

\begin{corollary}
If $k$ is a non-formally real field and $\chr(k)\ne 2$, then every symplectic Steinberg symbol $c\colon k^{\times}\times k^{\times} \to A$ with finite image, where $A$ is a torsion free abelian group, is trivial.
\end{corollary}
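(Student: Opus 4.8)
The plan is to deduce this from the Proposition just proved, by showing that in a non-formally real field $k$ with $\chr(k)\ne 2$ every nonzero element is a nonzero sum of squares, i.e. $k^{\times}\subseteq S(k)$. Granting this, the Proposition immediately yields $c(x,t)=c(t,x)=0$ for all $x,t\in k^{\times}$, so $c$ is identically $0$; all the substantive work (and the use of the hypotheses that $\im(c)$ is finite and $A$ is torsion free) has already been carried out in the Proposition, and this corollary is a formal consequence of it together with the standard characterization of non-formally real fields.

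So the only point to verify is the inclusion $k^{\times}\subseteq S(k)$. First I would record the elementary closure fact: if $w\in S(k)$, say $w=\sum_{i=1}^{m}b_i^2$ with $b_i\in k^{\times}$, and $z\in k^{\times}$, then $wz^2=\sum_{i=1}^{m}(b_iz)^2\in S(k)$. Since $k$ is not formally real, $-1$ is a sum of squares, i.e. $-1\in S(k)$; hence $-z^2\in S(k)$ for every $z\in k^{\times}$.

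Now fix $x\in k^{\times}$. If $x=1$, then $x=1^2\in S(k)$; if $x=-1$, then $x\in S(k)$ by the previous paragraph. Otherwise both $\frac{x+1}{2}$ and $\frac{x-1}{2}$ lie in $k^{\times}$ (this is where $\chr(k)\ne 2$ is used, so that $2\in k^{\times}$ and these expressions make sense and are nonzero), and the identity
\[ x=\left(\frac{x+1}{2}\right)^2+(-1)\left(\frac{x-1}{2}\right)^2 \]
exhibits $x$ as the sum of a nonzero square and an element of $S(k)$, hence $x\in S(k)$. In every case $x\in S(k)$ and $x\ne 0$, so applying the Proposition with $s=x$ and $t$ arbitrary gives $c(x,t)=c(t,x)=0$.

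I do not expect any real obstacle here: the argument is the short computation above, and the characteristic hypothesis enters only to guarantee $2$ is invertible (matching the hypothesis already needed in the Proposition). The one small thing to be careful about is the degenerate cases $x=1$ and $x=-1$, where the displayed identity either becomes trivial or is superseded by the observation that $-1\in S(k)$ directly.
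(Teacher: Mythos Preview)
Your argument is correct and follows essentially the same approach as the paper: both use the identity $x=\left(\frac{x+1}{2}\right)^2-\left(\frac{x-1}{2}\right)^2$ together with $-1\in S(k)$ to conclude $k^{\times}\subseteq S(k)$, then invoke the preceding Proposition. You are a bit more careful than the paper in isolating the degenerate cases $x=\pm 1$ (where one of $\tfrac{x\pm 1}{2}$ vanishes), which the paper's version glosses over but which is harmless.
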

\begin{proof}
By assumption, $-1=a_1^2 + \ldots + a_n^2$ is a sum of squares in $k$. Then every $x\in k^{\times}$ can be written as a sum of squares $x=\left(\frac{x+1}{2}\right)^2 - \left(\frac{x-1}{2}\right)^2 = \left(\frac{x+1}{2}\right)^2 + \left(a_1\frac{x-1}{2}\right)^2 + \ldots + \left(a_n\frac{x-1}{2}\right)^2$. Hence, by the previous proposition, $c$ is trivial.
\end{proof}

\begin{question}
Does there exist an infinite field of characteristic 2 possessing a non-trivial symplectic Steinberg symbol with finite image contained in a torsion free abelian group?
\end{question}

There exists a more general theory of central extensions of Chevalley groups (see \cite{matsumoto}) via symbols. However, when $G$ is not of symplectic type, then every symbol $c'\colon k^{\times}\times k^{\times} \to A$ is bimultiplicative, so $c'(xy,z)=c'(x,z)+c'(y,z)$. Therefore, if $A=\Z^n$, then every non-trivial $c'$ has infinite image, so our approach cannot be applied.

The following question is interesting.

\begin{question}
Does there exist an abelian group $G$ (defined over $\emptyset$ in a monster model) for which $G^{000}_\emptyset \ne G^{00}_\emptyset$. Can one find such a group using Theorem \ref{thm:main}?
\end{question}


\subsection{Applications of Corollary \ref{cor:main2}}\label{subsection:4.2}

In the first part of this subsection, we will be using various quasi-characters considered in \cite[Section 5]{Gr} in order to produce the desired 2-cocycles with finite image. In the final part, we give a detailed analysis of the extension of $\SL_2(\Z)$ by $\Z$ defined by the restriction to $\SL_2(\Z)\times \SL_2(\Z)$ of the 2-cocycle defining $\widetilde{\SL_2(\R)}$.

We recall the notions of quasi-character and pseudo-character, restricting ourselves to the integer-valued functions. In the next two definitions, $G$ is an arbitrary group.
\begin{definition}
A function $f \colon G \to \Z$ is said to be a \emph{quasi-character} if there exists a constant $C \geq 0$ such that for all $x,y \in G$ one has 
\[|f(x)+f(y)-f(xy)| \leq C.\]
Equivalently, the image of $f(x)+f(y)-f(xy)$ is finite.
\end{definition}

\begin{definition}
A function $f \colon G \to \Z$ is said to be a \emph{pseudo-character} if it is a quasi-character such that for all $x \in G$ and $n \in \Z$ one has \[f(x^n)=nf(x).\]
\end{definition}
The reference for the next there parts are the corresponding parts of \cite[Section 5]{Gr}.
%


\subsubsection{Extensions of free groups}

%
$\F_m$ denotes the free group with $m$ free generators $a_1,\dots,a_m$. Assume that $m \geq 2$. Put $F=\{a_1,\dots,a_m\}$.

Let $W$ be a reduced word  over the alphabet $F \cup F^{-1}$. Define $f_W \colon \F_m \to \Z$ by
\[f_W(g)= \mbox{number of occurrences of $W$ in $g$}\; - \; \mbox{number of occurrences of $W^{-1}$ in $g$},\]
where by an `occurrence of $W$ in $g$' we mean any occurrence of $W$ as a subword of the reduced word representing $g$. 

\begin{fact}
$f_W$ is a quasi-character on $\F_m$. Moreover, $f_W(g^{-1})=-f_W(g)$ for all $g \in \F_m$.
\end{fact}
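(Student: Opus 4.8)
The plan is to verify directly from the definition that $f_W$ has bounded coboundary and that it is odd. Both properties should follow from a careful bookkeeping of occurrences of $W$ (and of $W^{-1}$) as subwords of reduced words, once one controls the ``interface'' created by concatenating two reduced words $g$ and $h$ and then freely reducing.

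\medskip

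\textbf{Step 1: the odd part.} Given a reduced word $g$, its formal inverse $g^{-1}$ is obtained by reversing the order of the letters and inverting each letter. Hence subwords of $g^{-1}$ are exactly the formal inverses of subwords of $g$: an occurrence of $W$ in $g^{-1}$ corresponds bijectively to an occurrence of $W^{-1}$ in $g$, and an occurrence of $W^{-1}$ in $g^{-1}$ corresponds bijectively to an occurrence of $W$ in $g$. Therefore
\[
f_W(g^{-1}) = (\text{\# occurrences of }W\text{ in }g^{-1}) - (\text{\# occurrences of }W^{-1}\text{ in }g^{-1}) = f_W(g^{-1})\text{-count as above} = -f_W(g).
\]
(One only has to note that $W$ is reduced, so ``$W^{-1}$'' is again a reduced word, and that the reduced form of $g^{-1}$ is literally the reversed-inverted reduced form of $g$, so there is no reduction to worry about here.)

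\medskip

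\textbf{Step 2: bounding the coboundary $f_W(g)+f_W(h)-f_W(gh)$.} Write the reduced forms of $g$ and $h$. When forming $gh$, a maximal common cancelling block is removed: $g = g' c$ and $h = c^{-1} h'$ (reduced), where $c$ is the maximal cancelled segment, so that $gh = g'h'$ is reduced (after possibly one more letter of cancellation at the junction of $g'$ and $h'$, which is a bounded effect). The key point is that every occurrence of $W$ as a subword of $gh$ either lies entirely inside $g'$, or entirely inside $h'$, or straddles the junction point between $g'$ and $h'$, and in the last case it is contained in a window of length $|W|-1$ on each side of the junction, so there are at most $2(|W|-1)$ such straddling occurrences; similarly for $W^{-1}$. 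Meanwhile, occurrences of $W$ inside $g'$ are exactly the occurrences of $W$ in $g$ that do not touch the cancelled tail $c$, i.e.\ they differ from the total count in $g$ by at most $|c|$-worth of boundary occurrences — but $|c|$ is unbounded, so one must instead argue that the occurrences of $W$ in $g$ lying inside $c$ are \emph{exactly} matched by occurrences of $W^{-1}$ in $h$ lying inside $c^{-1}$ (since $c^{-1}$ is the inverse word of $c$, by the reasoning of Step 1), and these cancel in $f_W(g)+f_W(h)$. Carrying this out: the occurrences of $W$ inside the cancelled segment of $g$ cancel against the occurrences of $W^{-1}$ inside the cancelled segment of $h$, and vice versa, so the only uncompensated contributions come from occurrences meeting the two junction windows, of which there are $O(|W|)$. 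Hence $|f_W(g)+f_W(h)-f_W(gh)| \le C$ for a constant $C$ depending only on $|W|$ (one can take, say, $C = 6(|W|-1)$ after accounting for both $W$ and $W^{-1}$ and both sides of the junction, with room to spare).

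\medskip

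\textbf{Main obstacle.} The delicate part is the cancellation bookkeeping in Step 2: one has to be precise about what happens in the cancelled block $c$ — in particular that an occurrence of $W$ straddling the boundary between the surviving part $g'$ and the cancelled part $c$ in $g$, together with the mirror situation in $h$, recombine correctly — and about the at-most-one extra letter of cancellation that can occur at the $g'$/$h'$ junction after $c$ is removed. A clean way to organize this is to fix the bi-infinite picture: think of $g$, $h$, $gh$ as words and track, for each position, whether it survives into $gh$; then occurrences of $W$ are determined by length-$|W|$ windows, and a window either survives intact (contributing equally to $g$ or $h$ and to $gh$), or is destroyed by cancellation (and its contribution is cancelled in $f_W(g)+f_W(h)$ by the inverse window on the other side), or meets the boundary region, of which there are boundedly many. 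Once this trichotomy is set up, the inequality is immediate with an explicit constant, and the ``moreover'' clause is just Step 1.
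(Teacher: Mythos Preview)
The paper does not give its own proof of this statement: it is stated as a \emph{Fact}, with the attribution (announced just before the subsection) being the corresponding part of \cite[Section~5]{Gr}. So there is nothing in the paper to compare against; your direct argument is precisely the standard one that appears in Grigorchuk's survey.

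Your proof is correct. One small slip: once the cancelled block $c$ is chosen maximal in the decomposition $g=g'c$, $h=c^{-1}h'$, the word $g'h'$ is already reduced --- if the last letter of $g'$ were inverse to the first letter of $h'$, you could enlarge $c$, contradicting maximality --- so the parenthetical about ``possibly one more letter of cancellation at the junction'' is not needed. This only simplifies the bookkeeping and does not affect your bound. Otherwise the trichotomy (survive intact / cancel in pairs via Step~1 / meet a junction window of width $|W|-1$) is exactly the right organization, and the constant $C$ depending only on $|W|$ follows.
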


We say that $W$ is without \emph{self-overlapping} if $B(W) \cap E(W) = \emptyset$, where $B(W)$ is the set of beginnings and $E(W)$ is the set of endings of $W$ (i.e., if $W=a_{i_1}^{\xi_1} \dots a_{i_k}^{\xi_k}$, then $B(W)=\left\{ a_{i_1}^{\xi_1},a_{i_1}^{\xi_1}a_{i_2}^{\xi_2},\dots, a_{i_1}^{\xi_1}\dots a_{i_{k-1}}^{\xi_{k-1}}\right\}$ and $E(W)=\left\{a_{i_2}^{\xi_{i_2}} \dots a_{i_{k}}^{\xi_k},\dots, a_{i_{k-1}}^{\xi_{k-1}}a_{i_k}^{\xi_k},a_{i_k}^{\xi_k}\right\}$.)
For an element $g \in \F_m$, denote by $\overline{g}$ the reduced cyclic word corresponding to $g$.

Define $e_W \colon \F_m \to \Z$ by
\[e_W(g)= \mbox{number of occurrences of $W$ in $\overline{g}$}\; - \; \mbox{number of occurrences of $W^{-1}$ in $\overline{g}$}.\]

\begin{fact}
$e_W$ is a quasi-character on $\F_m$ satisfying $e_W(g^{-1})=-e_W(g)$. Moreover, if $W$ is without self-overlapping, then $e_W$ is a pseudo-character.
\end{fact}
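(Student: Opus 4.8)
The plan is to prove the two assertions about $e_W$ separately: first that $e_W$ is always a quasi-character with $e_W(g^{-1}) = -e_W(g)$, and then that when $W$ has no self-overlapping, $e_W$ is actually a pseudo-character. The antisymmetry is immediate from the definition, since passing from $g$ to $g^{-1}$ reverses and inverts the reduced cyclic word, so occurrences of $W$ in $\overline{g^{-1}}$ correspond bijectively to occurrences of $W^{-1}$ in $\overline{g}$, and vice versa. For the quasi-character property, I would compare $e_W$ with the honest quasi-character $f_W$ from the previous fact: if $g$ has reduced form of length $\ell$, the cyclic word $\overline{g}$ is obtained from the reduced form of the cyclic reduction $g_0$ of $g$ (where $g = u g_0 u^{-1}$) by allowing $W$ to wrap around. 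The number of occurrences of $W$ in $\overline{g_0}$ differs from the number in the linear reduced word representing $g_0$ by at most $|W|$ (only the finitely many ``wrap-around'' positions are affected). Hence $|e_W(g) - f_W(g_0)| \leq 2|W|$ for a suitable choice of representative, and since $f_W$ is a quasi-character and $g \mapsto g_0$ interacts with multiplication in a controlled way (the cyclic reduction of $xy$ differs from a cyclic permutation of a subword of the concatenation of reduced forms of $x$ and $y$ by bounded-length corrections), one gets a uniform bound on $|e_W(x) + e_W(y) - e_W(xy)|$.

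For the pseudo-character claim, the key point is that $e_W(g^n) = n\, e_W(g)$ exactly (not just up to a constant) when $W$ has no self-overlapping. First reduce to the case that $g$ is cyclically reduced, since $e_W$ only depends on the cyclic word $\overline{g}$ and $\overline{g^n} = \overline{g_0^n}$ where $g_0$ is the cyclic reduction; also $e_W(g^n) = e_W(g_0^n)$ and we want this to equal $n e_W(g_0)$. So assume $g$ is cyclically reduced of length $\ell$, and write $\overline{g^n}$ as the cyclic word of length $n\ell$ obtained by repeating the block $g$ exactly $n$ times around a circle. Now count occurrences of $W$: each occurrence of $W$ in $\overline{g^n}$ starts at one of the $n\ell$ positions. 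I want to show that the occurrences are exactly $n$ times the occurrences in $\overline{g}$, i.e. each ``residue class'' of starting positions modulo $\ell$ contributes equally. The no-self-overlapping hypothesis $B(W) \cap E(W) = \emptyset$ is precisely what prevents two occurrences of $W$ in $\overline{g^n}$ from overlapping each other, hence prevents an occurrence from straddling a block boundary in a way that depends on $n$; more precisely it guarantees that whether $W$ occurs starting at position $i$ of $\overline{g^n}$ depends only on $i \bmod \ell$ (the relevant window of length $|W|$ within $\overline{g^n}$, read cyclically, is the same as the corresponding window in $\overline{g}$ — here one may need $n \geq 2$ and $|W| \leq \ell$, handling the short cases, e.g. $|W| > \ell$, separately and directly). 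Doing the same count for $W^{-1}$ and subtracting yields $e_W(g^n) = n\, e_W(g)$.

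The main obstacle I anticipate is the careful bookkeeping around the ``wrap-around'' and ``block-boundary'' positions in the cyclic words, together with cleanly reducing to the cyclically reduced case and dealing with the degenerate cases $|W|$ comparable to or larger than the length of $g$. In particular, to make the no-self-overlapping argument airtight one must verify that an occurrence of $W$ in $\overline{g^n}$ cannot ``wrap around more than once'' and cannot overlap a copy of itself, which is exactly where $B(W) \cap E(W) = \emptyset$ is used: if $W$ had a prefix equal to a suffix, then in a long enough power $g^n$ one could get extra occurrences near boundaries that scale nonlinearly in $n$. I would also double-check the edge case where $g$ is a proper power of a shorter cyclically reduced word, since then $\overline{g^n}$ has extra periodicity, but the counting argument is unaffected because we are still just multiplying the per-block count by $n$. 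Everything else is routine word combinatorics given the previous fact about $f_W$.
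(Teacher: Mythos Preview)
The paper does not actually prove this Fact: it is quoted from Grigorchuk \cite[Section~5]{Gr} (the paper explicitly says ``The reference for the next three parts are the corresponding parts of \cite[Section~5]{Gr}''), so there is no proof in the paper to compare against. Your outline is a reasonable reconstruction of the standard argument, and the overall strategy is sound.

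Two comments on precision. First, for the quasi-character part, your phrase ``$g \mapsto g_0$ interacts with multiplication in a controlled way'' is doing a lot of work and is not quite the right route. Cleaner is to show directly that $|e_W(g)-f_W(g)|$ is bounded by a constant depending only on $|W|$ and the quasi-character constant of $f_W$: write $g=u g_0 u^{-1}$ with $g_0$ cyclically reduced, use $e_W(g)=e_W(g_0)$, bound $|e_W(g_0)-f_W(g_0)|$ by $2(|W|-1)$ (only wrap-around positions differ), and bound $|f_W(g_0)-f_W(g)|$ using the quasi-character inequality for $f_W$ together with $f_W(u^{-1})=-f_W(u)$. Then $e_W$ inherits the quasi-character bound from $f_W$.

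Second, your explanation of where the no-self-overlapping hypothesis enters is slightly off. For cyclically reduced $g$ of length $\ell$, whether $W$ occurs at position $i$ in $\overline{g^n}$ really does depend only on $i \bmod \ell$ by pure $\ell$-periodicity --- no hypothesis on $W$ is needed for that. The hypothesis is used exactly in the regime $|W|>\ell$: under the usual convention one declares that $W$ has no occurrence in a cyclic word of length $<|W|$, so $e_W(g)=0$; but $W$ could still occur in $\overline{g^n}$ once $n\ell\geq |W|$. If $W$ occurs in the $\ell$-periodic word $\overline{g^n}$ with $|W|>\ell$, then the prefix of $W$ of length $|W|-\ell$ equals the suffix of the same length, contradicting $B(W)\cap E(W)=\emptyset$. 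So no-self-overlapping forces $e_W(g^n)=0$ as well in this case, and the identity $e_W(g^n)=n\,e_W(g)$ survives. You flagged this edge case but attributed the hypothesis to the wrong mechanism.
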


Using the appropriate quasi-character's, one can find many examples to which Corollary \ref{cor:main2} can be applied. We give one such example.

\begin{example}\label{ex:new1}
Let $G=\F_m=\F_{a_1,\dots,a_m}$ (where $m \geq 2$) and  $A=(\Z,+)$. Take $f=f_{a_1^2}$ or $f=e_{a_1^2}$ (i.e., we take $W:=a_1^2$ in the discussion above). Define $h: \F_m \times \F_m \to \Z$ by $h(x,y)=f(x)+f(y)-f(xy)$ for $x,y \in \F_m$. 
Then $f$ is a quasi-character, so $h$ has finite image; in fact, $\im(h) = \{-1,0,1\}$. Take $B=\{ 1\}$. 
Let $\G$ be any expansion of $((\Z,+),(\F_m,\cdot),f)$, and let $\G^* \succ \G$ be a monster model. Let $\widetilde{G}$ be the central extension of $G$ by $\Z$ defined by means of $h$. Put $A^*_1=\bigcap_{n \in \N \setminus \{ 0 \}} n\Z^*$. Then the assumptions of Corollary \ref{cor:main2} are satisfied, so $\widetilde{G^*}^{000}_B \ne \widetilde{G^*}^{00}_B$.
\end{example}

\begin{proof}
By the discussion above, we know that $f$ is a quasi-character, so $\im(h)$ is finite. 
An easy investigation shows that in fact $\im(h)=\{-1,0,1\}$. It is also obvious that $h$ is $\emptyset$-definable in $\G$.

Assumptions (1) and (2) of Corollary \ref{cor:main2} are clearly satisfied. It is enough to prove that Assumption (5) holds, too. 

\begin{claim}
For every thick subset $P$ of $G$ there exists $g \in P$ such that for every $n \in \Z$, $f(g^n)=n$.
\end{claim}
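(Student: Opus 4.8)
The plan is to show that the set
\[
S:=\{\,g\in \F_m : f(g^{n})=n\ \text{for all}\ n\in\Z\,\}
\]
meets every thick subset of $\F_m$. By the very definition of thickness this reduces to a purely combinatorial statement: \emph{for every $N\in\N$ there are $g_{1},\dots,g_{N}\in \F_m$ with $g_{i}^{-1}g_{j}\in S$ for all $1\le i<j\le N$}. Indeed, if $P$ is $N$-thick, then applying $N$-thickness to such a tuple produces indices $i<j$ with $g_{i}^{-1}g_{j}\in P$, and this element also lies in $S$, so $g\in P$ as required. Thus everything comes down to producing, for each $N$, a tuple all of whose ``forward differences'' lie in $S$.

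The second ingredient is a workable membership criterion for $S$, obtained by looking at reduced words. For $g\in\F_m$ write its cyclic decomposition $g=uvu^{-1}$ with $v$ cyclically reduced; then $uv^{n}u^{-1}$ is the reduced word for $g^{n}$, and the numbers of occurrences of $a_{1}^{2}$ and of $a_{1}^{-2}$ in it are affine functions of $n$ whose slopes are the cyclic occurrence counts inside $v$ and whose constant terms come from the two seams where $u,u^{-1}$ meet $v^{n}$. Hence $f(g^{n})=n$ for all $n$ precisely when the cyclic $a_{1}^{2}$-surplus of $v$ equals $1$ and the seam contributions cancel. (For $f=e_{a_1^2}$ this is cleaner, since then the condition is simply $e_{a_1^2}(g)=1$.) In particular every element of the form $t(a_{1}^{2}a_{2})t^{-1}$, with $t$ a reduced word not involving $a_{1}$ and not ending in $a_{2}$, lies in $S$; more generally, the one‑step differences $g_{k}^{-1}g_{k+1}$ in a staircase are easy to force into $S$.

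The remaining, and genuinely harder, part is the explicit construction of the $N$-tuple. I would build it as a staircase $g_{1}=e$, $g_{k+1}=g_{k}s_{k+1}$, choosing the step words $s_{k+1}=a_{1}^{-1}w_{k+1}a_{1}^{2}$ with the $a_{1}$-free words $w_{k+1}$ pairwise ``incompatible'' inside $\F_m$; the leading $a_{1}^{-1}$ of each step annihilates the $a_{1}^{2}$-block trailing from the previous factors, so that in every product $g_{i}^{-1}g_{j}=s_{i+1}\cdots s_{j}$ the cancellations cascade and, after full reduction, a single $a_{1}^{2}$-block survives, making $g_{i}^{-1}g_{j}$ conjugate to $a_{1}^{2}a_{2}$ and hence a member of $S$. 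The delicate point — the one that has to be done with care — is to guarantee this for \emph{all} pairs $i<j$ simultaneously, i.e.\ to arrange the step words so that the non‑consecutive products $s_{i+1}\cdots s_{j}$ also collapse to exactly one $a_{1}^{2}$-block instead of accumulating $j-i$ of them; controlling this uniformly in $i<j$, using the freeness of $\F_m$ to keep the ``blocks'' from different steps from interfering, is the heart of the argument.
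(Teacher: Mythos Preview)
Your reduction to exhibiting a sequence $(g_i)$ all of whose differences $g_i^{-1}g_j$ lie in $S$ is exactly right, and this is also what the paper does. The gap is in the concrete construction of the steps. The element $s=a_1^{-1}wa_1^{2}$ (with $w$ nontrivial and $a_1$-free) is \emph{not} conjugate to $a_1^{2}a_2$: conjugating by $a_1$ gives $wa_1$, a cyclically reduced word containing no $a_1^{\pm 2}$-block at all, so $e_{a_1^{2}}(s)=0$ already. For $f=f_{a_1^{2}}$ one does get $f(s)=1$, but $s^{2}=a_1^{-1}wa_1wa_1^{2}$ still has a single $a_1^{2}$-block, so $f(s^{2})=1\ne 2$. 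The same happens for the longer products: $s_{i+1}\cdots s_j$ reduces to $a_1^{-1}w_{i+1}a_1w_{i+2}\cdots a_1w_ja_1^{2}$ with exactly one $a_1^{2}$-block, as you say, but squaring again cancels that block to a single $a_1$, leaving $f$ equal to $1$ on every positive power. The very device you use to keep the differences from accumulating $a_1^{2}$-blocks (the leading $a_1^{-1}$) is precisely what prevents those blocks from accumulating when you take powers, and linear growth with the exponent is the content of $g\in S$. So the ``delicate point'' you flag is not the real obstacle; the construction already fails for $j-i=1$.

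The paper sidesteps this with a short explicit choice: $b_i=a_2^{\,i}a_1^{-i}a_2a_1^{\,i}$. For $i<j$ the difference
\[
g=b_ib_j^{-1}=a_2^{\,i}a_1^{-i}a_2a_1^{\,i-j}a_2^{-1}a_1^{\,j}a_2^{-j}
\]
is already reduced and one checks $f(g)=(j-1)-(i-1)-(j-i-1)=1$. The point is that $g$ begins and ends with powers of $a_2$, so in forming $g^{n}$ the only cancellation is between $a_2$'s at the seams; every $a_1$-block survives intact, whence $f(g^{n})=n$. No auxiliary incompatibility conditions are needed, and all pairs $i<j$ are handled at once. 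If you want to repair your approach, the moral is to insulate the $a_1$-blocks from cancellation under powering by bracketing the word with $a_2$-powers rather than with $a_1^{\pm 1}$.
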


\begin{proof}[Proof of the claim]
Put $b_i =a_2^ia_1^{-i}a_2a_1^i$ for $i=1,2,\dots$. Since $P$ is thick, there are $j>i$ such that $g:=b_ib_j^{-1} \in P$. We see that $g=a_2^ia_1^{-i}a_2a_1^{i-j}a_2^{-1}a_1^ja_2^{-j}$. So, $f(g)=(-i+1) +(i-j+1)+(j-1) =1$. 


The fact that $f(g^n)=n$ for $n \in \Z$ follows from an easy observation that while computing $g^n$ there are no cancellations involving the letter $a_1$.
\end{proof}
Assumption (5) of Corollary \ref{cor:main2} follows from the claim. \end{proof}
%
%


The following corollary has been proved in conversation with Anand Pillay.

\begin{corollary}\label{cor:grupa wolna} Let $F$ be the free group $\F_m$ (where $m\geq 2$) expanded by predicates for all subsets. Then ${F^*}^{000}_F={F^*}^{000}_\emptyset \ne {F^*}^{00}_\emptyset={F^*}^{00}_F$.
\end{corollary}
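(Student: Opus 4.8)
The plan is to deduce Corollary~\ref{cor:grupa wolna} from Example~\ref{ex:new1} together with general facts about the free group and its model-theoretic connected components. First I would recall that $\F_m$ (for $m\geq 2$) is absolutely connected in the sense of Definition~\ref{def:abscon}; indeed, being a non-abelian free group it has no proper subgroups of finite index after passing to a suitable expansion\,---\,more precisely, the relevant fact is that for the structure $F$ consisting of $\F_m$ with predicates for \emph{all} subsets, one still has ${F^*}^{000}_\emptyset={F^*}^{000}_F$ and likewise for the $^{00}$-component, simply because adding all subsets as predicates makes every subset definable, so definability over $F$ and over $\emptyset$ coincide, and $F$-invariance is the same as $\emptyset$-invariance here (automorphisms fixing $F$ pointwise as a structure fix every subset setwise). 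This takes care of the two equalities ${F^*}^{000}_F={F^*}^{000}_\emptyset$ and ${F^*}^{00}_\emptyset={F^*}^{00}_F$ in the statement.

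The substance is the strict inequality ${F^*}^{000}_\emptyset\neq {F^*}^{00}_\emptyset$. Here the idea is to transfer the phenomenon from the central extension $\widetilde{\F_m}$ of Example~\ref{ex:new1} down to $\F_m$ itself, using the fact that in the structure $F$ every subset is a predicate. Concretely, in Example~\ref{ex:new1} we have a $\emptyset$-definable (in an expansion $\G$) quasi-character $f\colon \F_m\to\Z$ and the associated $2$-cocycle $h(x,y)=f(x)+f(y)-f(xy)$ with finite image, and we concluded $\widetilde{G^*}^{000}_B\neq \widetilde{G^*}^{00}_B$. The key point is that when we work inside $F$ (free group with all subsets named), the function $f$ and the cocycle $h$ become $\emptyset$-definable, so we are in Situation~$(00)$ with $B=\emptyset$. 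The witness extracted from the proof of Corollary~\ref{cor:main2} via Assumption~(5): for every thick subset $P$ of $G$ there is $g\in P$ with $f(g^n)=n$ for all $n$, shows that $\overline{f^*}\bigl[{G^*}^{00}_\emptyset\bigr]\nsubseteq \Z/A_1^*$ inside the extension. I would then argue that the same combinatorial construction $b_i=a_2^ia_1^{-i}a_2a_1^i$, $g=b_ib_j^{-1}$, which lies in every thick (hence every $\emptyset$-definable thick) subset of $\F_m$, already forces ${F^*}^{000}_\emptyset$ to be a proper subgroup of ${F^*}^{00}_\emptyset$: since $F$ names all subsets, ${F^*}^{00}_\emptyset$ is an intersection of $\emptyset$-definable thick sets (Fact~\ref{fact:new 1}(ii)) and contains all $g$ of the above form with unbounded $f$-values, whereas ${F^*}^{000}_\emptyset$ is generated by the intersection of all $\emptyset$-definable thick sets (Fact~\ref{fact:new 1}(i)) and $f$ restricted to it would have to be bounded, because $f$ descends to a well-defined (hence automatically bounded, by compactness of the logic topology on $\Z^*/{\Z^*}^{00}$) homomorphism-like object on the quotient. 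The cleanest formulation is: if ${F^*}^{000}_\emptyset={F^*}^{00}_\emptyset$ then the induced map on $\widetilde{F^*}^{00}/\widetilde{F^*}^{000}$ collapses, contradicting Example~\ref{ex:new1} applied with $\G:=F$.

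So the proof would run: (1) observe that in $F$ all subsets, and in particular the cocycle $h$ from Example~\ref{ex:new1}, are $\emptyset$-definable, placing us in Situation~$(00)$ with $B=\emptyset$ and $A^*_1=\bigcap_n n\Z^*$; (2) invoke Example~\ref{ex:new1} (equivalently, re-run Corollary~\ref{cor:main2} with the same $f$, $h$, using that Assumptions (1),(2),(5) hold) to get $\widetilde{F^*}^{000}_\emptyset\neq \widetilde{F^*}^{00}_\emptyset$; (3) note that if ${F^*}^{000}_\emptyset={F^*}^{00}_\emptyset$ held, then since ${F^*}^{000}_\emptyset=\pi[\widetilde{F^*}^{000}_\emptyset]$ and ${F^*}^{00}_\emptyset=\pi[\widetilde{F^*}^{00}_\emptyset]$ we would be exactly in the setting where the proof of Corollary~\ref{cor:main2} (or Theorem~\ref{thm:main}'s first part) derives a contradiction from Assumption~(5)\,---\,so the strict inequality for $\F_m$ follows; (4) for the equalities with predicates, use that naming all subsets identifies $B$-definable with $\emptyset$-definable and $\aut(\G^*/F)$-invariant with $\aut(\G^*/\emptyset)$-invariant for the relevant bounded sets, so all four components coincide appropriately and the inequality is as stated.

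The main obstacle I anticipate is step~(3): making precise \emph{why} ${F^*}^{000}_\emptyset={F^*}^{00}_\emptyset$ is incompatible with the existence of the quasi-character $f$ whose values are unbounded along elements of every thick set. The honest way is to observe that the argument is not really about $\F_m$ in isolation but about the extension: Corollary~\ref{cor:main2} already proves $\widetilde{F^*}^{000}_\emptyset\neq\widetilde{F^*}^{00}_\emptyset$ \emph{unconditionally}, and if ${F^*}^{000}_\emptyset$ equalled ${F^*}^{00}_\emptyset$ then by Proposition~\ref{prop:quotient} the quotient $\widetilde{F^*}^{00}_\emptyset/\widetilde{F^*}^{000}_\emptyset$ would be isomorphic to $(\widetilde{F^*}^{00}_\emptyset\cap\Z^*)/(\widetilde{F^*}^{000}_\emptyset\cap\Z^*)$, a nontrivial quotient of $\Z$-type groups; one then has to check this does not directly contradict ${F^*}^{000}_\emptyset={F^*}^{00}_\emptyset$\,---\,in fact it doesn't, so the correct logical structure is the reverse: we do \emph{not} separately prove ${F^*}^{000}_\emptyset\neq{F^*}^{00}_\emptyset$ and then get the extension statement; rather, the inequality ${F^*}^{000}_\emptyset\neq{F^*}^{00}_\emptyset$ must be obtained because the construction of $g$ in the claim of Example~\ref{ex:new1} places $g$ in every $\emptyset$-definable thick subset of $F$ but with the property that no bounded-index invariant subgroup can contain all of its powers while $f$ stays bounded\,---\,equivalently one directly shows $f$ is unbounded on ${F^*}^{00}_\emptyset$ but the quotient $\F_m/{F^*}^{000}_\emptyset$ forces $f$-values into a bounded (profinite) set, whence ${F^*}^{000}_\emptyset\subsetneq{F^*}^{00}_\emptyset$. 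I would present this last implication carefully, perhaps by directly exhibiting a $\emptyset$-invariant but non-type-definable bounded equivalence relation on ${F^*}^{00}_\emptyset$ coming from the cocycle, which is precisely what Theorem~\ref{thm:main} packages; citing Theorem~\ref{thm:main} and Example~\ref{ex:new1} with $\G:=F$ is the economical route and is what I would ultimately write.
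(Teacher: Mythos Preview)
Your step~(3) is a genuine gap, and none of the routes you sketch close it. Corollary~\ref{cor:main2} and Example~\ref{ex:new1} yield $\widetilde{G^*}^{000}_\emptyset\ne\widetilde{G^*}^{00}_\emptyset$ for the \emph{extension}; they do not, by themselves, say anything about ${G^*}^{000}_\emptyset$ versus ${G^*}^{00}_\emptyset$. In the proof of Corollary~\ref{cor:main2} the hypothesis ${G^*}^{000}_B={G^*}^{00}_B$ is assumed without loss of generality precisely because it does \emph{not} lead to a contradiction there---the argument then proceeds to prove $\widetilde{G^*}^{000}_B\ne\widetilde{G^*}^{00}_B$ under that hypothesis. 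Your appeal to Proposition~\ref{prop:quotient} you yourself recognize gives no contradiction. And the assertion that ``$f$-values are forced into a bounded set on $\F_m/{F^*}^{000}_\emptyset$'' has no meaning: $f$ is a map $G\to\Z$, not something that factors through that quotient. (Also, your opening remark that $\F_m$ is absolutely connected is simply false---free groups have many finite-index subgroups---though you seem to abandon it.)

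The paper's proof supplies exactly the missing step, via two observations you do not make. First, since $h$ is split via the $\emptyset$-definable $f$, the extension $\widetilde{G^*}$ is $\emptyset$-definably isomorphic to the \emph{product} $\Z^*\times G^*$; hence $\widetilde{G^*}^{000}_\emptyset\ne\widetilde{G^*}^{00}_\emptyset$ forces either ${\Z^*}^{000}_\emptyset\ne{\Z^*}^{00}_\emptyset$ or ${G^*}^{000}_\emptyset\ne{G^*}^{00}_\emptyset$. Second, to rule out the first alternative, one works in a structure $\G$ that names not only all subsets of $G$ but also all subsets of $\Z\times G$, so that an epimorphism $\sigma\colon\F_m\twoheadrightarrow\Z$ becomes $\emptyset$-definable; then ${G^*}^{000}_\emptyset={G^*}^{00}_\emptyset$ would imply ${\Z^*}^{000}_\emptyset=\sigma^*[{G^*}^{000}_\emptyset]=\sigma^*[{G^*}^{00}_\emptyset]={\Z^*}^{00}_\emptyset$, a contradiction. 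This definable-epimorphism-onto-$\Z$ trick is the idea you are missing; note also that it requires enlarging the language beyond the one-sorted structure $F$ you work in (and then checking that the components of $G^*$ computed in $\G^*$ coincide with those computed in $F^*$).
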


\begin{proof}
Denote $G=\F_m$. Let $\G$ be the expansion of $((\Z,+),(G,\cdot),f)$ by predicates for all subsets of $G$ and for all subsets of $\Z \times G$, where $f$ is the function from Example \ref{ex:new1}. Let $\G^*$ be a monster model such that the reduct $F^*$ of  $\G^*$ to the language of $F$ is a monster model extending $F$, and let $\Z^*$ and $G^*$ be the interpretations of $\Z$ and $G$ in $\G^*$. It is easy to see that ${G^*}^{000}_\emptyset={F^*}^{000}_\emptyset={F^*}^{000}_F$ and  ${G^*}^{00}_\emptyset={F^*}^{00}_\emptyset={F^*}^{00}_F$ 
Hence, it remains to show that ${G^*}^{000}_\emptyset \ne {G^*}^{00}_\emptyset$. 

Let $h$ and $\widetilde{G}$ be defined as in Example \ref{ex:new1}; $f^*$, $h^*$ and $\widetilde{G^*}$ denote the interpretations of $f$, $h$ and $\widetilde{G}$ in $\G^*$.
Since $h^*$ is split via the $\emptyset$-definable function $f^*$, we get that $\widetilde{G^*}$ is $\emptyset$-definably isomorphic with the product of groups $\Z^* \times G^*$. Since, from Example \ref{ex:new1}, we know that ${\widetilde{G^*}}^{000}_\emptyset \ne {\widetilde{G^*}}^{00}_\emptyset$, we conclude that either ${\Z^*}^{000}_\emptyset \ne  {\Z^*}^{00}_\emptyset$ or ${G^*}^{000}_\emptyset \ne {G^*}^{00}_\emptyset$. Suppose for a contradiction that ${G^*}^{000}_\emptyset = {G^*}^{00}_\emptyset$. Let $\sigma \colon G \to  \Z$ be an epimorphism. Since predicates for all subsets of $\Z \times G$ were added, $\sigma$ is  $\emptyset$-definable, so is its interpretation $\sigma^*$. Therefore, $\sigma^*\left[{G^*}^{000}_\emptyset \right]={\Z^*}^{000}_\emptyset \ne {\Z^*}^{00}_\emptyset = \sigma^*\left[{G^*}^{00}_\emptyset \right]$, a contradiction.
\end{proof}


Corollary \ref{cor:grupa wolna} and its  proof lead to the following question.

\begin{question}
Let $Z$ be the additive group of integers expanded by predicates for all subsets. Is it the case that ${Z^*}^{000}_Z ={Z^*}^{00}_Z$ or rather ${Z^*}^{000}_Z \ne {Z^*}^{00}_Z$. One can ask this question in the more general context of all abelian or nilpotent or solvable groups. Yet more generally, one can try to classify the groups for which the two connected components coincide (after expansion by predicates for all subsets). 
\end{question}

\subsubsection{Extensions of surface groups}

Let $\Gamma_g$ be the fundamental group of a closed oriented surface of genus $g \geq 2$. The standard presentation of $\Gamma_g$ is
\[\Gamma_g=\left\langle a_1,\dots,a_g,b_1,\dots,b_g : \prod_{i=1}^g [a_i,b_i]=1 \right\rangle,\]
where $[a,b]=a^{-1}b^{-1}ab$. Let $F=\{ a_1,\dots,a_g,b_1,\dots,b_g\}$.

In \cite [Page 152]{Gr}, the notion of an \emph{admissible} word is explained, and it is recalled from another paper that any element $x \in \Gamma_g$ can be uniquely written as an admissible word; in such a situation, we say that $x$ is in \emph{standard form}. Roughly speaking, admissible words are the words corresponding to the `simplest' paths in the Cayley graph of $\Gamma_g$.

Let $W$ be a freely reduced word in the alphabet $F \cup F^{-1}$. For $x \in \Gamma_g$ we define $h_W(x)$ as the number of occurrences of $W$ as a subword in the admissible word representing $x$. Define $f_W: \Gamma_g \to \Z$ by
\[f_W(x)=h_W(x)-h_{W^{-1}}(x)-h_W(x^{-1})+h_{W^{-1}}(x^{-1}).\]
The next fact is \cite[Proposition 5.14]{Gr}

\begin{fact} For every freely reduced word $W$ the function $f_W$ is a quasi-character. Moreover, $f_{W}(x^{-1})=-f_W(x)$ for all $x \in \Gamma_g$.
\end{fact}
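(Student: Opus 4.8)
\textbf{Proof proposal for the fact that $f_W$ is a quasi-character with $f_W(x^{-1}) = -f_W(x)$.}

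The plan is to reduce the statement directly to the bound given in \cite[Proposition 5.14]{Gr}, which asserts precisely this, so strictly speaking the fact is a citation; what one needs to supply is the verification that the hypotheses of that proposition are met in our setting (freely reduced $W$, genus $g \geq 2$) and to make the antisymmetry $f_W(x^{-1}) = -f_W(x)$ transparent. First I would record the antisymmetry, which is essentially built into the definition: reversing and inverting the letters of the admissible word representing $x$ yields (after checking it is again admissible --- this is where one invokes the uniqueness of standard form and the fact that admissibility is preserved under the word-reversal/inversion operation, as explained in \cite[Page 152]{Gr}) the admissible word representing $x^{-1}$. Consequently $h_W(x^{-1})$ counts the same patterns that $h_{W^{-1}}(x)$ counts read backwards, so in the combination $f_W(x) = h_W(x) - h_{W^{-1}}(x) - h_W(x^{-1}) + h_{W^{-1}}(x^{-1})$ the four terms pair up with a sign flip, giving $f_W(x^{-1}) = -f_W(x)$ immediately.

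Next I would address the quasi-character inequality. The key point is that the admissible word representing $xy$ differs from the concatenation of the admissible words for $x$ and for $y$ only within a bounded-length ``interaction region'' near the junction: by the geodesic-like nature of admissible words in the Cayley graph of $\Gamma_g$ (again \cite[Page 152]{Gr}), there is a constant $K = K(g)$ such that the standard form of $xy$ agrees with that of $x$ outside its last $K$ letters and with that of $y$ outside its first $K$ letters. Occurrences of $W$ (of some fixed length $\ell = |W|$) that lie entirely in the unchanged portions are therefore counted identically in $h_W(xy)$ and in $h_W(x) + h_W(y)$; only occurrences meeting the interaction region can be created or destroyed, and there are at most $O(K + \ell)$ of these. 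The same estimate applies to $h_{W^{-1}}$ and, using the antisymmetry already established, to the terms involving inverses. Summing the four contributions yields $|f_W(x) + f_W(y) - f_W(xy)| \leq C$ for a constant $C$ depending only on $g$ and $|W|$, which is exactly the quasi-character condition; equivalently the image of this coboundary-type expression is finite.

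I expect the main obstacle to be purely bookkeeping: making precise the claim that ``the standard form of a product agrees with the standard forms of the factors away from a bounded region.'' This requires unwinding the definition of admissible word from \cite{Gr} and the auxiliary results quoted there about the structure of geodesics in surface groups; none of it is conceptually hard, but one must be careful that cancellations at the junction are genuinely confined to bounded length (this uses the hyperbolicity of $\Gamma_g$, reflected in the ``admissible = geodesic-normal-form'' machinery for $g \geq 2$). Since \cite[Proposition 5.14]{Gr} already packages exactly this, the honest proof is simply to cite it, and I would present the above only as a sketch of why it holds, deferring the combinatorial details to \cite{Gr}.
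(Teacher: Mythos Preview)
Your proposal is correct and matches the paper's approach exactly: the paper does not prove this fact at all but simply introduces it with ``The next fact is \cite[Proposition 5.14]{Gr}'', which is precisely the citation you identify. Your additional sketch of the antisymmetry and of the bounded-interaction-region argument is more than the paper provides, but it is accurate as an informal explanation of why the cited result holds.
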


Similarly to the case of free groups, one can also define quasi-characters $e_W(x)$, using so-called \emph{admissible cyclic} words; in the case when $W$ is without self-overlapping, $e_W$ is a pseudo-character (see \cite[Page 156]{Gr}).

Arguing as  in the proof of Example \ref{ex:new1}, we get the following example.

\begin{example}\label{ex:new2}
Let $G=\Gamma_g$ (where $g \geq 2$) and $A=(\Z,+)$. Take $f=f_{a_1^2}$ or $f=e_{a_1^2}$ (i.e., we take $W:=a_1^2$ in the discussion above). Define $h: \F_m \times \F_m \to \Z$ by $h(x,y)=f(x)+f(y)-f(xy)$ for $x,y \in \Gamma_g$. 
Then $f$ is a quasi-character, so $h$ has finite image. Let $B=\{ 1\}$. Take $\G=((\Z,+),(\Gamma_g,\cdot),f)$, and let $\G^*=((\Z^*,+),(\Gamma_g^*,\cdot),f^*) \succ \G$ be a monster model. Let $\widetilde{G}$ be the central extension of $G$ by $\Z$ defined by means of $h$. Put $A^*_1=\bigcap_{n \in \N \setminus \{ 0 \}} n\Z^*$. Then the assumptions of Corollary \ref{cor:main2} are satisfied, so $\widetilde{G^*}^{000}_B \ne \widetilde{G^*}^{00}_B$.
\end{example}

\begin{proof}
It is enough to prove the following
\begin{claim}
For every thick subset $P$ of $G$ there exists $x \in P$ such that for every $n \in \Z$, $f(x^n)=2n$.
\end{claim}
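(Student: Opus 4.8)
The plan is to prove the claim by a direct construction analogous to the one used for free groups in Example \ref{ex:new1}, exploiting that admissible words in $\Gamma_g$ behave, locally around carefully chosen letters, like reduced words in a free group. First I would recall that, since $\Gamma_g = \langle a_1,\dots,a_g,b_1,\dots,b_g \mid \prod_i[a_i,b_i]=1\rangle$ with $g\geq 2$, there are plenty of generators available besides $a_1$, and the only relator involves every generator. The key point is that a word which never creates a subword in which $a_1^{\pm 1}$ can be cancelled or rewritten (i.e. a word in which all occurrences of $a_1^{\pm 1}$ are `isolated' and the relator can never be applied to touch them) is already in standard (admissible) form, and its powers are too; and $f_{a_1^2}$ simply counts signed occurrences of $a_1^2$ in that standard form together with its contribution from the inverse, so it behaves predictably.

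The concrete construction I would use: for $i=1,2,\dots$ set $b_i := a_2^{i} a_1^{2} a_2^{-i} \cdot a_2^{i+1} a_1^{2} a_2^{-(i+1)}$ (or an even simpler variant such as $c_i := a_2^{i} a_1^{2} a_2^{i}$, choosing exponents of distinct $a_2$-blocks so that no two $b_i b_j^{-1}$ can interact). Since $P$ is thick, there are $j>i$ with $x := b_i b_j^{-1}\in P$. I would then check that $x$, after free reduction, is already an admissible word: the $a_2$-syllables separate the $a_1^{2}$-blocks, nothing cancels, and because the relator $\prod[a_k,b_k]$ cannot be a subword of any power of $x$ (the $a_2$-syllable lengths grow and never match the relator pattern), $x^n$ is also admissible for every $n\in\Z$. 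A routine count of occurrences of $a_1^2$ versus $a_1^{-2}$ in $x^n$ and in $x^{-n}$ then yields $f_{a_1^2}(x^n)=2n$, and similarly for $e_{a_1^2}$ using admissible cyclic words. Together with $f(x^{-1})=-f(x)$ from the quoted Fact, this gives $f(x^n)=2n$ for all $n\in\Z$, establishing the claim; the rest of the example then follows verbatim as in Example \ref{ex:new1}, invoking Fact \ref{fact:new 1}(ii) and compactness to verify Assumption (5) of Corollary \ref{cor:main2}.

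The main obstacle I anticipate is purely combinatorial bookkeeping in $\Gamma_g$ rather than free groups: one must be genuinely careful that the chosen words are admissible in the precise sense of \cite[Page 152]{Gr} and remain so under taking powers, since admissibility is a subtler condition than free reducedness (it encodes geodesity in the Cayley graph). The safe route is to pick the $a_2$-syllable exponents large and pairwise distinct enough that (a) no subword of $x^{n}$ equals the defining relator or its cyclic conjugates, so no rewriting using the relator is ever available, and (b) the $a_1^{\pm 2}$-blocks are never adjacent after reduction, so the count of occurrences is literally additive in $n$. Once admissibility is secured, the computation of $f_{a_1^2}(x^n)$ is immediate. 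I would also remark that the choice $W=a_1^2$ is without self-overlapping, so $e_{a_1^2}$ is a pseudo-character and the same $x$ works for it with $e_{a_1^2}(x^n)=2n$.
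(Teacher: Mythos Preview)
Your overall strategy---choose a sequence $(b_i)$ so that thickness forces some $x=b_ib_j^{-1}\in P$, verify that $x$ and its powers are admissible, then read off $f(x^n)$ from the subword count---is exactly the paper's approach. The gap is in the specific sequences you propose: both of them give $f(x)=0$, not $f(x)=2$.

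Take your simpler candidate $c_i=a_2^ia_1^2a_2^i$. Then
\[
x=c_ic_j^{-1}=a_2^ia_1^2a_2^{\,i-j}a_1^{-2}a_2^{-j},
\qquad
x^{-1}=a_2^ja_1^2a_2^{\,j-i}a_1^{-2}a_2^{-i},
\]
and one sees $h_{a_1^2}(x)=h_{a_1^{-2}}(x)=h_{a_1^2}(x^{-1})=h_{a_1^{-2}}(x^{-1})=1$, hence $f_{a_1^2}(x)=0$. Your other candidate $b_i=a_2^ia_1^2a_2a_1^2a_2^{-(i+1)}$ has the same defect: in $x=b_ib_j^{-1}$ the blocks $a_1^2$ and $a_1^{-2}$ occur the same number of times, and the count cancels. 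The problem is structural: if every $a_1$-block in $x$ has fixed length (here~$2$), the contribution of each positive block equals that of each negative block, and since $b_ib_j^{-1}$ pairs them up symmetrically, you always get zero.

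The paper fixes this by letting the $a_1$-exponents vary with $i$: it reuses the free-group sequence $b_i=a_2^ia_1^{-i}a_2a_1^i$ from Example~\ref{ex:new1}, so that
\[
x=b_ib_j^{-1}=a_2^ia_1^{-i}a_2a_1^{\,i-j}a_2^{-1}a_1^{\,j}a_2^{-j}.
\]
Now the single positive block $a_1^j$ contributes $j-1$ occurrences of $a_1^2$, while the two negative blocks contribute $(i-1)+(j-i-1)=j-2$ occurrences of $a_1^{-2}$; the difference is~$1$, and since $x^{-1}$ has the mirrored standard form, $f(x)=2\bigl(h_{a_1^2}(x)-h_{a_1^{-2}}(x)\bigr)=2$. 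Powers of $x$ remain admissible after the obvious $a_2$-cancellations, and the count scales linearly, giving $f(x^n)=2n$. Once you replace your sequence by this one (or any sequence producing the needed asymmetry in $a_1$-block lengths), the rest of your argument goes through unchanged.
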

\begin{proof}[Proof of the claim.]
We consider the elements $b_i$ defined as in the proof of the claim in Example \ref{ex:new1}, and, in order to
to avoid a clash of notation, we denote by $x$ the element $g$ defined in that proof. Recall that $x=a_2^ia_1^{-i}a_2a_1^{i-j}a_2^{-1}a_1^ja_2^{-j}$ and $x^{-1}=a_2^ja_1^{-j}a_2a_1^{j-i}a_2^{-1}a_1^ia_2^{-i}$. It is clear from the definition of admissible words \cite[Page 152]{Gr} that both these elements are written in standard form. Thus, $f(x)=2(h_{a_1^2}(x)-h_{a_1^{-2}}(x))=2$. Since the powers of $x$ are also in standard form (after obvious cancellations of letters $a_2$), we conclude that $f(x^n)=2n$ for $n \in \Z$, which finishes the proof.
\end{proof}
The justification of Example \ref{ex:new2} is completed. \end{proof}
%
%
Using the same argument as in the proof of Corollary \ref{cor:grupa wolna}, we get the following corollary. (Note that there is an epimorphism from $\Gamma_g$ to $\Z$, e.g. sending $a_1$ to $1$ and the other generators to $0$).

\begin{corollary} Let $\Gamma$ be the surface group $\Gamma_g$ (where $g \geq 2$) expanded by predicates for all subsets. Then ${\Gamma^*}^{000}_F={\Gamma^*}^{000}_\emptyset \ne {\Gamma^*}^{00}_\emptyset={\Gamma^*}^{00}_F$.
\end{corollary}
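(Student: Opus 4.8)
The plan is to imitate the proof of Corollary~\ref{cor:grupa wolna} verbatim, substituting $\Gamma_g$ for $\F_m$ and Example~\ref{ex:new2} for Example~\ref{ex:new1}. First I would set $G=\Gamma_g$, let $\G$ be the expansion of $((\Z,+),(G,\cdot),f)$ by predicates for all subsets of $G$ and for all subsets of $\Z\times G$, where $f$ is the quasi-character $f_{a_1^2}$ (or $e_{a_1^2}$) from Example~\ref{ex:new2}, and take a monster model $\G^*$ whose reduct $\Gamma^*$ to the language of $\Gamma$ is a monster model extending $\Gamma$. As in the free group case, one checks the easy identities ${G^*}^{000}_\emptyset={\Gamma^*}^{000}_\emptyset={\Gamma^*}^{000}_F$ and ${G^*}^{00}_\emptyset={\Gamma^*}^{00}_\emptyset={\Gamma^*}^{00}_F$, so it remains to prove ${G^*}^{000}_\emptyset\ne{G^*}^{00}_\emptyset$.

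Next I would let $h$ and $\widetilde{G}$ be as in Example~\ref{ex:new2}, and denote by $f^*,h^*,\widetilde{G^*}$ their interpretations in $\G^*$. Since $h^*$ is split via the $\emptyset$-definable function $f^*$, the group $\widetilde{G^*}$ is $\emptyset$-definably isomorphic to the direct product $\Z^*\times G^*$. Example~\ref{ex:new2} gives ${\widetilde{G^*}}^{000}_\emptyset\ne{\widetilde{G^*}}^{00}_\emptyset$, hence either ${\Z^*}^{000}_\emptyset\ne{\Z^*}^{00}_\emptyset$ or ${G^*}^{000}_\emptyset\ne{G^*}^{00}_\emptyset$. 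Suppose for contradiction that ${G^*}^{000}_\emptyset={G^*}^{00}_\emptyset$. Here I would invoke the remark preceding the statement: there is an epimorphism $\sigma\colon\Gamma_g\to\Z$, for instance sending $a_1$ to $1$ and all other generators to $0$ (this is a homomorphism because the single relation $\prod_i[a_i,b_i]=1$ maps to a product of commutators in $\Z$, which is $0$). Because predicates for all subsets of $\Z\times G$ were added, $\sigma$ is $\emptyset$-definable, so its interpretation $\sigma^*$ is $\emptyset$-definable; applying $\sigma^*$ to the two connected components of $G^*$ gives ${\Z^*}^{000}_\emptyset=\sigma^*[{G^*}^{000}_\emptyset]=\sigma^*[{G^*}^{00}_\emptyset]={\Z^*}^{00}_\emptyset$, which forces ${\widetilde{G^*}}^{000}_\emptyset={\widetilde{G^*}}^{00}_\emptyset$, a contradiction.

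I do not anticipate a genuine obstacle, since every ingredient is already in place: the main substantive work (verifying the hypotheses of Corollary~\ref{cor:main2}, in particular Assumption~(5) via the thick-set claim) was done in Example~\ref{ex:new2}, and the mechanism for transferring from $\widetilde{G}$ back to $G$ through the projection onto the direct factor $G^*$ and the epimorphism $\sigma$ is exactly the one used in Corollary~\ref{cor:grupa wolna}. The only point that requires a sentence of care is the existence and $\emptyset$-definability of the epimorphism $\sigma\colon\Gamma_g\to\Z$; this is immediate from the standard presentation of $\Gamma_g$ and from the fact that predicates for all subsets have been added to the structure. Thus the proof is a direct adaptation, and I would write it in two short paragraphs mirroring the proof of Corollary~\ref{cor:grupa wolna}.
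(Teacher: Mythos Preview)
Your proposal is correct and is exactly the approach the paper takes: the paper simply says ``using the same argument as in the proof of Corollary~\ref{cor:grupa wolna}'' and notes the existence of the epimorphism $\sigma\colon\Gamma_g\to\Z$ sending $a_1\mapsto 1$ and the other generators to $0$. Your write-up spells this out faithfully, including the substitution of Example~\ref{ex:new2} for Example~\ref{ex:new1}.
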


\subsubsection{Extensions of $\Z_m * \Z_n$}

Let $G=\Z_m \times \Z_n = \langle a,b: a^m=b^n=e\rangle$, where $m,n \geq 2$. 
Consider the set \[F=\{a_1,\dots,a_{m-1}, b_1,\dots,b_{n-1}\}\] of generators of $G$, where $a_i=a^i$ for $1\leq i \leq m-1$ and $b_j=b^j$ for $1\leq j \leq n-1$. Every element $g \in G$ can be uniquely written in \emph{normal form}, i.e., as a word $W$ in the alphabet $F$ in which each symbol $a_i$ is followed by some symbol $b_j$ and each symbol $b_j$ is followed by some symbol $a_i$. We say that such a word $W$ is \emph{reduced}. If $W=a_{i_1}b_{j_1}\dots a_{i_k}b_{j_k}$ is written in normal form, we define $W^-=b_{n-j_k}a_{m-i_k}\dots b_{n-j_1}a_{m-i_1}$ (i.e., $W^-$ is the normal form of the element $W^{-1}$).

Let $W$ be a reduced word. Define $f_W \colon G \to \Z$ by
\[f_W(g)= \mbox{number of occurrences of $W$ in $g$}\; - \; \mbox{number of occurrences of $W^-$ in $g$},\]
where by an `occurrence of $W$ in $g$' we mean any occurrence of $W$ as a subword of the normal form of $g$. 

For an element $g \in G$, denote by $\overline{g}$ the {\em reduced cyclic} word corresponding to $g$ (reduced still means without subwords $a_ib_j$ or $b_ja_i$).
Define $e_W \colon G \to \Z$ by
\[e_W(g)= \mbox{number of occurrences of $W$ in $\overline{g}$}\; - \; \mbox{number of occurrences of $W^-$ in $\overline{g}$}.\]

\begin{fact}
For any reduced word $W$, $f_W$ and $e_W$ are quasi-characters satisfying $f_W(g^{-1})=-f_W(g)$ and $e_W(g^{-1})=-e_W(g)$. If $W$ is without self-overlapping, then $e_W$ is a pseudo-character.
\end{fact}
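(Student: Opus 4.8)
The plan is to follow, essentially line for line, the arguments for free groups and for surface groups recorded in \cite[Section~5]{Gr}: all three assertions are purely combinatorial facts about occurrences of a fixed word in normal forms. Write $N_V(g)$ for the number of occurrences of a reduced word $V$ in the normal form of $g\in G$, and $N_V(\overline g)$ for the number of occurrences of $V$ in the reduced cyclic word $\overline g$, so that $f_W(g)=N_W(g)-N_{W^-}(g)$ and $e_W(g)=N_W(\overline g)-N_{W^-}(\overline g)$; put $\ell=|W|$, and note that $|W^-|=\ell$ and that $W$ has no self-overlapping iff $W^-$ does. The sign identities come first and are immediate: the normal form of $g^{-1}$ is obtained from that of $g$ by the involution that reverses the word and replaces each letter $a_i$ by $a_{m-i}$ and each $b_j$ by $b_{n-j}$; by the definition of $W^-$ this involution matches occurrences of $W$ in $g$ bijectively with occurrences of $W^-$ in $g^{-1}$, so $N_W(g^{-1})=N_{W^-}(g)$ and $N_{W^-}(g^{-1})=N_W(g)$, whence $f_W(g^{-1})=-f_W(g)$, and since the same involution carries $\overline g$ to $\overline{g^{-1}}$ we also get $e_W(g^{-1})=-e_W(g)$.

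Next I would prove that $f_W$ is a quasi-character by analysing how the normal form of a product $xy$ is assembled from those of $x$ and $y$: writing these normal forms as $X=X_1X_2$ and $Y=Y_2Y_1$, where $X_2$ is the maximal suffix of $X$ that cancels against the maximal prefix $Y_2$ of $Y$ (so $Y_2=(X_2)^-$ letter by letter), the normal form of $xy$ is $X_1Y_1$, up to the possible fusion of the last letter of $X_1$ with the first letter of $Y_1$ into a single letter. The only elementary estimate needed is: if a reduced word splits as a concatenation $uv$ of reduced words with no cancellation at the seam, then $0\le N_V(uv)-N_V(u)-N_V(v)\le |V|-1$ (the discrepancy counts occurrences straddling the seam), with at most one further letter's worth of correction at a fused seam. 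Applying this to $V=W$ and $V=W^-$ in the decompositions of $X$, $Y$ and $xy$ gives
\[ f_W(x)+f_W(y)-f_W(xy)=\bigl(N_W(X_2)-N_{W^-}(X_2)\bigr)+\bigl(N_W(Y_2)-N_{W^-}(Y_2)\bigr)+E, \]
where $|E|$ is bounded by a constant depending only on $\ell$. The two bracketed terms grow with the length of the cancellation and are not individually bounded, but since $Y_2=(X_2)^-$ the sign argument above yields $N_W(Y_2)=N_{W^-}(X_2)$ and $N_{W^-}(Y_2)=N_W(X_2)$, so their sum vanishes; hence $f_W$ is a quasi-character. For $e_W$ I would reduce to $f_W$: every $g$ is conjugate to a cyclically reduced $w$ with $\overline g=\overline w$; if $C$ is a quasi-character constant for $f_W$, then $f_W(u^{-1})=-f_W(u)$ gives $|f_W(uwu^{-1})-f_W(w)|\le 2C$ uniformly, while $|e_W(w)-f_W(w)|\le 2(\ell-1)$ since a cyclic word has at most $\ell-1$ wrap-around occurrences of a word of length $\ell$ beyond the linear ones. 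Thus $|e_W-f_W|$ is bounded on $G$, and a bounded perturbation of a quasi-character is a quasi-character.

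The last assertion — that $e_W$ is a pseudo-character when $W$ has no self-overlapping — is the part I expect to require the most care, and it is also where one must be slightly careful about the statement itself: for $\ell=1$ it can already fail on torsion elements, so one should read the hypothesis as including $\ell\ge 2$, which I assume. Since $e_W$ is a class function we may take $x=w$ cyclically reduced; if $|w|\le 1$ then $w$ is torsion and, as a word of length $\ge 2$ alternating in type cannot occur in a cyclic word of length $\le 1$, we get $e_W(w)=e_W(w^n)=0$, so assume $|w|=L\ge 2$. Then $w^n$ is again cyclically reduced, of length $nL$, and $\overline{w^n}$ is $\overline w$ repeated $n$ times, hence $L$-periodic. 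If $\ell\le L$, whether a position starts an occurrence of $W$ in $\overline{w^n}$ depends only on that position modulo $L$, and each residue modulo $L$ is hit by exactly $n$ positions modulo $nL$, so $N_W(\overline{w^n})=nN_W(\overline w)$. If $\ell>L$, an occurrence of $W$ inside any $L$-periodic word would force the length-$(\ell-L)$ prefix and suffix of $W$ to coincide, contradicting the absence of self-overlapping, so $N_W(\overline{w^n})=0=nN_W(\overline w)$. The same applies to $W^-$, giving $e_W(w^n)=n\,e_W(w)$ for $n\ge 1$, and the cases $n\le 0$ follow from $e_W(e)=0$ and $e_W(g^{-1})=-e_W(g)$. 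The whole force of the no-self-overlapping hypothesis is exactly this: it rules out the ``fractional-period'' occurrences of $W$ that would otherwise make the occurrence count in $\overline{w^n}$ nonlinear in $n$.
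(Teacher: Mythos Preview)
The paper does not prove this statement; it is recorded as a Fact with implicit reference to \cite[Section~5]{Gr}, so there is no proof in the paper to compare against. Your argument is a correct direct proof along the lines one would expect from that source: the cancellation/fusion analysis of normal forms in $\Z_m*\Z_n$ for $f_W$ (with the key observation that the cancelled suffix $X_2$ and prefix $Y_2$ satisfy $Y_2=(X_2)^-$, so the potentially unbounded terms cancel), the bounded-difference reduction of $e_W$ to $f_W$, and the periodicity argument for the pseudo-character property are all sound.

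Your caveat about $|W|=1$ is also correct and worth flagging: in $\Z_m*\Z_n$ (unlike in a free group) a single-letter $W$ is vacuously without self-overlapping, yet $e_W$ can fail homogeneity on the torsion generators --- for instance $e_{a_1}(a_1)=1$ while $e_{a_1}(a_1^2)=e_{a_1}(a_2)=0$ when $m>3$ --- so the hypothesis should indeed be read with $|W|\ge 2$. This does not affect anything downstream in the paper, since Example~\ref{ex:new3} uses $W=ab$ of length $2$.
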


\begin{example}\label{ex:new3}
Let $G=\Z_m * \Z_n$, where $m,n > 3$, and  $A=(\Z,+)$. Take $f=f_{ab}$ or $f=e_{ab}$ (i.e., we take $W:=ab$ in the discussion above). Define $h: G \times G \to \Z$ by $h(x,y)=f(x)+f(y)-f(xy)$ for $x,y \in G$. 
Then $f$ is a quasi-character, so $h$ has finite image. Let $B=\{ 1\}$. Take $\G=((\Z,+),(G,\cdot),f)$, and let $\G^*=((\Z^*,+),(G^*,\cdot),f^*) \succ \G$ be a monster model. Let $\widetilde{G}$ be the central extension of $G$ by $\Z$ defined by means of $h$. Put $A^*_1=\bigcap_{n \in \N \setminus \{ 0 \}} n\Z^*$. Then the assumptions of Corollary \ref{cor:main2} are satisfied, so $\widetilde{G^*}^{000}_B \ne \widetilde{G^*}^{00}_B$.

\end{example}

\begin{proof}
As in the proof of Example \ref{ex:new1}, it is enough to prove the following claim.
\begin{claim}
For every thick subset $P$ of $G$ there exists $g \in P$ such that for every $k \in \Z$, $f(g^k)=2k$.
\end{claim}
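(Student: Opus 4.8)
The plan is to mimic the claims already proved for $\F_m$ (Example~\ref{ex:new1}) and for the surface group $\Gamma_g$ (Example~\ref{ex:new2}), producing, for a given thick set $P\subseteq G=\Z_m*\Z_n$, an explicit element $g\in P$ all of whose powers are already in normal (reduced) form, so that counting occurrences of the word $ab$ in $g^k$ is linear in $k$. First I would introduce the auxiliary elements
\[
b_i = b\,a^{-i}\,b\,a^{i}\qquad\text{for }i=1,2,\dots,
\]
written in normal form using the generators $b_j=b^j$, $a_i=a^i$ (here $a^{-i}$ means $a_{m-i}$). Since $P$ is thick, there are $i<j$ with $g:=b_i b_j^{-1}\in P$; writing this product out and performing the (few, and only $b$-letter) cancellations at the junctions, one sees that $g$ is a reduced word of the shape $b\,a^{-i}\,b\,a^{i-j}\,b^{-1}\,a^{j}\,b^{-1}\,a^{-i}$ (up to bookkeeping with the exponents mod $m$ and $n$, which is why we assume $m,n>3$, so that none of the relevant $a$- or $b$-exponents collapse to the identity or coalesce). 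A direct count gives $f(g)=2$: exactly two occurrences of the subword $ab$ and none of $b^- a^-=ba$ contributing with the opposite sign, or whatever the precise tally is — I would carry out this short computation explicitly.

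Next I would verify that for every $k\in\Z$ the word $g^k$, obtained by concatenating $k$ copies of the normal form of $g$, is still in normal form after the obvious cancellations at the $k-1$ internal junctions, and that these cancellations involve only the $b$-letters at the ends of $g$ (exactly as in the free and surface cases, where only the $a_2$-letters cancelled). Once this is checked, the occurrences of $ab$ inside $g^k$ are simply $k$ times those inside $g$, with no spurious occurrences created or destroyed at the junctions, so $f(g^k)=2k$; the same reasoning applies verbatim to $e_{ab}$ using reduced cyclic words, since $\overline{g^k}$ is again in reduced form. This establishes the Claim, and, exactly as noted at the end of the proofs of Examples~\ref{ex:new1} and~\ref{ex:new2}, Assumption~(5) of Corollary~\ref{cor:main2} follows (take $a:=2\in\Z\setminus A^*_1$ in condition~(5)); Assumptions~(1) and~(2) of that corollary are immediate ($h$ is split via the $\emptyset$-definable $f$, and $A_0=\Z$ is torsion free with $A^*_1\cap A_0$ trivial), so Corollary~\ref{cor:main2} yields $\widetilde{G^*}^{000}_B\ne\widetilde{G^*}^{00}_B$.

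I expect the main obstacle to be purely combinatorial bookkeeping rather than anything conceptual: one has to choose the witnessing family $b_i$ carefully so that, first, the product $b_ib_j^{-1}$ does not suffer cancellations that destroy the two $ab$-occurrences we are counting, and second, the exponents appearing (which live in $\Z_m$ and $\Z_n$) never degenerate — this is exactly where the hypothesis $m,n>3$ is used, and I would state explicitly at the start of the computation which inequalities among the exponents are needed. A secondary point to be careful about is the distinction between $f_{ab}$ and $e_{ab}$: for $e_{ab}$ one must check that passing to the cyclic word does not introduce an occurrence of $ab$ straddling the wrap-around point; since the normal form of $g$ begins with a $b$-letter and ends with an $a$-letter, the cyclic closure creates a junction of the form $a\cdots b$, which could in principle contribute — I would check that with our choice of $g$ it does not, or absorb any such constant contribution into the statement $f(g^k)=2k$ by adjusting $g$ slightly (e.g. conjugating so that $\overline{g}=\overline{g^1}$ already has the clean form). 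Apart from these finite-case verifications, the argument is a direct transcription of the two preceding examples.
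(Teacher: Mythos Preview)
Your proposal has a genuine gap that makes the argument fail. In $G=\Z_m * \Z_n$ the generator $a$ has order $m$, so $a^i$ (and $a^{-i}$) depends only on $i \bmod m$. Hence your sequence $b_i = b\,a^{-i}\,b\,a^{i}$ is periodic with period dividing $m$ and takes at most $m$ distinct values. Thickness gives you \emph{some} $i<j$ with $b_i b_j^{-1}\in P$, but since there are only finitely many possible values of $b_i b_j^{-1}$, a thick set $P$ can avoid all the non-identity ones (e.g.\ any finite-index subgroup is thick and misses any prescribed finite set of non-identity elements). When $b_i=b_j$ you just get $g=e$ and $f(e^k)=0$. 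So you cannot conclude that $P$ contains a $g$ of the required shape.

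This is precisely why the paper's construction is more elaborate: it takes
\[
b_i=(ba^2)^i\,a\,(ab)^{-i}\,b\,(ab)^i,
\]
so that the \emph{length} of $b_i$ grows with $i$, guaranteeing infinitely many distinct elements. The resulting $g=b_ib_j^{-1}$ then has the explicit normal form displayed in the paper, and a direct count of occurrences of $ab$ versus $(ab)^-=b^{n-1}a^{m-1}$ gives $f(g)=-(i-1)-(j-i-1)+j=2$; the hypothesis $m,n>3$ is what ensures that the exponents $a^2,a^3,a^{m-2},a^{m-3}$ appearing in this normal form are all non-trivial and distinct from $a$, so that no spurious $ab$ occurrences are created. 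Your overall strategy (produce $g\in P$ whose powers stay in normal form and count subwords) is the right one and matches the paper's, but the witnessing family must consist of words of unbounded length; the naive transcription from $\F_m$ does not survive the passage to generators of finite order.
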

\begin{proof}[Proof of the claim.]
Put $b_i=(ba^2)^ia(ab)^{-i}b(ab)^i$ for $i=1,2\dots$. Since $P$ is thick, there are $j>i$ such that $g:=b_ib_j^{-1} \in P$. We see that 
\[\begin{array}{lrl}
g &= &\underbrace{ba^2\dots ba^2}_{i-1}ba^3 \underbrace{b^{n-1}a^{m-1}\dots b^{n-1}a^{m-1}}_{i-1} b^{n-1} a^{m-2} \underbrace{b^{n-1}a^{m-1}\dots b^{n-1}a^{m-1}}_{j-i-1} \\
& \cdot& b^{n-1} \underbrace{ab \dots ab}_{j} a^{m-3}b^{n-1}\underbrace{a^{m-2}b^{n-1} \dots a^{m-2}b^{n-1}}_{j-1}
\end{array}\]
is written in normal form. Thus, $f(g)=-(i-1) - (j-i-1)+j=2$. From this, we easily conclude that $f(g^k)=2k$ for all $k \in \Z$.
\end{proof}
The justification of Example \ref{ex:new3} is completed.
\end{proof}
%
%

\subsubsection{An extension of $\SL_2(\Z)$: the braid group on three strands}

It has been an open question (at least for us) for a while whether the extension of $\SL_2(\Z)$ by $\Z$ defined by means of the 2-cocycle defining the universal cover of $\SL_2(\R)$ is an example where the two connected components are different. Here, we will prove that this is true. In order to do that, we will apply some results from \cite{asai} and Corollary \ref{cor:main2}; at the end of the paper, we will explain that none of the examples from Subsection \ref{subsection:4.2} could be obtained by methods from \cite{conv_pillay}.

Throughout this part of the paper, we will often be using (without mentioning) the formula for the 2-cocycle $h$ defining the universal cover of $\SL_2(\R)$, which was found in \cite{asai} and which has already been recalled in Example \ref{ex:PC1}. 


For a matrix 
$M=\left( 
\begin{array}{cc}
a & b \\
c & d
\end{array}\right)$
we define $a(M)=a$, $b(M)=b$, $c(M)=c$, $d(M)=d$ and $\Tr(M)=a+d$.


\begin{lemma}\label{lem:existence of A}
Consider an arbitrary first order expansion $\G$ of the 2-sorted structure \[((\Z,+),(\Z,+,\cdot)).\] 
Let $B$ be a set of parameters containing number 1 from the first sort. Then $G:=\SL_2(\Z)$ and $h$ are $B$-definable in $\G$ ($h$ is treated as a function from the set $G\times G$ which is $\emptyset$-definable in the sort $(\Z,+,\cdot)$ to the sort $(\Z,+)$). Let $\G^* \succ \G$ be a monster model.
%
%
Then, there exists a matrix $M \in {G^*}^{00}_B$ such that $h^*(M,M)=1$ and $c(M)>0$. In particular, $c(M^n) \ne 0$ for all $n \in \Z \setminus \{ 0 \}$, and $c(M^{2^n})<0$ for all $n \in \N \setminus \{ 0\}$.
\end{lemma}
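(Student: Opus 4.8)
The plan is to find, inside ${G^*}^{00}_B$, a matrix $M$ that looks like a sufficiently large power of a hyperbolic element of $\SL_2(\Z)$ with positive lower-left entry, and then to read off $h^*(M,M)=1$ from the explicit formula for $h$ recalled in Example \ref{ex:PC1}. First I would pick a concrete hyperbolic matrix in $\SL_2(\Z)$, for instance
\[
N=\begin{pmatrix} 1 & 1\\ 1 & 2\end{pmatrix}\in\SL_2(\Z),
\]
whose powers $N^k$ all have strictly positive entries (this is an easy induction, since all entries of $N$ are positive), so in particular $c(N^k)>0$ for every $k\ge 1$. The key point is that by the formula for $h$, when $c_1(d_1)>0$, $c_2(d_2)>0$ and $c_3(d_3)<0$ we have $h=1$; the issue is that for two matrices with positive lower-left entry the product need not have negative lower-left entry. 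However, I recall (or re-derive) from \cite{asai} that $h(x,y)+h(xy,z)=h(x,yz)+h(y,z)$ together with the cocycle identity gives $h(N^k,N^k)=$ (something that grows with $k$): more precisely, the cocycle $h$ restricted to the cyclic group generated by a hyperbolic element is, up to a coboundary, the ``rotation number'' cocycle, and $h(N^k,N^k)$ equals the integer part of a quantity tending to infinity with $k$. So for $k$ large enough $h(N^k,N^k)\ge 1$; by passing to a slightly smaller power or adjusting $N$ we can arrange $h(M_0,M_0)=1$ for some explicit power $M_0=N^{k_0}$, still with $c(M_0)>0$.

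The real content is then the transfer of this phenomenon into ${G^*}^{00}_B$. Here I would use Fact \ref{fact:new 1}(ii): ${G^*}^{00}_B$ is an intersection of a family $\mathcal P$ of $B$-definable thick subsets of $G^*=\SL_2(\Z^*)$ closed under finite intersections. So it suffices to show that for every $B$-definable thick subset $P$ of $\SL_2(\Z^*)$ there is a matrix $M\in P$ with $h^*(M,M)=1$ and $c(M)>0$; then by compactness (saturation) of $\G^*$ we get a single $M$ lying in all of them, i.e.\ in ${G^*}^{00}_B$, with the required properties. To produce such an $M$ for a given thick $P$, I would mimic the construction in the claims inside Examples \ref{ex:new1} and \ref{ex:new3}: set up an infinite family $b_1,b_2,\dots$ of elements of $\SL_2(\Z)$ (conjugates of $M_0$ by $N^i$, say $b_i=N^i M_0 N^{-i}$) so that for $j>i$ the difference $g=b_i b_j^{-1}$ still has the two crucial features — positive lower-left entry and $h(g,g)=1$ — because conjugation does not change $h(g,g)$ on the nose but the ``generic'' product $b_ib_j^{-1}$ behaves like $M_0$ for the $h$-count (no interference between the blocks when $j-i$ is large). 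Since $P$ is $n$-thick for some $n$, among $b_1,\dots,b_n$ two indices $i<j$ satisfy $b_i^{-1}b_j\in P$ (or $b_ib_j^{-1}\in P$), giving the desired element of $P$.

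The last sentence of the lemma is then immediate bookkeeping: $c(M)>0$ together with the fact that the powers $M^n$ stay in the ``positive hyperbolic cone'' (no sign change of $c$) gives $c(M^n)\ne 0$ for all $n\ne 0$; and $h^*(M,M)=1$ with $M$ essentially a power of $N$ forces, via the formula in Example \ref{ex:PC1}, that $c(M^{2})<0$ — wait, rather that the \emph{rotation number} accumulates, so that $c(M^{2^n})<0$ for all $n\ge 1$; concretely, one shows $h^*(M,M)=1$ is exactly the condition ``$c(M)(d(M))>0$ and $c(M^2)(d(M^2))<0$'', and then induction on $n$ using the cocycle identity $h(M^{2^{n-1}},M^{2^{n-1}})\ge 0$ propagates the sign $c(M^{2^n})(d(M^{2^n}))<0$. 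I expect the main obstacle to be the combinatorial verification that $h(b_ib_j^{-1}, b_ib_j^{-1})=1$ for $j\gg i$ — i.e.\ controlling the $2$-cocycle $h$ on a nontrivial product rather than on a single matrix — which requires either a clean closed formula for $h$ on products of the chosen shape (obtainable from the case analysis in Example \ref{ex:PC1} plus the cocycle identity) or an argument that the relevant sign pattern of lower-left entries is ``stable'' under the perturbation $b_j^{-1}$; everything else is routine matrix arithmetic and an application of Fact \ref{fact:new 1}(ii) with compactness.
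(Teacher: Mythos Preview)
Your proposal has a fatal error at the very start. You take $N=\begin{pmatrix}1&1\\1&2\end{pmatrix}$, note that all entries of $N^k$ are positive, and then assert that $h(N^k,N^k)$ ``grows with $k$'' so that eventually $h(M_0,M_0)=1$ for some power $M_0$. This is simply false: by the explicit formula in Example~\ref{ex:PC1}, $h(M,M)=1$ requires $c(M)(d(M))>0$ \emph{and} $c(M^2)(d(M^2))<0$. But $c(M^2)=c(M)\Tr(M)$, so if $c(M)>0$ you need $\Tr(M)<0$. Any matrix with all positive entries has positive trace, hence $h(N^k,N^k)=0$ for every $k\ge 1$. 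Your $M_0$ does not exist, and everything built on it (the conjugates $b_i=N^iM_0N^{-i}$, the claim that $h(b_ib_j^{-1},b_ib_j^{-1})=1$) collapses. The ``rotation number'' intuition is misleading here: for a hyperbolic element of $\SL_2(\R)$ with positive lower-left entry and positive trace the rotation number is $0$, not something growing.

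The paper's approach fixes exactly this sign obstruction. Rather than taking powers of a fixed matrix, it considers the family $M_i=\begin{pmatrix}a_i & a_id_i-1\\ 1 & d_i\end{pmatrix}$ with $a_i,d_i$ growing, uses bounded index to find $M=M_iM_j^{-1}\in{G^*}^{00}_B$ for some $i<j$, and then a direct computation gives $c(M)=d_j-d_i>0$ while $\Tr(M)=(a_i-a_j)(d_j-d_i)+2<0$; this forces $c(M^2)<0$ and hence $h^*(M,M)=1$. Your ``positive hyperbolic cone'' argument for $c(M^n)\ne 0$ is also wrong for the same reason (we just saw $c(M^2)<0$); the paper instead uses that $M\in\bigcap_n\Gamma(n)^*$, so $a(M),d(M)\in 1+{\Z^*}^0$ and $b(M),c(M)\in{\Z^*}^0$, and an easy induction gives $c(M^n)\in c(M)(n+{\Z^*}^0)\ne 0$. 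The sign of $c(M^{2^n})$ is then handled via $\Tr(M^{2^n})=\Tr(M^{2^{n-1}})^2-2>0$ (Cayley--Hamilton plus the congruence information).
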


\begin{proof}
We are going to use the following observation: for any matrix
$M=\left( 
\begin{array}{cc}
a & b \\
c & d
\end{array}\right)$ with coefficients in $\Z^*$,
since $c(M^2)=ca+cd=c(M)\Tr(M)$, the formula for $h$ implies that if $c(M):=c>0$ and $\Tr(M):=a+d<0$, then $h^*(M,M)=1$. More precisely, we will prove that such a matrix $M$ can be chosen in ${G^*}^{00}_B=\SL_2(\Z^*)^{00}_B$. 

Consider a sequence of matrices $(M_i)_{i<\kappa}$ in $G^*$ (here $\kappa$ is the degree of saturation of $\G^*$), where
$M_i:=\left( 
\begin{array}{cc}
a_i & a_id_i-1 \\
1 & d_i
\end{array}\right)$
for some sequences $(a_i)$ and $(b_i)$ of elements of $\Z^*$ satisfying $a_j-a_i\geq2$ and $b_j-b_i\geq 2$ for all $i<j<\kappa$.
As ${G^*}^{00}_B$ is of bounded index, there are $j>i$ such that $M:=M_iM_j^{-1} \in {G^*}^{00}_B$. A computation yields 
\[M=
\left( 
\begin{array}{cc}
a_i(d_j-d_i) +1 & a_ia_j(d_i-d_j) + a_i-a_j \\
d_j-d_i & a_j(d_i-d_j) +1
\end{array}\right).\]
Thus, $c(M)=d_j-d_i>0$ and $Tr(M)=(a_i-a_j)(d_j-d_i)+2<0$. So, we conclude that $c(M^2)<0$ and $h^*(M,M)=1$.

Now, we show that $c(M^n) \ne 0$ for all $n \in \Z \setminus \{0\}$. Since the standard congruence subgroups 
\[\Gamma(n):=\left\{
\left( \begin{array}{cc}
a & b \\
c & d
\end{array}\right)
\in \SL_2(\Z) : 
\left( \begin{array}{cc}
a & b \\
c & d
\end{array}\right)
\equiv
\left(\begin{array}{cc}
1 & 0 \\
0 & 1
\end{array}\right)
\mod n \right\}\]
of $\SL_2(\Z)$ are $\emptyset$-definable in $\G$ and of finite index, we see that ${G^*}^{00}_B < \Gamma(n)^*$ for every $n \in \N \setminus \{ 0 \}$. So, we get
that $b(M),c(M) \in {\Z^*}^0$ and $a(M),d(M) \in 1+{\Z^*}^0$, where ${\Z^*}^0:=\bigcap_{n \in \N \setminus \{ 0 \}} n\Z^*$.
By an easy induction, one shows that $a(M^n) \in 1+{\Z^*}^0$, $b(M^n) \in {\Z^*}^0$, $c(M^n) \in c(M)(n+ {\Z^*}^0)\subseteq {\Z^*}^0 \setminus \{ 0 \}$ and $d(M^n) \in 1 +{Z^*}^0$ for all $n \in \N \setminus \{ 0 \}$; in particular, $c(M^n) \ne 0$, and the formula for the inverse of a matrix in $\SL_2(\Z^*)$ yields this conclusion for all $n \in \Z \setminus \{ 0 \}$. 

It remains to show that  $c\left(M^{2^n}\right)<0$ for all $n \in \N \setminus \{ 0\}$. The proof is by induction on $n$. We know that this is true for $n=1$. Suppose it holds for some $n$. One easily checks that $\Tr(M^{2^n})= \Tr(M^{2^{n-1}})^2-2$. Thus, since $a(M^{2^{n-1}}), d(M^{2^{n-1}}) \in 1 + {\Z^*}^0$, we get that  $\Tr(M^{2^n})>0$. Using this together with the induction hypothesis that $c(M^{2^n})<0$, we get that $c(M^{2^{n+1}}) = c(M^{2^n})\Tr\left(M^{2^n}\right)<0$.
\end{proof}

\cite[Theorem 3]{asai} tells us that $h_{| \SL_2(\Z) \times \SL_2(\Z)} \colon \SL_2(\Z) \times \SL_2(\Z) \to \R$ is split via a unique real-valued function $f \colon \SL_2(\Z) \to \R$. Moreover, this function $f$ is rational-valued, and $12\im(f) \subseteq \Z$. 


\begin{remark}\label{rem:integer-valued}
$f\left[\left[\SL_2(\Z),\SL_2(\Z)\right]\right] \subseteq \Z$.
\end{remark}
 
\begin{proof}
It is clear that $f(I)=h(I,I)=0$. All the time in this proof, we use the fact that $\im(h) \subseteq \Z$.
Notice that if $f(A) \in \Z$, then $f(A^{-1}) \in \Z$. Indeed, $f(A^{-1})= h(A,A^{-1})-f(A) +f(I) \in \Z$. By an easy induction, we get that if $f(A_1),\dots, f(A_n) \in \Z$, then for any $\xi_1,\dots,\xi_n \in \{1,-1\}$, one has $f(A_1^{\xi_1} \cdot \ldots \cdot A_n^{\xi_n}) \in \Z$. So, it remains to check that for any $A,B \in \SL_2(\Z)$, $f([A,B]) \in \Z$. Since $\im(h) \subseteq \Z$, we have $f([A,B])=f(A^{-1}B^{-1}AB) = f(A^{-1}B^{-1}) + f(AB) -h(A^{-1}B^{-1},AB) = f(A^{-1})+f(A) +f(B^{-1})+f(B) -h(A^{-1},B^{-1})-h(A,B) -h(A^{-1}B^{-1},AB)\in  f(A^{-1})+f(A) +f(B^{-1})+f(B) +\Z = f(I)+h(A^{-1},A) + f(I) +h(B^{-1},B) +\Z =\Z$.
\end{proof}

It is a classical fact that the congruence subgroup $\Gamma(12)$ has finite index in $\SL_2(\Z)$ and is contained in $[\SL_2(\Z),\SL_2(\Z)]$. Thus, $[\SL_2(\Z),\SL_2(\Z)]$ is $\emptyset$-definable in $(\Z,+,\cdot)$. From now on, instead of $h_{|\SL_2(\Z) \times \SL_2(\Z)}$ we will just write $h$.


\begin{example}\label{ex:new4}
Let $G=\SL_2(\Z)$, $H=[\SL_2(\Z),\SL_2(\Z)]$ and $\G=((\Z,+), (\Z,+,\cdot), f_{|H})$ [more generally, $\G$ is any expansion of this 2-sorted structure]; here, $f_{|H}$ is treated as a function from the group $H$ $\emptyset$-definable in in the sort $(\Z,+,\cdot)$ to the sort $(\Z,+)$ (see Remark \ref{rem:integer-valued}). As usual, $\G^* \succ \G$ denotes a monster model. Let $\widetilde{G}$ be the central extension of $G$ by $\Z$ defined by means of $h$. Put $A=(\Z,+)$, $A_1^*=\bigcap_{n \in \N \setminus \{ 0 \}} n\Z^*$ 
and $B=\{ 1\}$. Then Assumptions (1), (2) and (4) (so also (3)) of Corollary \ref{cor:main2} are satisfied, and so $\widetilde{G^*}^{000}_B \ne \widetilde{G^*}^{00}_B$.  
\end{example}

\begin{proof}
Assumptions (1) and (2) are clearly satisfied. It remain to show that (4) also holds. For this it is enough to prove the following claim. Instead of ${f_{|H}}^*$, we will write $f^*$ (note that the arguments of $f^*$ considered below are always from ${G^*}^{00}_B \subseteq H^*$).

\begin{claim}
Let $M\in {G^*}^{00}_B$ be a matrix provided by Lemma \ref{lem:existence of A}, i.e., it satisfies $h^*(M,M)=1$, $c(M)>0$, $c(M^n) \ne 0$ for all $n \in \Z \setminus \{ 0 \}$, and  $c(M^{2^n})<0$ for all $n \in \N \setminus \{ 0\}$. Then $2f^*(M) -1 \notin A^*_1$ and $\Z \cdot (2f^*(M)-1) \subseteq f^*\left[{G^*}^{00}_B\right]$.
\end{claim}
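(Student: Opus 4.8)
The plan is to deduce both assertions from the behaviour of the single matrix $N:=M^2$ and its powers, all of which lie in ${G^*}^{00}_B$. First, since $f^*$ splits $h^*$ on $H^*\supseteq{G^*}^{00}_B$ and $h^*(M,M)=1$ by Lemma~\ref{lem:existence of A}, the identity $h^*(M,M)=f^*(M)+f^*(M)-f^*(M^2)$ gives $f^*(N)=f^*(M^2)=2f^*(M)-1=:a$. For the first assertion, $M\in{G^*}^{00}_B\subseteq H^*=[\SL_2(\Z),\SL_2(\Z)]^*$, so $f^*(M)\in\Z^*$ by Remark~\ref{rem:integer-valued}; hence $a=2f^*(M)-1\notin 2\Z^*$. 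As $A^*_1=\bigcap_{n}n\Z^*\subseteq 2\Z^*$, we conclude $2f^*(M)-1\notin A^*_1$.

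For the second assertion I will show $f^*(N^k)=ka$ for every $k\in\Z$; since $N^k\in{G^*}^{00}_B$ this yields at once $\Z\cdot(2f^*(M)-1)\subseteq f^*[{G^*}^{00}_B]$. The essential input is a sign statement on the lower-left entries $c(N^k)$. From the Cayley--Hamilton identity $N^{k+1}=\Tr(N)\,N^k-N^{k-1}$ one gets $c(N^{k+1})=\Tr(N)\,c(N^k)-c(N^{k-1})$. Here $\Tr(M)=a(M)+d(M)<0$ by the construction in Lemma~\ref{lem:existence of A}, while $\Tr(M)\in 2+A^*_1$ because ${G^*}^{00}_B$ lies in every congruence subgroup $\Gamma(n)^*$ and so $a(M),d(M)\in 1+A^*_1$; since no element of $\{-1,0,1\}$ lies in $2+A^*_1$, this forces $\Tr(M)\leq -2$, whence $\Tr(N)=\Tr(M)^2-2\geq 2$. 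Starting from $c(N)=c(M^2)<0$ (Lemma~\ref{lem:existence of A}), an induction on $k$ that also records that $|c(N^k)|$ is non-decreasing then gives $c(N^k)<0$ for all $k\geq 1$; consequently $c(N^{-k})=-c(N^k)>0$ for all $k\geq 1$.

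Now I feed this into the explicit formula for $h$ recalled in Example~\ref{ex:PC1}. For $k\geq 1$ the three relevant matrices $N^k$, $N$, $N^{k+1}$ all have negative (hence nonzero) lower-left entry, so neither the ``$1$'' nor the ``$-1$'' clause of the formula applies and $h^*(N^k,N)=0$; similarly, since $c(N^k)<0$, $c(N^{-k})>0$ and $c(I)=0$ (so the third symbol equals $d(I)=1>0$ while the first is negative), we get $h^*(N^k,N^{-k})=0$. Using that $f^*$ splits $h^*$ and $f^*(I)=0$: the first family of identities gives $f^*(N^{k+1})=f^*(N^k)+f^*(N)=f^*(N^k)+a$, hence $f^*(N^k)=ka$ for all $k\geq 0$; the second gives $f^*(N^{-k})=-f^*(N^k)=-ka$. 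Thus $\Z\cdot(2f^*(M)-1)\subseteq f^*[{G^*}^{00}_B]$, and the claim is proved.

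The step I expect to be the only genuine obstacle is establishing $c(N^k)<0$ for \emph{all} $k\geq 1$: the recursion $c(N^{k+1})=\Tr(N)\,c(N^k)-c(N^{k-1})$ could a priori change sign, and ruling this out is exactly why one must track the growth of $|c(N^k)|$ and use that $\Tr(N)\geq 2$ (equivalently $|\Tr(M)|\geq 2$, which in turn rests on $\Tr(M)\in 2+A^*_1$ together with $\Tr(M)<0$). Everything else is routine verification against the piecewise definition of $h$.
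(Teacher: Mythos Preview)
Your proof is correct, and it is genuinely different from the paper's. The paper runs a rather intricate simultaneous induction on $n$ establishing both $f^*(M^{2n})=2nf^*(M)-n$ with $c(M^{2n})<0$ and $f^*(M^{2n-1})=(2n-1)f^*(M)-(n-1)$ with $c(M^{2n-1})>0$; the inductive step leans heavily on the hypothesis $c(M^{2^k})<0$ for all $k$, using the largest power of $2$ below $m$ as an anchor. Your route is cleaner: you pass to $N=M^2$, extract the linear recursion $c(N^{k+1})=\Tr(N)\,c(N^k)-c(N^{k-1})$ from Cayley--Hamilton, and use the single observation $\Tr(N)\geq 2$ (which you correctly derive from $\Tr(M)<0$ and $\Tr(M)\in 2+A^*_1$, hence $\Tr(M)\leq -2$) to force $c(N^k)<0$ with $|c(N^k)|$ non-decreasing. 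After that, the cocycle vanishes on the relevant pairs and $f^*(N^k)=ka$ drops out immediately. Your argument actually consumes less of Lemma~\ref{lem:existence of A}: you only need $c(M)>0$, $c(M^2)<0$, and $M\in{G^*}^{00}_B$, not the full sequence $c(M^{2^n})<0$. The paper's approach, by contrast, yields extra information about odd powers of $M$, which is not needed for the claim. One small remark: you do not need to invoke ``the construction in Lemma~\ref{lem:existence of A}'' for $\Tr(M)<0$; it already follows from the stated hypotheses, since $c(M^2)=c(M)\Tr(M)$ with $c(M)>0$ and $c(M^2)<0$.
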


\begin{proof}[Proof of the claim.]
It is clear that $2f^*(M) -1 \notin A^*_1$. In order to see that $\Z \cdot (2f^*(M)-1) \subseteq f^*[{G^*}^{00}_B]$, we will prove by induction the following properties:
\begin{enumerate}
\item[(i)] $f^*(M^{2n})=2nf^*(M)-n$ and $c\left(M^{2n}\right)<0$ for all $n \in \N \setminus \{ 0 \}$,
\item[(ii)] $f^*(M^{2n-1})=(2n-1)f^*(M) - (n-1)$ and $c(M^{2n-1})>0$ for all $n \in \N \setminus \{ 0 \}$.
\end{enumerate}
{\bf Base step $(n=1)$.} (ii) is clear from the choice of $M$. We also know that $c(M^2)<0$. Finally, we have $1=h^*(M,M)=2f^*(M)-f^*(M^2)$, so $f^*(M^2)=2f^*(M) -1$.\\[1mm]
{\bf Induction step.} Suppose (i) and (ii) hold for all $n \in \{1,\dots,m\}$ for some $m\geq 1$. Choose the biggest natural number $k$ such that $m\geq 2^k$. Put $l=2^k$. Then $m<2l \leq 2m$. 

Since $c(M^{2l})<0$ and  $c(M^{4l})<0$, we have
$0=h^*(M^{2l},M^{2l})=2f^*(M^{2l})-f^*(M^{4l})$, so, by induction hypothesis, we get
\begin{equation}
f^*\left(M^{4l}\right)=4lf^*(M)-2l.\tag{$\dag$} \label{eq:4l}
\end{equation}

As, by induction hypothesis,  $c(M)>0$ and $c(M^{2m})<0$, we have $0=h^*(M,M^{2m})=f^*(M) + f^*(M^{2m})-f^*\left(M^{2m+1}\right)$, and so, by induction hypothesis, 
\[f^*\left(M^{2m+1}\right)= (2m+1)f^*(M)-m,\] 
i.e., the first equality of (ii) holds for $n=m+1$.
Thus, using induction hypothesis and (\ref{eq:4l}), we get
$h^*\left(M^{2(2l-m)-1},M^{2m+1}\right)=f^*\left(M^{2(2l-m)-1}\right)+ f^*\left(M^{2m+1}\right) -f^*\left(M^{4l}\right)=(2(2l-m)-1)f^*(M) -((2l-m)-1) +(2m+1)f^*(M) - m - (4lf^*(M) -2l)=1$. This implies that
\[c(M^{2m+1})>0,\]
which completes the proof of (ii) for $n=m+1$.

If $m=2l-1$, then (i) for $n=m+1$ follows from (\ref{eq:4l}) and the fact that $c(M^{4l})<0$. So, consider the case $2l-m>1$.
Since,  by induction hypothesis, $c(M^{2(2l-m-1)})<0$ and $c(M^{4l})<0$, we have $0=h^*(M^{2(2l-m-1)},M^{2m+2}) = f^*(M^{2(2l-m-1)})+f^*(M^{2m+2}) -f^*(M^{4l})$. Hence, by induction hypothesis and (\ref{eq:4l}), we get
\[f^*(M^{2m+2})= (2m+2)f^*(M) -(m+1),\]
i.e., the first equality in (i) holds for $n=m+1$.
We also have 
$h^*(M,M^{2m+1})= f^*(M)+f^*(M^{2m+1}) - f^*(M^{2m+2})= f^*(M)+(2m+1)f^*(M) - m - ((2m+2)f^*(M) - (m+1))=1$. This implies that \[c\left(M^{2m+2}\right)<0,\] which completes the proof of (i) for $n=m+1$.
\end{proof}

As was noted, the claim finishes our justification of Example \ref{ex:new4}.
\end{proof}

Using the fact that $h$ is split via $f$ and $\im(12f) \subseteq \Z$, we can rewrite the proof of Example \ref{ex:new4} to get the following example.
\begin{example}\label{ex:new5}
Replace $h$, $f$, $B$ and $\G$ in Example \ref{ex:new4} by $12h$, $12f$, $12B=\{ 12\}$, and $\G_{12}:=((\Z,+), (\Z,+, \cdot), 12f)$; in particular, $\widetilde{G}$ is the central extension of $\SL_2(\Z)$ by $\Z$ defined by means of the 2-cocycle $12h$. Then Assumptions (1), (2) and (4) (so also (3)) of Corollary \ref{cor:main2} are satisfied, and so $\widetilde{G^*}^{000}_B \ne \widetilde{G^*}^{00}_B$. 
\end{example}

Now, we replace the sort $(\Z,+,\cdot)$ by the pure group $\SL_2(\Z)$.

\begin{example}
Let $G=\SL_2(\Z)$, $H=\left[\SL_2(\Z),\SL_2(\Z)\right]$ and $\G=((\Z,+), \SL_2(\Z), H, 12f)$, where $(\Z,+)$ and $\SL_2(\Z)$ are two sorts, $H$ is a predicate in the sort $\SL_2(\Z)$, and $12f$ is treated as a function from the sort $\SL_2(\Z)$ to the sort $(\Z,+)$. Then $h$ is $\emptyset$-definable in $\G$. As usual, $\G^* \succ \G$ denotes a monster model. Let $\widetilde{G}$ be the central extension of $G$ by $\Z$ defined by means of $h$, 
and $B:=\{1\}$. Put $A=(\Z,+)$ and $A_1^*=\bigcap_{n \in \N \setminus \{ 0 \}} n\Z^*$. Then Assumptions (1), (2) and (4) (so also (3)) of Corollary \ref{cor:main2} are satisfied, and so $\widetilde{G^*}^{000}_B \ne \widetilde{G^*}^{00}_B$.  
\end{example}

\begin{proof}
Since $12h$ is $\emptyset$-definable (using splitness via $12f$), we get that $h$ is $\emptyset$-definable. It is clear that Assumptions (1) and (2) of Corollary \ref{cor:main2} are satisfied (as $f_{|H}$ is $\emptyset$-definable in $\G$). Since the structure $\G$ is $\emptyset$-interpretable in one of the structures $\G$ considered in Example \ref{ex:new4}, 
we have that ${G^*}^{00}_B$ computed in the current example contains ${G^*}^{00}_B$ computed in Example \ref{ex:new4} (working in an appropriate structure). Thus, the fact that  Assumption (4) of Corollary \ref{cor:main2} holds in Example \ref{ex:new4} implies that it holds in the current example.
\end{proof}

\begin{example}
Let $G=\SL_2(\Z)$, $H=\left[\SL_2(\Z),\SL_2(\Z)\right]$ and $\G=((\Z,+), \SL_2(\Z), 12f)$, where $(\Z,+)$ and $\SL_2(\Z)$ are two sorts, and $12f$ is treated as a function from the sort $\SL_2(\Z)$ to the sort $(\Z,+)$. Then $h$ is $\emptyset$-definable in $\G$. Replace $h$ by $12h$, i.e., $\widetilde{G}$ is the extension of $G$ by $\Z$ defined by means of $12h$, 
and let $B:=\{12\}$. As usual, $\G^* \succ \G$ denotes a monster model, $A:=(\Z,+)$ and $A_1^*:=\bigcap_{n \in \N \setminus \{ 0 \}} n\Z^*$. Then Assumptions (1), (2) and (4) (so also (3)) of Corollary \ref{cor:main2} are satisfied, and so $\widetilde{G^*}^{000}_B \ne \widetilde{G^*}^{00}_B$.
\end{example}

\begin{question}
Let $\widetilde{G}$ be the extension of $\SL_2(\Z)$ by $\Z$ defined by means of $h$ [or $12h$]. Let $\G$ be the pure group structure on $\widetilde{G}$, and $\G^* \succ \G$ a monster model. Is it true that $\widetilde{G^*}^{000}_B \ne \widetilde{G^*}^{00}_B$ for some small set of parameters $B$?
\end{question}

Let us note the following consequence of Corollary \ref{cor:grupa wolna} and the fact that the commutator subgroup $[\SL_2(\Z),\SL_2(\Z)]$ is a free group in two generators (see \cite[Page 117, Theorem 6.9.]{aig}) and it has finite index in $\SL_2(\Z)$.

\begin{corollary} Let $S$ be the group $\SL_2(\Z)$ expanded by predicates for all subsets. Then ${S^*}^{000}_S={S^*}^{000}_\emptyset \ne {S^*}^{00}_\emptyset={S^*}^{00}_S$.
\end{corollary}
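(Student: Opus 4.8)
The plan is to reduce the statement to Corollary \ref{cor:grupa wolna}. Recall that Corollary \ref{cor:grupa wolna} says that if $F$ is the free group $\F_m$ (for $m \geq 2$) expanded by predicates for all subsets, then ${F^*}^{000}_F = {F^*}^{000}_\emptyset \ne {F^*}^{00}_\emptyset = {F^*}^{00}_F$. We want to transfer this to $S = \SL_2(\Z)$ expanded by predicates for all subsets. The key algebraic input, as indicated just before the corollary statement, is that the commutator subgroup $H := [\SL_2(\Z),\SL_2(\Z)]$ is a free group on two generators and has finite index in $\SL_2(\Z)$ (by \cite[Page 117, Theorem 6.9.]{aig}).

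First I would set up the structures carefully. Let $\G$ be the pure group $\SL_2(\Z)$ expanded by predicates for all subsets (of $\SL_2(\Z)$, and of all finite Cartesian powers — this does not matter since everything is $\emptyset$-definable with parameters anyway, but it is cleanest to add predicates for all subsets of all finite powers), and let $\G^* \succ \G$ be a monster model, with $S^*$ its universe. As in the proof of Corollary \ref{cor:grupa wolna}, the equalities ${S^*}^{000}_\emptyset = {S^*}^{000}_S$ and ${S^*}^{00}_\emptyset = {S^*}^{00}_S$ are immediate: any automorphism of $\G^*$ respects all the added predicates, so $\emptyset$-invariance over the pure group language with all subsets named is the same as $S$-invariance, and likewise for type-definability; equivalently, every subset of every finite power of $S$ becomes $\emptyset$-definable in $\G$, so parameters are redundant. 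Hence it suffices to prove ${S^*}^{000}_\emptyset \ne {S^*}^{00}_\emptyset$.

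Next I would exploit that $H = [\SL_2(\Z),\SL_2(\Z)]$ is free of rank $2$ and of finite index in $\SL_2(\Z)$. Since predicates for all subsets are present, $H$ is $\emptyset$-definable in $\G$, so its interpretation $H^*$ is a $\emptyset$-definable, finite-index subgroup of $S^*$; moreover $H^* = H(\G^*)$ is a monster model of the pure group $H$ (with induced structure) and in fact, since all subsets of all powers of $H$ are $\emptyset$-definable in $\G$ — they are among the named subsets of powers of $S$ — the induced structure on $H^*$ is exactly that of the free group of rank $2$ expanded by all subsets. Now, since $H^*$ has finite index in $S^*$, a standard fact (e.g. \cite[Lemma 3.9]{gis}-type reasoning, or directly: a finite-index subgroup contains the relevant connected component computed in the ambient group, and the connected components of a finite-index definable subgroup coincide with those of the ambient group) gives ${S^*}^{000}_\emptyset = {H^*}^{000}_\emptyset$ and ${S^*}^{00}_\emptyset = {H^*}^{00}_\emptyset$, where the right-hand components can be computed either in $\G^*$ or in the reduct to the structure on $H^*$ described above — these agree because the induced structure on $H^*$ from $\G^*$ is precisely the free group with all subsets named. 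Applying Corollary \ref{cor:grupa wolna} to this free group of rank $2$ yields ${H^*}^{000}_\emptyset \ne {H^*}^{00}_\emptyset$, and therefore ${S^*}^{000}_\emptyset \ne {S^*}^{00}_\emptyset$, which is what we wanted.

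The main obstacle, and the place requiring care, is the passage between connected components of $S^*$ and of the finite-index subgroup $H^*$, together with the claim that the induced structure on $H^*$ from $\G^*$ is the full "free group with all subsets named". For the first point one should either cite a lemma to the effect that a type-definable (resp. invariant) bounded-index subgroup of $S^*$ is contained in $H^*$ once $[S^* : H^*] < \aleph_0$ and that intersecting with $H^*$ and the connected components of $H^*$ match up — this is routine but must be spelled out — or reduce to \cite[Lemma 3.9]{gis} as was done elsewhere in the paper. For the second point, the argument is that a subset $X \subseteq (H^*)^k$ is $\emptyset$-definable in the structure on $H^*$ induced from $\G^*$ iff it is $\emptyset$-definable in $\G^*$ (since $H^*$ itself is $\emptyset$-definable), and the latter holds for \emph{every} such $X$ because $X$, being a subset of a finite power of $S^*$ contained in $(H^*)^k$, is named; one then matches this against the structure used in Corollary \ref{cor:grupa wolna}, observing that in that corollary the extra sort $(\Z,+)$ and the function $f$ play no role once we only care about the components of $G^* = \F_m^*$, whose proof used only that predicates for all subsets of $\F_m$ and of $\Z \times \F_m$ are present — in particular that every epimorphism $\F_m \to \Z$ is $\emptyset$-definable. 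Since $H$ is free of rank $2$ it admits an epimorphism onto $\Z$, so the hypothesis of Corollary \ref{cor:grupa wolna} is genuinely met, and the transfer goes through.
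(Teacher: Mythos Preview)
Your proposal is correct and follows exactly the approach indicated in the paper, which gives no explicit proof but simply records the corollary as an immediate consequence of Corollary~\ref{cor:grupa wolna} together with the fact that $[\SL_2(\Z),\SL_2(\Z)]$ is free of rank~$2$ and of finite index in $\SL_2(\Z)$. You have merely spelled out the routine details (redundancy of parameters when all subsets are named, the passage to a finite-index definable subgroup, and the identification of the induced structure on $H^*$ with the free group expanded by all subsets), none of which presents any difficulty.
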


Now, we briefly explain why none of the examples obtained in Subsection \ref{subsection:4.2} could be obtained by the methods from \cite{conv_pillay}. The proof from \cite{conv_pillay} uses commutator subgroups; more precisely, it uses the fact that $\widetilde{\SL_2(\R)}$ is perfect, which is not true for $\widetilde{G}$ in our examples (because of splitness and Remark \ref{rem:nice and easy}(1)). Even if one tries to do more delicate arguments in the spirit of \cite{conv_pillay}, one would need to know that $\left[\widetilde{G^*}^{00}_B /A^*_1, \widetilde{G^*}^{00}_B /A^*_1\right] \cap A^*/A^*_1$ is non-trivial, but in our examples it is trivial by the splitness of $\overline{h^*}_{|{G^*}^{00}_B \times {G^*}^{00}_B} \colon {G^*}^{00}_B \times {G^*}^{00}_B \to A^*/A^*_1$ and Remark \ref{rem:nice and easy}(1). 

We finish with a remark that applying Corollary \ref{prop:more examples in 4.2} to situations like in Examples \ref{ex:new1}, \ref{ex:new2} or \ref{ex:new3}, one can produce wider classes examples where the two connected components differ.


\end{document}